\definecolor{darkred}{rgb}{0.5,0,0}
\definecolor{darkgreen}{rgb}{0, 0.5, 0}
\definecolor{darkblue}{rgb}{0,0,0.5}
\numberwithin{equation}{section}
\newtheorem{thm}{Theorem}[section]
\newtheorem{cor}[thm]{Corollary}
\newtheorem{prop}[thm]{Proposition}
\newtheorem{lemma}[thm]{Lemma}
\theoremstyle{definition}
\newtheorem{defn}[thm]{Definition}
\theoremstyle{remark}
\newtheorem{rem}[thm]{Remark}
\newtheorem{example}[thm]{Example}
\newcounter{notes}
{\end{list}}
\newcommand{\ev}{{\rm ev}}
\renewcommand{\setminus}{\smallsetminus}
\newcommand\qu{/\hspace{-0.1cm}/} 
\newcommand{\beq}{\begin{equation}}
\newcommand{\eeq}{\end{equation}}
\newcommand{\beqn}{\begin{equation*}}
\newcommand{\eeqn}{\end{equation*}}
\newcommand{\dual}{*}
\newcommand{\ov}{\overline}
\newcommand{\ol}{\overline}
\newcommand{\mb}{\mathbb}
\newcommand{\mc}{\mathcal}
\newcommand{\mf}{\mathfrak}
\newcommand{\fk}{ {\mf k}}
\newcommand{\g}{ {\mf g}}
\renewcommand{\txt}[1]{\rm{\texttt{#1}}}
\newcommand{\wt}{\widetilde}
\newcommand{\uds}[1]{\underline{\smash{#1}}}
\newcommand{\weight}{{{\color{lightgray}{\bullet}}}}
\newcommand{\fuk}{{\rm Fuk}}
\newcommand{\End}{{\rm{End}}}
\newcommand{\m}{{\bm m}}
\newcommand{\crit}{{\bf crit}}
\renewcommand{\ss}{{\rm st}}
\newcommand{\us}{{\rm us}}
\newcommand{\Gammait}{{\mathit{\Gamma}}}
\begin{document}

\title{An Open Quantum Kirwan Map}

\author[Woodward]{Chris Woodward}
\address{Department of Mathematics, Rutgers University, Hill Center for Mathematical Sciences, 110 Frelinghuysen Rd., Piscataway, NJ 08854, USA}
\email{ctw@math.rutgers.edu}

\author[Xu]{Guangbo Xu}
\address{
Department of Mathematics, Texas A{\&}M University, College Station, TX 77843, USA}
\email{guangboxu@math.tamu.edu}

\date{\today}

\maketitle

\begin{abstract}
We construct a morphism from the equivariant Fukaya algebra of a Lagrangian brane in the zero level set of a moment map of a Hamiltonian action to the Fukaya algebra of the quotient brane. This morphism induces a map between Maurer--Cartan solution spaces, and intertwines the disk potentials. As an application, we show under some technical hypotheses that weak unobstructedness of an invariant Lagrangian brane implies weak unobstructedness of its quotient. For semi-Fano toric manifolds we give a different proof of the open mirror theorem of Chan--Lau--Leung--Tseng \cite{CLLT} by showing that the potential of a Lagrangian toric orbit in a toric manifold is related to the Givental--Hori--Vafa potential by a change of variable. We also reprove the results of Fukaya--Oh--Ohta--Ono \cite{FOOO_toric_1} on weak unobstructedness of these toric orbits. In the case of polygon spaces we show the existence of weakly unobstructed and Floer nontrivial products of spheres.
\end{abstract}

\setcounter{tocdepth}{2}
\tableofcontents

\section{Introduction}

The {\it gauged linear sigma model} (GLSM) in physics, introduced by Witten \cite{Witten_LGCY}, has been an influential framework in mathematics and physics.  In addition to providing important applications in mirror symmetry (see \cite{Hori_Vafa}), the GLSM provides an alternate approach to holomorphic curves in symplectic geometry.  Several authors have  defined Gromov--Witten type invariants for the GLSM as counterparts of ordinary Gromov--Witten invariants of the symplectic quotient: In algebraic geometry the  {\it quasimap invariants} (\cite{CKM_quasimap}) and in symplectic geometry  the {\it gauged Gromov--Witten invariants} or {\it Hamiltonian Gromov--Witten} or {\it vortex invariants} \cite{Cieliebak_Gaio_Salamon_2000} \cite{Cieliebak_Gaio_Mundet_Salamon_2002} \cite{Mundet_thesis, Mundet_2003}. 

In physical terms, the GLSM theory and the corresponding nonlinear
sigma model (NLSM) theory are related via a {\it renormalization}
procedure. In the study of gauged Gromov--Witten theory in symplectic
geometry, a mathematical version of this relationship was observed by Gaio--Salamon
\cite{Gaio_Salamon_2005} via an adiabatic limit of the
symplectic vortex equation. The phenomena appearing in the adiabatic limit process 
suggests there should be a quantum correction of the
classical Kirwan map that intertwines with the two kinds of invariants. As explained in
\cite{Witten_LGCY}\cite{Morrison_Plesser_1995}\cite{Gaio_Salamon_2005}, 
the correction is enumerative in nature, counting the so-called point-like instantons or {\it affine vortices}. The quantum Kirwan map conjecture has been studied in symplectic geometric setting by Ziltener \cite{Ziltener_Decay, Ziltener_thesis, Ziltener_book} and has been proved in algebraic geometric setting by the first-named author \cite{Woodward_15}. Remarkably, as explained in \cite{Hori_Vafa}, the quantum correction is essentially equivalent to the mirror map accounted for the nontrivial relation between A-model and B-model theories.

In this paper we extend the idea of quantum Kirwan map to the open-string situation. Our construction leads to various applications in Lagrangian Floer theory. We first state the general picture without listing the precise assumptions. Let $V$ be a symplectic manifold acted by a compact Lie group $K$. There are two ways of studying the Lagrangian Floer theory of the symplectic quotient $V \qu K$. The ordinary approach associates to each Lagrangian submanifold $L \subset V \qu K$ and a Floer cochain complex, or more generally an $A_\infty$ algebra, whose structure maps count pseudoholomorphic curves in $V \qu K$ with boundaries mapped into $L$. An alternative construction, introduced by the first-named author in \cite{Woodward_toric}, replaces holomorphic curves in $V \qu K$ by holomorphic curves in $V$ which are counted equivariantly. The equivariant approach brings in additional convenience in both theory and calculation, since many symplectic manifolds arise as the GIT quotients of rather simple $V$ such as a vector space. For example, in \cite{Woodward_toric} certain Lagrangian tori in toric manifolds were proved to be Hamiltonian non-displaceable without appealing to the formidable Kuranishi structure, in contrast to the approach of \cite{FOOO_toric_1, FOOO_toric_2}. One conceptual motivation of this paper is to compare these two different constructions as an analogue of the quantum Kirwan map for the closed-string case.

Another motivation of our paper is to understand the ``coordinate change'' that relates the two kinds of potential functions of Lagrangian Floer theory, particularly in the toric case. For each toric manifold $X$ and a class of Lagrangian torus $L \subset X$, there is the so-called Givental--Hori--Vafa potential (see \cite{Givental_potential}\cite{Hori_Vafa}) that counts a naive set of holomorphic disks. Cho--Oh \cite{Cho_Oh} first showed that when $X$ is Fano, the Givental--Hori--Vafa potential agrees with the Lagrangian Floer potential. Beyond the Fano case, many people \cite{Chan_Lau_2014} \cite{FOOO_mirror} \cite{CLLT} \cite{Gonzalez_Iritani} have shown that a nontrivial coordinate change is needed in order to identify the two potential functions. On the other hand, as indicated by physicists' work and rigorously shown by the first-named author \cite{Woodward_toric}, the Givental--Hori--Vafa potential is indeed the potential function from the equivariant approach. The coordinate change was then conjectured in \cite{Woodward_toric} to come from counts of affine vortices over both the complex plane and the upper half plane. Our construction of the open quantum Kirwan map verifies this conjecture for a large class of manifolds and unifies the previous results for the toric case. We also applied our construction to two other situations and obtain nontrivial Floer-theoretic implications.

\subsection{Main results}

The geometric setup is as follows (see more details in Subsection \ref{subsection21}). Let $V$ be a K\"ahler manifold with symplectic form $\omega_V$ and complex structure $J_V$. Suppose $V$ is acted upon by a complex reductive Lie group $G$ with a maximal compact subgroup $K$. Let ${\mf k}$ resp. ${\mf g}$ denote the Lie algebras of $K$ resp. $G$. 
Let
\beqn
\mu: V \to {\mf k}^*
\eeqn
be a moment map. We assume that $K$ acts freely on
  $\mu^{-1}(0)$ so that the symplectic quotient of $V$ by $K$ (
  homeomorphic to the geometric invariant theory quotient with respect
  to a stability condition corresponding to $\mu$) is a compact
  K\"ahler manifold
\beqn X = V \qu G = \mu^{-1}(0)/K.  \eeqn
We assume that there exists a holomorphic line bundle over $V$ with connection whose curvature is a positive multiple of the equivariant symplectic form, see \eqref{eqn22} below, so that the quotient may also be defined using geometric invariant theory.

We study Fukaya algebras associated to branes in the quotient.
Let $L\subset X$ be a compact Lagrangian submanifold, equipped with an
orientation, a spin structure and a local system. Let $\fuk ( L )$
denote the associated Fukaya $A_\infty$ algebra with composition maps
\beqn {\bm m}_k: \fuk( L )^{\otimes k} \to \fuk( L),\ \ k = 0, 1,
\ldots \eeqn 
defined by counting treed holomorphic disks in $X$ (see
\cite{FOOO_Book} \cite{Charest_Woodward_2015}). The Fukaya algebra has
a family of deformations parametrized by cycles
${\mf a}$ in $X$, denoted by $\fuk(L, {\mf a})$, called the
{\it bulk deformation} as in \cite{FOOO_toric_2}, defined by counting
holomorphic disks with interior markings mapping to  ${\mf a}$.  On the other hand, the preimage of $L$
under the projection $\mu^{-1}(0) \to X$ is a $K$-invariant Lagrangian
submanifold $L_V \subset V$. In \cite{Woodward_toric} the first-named
author introduced the equivariant Fukaya algebra
$\fuk^{\it eq} (L)$, \footnote{It was called the {\it quasimap Fukaya
    algebra} in that paper.} whose composition maps \beqn {\bm
  m}_k^{\it eq}: \fuk^{\it eq} ( L )^{\otimes k} \to \fuk^{\it eq} (
L),\ \ k = 0, 1, \ldots\eeqn are defined by counting holomorphic disks
in $V$ modulo the $K$-action.

Our main result of this paper is about the relation between these two
kinds of $A_\infty$ algebras. The following describes the technical
hypotheses we will make on our targets in order to regularize the
moduli spaces of vortices.

\begin{defn}\label{defn11} Define the following conditions  on the target datum $(V, G, \mu)$.
\begin{enumerate}
\item[(T1)] The element $0 \in {\mf k}^*$ is a regular value of $\mu$
  and the $G$ acts freely on $\mu^{-1}(0)$ so that the quotient
  $X$ is a smooth manifold;
\item[(T2)] the unstable locus $V^{\rm us} \subset V$ is an algebraic
  subvariety of complex codimension at least two and so real
  codimension at least four;
\item[(T3)] the manifold $V$ is either compact or convex at infinity
  (see Definition \ref{defn21}) and either aspherical or monotone with
  minimal Chern number $2$.
\end{enumerate}
\end{defn}

To construct the morphisim between Fukaya algebras we count {\it
  affine vortices} as in Ziltener \cite{Ziltener_book}. These are the objects that appear as bubbles in the adiabatic limit  studied in Gaio-Salamon \cite{Gaio_Salamon_2005}. An affine vortex with domain ${\bm C}$, the complex plane, is a pair consisting of a map with connection
\[ (u, A), \quad u: {\bm C} \to V , \quad  A \in \Omega^1({\bm C},
{\mf k}) \] 
satisfying the equations
\begin{align}\label{eqn11}
&\ \ov\partial_A u = 0,\ &\ * F_A + \mu(u) = 0.
\end{align}
As explained in Ziltener \cite{Ziltener_book}, these equations are
elliptic modulo gauge and their solutions form finite-dimensional
moduli spaces.  In the case of domain ${\bm H}$, the upper half plane, we require that the
map $u$ satisfy the boundary condition
$u(\partial {\bm H}) \subset L_V$. Ziltener \cite{Ziltener_book}
  shows that  affine vortices $(u,A)$ with domain ${\bm C}$ have a
well-defined equivariant homology class $[(u,A)] \in H_2^K (V)$, and so
a well-defined Chern number $c_1([u,A]) \in \mb{Z}$  given by pairing
with $c_1^K (V) \in H^2_K (V)$.

To count affine vortices we wish to choose generic perturbations so that the moduli space is smooth. In this paper, we construct such perturbations in a restricted semi-positive setting, although we expect that the same arguments work in general for a sufficiently general perturbation scheme.  To regularize affine vortices mapped  into  a stabilizing divisor,  we impose the following conditions which give us some control over the spheres of zero or   negative Chern numbers.  We expect that these conditions can be   removed using a more sophisticated equivariant perturbation scheme.

\begin{defn}\label{defn12}   Define the following conditions on the target  $(V, G, \mu)$ :
\begin{enumerate}

\item[(S1)] \label{S1} All $J_V$-affine vortices over ${\bm C}$ have nonnegative
  Chern number.

\item[(S2)] \label{S2} All $I_X$-holomorphic spheres in $X$ have nonnegative
  Chern number.

\item[(S3)]  \label{S3} All nonconstant $J_V$-affine vortices of Chern number zero
  are contained in a normal crossing $G$-invariant divisor
  \beqn S = S_1 \cup \cdots \cup S_N \subset V \eeqn
  with each $S_i$ smooth.

\item[(S4)]  \label{S4}  All nonconstant $I_X$-holomorphic spheres in $X$ of Chern
  number zero are contained in the geometric invariant theory quotient
  $\bar S:=S \qu G$.
  
\end{enumerate}
\end{defn}

\begin{example}\label{example13} 
These conditions are satisfied in the toric case under the following combinatorial condition that the first equivariant Chern class lie in the closure of the ``positive cone'' defined by the equivariant symplectic class. Suppose $V \cong {\mb C}^k$ is a vector space with an action of torus $G$ with Lie algebra $\g$. We identify 
\[ H_2^G(V) \cong \g^\dual, \quad H^2_G(V) \cong \g .\] 
Suppose the $G$-action has weights 
\[ {\bf m}_1,\ldots, {\bf m}_k \in \g^\dual \] 
and the equivariant symplectic class is denoted 
\[ [\omega_G]
\in H^2_G(V) \cong \g^\dual . \]  
For each subset
$I \subset \{ 1, \ldots , k\} $ let
\[ C_I =  \operatorname{span}_+ \{ {\bf m}_i, k \in I \} \]
denote the closed cone generated by ${\bf m}_i, i \in I$.
Let 
\[ \g^{\dual,\circ} = \g^\dual - \bigcup_{I, \operatorname{codim}(C_I) > 0 } C_I  \] 
denote the complement of the cones $C_I$ of positive codimension.  The {\it positive cone} 
\[ \g^{\dual,+} \subset \g^{\dual,\circ} \] 
defined by $[\omega_G]$ is the connected component of $\g^{\dual,\circ}$ containing $[\omega_G]$, and the 
{\it semipositive cone} is the closure of $\g^{\dual,+}$.
If the equivariant Chern class $c_1^G(V)$ lies in semipositive
cone $ \ol{\g^{\dual,+}}$, then $V$ is semi-Fano in the sense above.  Indeed
any homology class $d$ of affine vortices
pairs positively with any $\xi \in \g^{\dual,+}$
since the classes of vortices are independent
of the symplectic class $\xi$ by the Hitchin-Kobayashi correspondence  described in Example \ref{hn} below.
Then $c_1^G(V)$ pairs non-negatively with $d$ by continuity.
This implies (S1) and (S2) is similar. 
For (S3), we take the hyperplane $S_j$ to be the sum of the weight subspaces for weights other 
than ${\bf m}_j$, namely the $j$-th coordinate hyperplane.   If a vortex is not contained in $S$, then all of the pairings of $d$ with 
${\bf m}_j$ are non-negative, which implies non-negative Chern number and zero Chern number only if the vortex is zero energy, hence constant. The argument for (S4) is similar.
This ends the example.
\end{example}

We need a Novikov field to sum the counts of vortices. Since we are in the rational case, we use the Novikov field with integral energy filtrations, namely the field of Laurent series in a formal variable ${\bm q}$:
\beqn 
{\bm \Lambda} = {\mb C} (({\bm q})) = \Set{ \sum_{i=m}^{+\infty} a_i {\bm q}^i\ |\ a_i \in {\mb C},\ m \in {\mb Z} }.  
\eeqn
Let ${\bm \Lambda}_+$ be the subring of power series containing only positive powers of ${\bm q}$.

Now we state the first main theorem. 

\begin{thm}\label{mainthm} 
  Let $(V, G, \mu)$ satisfy (T1)---(T3) of Definition \ref{defn11} and (S1)---(S4) of Definition \ref{defn12}. Let $S \subset V$ be the divisor in Definition \ref{defn12}. Let
  $L \subset X \setminus \bar S$ be a connected and spin Lagrangian
  submanifold that satisfies (L1)---(L2) of Definition \ref{defn27}. For
  each local system on $L$ there are an $A_\infty$ algebra $\fuk^0(L)$
  (which is formally the same as $\fuk^{\it eq}(L)$), an $A_\infty$
  algebra $\fuk^\infty(L)$ (which is homotopy equivalent to a version
  of bulk-deformed Fukaya algebra), and a unital $A_\infty$ morphism
  (called the {\sc open quantum Kirwan morphism})
\beqn
{\bm \kappa}: \fuk^0 ( L ) \to \fuk^\infty ( L ) 
\eeqn
defined by counting affine vortices over the upper half plane ${\bm H}$. Moreover, ${\bm \kappa}$ is a higher order deformation of the identity.
\end{thm}

The unitality of the open quantum Kirwan map has the following consequences for disk potentials of Lagrangians in symplectic quotients. Recall that an $A_\infty$ algebra ${\mc A}$
with compositions ${\bm m}_l$ and strict unit ${\bm e}$ is called {\it
  weakly unobstructed} if the following {\it weak Maurer--Cartan
  equation} has a solution:
\beqn 
\sum_{l\geq 0} {\bm m}_l \big({\bm b}, \ldots, {\bm b} \big) \in {\bm \Lambda} {\bm e}.  
\eeqn
On its solution set $MC({\mc A})$ there is the {\it potential function} ${\bf W}_{\mc A}$ defined by 
\beqn {\bf W}_{\mc A} ({\bm b}) {\bm e} = \sum_{l \geq 0} {\bm m}_l
\big( {\bm b}, \ldots, {\bm b} \big)
 \ \ \ \forall {\bm b} \in MC({\mc
  A}).  \eeqn
In the situation of Theorem \ref{mainthm}, the $A_\infty$ morphism ${\bm \kappa}$ induces a total map (formally)
\begin{align*}
&\ \uds \kappa: \fuk^0  (L) \to \fuk^\infty ( L),\ &\  \uds \kappa ( {\bm a}) = \sum_{l \geq 0} \kappa_l \big(  {\bm a}, \ldots, {\bm a} \big).
\end{align*}
By the $A_\infty$ axiom for ${\bm \kappa}$ and unitality, $\uds \kappa$ maps the Maurer--Cartan  spaces of $\fuk^0 (L)$ into the Maurer--Cartan space of $\fuk^\infty (L)$ and intertwines with the potential functions
\begin{align*}
&\ {\bf W}^0: MC(\fuk^0 (L)) \to {\bm \Lambda},\ &\  {\bf W}^\infty: MC (\fuk^\infty ( L) ) \to {\bm\Lambda}.
\end{align*}

\begin{cor}\label{cor15}
The map $\uds \kappa$ maps $MC(\fuk^0 (L))$ injectively into $MC(\fuk^\infty (L))$. Therefore, $\fuk^\infty ( L)$ is weakly unobstructed if $\fuk^0 (L)$ is weakly unobstructed. Moreover, for each weakly bounding cochain ${\bm b} \in MC(\fuk^0  (L))$ there holds
\beqn
{\bf W}^\infty ( \uds \kappa ({\bm b})) =  {\bf W}^0 ({\bm b}).
\eeqn
\end{cor}

Another consequence of Theorem \ref{mainthm} is a relation between the equivariant Floer cohomology upstairs and a  bulk-deformed Floer cohomology of the quotient. Given a weakly bounding cochain ${\bm b}^0 \in MC( \fuk^0 (L))$ and its image ${\bm b}^\infty = \uds \kappa ({\bm b}^0 )$, the deformed ${\bm m}_1^0$ resp. ${\bm m}_1^\infty$ are differentials on $\fuk{}^0(L)$ resp. $\fuk{}^\infty(L)$. While the cohomology of ${\bm m}_1^0$ can be identified with the {\it quasimap Floer cohomology} $QHF^*( (L, {\bm b}^0); {\bm \Lambda})$ introduced by the first-named author \cite{Woodward_toric}, the cohomology of ${\bm m}_1^\infty$ can be viewed as a version of Lagrangian Floer cohomology of the brane $(L, {\bm b}^\infty)$ with a bulk deformation ${\mf c}$ (see the discussion of Remark \ref{rem78}), denoted by $HF^* ((L, {\bm b}^\infty; {\mf c}); {\bm \Lambda})$. Since the $A_\infty$ morphism is a higher order deformation of the identity, our construction gives an isomorphism between these two cohomology groups.

\begin{cor}\label{cor16}
There is an isomorphism 
\beqn 
QHF^*( (L, {\bm b}^0); {\bm \Lambda}) \cong HF^* ( (L, {\bm b}^\infty; {\mf c}); {\bm \Lambda}).
\eeqn
\end{cor}

Perhaps the most important application of the open quantum Kirwan map
is a simple criterion of weak unobstructedness of Lagrangians in a symplectic  quotient as above. We introduce the following positivity condition.

\begin{itemize}
\item[(P1)] Any nonconstant $J_V$-holomorphic disk in $V$ with
  boundary mapped into $L_V$ has positive Maslov indices.
\end{itemize}

\begin{thm}\label{thm17}
Under (P1), for all local systems on $L$, both $\fuk{}^0 (L)$ and $\fuk{}^\infty (L)$ are weakly unobstructed. 
\end{thm}

Furthermore, we show that under (P1) there is a natural section 
\beqn
H^1(L; {\bm \Lambda}_0) \to \bigsqcup_{b \in H^1(L; {\bm \Lambda}_0)} MC( \fuk{}^0(L, b)).
\eeqn
Here ${\bm \Lambda}_0 \subset {\bm \Lambda}$ is the subring of power
series starting from nonnegative powers of ${\bm q}$ and $b \in H^1(
L; {\bm \Lambda}_0)$ parametrizes local systems on $L$. In other
words, for each local system on $L$ there is a canonical weakly
bounding cochain. One can then transfer the potential function ${\bf
  W}^0$ to a function defined over the space $H^1(L; {\bm
  \Lambda}_0)$. We call this function the {\it restricted potential
  function} and by abuse of terminology denote the restricted potential by
\beqn
{\bf W}^0: H^1(L; {\bm \Lambda}_0) \to {\bm \Lambda}.
\eeqn
If we assume a similar positivity condition for holomorphic disks in
$X$ 
(see (P2) in Subsection \ref{subsection75}) one
obtains another restricted potential function 
\beqn
{\bf W}^\infty: H^1(L; {\bm \Lambda}_0) \to {\bm \Lambda}.
\eeqn
Formally, ${\bf W}^0$ is the same as the {\it quasimap potential
  function} introduced in
\cite{Woodward_toric} and ${\bf W}^\infty$ is the bulk-deformed
Lagrangian Floer potential function defined by Fukaya--Oh--Ohta--Ono
\cite{FOOO_Book}. Then under the third positivity condition 
(see (P3) in Subsection \ref{subsection75}), as a special
case of Corollary \ref{cor15}, one has
 \beqn {\bf W}^0 = {\bf
  W}^\infty.  \eeqn

\subsection{Examples}

We now turn to concrete examples in which we prove the weak
unobstructedness of certain Lagrangians in symplectic quotients.

\subsubsection{Toric manifolds}

Any compact symplectic toric manifold can be realized as the GIT
quotient of a Euclidean space by a torus action, by a result of
Delzant \cite{Delzant}. We use the quantum Kirwan morphism to study
the Lagrangian Floer theory of toric manifolds. In Section
\ref{section8} we prove the following theorem.

\begin{thm}\label{thm18}
Let $X$ be an $n$-dimensional rational compact toric manifold whose  moment polytope $P$ has $N$ faces. Let $L \subset X$ be the   Lagrangian torus that is the preimage under the projection   $X \to P$ of a rational interior point of $P$.

\begin{enumerate}
\item We view $X$ as the GIT quotient of $V = {\mb C}^N$ by the action of $G = ({\mb C}^*)^{N-n}$ and assume that $V$ satisfies the conditions of Definition \ref{defn12} (see Example \ref{example13} for more clarification).  Then for any local system on $L$, the $A_\infty$ algebras $\fuk^0(L)$ and
  $\fuk^\infty(L)$ are both weakly unobstructed.

\item The restricted potential function
  ${\bf W}^0: H^1(L; {\bm \Lambda}_0) \to {\bm \Lambda}$ coincides
  with the Givental--Hori--Vafa potential (see
  \cite{Givental_potential} \cite{Hori_Vafa}).

\item The above two restricted potential functions ${\bf W}^0$ and ${\bf W}^\infty$ are equal.
\end{enumerate}
\end{thm}

Item (c) can be viewed as equivalent to the ``open mirror theorem'' of Chan--Lau--Leung--Tseng \cite{CLLT}, which says that the disk potential of $L$ coincides with the Givental--Hori--Vafa potential after a coordinate change induced from the mirror map. The mirror map should be equivalent to the counts of affine vortex over the complex plane, which is independent of the Lagrangian $L$. If we can remove the semi-Fano conditions (S1)---(S4) in Definition \ref{defn12} and can still regularize related moduli spaces, then we will see that one also needs a possibly nontrivial transformation on the variable ${\bm b}$ to identify the two potential functions (see also Fukaya--Oh--Ohta--Ono \cite{FOOO_mirror} for more abstract statement).  Another situation where the quasimap disk potential is computed is that of SYZ fibrations of toric Calabi--Yau varieties in Chan \cite{chan:quasi}; the results here
imply that the quasimap potential is related to the 
disk potential of the quotient by the open quantum Kirwan map.

\subsubsection{Polygon spaces}

A second example is the case of polygon spaces, that is, quotients of
products of two-spheres. The classical cohomology rings of these spaces were studied
by Kirwan \cite{Kirwan_book}.  The set $S^2$ of unit vectors
  $v \in {\mb R}^3$ is acted upon by the group of rotations $SO(3)$.
The inclusion $S^2 \hookrightarrow {\mb R}^3 \cong \mf{so}(3)^*$ is a
moment map for the action. Let $V$ be the product of $(2n+3)$ copies
of $S^2$ acted by the diagonal $SO(3)$-action, with a moment map
\beqn 
\mu(v_1, \ldots, v_{2n+3}) = v_1 + \cdots + v_{2n+3}.  
\eeqn 
The corresponding symplectic quotient can be viewed as the moduli space of equilateral $(2n+3)$-gons in ${\mb R}^3$ up to rigid body motion. Let $L_V \subset V$ be the Lagrangian 
\beqn 
L_V = \underbrace{ \bar \Delta_{S^2} \times \cdots \times \bar \Delta_{S^2}}_{n} \times \Delta_3 \eeqn 
where $\bar \Delta_{S^2} \subset S^2 \times S^2$ is the anti-diagonal
and $\Delta_3 \subset S^2 \times S^2 \times S^2$ is the set of triples
of unit vectors $(v_1, v_2, v_3)$ in ${\mb R}^3$ satisfying
$v_1 + v_2 + v_3 = 0$. The submanifold $L_V$ is an $SO(3)$-
Lagrangian of $V$ which projects to a Lagrangian $L \subset X$ that is
diffeomorphic to the product of $n$ spheres. In Section
\ref{section9} we prove the following theorem.

\begin{thm} \label{polyex} The Fukaya algebras $\fuk^0(L)$ and
  $\fuk^\infty(L)$ are weakly unobstructed and the corresponding Floer
  cohomologies for a canonical weakly bounding cochain
  are isomorphic to $H^*(L; {\bm \Lambda})$.
\end{thm}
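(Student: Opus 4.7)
The plan is to verify admissibility (Definition \ref{defn11}) and the Maslov-positivity Hypotheses \ref{hyp15}, \ref{hyp17} for this example, invoke Theorem \ref{thm16} to get weak unobstructedness of both Fukaya algebras, compute the Floer cohomology of $L$ upstairs by a product decomposition, and finally transport the isomorphism to $\bar L$ via Corollary \ref{cor14}.

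\smallskip

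\emph{Admissibility and hypotheses.} The ambient $X=(S^2)^{2k+3}$ is compact (hence convex at infinity) and monotone with minimal Chern number $2$, since each $S^2$-factor has $c_1=2$ and the diagonal $SO(3)$-action preserves monotonicity. Under the moment map $\mu(v_1,\ldots,v_{2k+3})=v_1+\cdots+v_{2k+3}$, the unstable locus consists of tuples strictly separated from $0$ by a hyperplane, which is of real codimension $\geq 4$ for $k\ge 1$. The Lagrangian $L=\bar\Delta_{S^2}^k\times\Delta_3$ is equivariantly rational as a product of $SO(3)$-invariant rational Lagrangians in the rational ambient $X$; the semi-Fano condition of Definition \ref{defn24} is arranged by choosing an $SO(3)$-equivariant anticanonical divisor $S$ disjoint from $L$, built from generic sections of each $S^2$-factor. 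Both $\bar\Delta_{S^2}\subset S^2\times S^2$ and $\Delta_3\subset(S^2)^3$ are monotone Lagrangians, so the minimal Maslov number of $(X,L)$ is $2$ and Hypothesis \ref{hyp15} holds; Hypothesis \ref{hyp17} follows from the same monotonicity applied to the quotient $(\bar X,\bar L)$ and to affine vortices over $\bm H$.

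\smallskip

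\emph{Floer cohomology upstairs.} By Theorem \ref{thm16}, both $\fuk^K(L)$ and $\fuk^{\mf c}(\bar L)$ are weakly unobstructed. Since $H^1(\bar L;\Lambda_{\geq 0})=H^1((S^2)^k;\Lambda_{\geq 0})=0$ the local-system parameter is trivial, so the curvature $\bm m_{0,0}(1)$ is already a scalar multiple of the unit. The crucial observation is that $\bar L\cong (S^2)^k$ is obtained from $L=\bar\Delta_{S^2}^k\times\Delta_3$ by using the free transitive $SO(3)$-action on $\Delta_3\cong SO(3)$ to fix a frame, so after this gauge fixing the equivariant moduli spaces of disks for $(X,L)$ reduce, via a Künneth-type argument, to products of moduli spaces for the individual antidiagonals. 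Choosing for each factor the standard weakly bounding cochain $\bm b_i$ that realizes the Floer cohomology of the antidiagonal as its ordinary cohomology (a classical computation going back to Oh and others), the tensor product $\bm b=\bigotimes_i\bm b_i\in MC^K(L)$ yields
\beqn
H^*\bigl(\fuk^K(L),\bm m_1^{\bm b}\bigr)\;\cong\;\bigotimes_{i=1}^{k}H^*(S^2;\Lambda)\;\cong\;H^*(\bar L;\Lambda).
\eeqn

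\smallskip

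\emph{Transport and main obstacle.} Setting $\bar{\bm b}=\uds\varphi(\bm b)\in MC^{\mf c}(\bar L)$, Corollary \ref{cor14} produces an isomorphism $H^*(\fuk^{\mf c}(\bar L),\bm n_1^{\bar{\bm b}})\cong H^*(\fuk^K(L),\bm m_1^{\bm b})\cong H^*(\bar L;\Lambda)$, completing the proof. The hard part is justifying the equivariant Künneth-type reduction on $L$: the $SO(3)$-action is transitive on $\Delta_3$, so equivariant transversality there is delicate, and one must both control the contribution of equivariant disks that bubble into the $\Delta_3$-factor and verify that the bulk deformation ${\mf c}$ does not destroy the factorization of the $A_\infty$ structure into the known antidiagonal pieces. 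Assembling the tensor-product bounding cochain and checking compatibility with the unitality and higher-order-deformation properties of ${\bm\varphi}$ is where the bulk of the concrete work lies.
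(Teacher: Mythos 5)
Your first half is fine and matches the paper: compactness and monotonicity of $X=(S^2)^{2k+3}$ with minimal Chern number two, codimension $\geq 4$ of the unstable locus, equivariant rationality of $L$, and Hypothesis \ref{hyp15} via the monotonicity of $(\,S^2\times S^2,\bar\Delta_{S^2})$ (minimal Maslov four) and of $((S^2)^3,\Delta_3)$ (minimal Maslov two, since $2B$ lifts to a spherical class because $\Delta_3\cong\mb{RP}^3$). Theorem \ref{thm16} then gives weak unobstructedness of both algebras. Two small corrections: the paper takes the anticanonical divisor $S=\emptyset$ (justified by the positivity of equivariant Chern numbers of spherical classes, its Lemma on $\pi_1(SO(3))\cong{\mb Z}_2$), rather than building one from sections of the $S^2$-factors; and Hypothesis \ref{hyp17} is neither verified nor needed for this statement — it is only used for the equality of potentials in Theorem \ref{thm18}.

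The genuine gap is in your computation of the Floer cohomology. Your route runs through an equivariant K\"unneth decomposition of the moduli spaces of quasidisks into antidiagonal factors, plus a tensor-product bounding cochain, and you yourself flag that the reduction is unjustified: equivariant transversality on the $\Delta_3$-factor (where $SO(3)$ acts transitively), control of bubbling into that factor, and compatibility of the bulk deformation ${\mf c}$ with the factorization are all left open, and none of them is routine — the $A_\infty$ structure defined by coherent perturbations over treed disks does not factor as a tensor product of the structures of the individual factors. As written, the central step of your argument is an announcement rather than a proof.

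The paper avoids all of this with a degree-parity argument that makes the product structure irrelevant. Since $H^1(\bar L)=0$ there is no local system to choose, and since the pair is monotone with minimal Chern number two, every Maslov index appearing in the composition maps is even. Choose a perfect Morse function on $\bar L\cong (S^2)^l$: all generators of $CF(\bar L)$ sit in even degree, while the deformed differential $\tilde{\bm m}_1^{\bm b}$ has odd degree. Hence $\tilde{\bm m}_1^{\bm b}$ vanishes on $CF(\bar L)$, and the only nontrivial contribution involves the homotopy-unit generators, namely $\tilde{\bm m}_1^{\bm b}({\bm p})\in{\bm e}+CF(\bar L;\Lambda)$ from Theorem \ref{thm818}(e), which only identifies ${\bm e}$ with another cocycle. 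This yields
\beqn
\wt{QHF}{}^*(L,{\bm b};\Lambda)\;\cong\;CF(\bar L)\otimes\Lambda\;\cong\;H^*(\bar L;\Lambda)
\eeqn
for \emph{every} weakly bounding cochain ${\bm b}$, not just a carefully assembled one; the isomorphism downstairs then follows from Corollary \ref{cor14} exactly as you say. I recommend replacing your K\"unneth step with this parity argument.
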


\subsection{Further remarks}

There have been many works on symplectic vortex equation and related
invariants. In \cite{Cieliebak_Gaio_Mundet_Salamon_2002} and
\cite{Mundet_2003} the so-called {\it Hamiltonian Gromov--Witten
  invariants} were defined in certain cases, followed by the project
of Mundet--Tian \cite{Mundet_Tian_2009}. An alternative approach due
to Venugopalan \cite{Venugopalan_quasi} aims at constructing the
so-called {\it symplectic quasimap invariants}. An approach to the
quantum Kirwan map conjecture was initiated in
\cite{Gaio_Salamon_2005}, followed by a series of works of Ziltener
\cite{Ziltener_Decay, Ziltener_thesis, Ziltener_book}. Under certain
assumptions, Frauenfelder \cite{Frauenfelder_thesis,
  Frauenfelder_2004} and the second  author \cite{Xu_VHF}
constructed Lagrangian Floer cohomology and Hamiltonian Floer
cohomology respectively, using the vortex equation. The current paper
also relies on results proved in \cite{Wang_Xu} \cite{Venugopalan_Xu}
\cite{Xu_glue}.

The transversality of various relevant moduli spaces is achieved by
generalizing the stabilizing divisor technique introduced by
Cieliebak--Mohnke \cite{Cieliebak_Mohnke}. Using this method they
construct genus-zero Gromov--Witten invariants for compact rational
symplectic manifolds.  Their method is extended by
\cite{Charest_Woodward_2017} and \cite{Charest_Woodward_2015} to
Lagrangian Floer theory for rational Lagrangian submanifolds. In all
the previous approaches, a single stabilizing divisor is
sufficient. In our case we use the K\"ahler hypothesis to construct
multiple stabilizing divisors stabilizing objects having positive
energy whose images are contained in the divisor (for example, to
avoid treating tangency conditions at the infinity for affine
vortices).

The organization of the paper is as follows. In Section \ref{section2}
we set up the geometric background and review basic facts of
vortices. In Section \ref{section3} we give detailed definitions of
combinatorial types used in the paper. In Section \ref{section4} we
define the set of perturbation data. In Section \ref{section5} we
define the relevant moduli spaces and recall the fundamental
compactness theorem about affine vortices. In Section \ref{section6}
we show that generic perturbations can regularize moduli spaces for a
restricted class of combinatorial types. In Section \ref{section7} we construct the $A_\infty$
algebras and the $A_\infty$ morphism and prove Theorem \ref{mainthm},
Corollary \ref{cor15}, Corollary \ref{cor16}, and Theorem
\ref{thm17}. In Section \ref{section8} and Section \ref{section9} we
consider the examples of toric manifolds and polygon spaces. In
Section \ref{sectiona1} we show how to construct strict units in the
$A_\infty$ algebras.

\subsection{Acknowledgements}

We would like to thank Kai Cieliebak, Kenji Fukaya, Siu-Cheong Lau,
Paul Seidel, and Gang Tian for helpful discussions, and to thank
Yuchen Liu and Ziquan Zhuang for kindly answering algebraic geometry
questions. During the preparation of this paper, we learn that Kenji
Fukaya has a different approach that would lead to a construction of a
similar $A_\infty$ morphism, using the Lagrangian correspondence given
by the zero level set of the moment map. One advantage of our approach
is that it shows that the correction is given (in good cases) by
counting affine vortices, so that the coordinate change for the
potential functions is the same, at least in principle\footnote{A
  complete argument would require a proof that the symplectic vortex
  invariants defined here are the same as the algebraic vortex
  invariants.} as one appearing in the closed case.

\section{Symplectic vortices}\label{section2}

In this section we give the basic geometric setup and review necessary
analytic results about holomorphic curves and affine vortices.

\subsection{The target manifold}\label{subsection21}

Our target manifolds are K\"ahler manifolds with Hamiltonian group actions. Let $V$ be a smooth manifold with symplectic form $\omega_V \in \Omega^2(V)$ and complex structure $J_V$, so that the triple $(V, \omega_V, J_V)$ is a K\"ahler manifold. Let $G$ be a connected complex reductive Lie group acting on $V$. Let $K \subset G$ be a maximal compact subgroup. Suppose there is a linearization of the $G$-action, namely a $G$-equivariant
holomorphic line bundle $\tilde V \to V$.  Moreover, suppose
$\tilde V$ is equipped with a $K$-invariant Hermitian metric with
induced Chern connection $\nabla^{\tilde V}$.  Let
\beqn
F_{\nabla^{\tilde V}} = (\nabla^{\tilde V})^2\in \Omega^2(V, \End(\tilde V)) 
\eeqn
denote its curvature.  We assume that there is a positive integer
$k_0$ such that
\beqn
- \frac{1}{2\pi {\bf i}} {\rm tr} (F_{\nabla^{\tilde V}}) = k_0 \omega_V. 
\eeqn
In particular, $(V, \omega_V)$ is a rational symplectic manifold. The
rationality should also hold in an equivariant sense. Recall that the
moment map on $\tilde V$, denoted by
\beqn \tilde \mu \in \Gammait ( V, {\rm End} (\tilde V) \otimes {\mf
  k}^*) \eeqn 
is defined as 
\beq\label{eqn21} \tilde \mu (\xi)(v) = \nabla_{{\mc X}_\xi}^{\tilde
  V} v - {\mc L}_{\xi}^{\tilde V} v,\ \forall \xi \in {\mf k},\ v\in
\tilde V.  \eeq 
Here ${\mc X}_\xi \in \Gamma( T V) $ is the infinitesimal action and
${\mc L}_{\xi}^{\tilde V}$ is the Lie derivative on $\tilde V$. Then
\beqn \mu:= - \frac{1}{2k_0 \pi {\bf i}} {\rm tr}(\tilde \mu): V \to
{\mf k}^* \eeqn 
is a moment map of the Hamiltonian $K$-action on $(V, \omega_V)$ and so
\beq \label{eqn22} 
- \frac{1}{2 \pi {\bf i}} {\rm tr} ( F_{\nabla^{\tilde V}} +
\tilde \mu ) = k_0( \omega_V + \mu).  \eeq 
Recall that $\omega_V + \mu$ is an equivariant closed differential
form and represents an equivariant cohomology class
$[\omega_V + \mu] \in H_K^2(V; {\mb R})$. By rescaling the symplectic
form, we assume that $k_0 = 1$. Therefore
$[\omega_V + \mu] \in H_K^2 (V; {\mb Z})$.

The Kempf--Ness theorem relates the symplectic quotient with Mumford's geometric invariant theory (GIT) quotient. Assume that $\mu$ is a proper map,
$0\in {\mf k}^*$ is a regular value of $\mu$, and the $K$-action on
$\mu^{-1}(0)$ is free. The {\it symplectic quotient}
\beqn X:= V \qu K := \mu^{-1}(0) / K \eeqn
is a smooth compact K\"ahler manifold with induced K\"ahler form
$\omega_X$ and complex structure $I_X$. The semistable locus of $V$ is defined as
\beqn V^{\rm ss} = \Set{ x \in V \ | \ \exists k>0,\ s \in H^0( \tilde
  V{}^{\otimes k})^G\ {\rm such\ that\ } s(x) \neq 0 }.  \eeqn 
The complement of the semistable locus $V^{\rm ss}$ is the unstable
locus, denoted by $V^{\rm us}$.  Inside $V^{\rm ss}$, the
polystable locus $V^{\rm ps}$ consists of points $x$ for which the
orbits $Gx$ is closed in $V^{\rm ss}$, and the stable locus
$V^{\rm s} \subset V^{\rm ps}$ consists of polystable points $x$ whose
stabilizers $G_x$ are finite. The geometric invariant theory quotient is then obtained from
the semistable locus $V^{\rm ss}$ by quotienting by the orbit
equivalence relation 
\beqn x_1 \sim x_2 \iff \ol{G x_1} \cap \ol{G
  x_2} \cap V^{\rm ss} \neq \emptyset .  \eeqn 
The assumption that $0$ is a regular value of the moment map is
equivalent to 
\beqn V^{\rm ss}  = V^{\rm st} = V^{\rm ps} = G
\mu^{-1}(0).  \eeqn 
We will then only use the symbol $V^\ss$.
The Kempf--Ness Theorem \cite{Kempf_Ness} states that the symplectic reduction is homeomorphic to the GIT quotient:
\beqn X:= \mu^{-1}(0)/K \cong V \qu G:= V^\ss/ G.
\eeqn 
In this paper we will assume that the unstable locus $V^{\rm us}$ has
complex codimension at least two so that in particular 
\[ H_2(X) \cong H_2^G(V^{\ss}) \cong H_2^G(V) \] 
by Kirwan's book \cite{Kirwan_book}.


\subsection{Symplectic vortices}\label{subsection22}

Symplectic vortices are equivariant generalizations of pseudoholomorphic maps, obtained from what we call {\it gauged maps} by minimizing a certain energy functional. Here we recall basic notations and results about vortices.

Gauged maps are pairs of a connection and a holomorphic section of an associated bundle. Let $\Sigma$ be a Riemann surface and $P \to \Sigma$ be a $K$-bundle.  Denote by ${\mc A}(P)$ the space of connections on $P$, i.e., invariant one-forms $A \in \Omega^1(P,{\mf k})^K$ whose pairing with the generating vector field of any element $\xi \in {\mf k }$ is equal to $\xi$.  A {\it gauged map} from $\Sigma$ to $V$ is a triple $(P, A, u)$ where $P \to \Sigma$ is a $K$-bundle, $A \in {\mc A}(P)$
  is a connection, and $u: \Sigma \to P(V)$ is a section of the associated fibre bundle $P(V) = (P \times V)/ K$.  When $\Sigma$ has boundary $\partial \Sigma$, we also impose the boundary condition
\beqn
u(\partial \Sigma) \subset P(L_V)
\eeqn
where $L_V \subset V$ is a $K$-invariant Lagrangian submanifold. The group of gauge transformations on the principal bundle $P$ is the
space of sections of $P( K )$ where $K$ acts on itself by the adjoint
action.

Gauged maps over closed surfaces represent certain equivariant
homology classes. When $\Sigma$ is closed, a continuous gauged map
$(P, A, u)$ pushes forward the fundamental class of $\Sigma$ to a
class $[ P,A,u ] \in H_2^K(V; {\mb Z})$ (which is independent of the connection $A$). When $\Sigma$ is a disk, a gauged map
represents a relative equivariant class
$[P,A,u] \in H_2^K(V,L_V; {\mb Z})$. Since bundles over a disk can
always be trivialized, the class represented by the gauged map is in
$H_2(V, L_V; {\mb Z})$.

    The energy of a gauged map involves the curvature, moment map, and anti-holomorphic part of the twisted derivative. The curvature of a connection $A$ is denoted $F_A \in \Omega^2({\rm ad} P)$.  Given a section $u$ of $P(V)$, its composition with the moment map is denoted $\mu(u) \in \Gammait( {\rm ad}P^*)$. The covariant derivative of a section $u$ is denoted $d_A u \in \Omega^1( P(TV))$.  Choose a $K$-invariant, $\omega_V$-compatible almost complex structure $J$ on $V$ (possibly parametrized by points on $\Sigma$), an area form $\nu \in \Omega^2(\Sigma)$, and an ${\rm Ad}$-invariant metric on ${\mf k}$.  Let $\ov\partial_A u$ denote the $(0,1)$-part of the covariant derivative of $d_A u$, and $*: \Omega^2( \Sigma, {\rm ad} P) \cong \Omega^0( \Sigma, {\rm ad} P^*)$ is the Hodge-star operator induced by the area form and the identification ${\rm ad} P \cong {\rm ad} P^*$ induced by the metric on the Lie algebra.  For a gauged map $(P, A, u)$ from $\Sigma$ to $V$, define its energy by
\beqn
E(P, A, u) = \frac{1}{2} \left( \| d_A u\|_{L^2}^2 + \| F_A \|_{L^2}^2  + \| \mu(u)\|_{L^2}^2 \right).
\eeqn   
Here the norms are defined with respect to the metric on $\Sigma$
induced from the complex structure and the area form, the metric on
the Lie algebra we used for the vortex equation, and the Hermitian
metric on $P(TV)$ induced by $\omega_V$ and $J$. When $\Sigma$ is
closed and $(P, A, u)$ represents a class $B \in H_2^K(V; {\mb Z})$,
one has 
\beqn 
E(P, A, u) = \langle [\omega_V + \mu], B \rangle + \| * F_A +\mu(u)\|_{L^2}^2 + 2 \| \ov\partial_A u \|_{L^2}^2.  
\eeqn

Minimizers of the energy are solutions to the {\it vortex equation} \eqref{eqn11} and called {\it vortices} on $\Sigma$.  The vortex equation \eqref{eqn11} is invariant under gauge transformations.  In this paper we mainly consider the vortex equation over the complex
plane or the upper half plane equipped with the standard complex
structure and area form. Let ${\bm A}$ denote either ${\bm C}$ or
${\bm H}$. Let $z = s + {\bf i} t$ be the standard complex
coordinate. Since ${\bm A}$ is contractible, one can always regard the
bundle $P \to {\bm A}$ in a gauged map as the trivial bundle. Then a
gauged map can be identified with a triple ${\bm v} = (u, \phi, \psi)$
where $u: {\bm A} \to V$ is a map and $\phi, \psi$ are
${\mf k}$-valued functions on ${\bm A}$, regarded as the $ds$ and $dt$
components of the connection form. In this way, the symplectic vortex
equation \eqref{eqn11} can be written as
\begin{align*}
&\ \partial_s u + {\mc X}_\phi + J( \partial_t u + {\mc X}_\psi) = 0,\ &\ \partial_s \psi - \partial_t \phi + [\phi, \psi] + \mu(u) = 0
\end{align*}
A solution of this equation is called an {\it $J$-affine vortex} over ${\bm A}$. We only consider solutions with finite energy.

To count affine vortices, we must compactify their moduli spaces. We first recall the following equivariant convexity condition which guarantees the $C^0$-compactness of vortex moduli spaces. 

\begin{defn}\label{defn21}(cf. \cite[Section  2.5]{Cieliebak_Gaio_Mundet_Salamon_2002}) Let $\nabla$ be the  Levi--Civita connection of the K\"ahler metric on $V$. The manifold  $V$ is called {\it convex at infinity} if there exist a  $K$-invariant smooth proper function $f_V: V \to [0, +\infty)$ and $C >0$ such that 
\beqn 
\forall \ x \in V, \ f_V (x) \geq C \Longrightarrow \left\{ \begin{array}{c} \langle \nabla_\xi \nabla f_V (x), \xi \rangle  + \langle \nabla_{J_V \xi} \nabla f_V (x), J_V \xi \rangle \geq 0,\ \forall \xi\in T_x V,\\
d f_V (x) \cdot J_V {\mc X}_{\mu(x)} \geq 0. 
\end{array} \right. 
\eeqn 
\end{defn}

Assume for the rest of the paper that $V$ is compact or convex at infinity. In particular it follows from this assumption that $X = V \qu K$ is compact.

We recall the basic results about removal of singularities for affine vortices. For domain ${\bm C}$ the removal singularity theorem is due to Ziltener (see \cite{Ziltener_Decay, Ziltener_thesis,  Ziltener_book}.  A more general result in the recent \cite{Chen_Wang_Wang}); for affine vortices over ${\bm H}$ the result is due to Wang and the second
 named author \cite[Theorem 2.11]{Wang_Xu}. 

\begin{prop}\label{prop22}
  Let ${\bm v} = (u, \phi, \psi)$ be an affine vortex over ${\bm C}$
  or ${\bm H}$.  There exists $x \in X = \mu^{-1}(0)/K$ such that as
  points in the orbit space $V / K$, 
  \beqn \lim_{z \to \infty} K \cdot u(z) = x.  \eeqn
\end{prop}

The above proposition allows one to associate an equivariant homology
class to any affine vortex so that an affine vortex ${\bm v}$ over
${\bm C}$ represents a curve class $B \in H_2^K(V; {\mb Z})$. By
\cite[page 5]{Ziltener_book} one has the energy identity
\beq\label{eqn23} E({\bm v}) = \omega(B):= \langle [\omega_V + \mu], B
\rangle.  \eeq 
Similarly, an affine vortex over ${\bm H}$ represents a class
$B\in H_2(V, L_V; {\mb Z})$ and 
\beqn E({\bm v}) = \omega(B):= \langle [\omega_V], B \rangle.  \eeqn

A topology on the moduli space of affine vortices is defined as follows. Given a class $B \in H_2^K(V; {\mb Z})$ resp. $B \in H_2(V, L_V; {\mb Z})$ and a family of $K$-invariant almost complex structures $J$ on $V$ parametrized by points on ${\bm A}$, let ${\mc M}_J ({\bm A}; B)$ be the set of gauge equivalence classes of $J$-affine vortices over ${\bm A}$. There is a natural topology, called the {\it compact convergence topology}, abbreviated as c.c.t., on ${\mc M}_J({\bm A}; B)$ defined as follows. We say that a sequence of smooth gauged maps ${\bm v}_i = (u_i, \phi_i, \psi_i)$ {\it converges} in c.c.t. to a gauged map ${\bm v}_\infty = (u_\infty, \phi_\infty, \psi_\infty)$ if for any precompact open subset $Z \subset {\bm A}$, ${\bm v}_i|_Z$ converges to ${\bm v}_\infty|_Z$ uniformly with all derivatives. We say that ${\bm v}_i$ converges in c.c.t. to ${\bm v}_\infty$ modulo gauge transformation if there exist a sequence of smooth gauge transformations $g_i: {\bm A} \to K$ such that $g_i^* {\bm v}_i$ converges to ${\bm v}_\infty$ in c.c.t. The notion of sequential convergence in c.c.t. modulo gauge descends to a notion of sequential convergence in the moduli space ${\mc M}_J({\bm A}; B)$, and induces a topology on ${\mc M}_J({\bm A}; B)$ (see \cite[Section 5.6]{McDuff_Salamon_2004} and the recent erratum \cite{McDuff_Salamon_erratum_2}). This topology is Hausdorff and second countable, but however not necessarily compact. We  recall the compactification of affine vortices over ${\bm A}$ constructed in by Ziltener \cite{Ziltener_thesis, Ziltener_book} and of affine vortices over ${\bm H}$ constructed by Wang and the second author \cite{Wang_Xu} later in this paper (see Section \ref{subsection53}).

\begin{example} \label{hn} {\rm (The toric case)}  
There is a complete classification of affine vortices in the toric case. Suppose $V = {\mb C}^n$ and $G \hookrightarrow ({\mb C}^*)^n$ is a torus acting on $V$ with a compact GIT quotient. An affine vortices in $V$ can be identified via a certain  Hitchin--Kobayashi correspondence with a polynomial map from ${\bm C}$ to $V$. When $V = {\mb C}$ and $G = {\mb C}^*$, the classification was given by Taubes \cite{Taubes_vortex}. When $V = {\mb C}^n$ and $G = {\mb C}^* \subset ({\mb C}^*)^n$ is the diagonal, the classification was due to the second author \cite{Guangbo_vortex}. The classification for the general situation was provided by \cite{VW_affine}. 
\end{example} 

\subsection{Local model of affine vortices}\label{subsection23}

In this subsection we recall the result of \cite{Venugopalan_Xu} on local models of moduli spaces of affine vortices that we will need later for regularization of the moduli spaces. Unlike the case of holomorphic curves or vortices over compact domains, the case of affine vortices does not follow from standard Fredholm theory due to the noncompactness of the domain and unusual asymptotic behavior at the infinity.

\subsubsection{Sobolev spaces}

We first recall the particular Sobolev spaces we used to construct the
local model. Let ${\bm A}$ be either the complex plane ${\bm C}$ or the upper half plane ${\bm H}$. Choose a smooth function $\rho: {\bm A} \to [1, +\infty)$ that coincides with the radial coordinate $r$ outside a compact subset
of ${\bm A}$. For $\tau \in {\mb R}$ introduce the following weighted
Sobolev norms on functions on ${\bm A}$:
\beqn
\| f \|_{L^{p, \tau}} = \left[ \int_{\bm A} |f(z)|^p \rho(z)^{p \tau} d s d t \right]^{\frac{1}{p}}.  
\eeqn
\begin{align*}
&\ \| f \|_{L_g^{1, p, \tau}} = \| f \|_{L^{p,\tau}} + \| \nabla f\|_{L^{p,\tau}},\ &\ \| f \|_{L_h^{1,p,\tau}} = \| f \|_{L^\infty} + \| \nabla f \|_{L^{p,\tau}}.
\end{align*}
These norms have the following properties:  The space $L_g^{1,p, \tau}({\bm A})$ embeds into the space of continuous functions on ${\bm A}$ that converge to zero at infinity, while the space $L_h^{1,p,\tau}({\bm A})$ embeds into the space of continuous functions on ${\bm A}$ that converge to a finite number at infinity. To simplify notations, we use the specializations of the Sobolev exponents as in \cite{Xu_glue}. Choose and set 
\beqn 
p \in (2, 4), \quad \tau = \tau_p: = 2- \frac{4}{p}.  
\eeqn 
In this case the above three norms are abbreviated as $\tilde L^p$, $\tilde L^{1,p}_g$, $\tilde L_h^{1,p}$.

We use the above weighted norms to define Sobolev norms on the tangent space to the space of gauged maps.  Given a gauged map
${\bm v} = (u, \phi, \psi)$ from ${\bm A}$ to $V$, consider the space of formal infinitesimal deformations 
\beqn 
\Gammait ( {\bm A}, u^* TV \oplus {\fk} \oplus {\fk})_{L_V}
\eeqn
whose elements are denoted by ${\bm \xi} = (\xi, \eta, \zeta)$. Here the subscript ${}_{L_V}$ means that the elements are required to satisfy the boundary condition 
\begin{align*}
&\ \xi|_{\partial {\bm A}} \subset u^* TL_V,\ &\ \zeta|_{\partial {\bm A}} \equiv 0.
\end{align*}
Define the covariant derivative of ${\bm \xi}$ by
\beqn 
\tilde \nabla {\bm \xi} = d s \otimes \tilde \nabla_s {\bm \xi} + d t \otimes \tilde \nabla_t {\bm \xi} 
\eeqn
where
\begin{align*}
&\ \tilde \nabla_s {\bm \xi} = \left[ \begin{array}{c} \nabla_s \xi + \nabla_\xi {\mc X}_\phi \\
                                                  \nabla_s \eta + [ \phi, \eta] \\
																									\nabla_s \zeta + [\phi, \zeta] \end{array} \right],\ &\ \tilde \nabla_t {\bm \xi} = \left[ \begin{array}{c} \nabla_t \xi + \nabla_\xi {\mc X}_\psi \\
                                                  \nabla_t \eta + [ \psi, \eta] \\
																									\nabla_t \zeta + [\psi, \zeta] \end{array} \right].
\end{align*}
Define the following norm on ${\bm \xi} = (\xi, \eta, \zeta)$: 
\beq\label{mixednorm} 
\| {\bm \xi}\|_{\tilde L_m^{1,p}}:= \| \xi \|_{L^\infty} + \| \tilde \nabla {\bm \xi}\|_{\tilde L^p} + \| d\mu(u) \cdot \xi \|_{\tilde L^p} + \| d\mu(u) \cdot J_V \xi \|_{\tilde L^p} + \| \eta \|_{\tilde L^p} + \| \zeta \|_{\tilde L^p}.
\eeq 
Denote by
\beqn
\tilde L_m^{1,p}( {\bm A}, u^* TV \oplus {\mf k} \oplus {\mf k})_{L_V}
\eeqn
the completion of the space $\Gammait({\bm A}, u^* TV \oplus {\mf k} \oplus {\mf k})_{L_V}$ with respect to the norm \eqref{mixednorm}.

There are two important features of the above norm. First, this norm is gauge invariant in the following sense. Suppose ${\bm v}$ and ${\bm v}'$ are two smooth gauged maps and $g: {\bm A} \to K$ is a gauge transformation such that ${\bm v}' = g \cdot {\bm v}$. For every infinitesimal deformation ${\bm \xi}$ of ${\bm v}$, the triple ${\bm \xi}':=g\cdot {\bm \xi}$ is an infinitesimal deformation of ${\bm v}'$ and
\beqn
\| {\bm \xi}\|_{\tilde L_m^{1,p}} = \| {\bm \xi}'\|_{\tilde L^{1,p}_m} .
\eeqn
Second, the subscript ${}_m$, which represents ``mixed,'' indicates that this
norm is a combination of the norms $\tilde L_h^{1,p}$ and
$\tilde L_g^{1,p}$. More precisely, near the infinity of ${\bm A}$,
the value of the map $u: {\bm A} \to V$ is contained in the stable
locus $V^{\rm st} \subset V$ where one has the orthogonal
decomposition 
\beqn 
TV|_{V^{\rm st}} = {\rm ker} (d\mu \oplus d\mu \circ J) \oplus G_V 
\eeqn 
where $G_V \subset TV$ is the distribution
spanned by infinitesimal $G$-actions. Then the finiteness of the norm
$\| (\xi, \eta, \zeta)\|_{\tilde L_m^{1,p}}$ implies that the
$G_V$-direction of $\xi$ together with $\eta$ and $\zeta$ has finite
$\tilde L_g^{1,p}$-norm, which further implies that their values converge to zero at infinity, while the
${\rm ker}( d\mu \oplus d\mu\circ J)$-direction of $\xi$ has finite
$\tilde L_h^{1,p}$-norm and have finite but possibly nonzero limit at infinity. Furthermore, we know that $\mu(u) \to 0$ as
$z \to \infty$ and $u(z)$ has a limit $x \in \mu^{-1}(0)/K$ as a
$K$-orbit. Each tuple
\[ {\bm \xi} = (\xi, \eta, \zeta) \in \tilde L_m^{1,p}({\bm A}, u^* TV
\oplus {\mf k} \oplus {\mf k})_{L_V} \] 
has a limit in $T_x X$ at infinity.

\subsubsection{The local model}

We summarize the result on local models of moduli spaces of affine vortices proved by Venugopalan and the second  author (see \cite{Venugopalan_Xu}). 

\begin{thm}\cite{Venugopalan_Xu}\label{localmodel} 
Given $B \in H_2^K(V; {\mb Z})$ (resp. $B \in H_2(V, L_V; {\mb Z})$) and $p \in (2, 4)$, there exist a Banach manifold ${\mc B} = {\mc B}_{\bm C}^{\rm vor}(B)$ (resp. ${\mc B} = {\mc B}_{\bm H}^{\rm vor}(B)$), a Banach vector bundle ${\mc E} = {\mc E}_{\bm C}^{\rm vor}(B)$ (resp. ${\mc E}_{\bm H}^{\rm vor}(B)$) over ${\mc B}$, and for a $K$-invariant $\omega_V$-compatible almost complex structure $J$, a smooth section 
\beqn
{\mc F}_J: {\mc B} \to {\mc E} 
\eeqn
satisfying the following conditions.
\begin{enumerate}

\item Every element of ${\mc B}$ is a gauge equivalence class of gauged maps from ${\bm C}$ (resp. ${\bm H}$) to $V$ of regularity $W^{1, p}$ having limits at infinity as $K$-orbits. Moreover, the evaluation map at infinity $\ev: {\mc B} \to X$ (resp. the evaluation map $\ev_z: {\bm H} \to L$ at every point $z \in \partial {\bm H}$ including $z = \infty$) is smooth.

\item The zero locus of ${\mc F}_J$ consists of gauge equivalence classes of $J$-affine vortices representing the class $B$ and the natural map ${\mc F}_J^{-1}(0) \cong {\mc M}_J({\bm C}, B)$ (resp. ${\mc F}_J^{-1}(0) \cong {\mc M}_J({\bm H}, B)$) is a homeomorphism between Banach manifold topology on the former and the compact convergence topology on the latter.

\item For every element of ${\mc F}_J^{-1}(0)$, choose a smooth representative ${\bm v} = (u, \phi, \psi)$, the tangent space of ${\mc B}$ at ${\bm v}$ is isomorphic to 
\beq\label{eqn25}
\Big\{ {\bm \xi} = (\xi, \eta, \zeta) \in \tilde L_m^{1,p}( {\bm C}, u^* TV \oplus {\mf k} \oplus {\mf k}) \ |\ \tilde \nabla_s \eta + \tilde \nabla_t \zeta + d\mu(u) \cdot J \xi = 0 \Big\},
\eeq
\beq\label{eqn26}
\left({\rm resp.}\ \Big\{ {\bm \xi} = (\xi, \eta, \zeta) \in \tilde L_m^{1,p} ( {\bm H}, u^*TV \oplus {\mf k} \oplus {\mf k})_{L_V}\ |\  \tilde \nabla_s \eta + \tilde \nabla_t \zeta + d\mu(u) \cdot J \xi = 0 \Big\} \right),
\eeq
the fibre of ${\mc E}$ at ${\bm v}$ is isomorphic to $\tilde L^p( {\bm C}, u^* TV \oplus {\mf k} )$ (resp. $\tilde L^p({\bm H}, u^* TV \oplus {\mf k})$), and the linearization of ${\mc F}_J$ at ${\bm v}$ reads
\beqn
D_{\bm v}({\bm \xi}) = \left[ \begin{array}{c} \tilde \nabla_s \xi + J \tilde \nabla_t \xi + (\nabla_\xi J) {\bm v}_t + {\mc X}_\eta + J {\mc X}_\zeta \\   \tilde \nabla_s \zeta - \tilde \nabla_t \eta + d\mu(u) \cdot \xi  \end{array}\right]. \eeqn

\item (see \cite[(1.27)]{Ziltener_book} for the case over ${\bm C}$) The linearized operator $D_{\bm v}$ is a Fredholm operator with index given by the formula
\beqn
{\rm index} D_{\bm v} = \left\{ \begin{array}{lc} {\rm dim} X + 2\langle c_1^K(TV), B \rangle, &\ {\bm A} = {\bm C};\\
                                                  {\rm dim} L + {\rm Maslov}(B), &\  {\bm A} = {\bm H}.
                                                  \end{array} \right.
\eeqn
\end{enumerate}
\end{thm}

The linearized operator in the theorems above is equivalent to another operator, called the {\it augmented linearized operator}, that incorporates the gauge-fiing condition.  Fix a smooth $J$-affine vortex ${\bm v} = (u, \phi, \psi)$ over ${\bm A}$ representing an element of
${\mc B}_{\bm A}^{\rm vor}(B)$. The condition in \eqref{eqn25} and
\eqref{eqn26} is called the {\it Coulomb gauge condition}. Using the
basic treatment of gauge theory, define the {\it augmented linearized
  operator}
\beqn D_{\bm v}^+: \tilde L_m^{1,p}( {\bm A}, u^* TV \oplus {\mf k}
\oplus {\mf k})_{L_V} \to \tilde L^p( {\bm A}, u^* TV \oplus {\mf k}
\oplus {\mf k}) \eeqn 
by 
\beq\label{eqn27} D_{\bm v}^+
\left[ \begin{array}{c} \xi \\ \eta \\ \zeta
 \end{array} \right] = \left[ \begin{array}{c}  \tilde \nabla_s \xi + J \tilde \nabla_t \xi  + ( \nabla_\xi J) (\partial_t u + {\mc X}_\phi) + {\mc X}_\eta + J {\mc X}_\zeta \\
                            \tilde \nabla_s \eta + \tilde \nabla_t \zeta + d\mu(u) \cdot J \xi \\
                            \tilde \nabla_s \zeta - \tilde \nabla_t
                            \eta + d\mu(u) \cdot \xi
 \end{array} \right].
\eeq
(As a convention we put the gauge-fixing condition in the second coordinate.) The operator $D_{\bm v}^+$ is a Cauchy--Riemann type operator. In fact one can prove that $D_{\bm v}^+$ is also Fredholm and has the same index as $D_{\bm v}$ (see \cite{Venugopalan_Xu}). Moreover, ${\rm ker} D_{\bm v} = {\rm ker} D_{\bm v}^+$ and $D_{\bm v}$ is surjective if and only if $D_{\bm v}^+$ is surjective, since these two operators differ by an invertible one.

\subsection{Stabilizing divisors}\label{subsection24}

To regularize the moduli spaces of vortices we choose invariant
divisors so that the additional intersection points stabilize the
domains. In the symplectic case the existence of stabilizing divisors
relies on the rationality of the symplectic class, thanks to the
theorems of Donaldson \cite{Donaldson_96} and Auroux--Gayet--Mohsen
\cite{Auroux_Gayet_Mohsen}. In our setting, the
quotient $X$ is automatically rational. Moreover, one can choose
holomorphic divisors rather than almost complex ones.

Donaldson's hypersurfaces are defined as sections of a line bundle-with-connection whose curvature is the symplectic form.  Recall that by assumption the linearization $\tilde V \to V$ descends to a holomorphic line bundle
\beqn
\tilde X \to X
\eeqn
with an induced Hermitian metric and Chern connection
$\nabla^{\tilde X}$. Take a sufficiently large integer $k$ and denote
\begin{align*}
&\ \Gammait_V^k:= H^0( \tilde V{}^{k})^G,\ &\ \Gammait_X^k:= H^0( \tilde X{}^{k}).
\end{align*}
Restriction to the stable locus defines a natural isomorphism
\beq\label{eqn28} \Gammait_V^k \cong \Gammait_X^k.  \eeq
To see this, notice that every section of $\tilde X^k$ can be pulled
back to a holomorphic section of $\tilde V^k$ over the unstable locus.
By (T2) of Definition \ref{defn11}, the unstable locus
has complex codimension two or higher.  By using Riemann's extension
theorem (see \cite[(6.4) Corollary]{Demailly_book}), the pullback can
be extended holomorphically to the unstable locus. Then for any
$s \in \Gammait_V^k$, define 
\beqn D_s = s^{-1}(0) \subset V.  \eeqn
The corresponding section in $\Gammait_X^k$ is denoted by $\bar s$ and
the corresponding divisor in $X$ is denoted by
\beqn
\bar D_s = \bar s^{-1}(0) \subset X.
\eeqn
By Bertini's theorem \cite[II.8.18]{Hartshorne}), for sufficiently large $k$, a generic $\bar s \in \Gammait_X^k$ is transverse to the zero section and define a smooth divisor $\bar D_s \subset X$. Moreover, $\bar D_s$ represents the Poincar\'e dual of $k [\omega_X]$. For such $\bar s$, the divisor $D_s \subset V$ is smooth in the stable locus. We note, for later use, the fact that any stabilizing divisor $D_s$ contains the unstable locus. Indeed, by Mumford's definition \cite{Mumford_GIT}, $V^{\rm us}$ is the locus where all equivariant holomorphic sections of $\tilde V^k$ vanish.

A result of Clemens and Voisin \cite{Clemens_1986}\cite{Voisin_1996}
prevents holomorphic spheres from appearing in the stabilizing
divisors.  Recall that a hypersurface of $\mb{CP}^n$ of degree $d$ is
{\it general} if its defining section belongs to a nonempty Zariski
open subset of $PH^0( \mb{CP}^n ,{\mc O}(d))$.

\begin{thm}\label{thm25}\cite{Clemens_1986}\cite{Voisin_1996}\cite{Voisin_1998} A general hypersurface of $\mb{CP}^n$  of degree $d \geq 2n-1$ contains no rational curves.
\end{thm}

These results imply the existence of stabilizing divisors in the
following sense:

\begin{cor}\label{cor26}
There exists $k_0 >0$ such that for a generic section $\bar s \in \Gammait_X^k$ for $k \geq k_0$, $\bar D_s \subset X$ contains no nonconstant $I_X$-holomorphic sphere.
\end{cor}

\subsection{Lagrangian submanifolds}

Now we describe the assumptions on the Lagrangian
submanifold. Consider a closed embedded Lagrangian submanifold
\beqn
L \subset X
\eeqn
which is oriented and equipped with a spin structure. Its
preimage under the projection $\mu^{-1}(0) \to X$ is a $K$-invariant
Lagrangian submanifold 
\beqn L_V \subset V.  \eeqn 
This is the class of Lagrangians studied in Woodward
\cite{Woodward_toric} and Xu \cite{Guangbo_compactness}, but is not
the same as that studied in Frauenfelder \cite{Frauenfelder_thesis,
  Frauenfelder_2004}.

In order to apply the stabilizing divisor technique to Lagrangian
Floer theory, we need an additional rationality assumption on
Lagrangian submanifolds. The notion of rational Lagrangians used in
\cite{Charest_Woodward_2017} generalizes to the equivariant case as
follows.

\begin{defn}(cf. \cite[Definition
  3.5]{Charest_Woodward_2017})\label{defn27} The Lagrangian
  $L \subset X$ is {\it strongly rational} if for some positive
  integer $k$ the following conditions are satisfied.

\begin{itemize}
\item[(L1)] \label{defn27a} $L$ is Bohr--Sommerfeld with respect to
  $(\tilde X{}^k, \nabla^{\tilde X{}^k})$. Namely, the restriction of
  $( \tilde X{}^k, \nabla^{{\tilde X}{}^k} )$ to $L$ is isomorphic to
  a trivial bundle with the trivial connection.
\item[(L2)] \label{defn27b} There is a holomorphic section of $\tilde X{}^k$ that is nonvanishing along $L$ and its restriction to $L$ induces a trivialization of $\tilde X{}^k |_L$ that is homotopic to the trivialization in the (L1). \footnote{It is possible to remove the second condition. Indeed, the argument of Auroux--Gayet--Mohsen. \cite{Auroux_Gayet_Mohsen} can be extended to construct holomorphic sections concentrated along $L$ in every homotopy class. However we add the second part as an assumption for simplicity.} See also \cite{Borthwick_Paul_Uribe}.
\end{itemize}
\end{defn}

\begin{rem} 
The strong rationality of $L$ implies that there is a nonempty open subset
\beqn
\Gammait_X^k(L) \subset \Gammait_X^k
\eeqn
consisting of sections satisfying (L2). Let
\beqn
\Gammait_V^k(L) \subset \Gammait_V^k
\eeqn
be the corresponding open subset under the identification \eqref{eqn28}. 
\end{rem} 

\begin{lemma}\label{lemma29}
For sufficiently large $k$, for any $s \in \Gammait_V^k(L)$ that defines a divisor $D_s \subset V$, $L_V$ is an exact Lagrangian submanifold of $(V \setminus D_s, \omega_V)$ and $L$ is an exact Lagrangian submanifold of $( X \setminus \bar D_s, \omega_X)$.
\end{lemma}

\begin{proof}
  We first show that $L$ is exact in $X \setminus \bar D_s$. Take a
  holomorphic section $\bar s \in \Gammait_X^k(L)$. Without loss of
  generality, assume $k = 1$. Then define
  $\theta_{\bar s} \in \Omega^1(X \setminus \bar D_s)$ by
\beqn
\nabla^{\tilde X} \left( \frac{ \bar s}{\| \bar s \|} \right) = - \frac{\theta_s}{2\pi {\bf i}} \frac{ \bar s }{\| \bar s\|}.
\eeqn
Then there holds $\omega_X = d \theta_{\bar s}$. We would like to show that $\theta_{\bar s}|_{L}$ is an exact form. Indeed, by condition (L1), there is a smooth section $\bar s{}'$ of $\tilde X|_L$ that is nowhere vanishing and covariantly constant with respect to the connection $\nabla^{\tilde X}$, and there is a function $f: L \to {\mb R}$ such that
\beqn
\frac{\bar s}{\| \bar s\|} = e^{{\bf i}f} \frac{\bar s{}'}{\| \bar s{}'\|}.
\eeqn
Extend $\bar s'$ smoothly over $X$ and define $\theta_{\bar s'}$ in the same way as defining $\theta_{\bar s}$. Then over $L$ we have $\theta_{\bar s'} = 0$ and
\beqn
\theta_{\bar s} = \theta_{\bar s'}  + d f = d f.
\eeqn
Hence $L$ is exact on the complement of $\bar D_s$. 

To show that $L_V$ is also exact, consider the projection $\pi: L_V \to L$. The section $\bar s'$ lifts to a $K$-invariant section $s'$ of $\tilde V|_{L_V}$ and on $L_V$ there holds
\beqn
\frac{s}{\| s\|} = e^{{\bf i} \pi^* f} \frac{s'}{\| s'\|}.
\eeqn
By \eqref{eqn21}, covariant derivatives of $s'$ in $K$-orbit directions are the same as Lie derivatives of $s'$, which vanish since $s'$ is $K$-invariant. Hence $s'$ is covariantly constant with respect to $\nabla^{\tilde V}$. Then the exactness of $L_V$ in the complement of $s^{-1}(0)$ follows.
\end{proof}

The exactness implies that the morphism
\beqn
H_2(V, L_V; {\mb Z}) \to {\mb R},\ B\mapsto \langle [\omega_V], B \rangle
\eeqn
takes rational values. From now on, by rescaling the symplectic form, one assume that the areas of disk classes are all integers.

Now we choose finitely many stabilizing divisors in general position.

\begin{lemma}\label{lemma210}
Let $
h \geq 2 {\rm dim}_{\mb C} X + 1$
be an integer. 
There exists a collection of invariant prime divisors $D_1,\ldots, D_h \subset V \setminus L_V$ whose intersection is the unstable
  locus $V^{\rm us}$  intersecting transversely in $V^{\rm ss}$ and such that no nonconstant holomorphic sphere
  $u: {\bm S}^2 \to X$ is contained in $(D_1 \cup \ldots \cup D_h) \qu G$.
\end{lemma}

\begin{proof} 
Choose $k$ sufficiently large. Since $\Gammait_V^k(L)$ is nonempty and open, Corollary \ref{cor26} implies for a generic $s \in \Gammait_V^k(L)$, $D_s$ is disjoint from $L_V$ and $\bar D_s$ contains no $I_X$-holomorphic spheres. Choose $h$ generic such sections defining invariant divisors 
\beqn
D_1, \ldots, D_h  \subset V
\eeqn
possible not smooth over the unstable locus. 
Denote their union by
\beq \label{Dh} D = D_1 \cup \cdots \cup D_h.  \eeq
Since the number of components is more than the dimension of $X$, the
intersection $D_1 \cap \cdots \cap D_h$ is the unstable locus
$V^{\rm us}$.
\end{proof}

\subsection{Almost complex structures}\label{subsection26}	

The almost complex structures used for the construction of perturbations are given as follows. Let ${\mc J}_V$ be the set of smooth $K$-invariant $\omega_V$-compatible almost complex structures on $V$. We would like to use almost complex structures that differ from the integrable one $J_V$ only near $\mu^{-1}(0)$. Fix a small $K$-invariant open neighborhood $U$ of $\mu^{-1}(0)$ such as
\beq\label{eqn210}
U:= \Big\{  x \in V\ \Big|\ |\mu(x)| < \epsilon_U  \Big. \Big\}, \ {\rm and}\ \epsilon_U>0\ {\rm being\ sufficiently\ small}.
\eeq
We require an almost complex structure $J \in {\mc J}_V$ that makes each divisor $D_a$ almost complex and agrees with $J_V$ on the normal bundle $ND_a \to D_a$. Define
\beq\label{eqn211}
 {\mc J}_{D, U}  = \Big\{ J \in {\mc J}_V \ |\ J|_{V \setminus U} = J_V,\  J|_{D_a} = J_V,\  1 \leq a \leq h \Big\}.
\eeq
We often abbreviate ${\mc J}_{D, U}$ by ${\mc J}_D$. Each $J \in {\mc J}_D$ induces an almost complex structure on $X$, generally denoted by $I$, making the quotient divisors $\bar D_a \subset X$ almost complex. 

Almost complex structures in the above class guarantee nonempty intersections between nonconstant pseudoholomorphic disks or vortices with the divisors. We first consider the notion of intersection multiplicities and its relation with tangent orders. The case of holomorphic curves in the quotient intersecting the induced divisors in the quotient $X$, this notion is well-understood. For curves or vortices in $V$, since $D$ has codimension two with singularity having codimension four, there is a well-defined intersection number between a map $u: \Sigma \to V$ with $u(\partial \Sigma) \cap D = \emptyset$ and $D$. Moreover, since $D$ is $G$-invariant, for any smooth map $g: \Sigma \to K$, the intersection number $u \cap D$ and $(gu) \cap D$ are the same. Therefore one can define the intersection number for gauge equivalence classes of gauged maps. 

\begin{lemma}\label{lemma211}
Let $J: {\bm D}^2 \to {\mc J}_{D, U}$ be a smooth family of almost complex structures. Let ${\bm v} = (u, \phi, \psi)$ be a $J$-holomorphic gauged map from ${\bm D}^2$ to $V$, namely satisfying
\beqn
\partial_s u + {\mc X}_\phi + J( \partial_t u +{\mc X}_\psi) = 0.
\eeqn
Suppose $u({\bm D}^2)$ is not contained in $D$, then $u^{-1}(D) \cap {\rm Int} {\bm D}^2$ is discrete. Moreover, for each intersection $p \in u^{-1}(D) \cap {\rm Int} {\bm D}^2$, the local intersection number is positive. 
\end{lemma}

\begin{proof}
By covering ${\bm D}^2$ by smaller disks, one can reduce the problem to the situation that either $u({\bm D}^2) \subset V \setminus U$ or $u({\bm D}^2)  \cap V^{\rm us} = \emptyset$. In the former case, $u$ is $J_V$-holomorphic while $J_V$ is integrable. Then for each defining section $s_a$ of $D_a$, the section $u^* s_a$ is holomorphic and doesn't vanish identically. Hence $u^{-1}(D)$ is discrete. In the latter case, using the gauge field $\phi ds + \psi dt$, one can define an almost complex structure $J_{\phi, \psi}$ on ${\bm D}^2 \times V$ such that the map $\tilde u: {\bm D}^2 \to {\bm D}^2 \times Y$ defined by $\tilde u(z) = (z, u(z))$ is $J_{\phi, \psi}$-holomorphic. Moreover, ${\bm D}^2 \times D$ is $J_{\phi, \psi}$-complex. Then by the proof of \cite[Proposition 7.1]{Cieliebak_Mohnke} using the Carleman similarity principle proved in \cite{Floer_Hofer_Salamon}, the assertion is proved. 
\end{proof}

On the other hand, the exactness of $L$ resp. $L_V$ in the complement of $\bar D_a$ resp. of $D_a$ (see Lemma \ref{lemma29}) implies that the energy of holomorphic disks in $X$ resp. in $V$ are proportional to the intersection numbers with $\bar D_a$ resp. with $D_a$. Moreover, similar fact holds for affine vortices over ${\bm H}$. 

\begin{lemma}\label{lemma212}
Given $J \in {\mc J}_D$ and a $J$-affine vortex ${\bm v}$ over ${\bm H}$ with positive energy, for each component $D_a \subset D$, the intersection $u^{-1}(D_a)$ is nonempty. Indeed, 
\beq\label{eqn212}
\# ({\bm v} \cap D_a) = {\rm deg} D_a \cdot E({\bm v}).
\eeq
and similarly for other objects including affine vortices over ${\bm C}$, holomorphic disks in $V$, holomorphic disks and spheres in $X$.
\end{lemma}

\begin{proof}
Suppose ${\bm v} = (u, \phi, \psi)$ is a $J$-affine vortex over ${\bm H}$. Assume that in some gauge (see \cite{Wang_Xu}), the connection part $a \in \Omega^1({\bm H})$ of ${\bm v}$ is well-behaved at infinity. Then by the energy identity for affine vortices \eqref{eqn23}, one has
\beqn
E({\bm v}) = \int_{{\bm H}} \big[ u^* \omega_V + d (\mu \cdot (\phi ds + \psi dt)) \big] = \int_{{\bm H}} u^* \omega_V.
\eeqn
On the other hand, after generic perturbation, $u$ intersects the stable part of $D_a$ transversely at a finite subset $Z \subset {\bm H}$ and does not intersect the unstable locus. By Lemma \ref{lemma29}, there is a 1-form $\theta_a \in \Omega^1(V \setminus D_a)$ such that $d\theta_a = \omega_V$. It follows that 
\beqn
E({\bm v}) = \lim_{\epsilon \to 0} \int_{{\bm H} \setminus B_\epsilon(Z)} u^* \omega_V = \lim_{\epsilon \to 0} \int_{\partial ({\bm H} \setminus B_\epsilon(Z))} u^* \theta_a = \lim_{\epsilon \to 0} \int_{\partial B_\epsilon(Z)} u^* \theta_a. 
\eeqn
The last equality follows from the fact that $\theta_a |_{L_V}$ is exact (Lemma \ref{lemma29}). Indeed, the last term above is exactly the intersection number. Hence \eqref{eqn212} follows. In particular, when ${\bm v}$ has positive energy, there is at least one intersection point between ${\bm v}$ and $D_a$. 
The other cases are left to the reader.
\end{proof}

\subsection{Orientations}\label{subsection27}

Moduli spaces of objects defined over closed domains, i.e., holomorphic spheres and affine vortices over ${\bm C}$, have canonical orientations. Orientations of moduli spaces of maps defined over bordered domains are achieved as in Fukaya--Oh--Ohta--Ono \cite{FOOO_Book}.  The orientation and the spin structure on the Lagrangian boundary condition $L$ induces an orientation on the linearization of the Cauchy--Riemann operators at all pseudoholomorphic disks in $X$ with boundary in $L$. The orientation and the spin structure on $L$ also induce orientations for disks in $V$ modulo $K$-action, which were used in \cite{Woodward_toric} to count quasidisks. 

Here we show that the moduli space of affine vortices with Lagrangian boundary condition is oriented in a similar fashion.

\begin{prop}\label{prop213}
Given a spin structure and an orientation on $L$, there is a naturally induced orientation for every affine vortex over ${\bm H}$.
\end{prop}

Before proving the proposition we introduce the following notation. Let $E \to {\bm D}^2$ be a complex vector bundle and $\ov\partial{}^E: \Omega^0(E) \to \Omega^{0,1}(E)$ be a real linear Cauchy--Riemann operator. Let $\lambda \to \partial {\bm D}^2$ be a totally real subbundle of $E|_{\partial {\bm D}^2}$, meaning that $J_E \lambda \cap \lambda = \{0\}$ and $\lambda$ has maximal dimension, where $J_E$ is the complex structure on $E$. The pair $(E, \lambda)$ is called a {\it bundle pair} over $({\bm D}^2, \partial {\bm D}^2)$. Let $p > 2$ and $W^{1, p} (E)_\lambda$ be the Sobolev space of $W^{1, p}$-sections of $E$ whose boundary values lie in $\lambda$. Then we have a Fredholm operator 
\beqn 
\ov\partial{}_E: W^{1, p}(E)_\lambda \to
L^p(E). 
\eeqn

\begin{lemma}\label{lemma214}\cite[Proposition 8.1.4]{FOOO_Book}
Suppose $\lambda$ is trivializable. Then each trivialization of $\lambda$ canonically induces an orientation on $\ov\partial{}_E: W^{1,p}(E)_\lambda \to L^p(E)$.
\end{lemma}

A similar discussion holds in the case of boundary value problems on the half-space, fixed outside a compact set.  Let $(E, \lambda)$ be a bundle pair over ${\bm H}$.  Fix a trivialization of $E$ outside a compact set $Z \subset {\bm H}$, i.e., 
\beqn 
\rho: E|_{{\bm H} \setminus Z} \cong ({\bm H} \setminus Z) \times {\mb C}^m 
\eeqn 
whose restriction to $\partial {\bm H} \setminus Z$ maps $\lambda$ to ${\mb R}^m$. If we regard ${\bm D}^2 \cong {\bm H} \cup \{\infty\}$, then $\rho$ induces a bundle pair on ${\bm D}^2$, which is still denoted by $(E, \lambda)$.
Choose a Hermitian metric $h_E$ on $E$ such that  
\beqn
h_E - h_0 \circ \rho \in \tilde L^{1,p} ({\bm H} \setminus Z, {\mb C}^{m \times m} )
\eeqn
where $h_0$ is the standard metric on ${\mb C}^m$.\footnote{This particular type of metric comes from the particular convergence rate of affine vortices at infinity (see \cite{Venugopalan_Xu}).} Here $\tilde L^{1,p}$ is the weighted Sobolev space used in the previous section for $p \in (2, 4)$ and weighted by $|z|^{\tau_p}$ at infinity where $\tau_p = 2 - \frac{4}{p}$. Choose a metric connection $\nabla^E$ on $E$ such that with respect to the fixed trivialization near infinity, the connection agrees with the ordinary differentiation of ${\mb C}^m$-valued functions up to matrix-valued functions in the space $\tilde L^{1,p}$. Then, using $h_E$ and this metric connection, one can define weighted Sobolev space of sections of $E$
\beqn
\tilde L_h^{1,p} ({\bm H}, E)_\lambda = \{ s \in W^{1,p}_{\rm loc}({\bm H}, E)\ |\ s|_{\partial {\bm H}} \subset \lambda,\ \|s \|_{L^\infty} + \| \nabla^E s \|_{L^{p, \tau_p}} < \infty \}.
\eeqn
Consider a real linear Cauchy--Riemann operator $\ov\partial{}_E$ on $E$ such that with respect to the trivialization $\rho$ of $E$ near infinity,
\beqn
\ov\partial{}_E = \ov\partial + \alpha,\ {\rm where}\ \alpha \in W^{1, p, \tau}({\bm H} \setminus Z, \Lambda^{0,1} \otimes {\mb C}^{m \times m}),\ \tau > \tau_p = 2 - \frac{4}{p}.
\eeqn
Consider the bounded linear operator
\beq\label{eqn213}
\ov\partial{}_E: \tilde L_h^{1,p} ({\bm H}, E)_\lambda \to \tilde L^p ({\bm H}, \Lambda^{0,1} \otimes E).
\eeq
By an argument similar to the proof of the Fredholm property of linearizations of affine vortices over ${\bm H}$ given in
\cite[Appendix]{Venugopalan_Xu}, the operator \eqref{eqn213} is Fredholm. Moreover, by straightforward verification, we have the isomorphisms of Banach spaces
\begin{align*}
&\ \tilde L_h^{1,p}({\bm H}) \cong W^{1,p}({\bm D}), &\ \tilde L^p({\bm H}) \cong L^p({\bm D}).
\end{align*}
Hence Lemma \ref{lemma214} implies the following lemma.

\begin{lemma}\label{lemma215}
Suppose the real bundle $\lambda$ over $\partial {\bm D}^2$ is trivializable. Then each trivialization of $\lambda$ canonically induces an orientation of the operator \eqref{eqn213}.
\end{lemma}

Now we prove Proposition \ref{prop213}. Given an affine vortex ${\bm v} = (u, \phi, \psi)$ over ${\bm H}$. By Proposition \ref{prop22}, $u$ has a well-defined limit at $\infty \in {\bm H}$  modulo $K$-action. Since a neighborhood of $\infty$ is contractible, up to gauge transformations we can assume that $u$ has a limit $x_\infty \in L_V$ at $\infty$. Moreover, the convergence can be made stronger. By a result used in \cite[Appendix]{Venugopalan_Xu} (analogous to \cite[Lemma 6.1]{Venugopalan_Xu}), up to gauge transformation, we may assume that in local coordinates around $x_\infty$, 
\beqn 
\nabla u \in W^{1, p, \tau} ({\bm H} \setminus Z),\ \forall \tau \in ( \tau_p, 1 ).  
\eeqn
Therefore, $E':= u^* TV$ has a continuous extension to ${\bm D}^2$, still denoted by $E'$. Define 
\beqn
E:= E' \oplus E'':= E' \oplus ({\mf k} \oplus {\mf k}).
\eeqn
The complex structure on $E''$ is defined by viewing ${\mf k} \oplus {\mf k} \cong {\mf k} \otimes {\mb C}$. On the other hand, the totally real subbundle $\lambda \subset E|_{\partial {\bm D}^2}$ is defined by
\beqn
\lambda = \lambda' \oplus \lambda'' = (u|_{\partial {\bm D}^2})^* TL_V \oplus ({\mf k} \oplus \{0\}) \subset (u|_{\partial {\bm D}^2})^* TV \oplus ({\mf k} \oplus {\mf k}).
\eeqn

To study the orientation of the operator, we make a further decomposition of the first summand of $\lambda$. Let $\lambda_K \subset {\mf \lambda}'$ be the subbundle spanned by infinitesimal $K$-actions. Then $\lambda_K \oplus J_{E'} \lambda_K'$ is canonically trivialized. Moreover, $\lambda_K' \oplus J_{E'}\lambda_K'$ extends to a trivial subbundle $E_K \subset E'$ over ${\bm D}^2$, hence is isomorphic to $E''$. The extension is unique up to homotopy. Let $E_K^\bot\subset E'$ (resp. $\lambda_K^\bot \subset \lambda'$) be the orthogonal complement of $E_K$ (resp. $\lambda_K$). Notice that if we denote by $l: \partial {\bm D}^2 \to L$ the composition of $u|_{\partial {\bm D}^2}$ and the projection $L_V \to L$, then $\lambda_K^\bot \cong l^* TL$.  With respect to the splitting $E \cong E_K^\bot \oplus E_K \oplus E''$ and the isomorphism $E'' \cong E_K$, the linear operator \eqref{eqn27} may be written, up to compact operators, as
\beqn
\left[ \begin{array}{cc} \ov\partial{}_{E_K^\bot} & 0 \\ 0 & D_K \end{array}\right] = \left[\begin{array}{ccc} \ov\partial{}_{E_K^\bot} & 0  & 0 \\
        0            & \ov\partial{}_{E_K} & {\rm Id}_{E_K} \\
        0 & {\rm Id}_{E_K} & \partial{}_{E_K}
\end{array}  \right].
\eeqn
Here $\ov\partial{}_{E_K^\bot}$ is the operator 
\beqn
\ov\partial{}_{E_K^\bot}: L_h^{1,p,\tau_p} ( {\bm H}, E_K^\bot)_{\lambda_K^\bot} \to L^{p, \tau_p}( {\bm H}, E_K^\bot)
\eeqn
of the form considered in Lemma \ref{lemma215} and
\beqn
D_K: L_g^{1,p, \tau_p} ({\bm H}, E_K \oplus E_K) \to L^{p, \tau_p}( {\bm H}, E_K \oplus E_K).
\eeqn
By Lemma \ref{lemma215}, $\ov\partial{}_{E_K^\bot}$ already has an orientation.  Proposition \ref{prop213} follows from the following lemma.

\begin{lemma}\label{lemma216}
The operator $D_K$ has a canonical orientation.
\end{lemma}

\begin{proof}
By simple generalization of \cite[Proposition 97]{Ziltener_book} to the case over ${\bm H}$, for all $\tau \geq 0$, the operator
\beqn
D_K^\tau:= D_K : L_g^{1,p, \tau}(E_K\oplus E_K) \to L^{p, \tau}(E_K
  \oplus E_K)
\eeqn
  is Fredholm with index zero. Let $m_\tau$ be the multiplication by
  the weight function $\rho^\tau$. By explicit calculation (see
  the proof of \cite[Proposition 97]{Ziltener_book}), \beqn D_K^\tau =
  m_\tau^{-1}\circ ( D_K^0 + A_\tau )\circ m_\tau \eeqn where $A_\tau$
  is a compact operator. The orientation of $D_K^\tau$ is induced
  from the orientation of $D_K^0$ (which is an isomorphism) and the
  path $D_K^0 + t A_\tau$.
\end{proof}

This completes the proof of Proposition \ref{prop213}.

\begin{rem}\label{coherence}
The gluing construction requires orienting moduli spaces in a coherent way. For the discussion of the case of holomorphic disks, see \cite{FOOO_Book} and \cite{Katic_Milinkovic}. The orientation specified above is also coherent with respect to two kinds of gluing constructions: (1) bubbling off holomorphic disks in $V$ and (2)  degeneration into several affine vortices over ${\bm H}$ and several affine vortices over ${\bm C}$ connected by a holomorphic disk in the quotient $X$. As our orientation of affine vortices over ${\bm H}$ is via the Cauchy--Riemann operator and essentially topological, the coherence under gluing for both kind of degenerations holds for the same reason as the case of holomorphic disks. 
\end{rem}

\section{Scaled curves}\label{section3}

In this section we describe the moduli spaces of domains used in this paper.  These domains are combinations of spheres, disks, and trees, and their combinatorial types are trees with additional data we call {\it scalings.}

\subsection{Trees}\label{subsection31}

We begin with the combinatorics of trees. A {\it rooted tree} $\Gamma$ consists of a set of vertices $V_\Gamma$, a set of edges $E_\Gamma$, and a distinguished vertex $v_\infty \in V_\Gamma$ called the root. There is a canonical partial order among vertices $V_\Gamma$ so that $v_\infty$ is the minimal vertex. We write $v_\alpha \succ v_\beta$ if $v_\alpha$ and $v_\beta$ are adjacent and $v_\beta$ is closer to the root $v_\infty$. A subtree $\Gamma'$ of $\Gamma$ has an induced root $v_\infty'$ which agrees with $v_\infty$ if $v_\infty \in V_{\Gamma'}$. In the remainder of this paper, all trees are rooted and the term ``rooted'' will be omitted. On the other hand, recall that a {\it ribbon tree}, denoted by $\uds \Gamma$, is a rooted tree $\uds\Gamma$ together with an isotopy class of embeddings of $\uds\Gamma$ into ${\mb C}$.


The combinatorial type of a nodal disk with both disk and sphere components is a {\it based tree}, in which a distinguished subtree corresponds to the disk components: A {\it based tree} consists of a rooted tree $\Gamma$, a nonempty subtree $\uds\Gamma$ containing $v_\infty$ called the {\it base}, with $\uds\Gamma$ equipped with the structure of a ribbon tree. A rooted tree $\Gamma$ without a base is called a {\it base-free tree}.
We refer to the set of semi-infinite interior edges as the set of {\it leaves} $L_\Gamma$, and the set of semi-infinite boundary
edges as the set of {\it tails} $T_\Gamma$, equipped with attaching maps $L_\Gamma \to V_\Gamma$ and $T_\Gamma \to V_{\uds\Gamma}$.  When $V_{\uds\Gamma} \neq \emptyset$, we require that $T_\Gamma$ is nonempty with a distinguished element $t_{\rm out}$ called the {\it output} attached to $v_\infty$, with elements in $T_{\Gamma} \setminus \{t_{\rm out}\}$ (called {\it inputs}) ordered in a way compatible with the ribbon tree structure. See Figure \ref{basedtree} for an illustration of a typical based tree.

\begin{figure}[ht]
    \centering
    \includegraphics{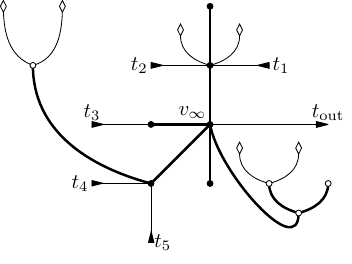}
    \caption{A based tree. The part drawn in straight segments is the base which has a ribbon tree structure. The thick segments are edges and the thin segments are tails. The curly part are vertices, edges (thickened), and leaves which are not part of the the base. The circles are vertices not in the base and the diamonds are leaves.}
    \label{basedtree}
\end{figure}

Inclusions of strata in the moduli spaces of vortices correspond to
certain morphisms of trees.  A {\it morphism} between two based
resp. base-free trees $\Gamma'$ and $\Gamma$, denoted by
$\rho: \Gamma' \to \Gamma$, consists of a surjective map
$\rho_V: V_{\Gamma'} \to V_\Gamma$, a bijection
$\rho_T: T_{\Gamma'} \to T_{\Gamma}$, and a bijection
$\rho_L: L_{\Gamma'} \to L_\Gamma$. They need to satisfy the following
conditions.

\begin{enumerate}

\item The map between sets of vertices $\rho_V: V_{\Gamma'} \to V_\Gamma$ is a tree morphism such that $\rho_V ( V_{\uds \Gamma'} ) = V_{\uds \Gamma}$ and the restriction $\uds\rho_V: V_{\uds\Gamma'} \to V_{\uds\Gamma}$ respects the ribbon tree structures of the bases.

\item If a tail $t'\in T_{\Gamma'}$ resp. a leaf $l' \in L_{\Gamma'}$ is attached to $v' \in V_{\Gamma'}$, then $\rho_T(t')$ resp. $\rho_L(l')$ is attached to $\rho_V (v')$.

\item The map between sets of tails preserves the output, i.e. $\rho_T ( t_{\rm out}') = t_{\rm out}$. 
\end{enumerate}

It is straightforward to generalize the notion of based trees to {\it
  broken} based trees by allowing certain edges in the base to
break. We regard a broken based tree $\Gamma$ as the union of several
unbroken trees, called {\it unbroken parts}, with certain pairs of
ends of tails identified at the breakings. The partial order among
vertices induced by the position of the root $v_\infty$ also induces a partial
order among unbroken parts: if $\Gamma_i$ and $\Gamma_j$ are unbroken
parts of a broken tree $\Gamma$, then denote $\Gamma_i \succ \Gamma_j$
if $\Gamma_i$ and $\Gamma_j$ are adjacent and $\Gamma_j$ is closer to
the root $v_\infty$. Given a broken based tree $\Gamma$, one can {\it glue} at a
subset of breakings and obtain a tree with less breakings.

In some of the following discussions, we also consider disconnected graphs, namely forests. One example is the {\it superstructure} of a based tree. For a based tree $\Gamma$, the superstructure of $\Gamma$, denoted by $\Gamma^{\rm \sup}$, is the subgraph consisting of all vertices not in the base and all edges connecting these vertices. 

We put various extra discrete structures to a based or base-free tree $\Gamma$, consisting of a {\it scale}, a {\it metric type}, and a {\it decoration}.

\begin{defn}\label{defn31} {\rm (Scaled trees)}  
Let $0$, $1$, $\infty$ be ordered as usual $0 \leq  1 \leq \infty$.

\begin{enumerate}

\item Let $\Gamma$ be an unbroken based or base-free tree. A {\it
    scale} on $\Gamma$ is an order-reversing map
  ${\mf s}: V_\Gamma \to \{ 0, 1, \infty \}$ satisfying the following
  condition: Within any non-self-crossing path
  $v_\alpha \succ \cdots \succ v_\beta$ in $\Gamma$ with
  ${\mf s}( v_\alpha) \in \{0, 1\}$,
  ${\mf s}(v_\beta) \in \{1, \infty\}$, there is a unique $v_\gamma$
  in this path with ${\mf s}(v_\gamma) = 1$. A {\it scaled tree}
  $(\Gamma, {\mf s})$ consists of a tree $\Gamma$ and a scale
  ${\mf s}$.

\item Given a scaled tree $(\Gamma, {\mf s})$. For $\lambda = 0, 1, \infty$, we denote $V_\Gamma^\lambda = {\mf s}^{-1}(\lambda)$. The scale also induces a partition on the set of edges
\beqn
E_\Gamma = E_\Gamma^0 \sqcup E_\Gamma^1 \sqcup E_\Gamma^\infty
\eeqn
where $E_\Gamma^0$ are edges connecting vertices in $V_\Gamma^0 \cup V_\Gamma^1$, $E_\Gamma^\infty$ consists of edges connecting vertices in $V_\Gamma^\infty$, and $E_\Gamma^1$ consists of edges connecting one vertex in $E_\Gamma^1$ and one vertex in $E_\Gamma^\infty$. An edge in $E_\Gamma^1$ is called a {\it special edge}; other edges are called {\it non-special edges}. 

\item A scaled based tree $(\Gamma, {\mf s})$ is of {\it scale $0$} if
  ${\mf s} \equiv 0$; is of {\it scale $\infty$} if ${\mf s}|_{V_{\uds \Gamma}} \equiv \infty$; otherwise we say that $(\Gamma, {\mf s})$ is of {\it mixed scale} or {\it scale $1$}.

\item The set of maximal vertices is denoted as follows. When an unbroken scaled tree $(\Gamma, {\mf s})$ is of mixed scale, denote by $\partial V_\Gamma^\infty \subset V_\Gamma^\infty$ the set of vertices in $V_\Gamma^\infty$ which are maximal with respect to the partial order among vertices.  When $(\Gamma, {\mf s})$ is of scale $\infty$, define $\partial V_\Gamma^\infty \subset V_\Gamma^\infty \setminus V_{\uds \Gamma}$ to be the set of vertices in $V_\Gamma^\infty$ which are maximal. When $(\Gamma, {\mf s})$ is of scale $0$, define $\partial V_\Gamma^\infty = \emptyset$.

\item If $\Gamma$ broken, then a scale on $\Gamma$ consists of scales
  ${\mf s}_1, \ldots, {\mf s}_m$ on its unbroken parts
  $\Gamma_1, \ldots, \Gamma_m$, satisfying the following conditions:
  1) the induced map ${\mf s}: V_\Gamma \to \{ 0, 1, \infty \}$ is
  order-reversing; 2) if $\Gamma_1 \succ \cdots \succ \Gamma_l$ is a
  chain of unbroken parts, $(\Gamma_1, {\mf s}_1)$ is of scale $0$ or
  $1$, $(\Gamma_l, {\mf s}_l)$ is of scale $1$ or $\infty$, then there
  is a unique $\Gamma_j$ in this chain such that
  $(\Gamma_j, {\mf s}_j)$ is of scale $1$.
\end{enumerate}
\end{defn}

Now we introduce certain special scaled trees. 

\begin{defn}\label{defn32}(Special domain types)

\begin{enumerate}

\item An {\it infinite edge} $\Gamma$ has $V_\Gamma = E_\Gamma = \emptyset$ and $T_\Gamma = \{ t_{\rm in}, t_{\rm out}\}$.

\item A {\it{\rm Y}-shape} is a scale tree $(\Gamma, {\mf s})$ with $\Gamma = \uds\Gamma$, $V_\Gamma   =  \{v_\infty\}$, $E_\Gamma = \emptyset$, $T_\Gamma = \{ t_{\rm in}', t_{\rm in}'', t_{\rm out}\}$ and ${\mf s} (v_\infty) \in \{0, \infty\}$.

\item A {\it$\Phi$-shape} is a scaled tree $(\Gamma, {\mf s})$ with $\Gamma = \uds\Gamma$, $V_\Gamma = \{ v_\infty\}$, $E_\Gamma = \emptyset$, $T_\Gamma =  \{ t_{\rm in}, t_{\rm out}\}$ and ${\mf s}(v_\infty) = 1$.
\end{enumerate}
\end{defn}

The moduli space of vortices we consider will be a union of strata corresponding to {\it stable trees}, defined as follows. For each $v_\alpha \in V_\Gamma$, denote by $\mathring{d}(v_\alpha)$ the number of vertices $v_\beta \in V_{\Gamma^{\rm sup}}$ with $v_\beta \succ v_\alpha$, by $\uds d(v_\alpha)$ the number of edges in $E_{\uds\Gamma}$ or tails attached to $v_\alpha$, and by $L_{v_\alpha} \subset L_\Gamma$ the set of leaves attached to $v_\alpha$.

\begin{defn}\label{defn33} (Stability) 
A scaled tree $(\Gamma, {\mf s})$ is called {\it stable} if $E_\Gamma$ does not contain an infinite-length edge and satisfy the following additional conditions.
\begin{enumerate}
\item For each $v_\alpha \in V_{\Gamma^{\rm sup}}^\infty \sqcup V_{\Gamma^{\rm sup}}^0$ (a vertex corresponding to a spherical component) there holds $\mathring{d}(v_\alpha) + \# L_{v_\alpha} \geq 2$.

\item For each $v_\alpha \in V_{\uds \Gamma}^\infty \sqcup V_{\uds\Gamma}^0$ (a vertex corresponding to a disk component) there holds $2 \mathring{d}(v_\alpha) + \uds d(v_\alpha) + 2 \# L_{v_\alpha}
  \geq 3$.

\item For each $v_\alpha \in V_\Gamma^1$ (a vertex corresponding to an affine vortex) there holds  $2\mathring{d}(v_\alpha) + \uds d(v_\alpha) + 2 \# L_{v_\alpha} \geq 2$.

\end{enumerate}
\end{defn}

As in \cite{Biran_Cornea, Biran_Cornea_09} we allow edges which connect disk components to acquire length and impose gradient flow equation on the edges. This gives rise the notion of metric trees and metric treed. 

\begin{defn}\label{defn34} {\rm (Metric)} 
Let $(\Gamma, {\mf s})$ be an unbroken scaled tree. A {\it metric} on $(\Gamma, {\mf s})$ is a function (called the {\it length function})
    \beqn
    \ell: E_{\uds\Gamma} \to [0, +\infty)
    \eeqn
    satisfying the following condition
\begin{itemize}

\item {\rm (Balanced condition)} Suppose $(\Gamma, {\mf s})$ has scale   1. For each $v_{\alpha} \in V_{\uds \Gamma}^1 \sqcup V_{\uds \Gamma}^\infty$,   let $P(v_\alpha, v_\infty) \subset E_{\uds\Gamma}$ be the unique non-self-crossing path in $\uds \Gamma$ connecting $v_{ \alpha}$ with the root $v_\infty$. Consider the function $\tilde \ell: V_{\uds \Gamma}^1 \sqcup V_{\uds \Gamma}^\infty \to {\mb R}$ defined by %
\beqn
\tilde \ell (v_{ \alpha}):= \sum_{e  \in P(v_\alpha, v_\infty)} \ell ( e ).
\eeqn
We require that $\tilde \ell$ restricted to $V_{\uds \Gamma}^1$ is a constant and 
\beqn
\tilde \ell|_{V_{\uds\Gamma}^\infty} \leq \tilde \ell|_{V_{\uds\Gamma}^1}.
\eeqn
\end{itemize}
The balanced condition is nonvacuous only for trees of scale 1; for a broken tree, the balanced condition only applies to its unbroken parts of scale $1$ (See Figure \ref{metrictree} for an illustration of a metric tree and the balanced condition).
\end{defn}

\begin{figure}[ht]
    \centering
    \includegraphics{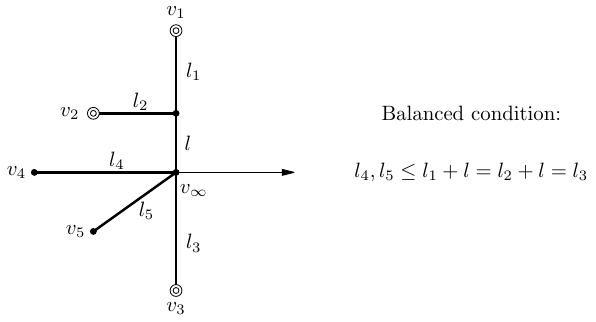}
    \caption{A metric tree of scale 1. Among all vertices, $v_1, v_2, v_3$ have scale $1$ and $v_4, v_5, v_\infty$ have scale $\infty$. If all edges have positive lengths and $l_5 < l_4 = l_3$, then $\partial^* V_{\uds\Gamma}^\infty = \{v_4\}$.}
    \label{metrictree}
\end{figure}

In what follows we define the notion of {\it metric types} that are the discrete data underlying the length functions.

\begin{defn}[Metric type]
Let $(\Gamma, {\mf s})$ be an unbroken scaled tree. A {\it metric type} on $(\Gamma, {\mf s})$ consists of two functions ${\mf m} = ({\mf m}', {\mf m}'')$, ${\mf m}': E_{\uds \Gamma} \to  \{ * , + \}$ and ${\mf m}'': \partial V_{\uds \Gamma}^\infty \to \{ *, -\}$. These functions induce two partitions
\begin{align*}
&\ E_{\uds \Gamma} = E_{\uds \Gamma}^* \sqcup E_{\uds \Gamma}^+,\ &\ \partial V_{\uds \Gamma}^\infty = \partial^* V_{\uds \Gamma}^\infty \sqcup \partial^- V_{\uds \Gamma}^\infty.
\end{align*}
A metric $\ell$ on $(\Gamma, {\mf s})$ is said to be of type ${\mf m}$ if 
\begin{enumerate}
    \item The set $E_{\uds \Gamma}^*$ (resp. $E_{\uds\Gamma}^+$) consists of edges of zero (resp. positive) lengths.
    
    \item When $(\Gamma, {\mf s})$ is of scale $1$, a vertex $v_\alpha$ is in $\partial^* V_{\uds\Gamma}^\infty$ if and only if 
    \beqn
    \tilde \ell(v_\alpha) = \tilde \ell|_{V_{\uds\Gamma}^1}
    \eeqn
\end{enumerate}
We call the elements of $\partial^* V_{\uds \Gamma}^\infty$ {\it bordered} disk vertices and the remaining elements $ \partial^- V_{\uds \Gamma}^\infty$ {\it unbordered}; later these vertices will correspond to components connected to the root $v_\infty$ component by an edge of maximal resp. non-maximal length. 
\end{defn}

We need to record the contact orders of holomorphic curves or vortices
at interior markings with respect to a (singular) divisor. For a tree
$\Gamma$, a {\it decoration} is a function
\beqn
{\mf p}: L_\Gamma \to (\{0\} \cup {\mb N})^{\{ D_1, \ldots, D_h\} }.
\eeqn
We use a monomial to describe the value of ${\mf p}$:  Define
\beqn
{\mf p}(l_i) = p_i = D_1^{a_1} \cdots D_h^{a_h}.
\eeqn
The {\it  degree}  of a variable $D_a$ in $p_i$ is denoted by 
\beqn
{\rm deg}_{D_a} (p_i) \in \{0\}\cup {\mb N}.
\eeqn
Moreover, for each subtree $\Pi\subset \Gamma$, define 
\beqn
{\rm deg}_{D_a} \Pi = \sum_{l_i \in L_\Pi} \frac{ {\rm deg}_{D_a} p_i}{{\rm deg} D_a} \in {\mb Q}
\eeqn
and define
\beq\label{eqn31}
{\rm deg}^{\rm max} \Pi = \max_a {\rm deg}_{D_a} \Pi.
\eeq

\begin{defn} A {\it domain type} is a quadruple $(\Gamma, {\mf s}, {\mf m} , {\mf p})$, often abbreviated by  $\Gamma$, where $\Gamma$ is a based or base-free tree, ${\mf s}$ is   a scale on $\Gamma$, ${\mf m}$ is a metric type on   $(\Gamma, {\mf s})$, and ${\mf p}$ is a decoration. The domain type $(\Gamma, {\mf s}, {\mf m} , {\mf p})$ is called {\it stable} if   $(\Gamma, {\mf s})$ is stable.
\end{defn}

Isomorphisms of domain types are isomorphisms of the underlying graphs preserving the scales, metric types, and decorations. Denote by ${\bf T}$ the set of isomorphism classes of domain types and ${\bf T}^{\rm st} \subset {\bf T}$ the subset of stable ones. Without making confusion, we drop ``isomorphism class'' and call an element of ${\bf T}$ a domain type.

\subsection{Degeneration and broken trees}

The possible degenerations of scaled metric trees involving
``degenerating'' an edge to obtain a broken tree, or extending
an edge of length zero to one with positive length. The balanced
condition in Definition \ref{defn34} imposes some restrictions on such
operations.

\begin{defn}\label{defn37} {(\rm Elementary transformations of domain types)} Let $\Gamma', \Gamma\in {\bf T}$ be domain types such that $\Gamma$ is {\it unbroken} and let ${\mf s}'$, ${\mf s}$ be their scales respectively. We say that $\Gamma$ is obtained from $\Gamma'$ by an {\it elementary transformation} if one of the following situations holds.\footnote{The labellings of these cases indicate that the first two transformations are ``interior,'' hence of codimension two; the cases (F1)---(F4) are of codimension one and correspond to ``fake boundaries'' of one-dimensional moduli spaces; the cases (T1) and (T2) are ``true boundaries.''}

\begin{itemize}

\item[(I1)] {\rm (Sphere bubbling)} There is a morphism $\rho: \Gamma' \to \Gamma$ that  collapses exactly one
  edge in $E_{\Gamma'} \setminus E_{\uds \Gamma'}$ that is not a
  special edge. Geometrically the morphism corresponds to bubbling off
  or gluing a holomorphic sphere.
 
\item[(I2)] {\rm (Interior affine vortex bubbling)} There is a morphism $\rho: \Gamma' \to \Gamma$ and a vertex $v_\alpha' \in \partial V_{\Gamma'}^\infty \setminus V_{\uds \Gamma'}$, such that $\rho$ collapses (and only collapses) the subtree consisting of $v_\alpha'$ and all $v_\beta'$ with $v_\beta' \succ v_\alpha'$ to a single vertex $v_\alpha \in V_\Gamma^1 \setminus V_{\uds \Gamma}$. This kind of morphism corresponds to the gluing or degeneration of stable affine vortices over ${\bm C}$.

\item[(F1)] {\rm (Shrinking the length of a non-special edge to zero)} There is a tree isomorphism $\rho: \Gamma' \to \Gamma$  which preserves all extra structures except that there is one edge $e' \in E_{\uds \Gamma'}^0$ identified with an edge $e \in E_{\uds \Gamma}^+$. 

\item[(F2)] {\rm (Disk bubbling)}   There is a morphism   $\rho: (\Gamma', {\mf s}') \to (\Gamma, {\mf s})$ such that $\rho$ collapses exactly one edge in $E_{\uds \Gamma'}^0$ which is not a   special edge. Geometrically the morphism corresponds to inclusion of  a stratum corresponds to bubbling off or gluing a holomorphic disk.

\item[(F3)] {\rm (Shrinking the lengths of a collection of special edges to zero)} There is a tree isomorphism $\rho: \Gamma' \to \Gamma$ preserving scales, decorations, and metric types on all edges with the following exceptions. There is a vertex $v_{\uds\alpha}' \in \partial^* V_{\uds \Gamma'}^1$ such that 
\[ v_\alpha = \rho_V(v_\alpha') \in \partial^- V_{\uds \Gamma} .\] 
This implies that for all $v_{\uds\beta}' \succ v_{\uds\alpha}'$,
$e_{\uds\beta\uds\alpha}' \in E_{\uds \Gamma'}^*$ and the
corresponding edge $e_{\uds\beta \uds \alpha}$ is in
$E_{\uds \Gamma}^+$.

\item[(F4)] {\rm (Boundary affine vortex bubbling)} There is a morphism $\rho: \Gamma' \to \Gamma$ and a vertex   $v_{\uds\alpha}' \in \partial^* V_{\uds \Gamma'}^\infty$, such that   $\rho$ collapses (and only collapses) the subtree consisting of   $v_{\uds\alpha}'$ and all $v_{\beta}'$ with   $v_\beta' \succ v_{\uds\alpha}'$ to a vertex
  $v_{\uds \alpha} \in V_{\uds \Gamma}^1$. Geometrically the morphism
  corresponds to the gluing or degeneration of stable affine vortices
  over ${\bm H}$.

\item[(T1)] {\rm (Breaking one edge)} $\Gamma$ is obtained from $\Gamma'$ by gluing a breaking connecting two unbroken parts of $\Gamma'$ such
  that the edge obtained from gluing is in $E_\Gamma^0$.
  
\item[(T2)] {\rm (Breaking a collection of edges at infinity)} Both $\Gamma'$ and $\Gamma$ are of scale $1$ or $\infty$. $\Gamma'$ has unbroken parts $\Gamma_1', \ldots, \Gamma_m', \Gamma_\infty'$ where $\Gamma_i'$ are of scale $1$ and $\Gamma_\infty'$ is of scale $\infty$. $\Gamma$ is obtained from $\Gamma'$ by simultaneously gluing the breakings connecting each $\Gamma_i'$ with $\Gamma_\infty'$.
\end{itemize}

\end{defn}

Elementary transformations induce a partial order on the set of domain types as follows.  For $\Pi, \Gamma \in {\bf T}$ with $\Gamma$ unbroken, denote $\Pi \leq \Gamma$, if there are domain types 
\[ \Pi = \Gamma_0', \Gamma_1', \ldots, \Gamma_{k-1}', \Gamma_k' =
\Gamma \in {\bf T} \]  
such that $\Gamma_i'$ is obtained from $\Gamma_{i-1}'$ by an
elementary transformation. This notion can be extended to the case
that $\Gamma$ is broken.  The proof of the following lemma is left to the reader:

\begin{lemma}\label{lemma38}
The relation $\Pi \leq \Gamma$ is a partial order. 
\end{lemma}

\subsection{Treed disks}\label{subsection33}

The domains of the configurations of vortices in our compactification
are constructed from scaled trees by replacing each vertex with a
nodal disk or sphere, or an affine space or half-space in the case of
a scaled vertex.

\begin{defn}\label{defn39} {\rm (Treed disks)}  
  Given an unbroken domain type $\Gamma = (\Gamma, {\mf s}, {\mf m}, {\mf p})$ a {\it treed disk} modelled on $\Gamma$ consists of a collection $(\Sigma_\alpha)$ of surfaces indexed by $v_\alpha \in V_\Gamma$, a collection of markings and nodes $Z$, and a metric $\ell$ of type ${\mf m}$ such that 
\begin{itemize}

\item If $v_\alpha \in V_{\uds \Gamma}$, then $\Sigma_\alpha = {\bm H}$; otherwise $\Sigma_\alpha = {\bm C}$.
\end{itemize}
Each $\Sigma_\alpha$ admits a compactification to a disk $\ol{\Sigma}_\alpha \cong {\bm D}^2$ or sphere $\ol{\Sigma}_\alpha \cong {\bm S}^2$ by adding a ``point at infinity.''  The metric, markings, and nodes form a tuple
\beqn
{\mc C} = \Big( \ell,\ \uds {\bm z} = (\uds z_j)_{t_j \in T_\Gamma^{\rm in}},\ {\bm z} = (z_i)_{l_i \in L_\Gamma},\ {\bm w}= (w_{\alpha \alpha'})_{e_{\alpha \alpha'} \in
    E_\Gamma } \Big) 
\eeqn 
where $\uds{\bm z}$ (resp. ${\bm z}$) is the collection of boundary (resp. interior) marked points, and ${\bm w}$ is the collection of nodes. These data are required to satisfy the following conditions/conventions.
\begin{enumerate}
  
\item (Nodes) If $e_{\uds\alpha \uds\alpha'}\in E_{\uds \Gamma}$, then $w_{\uds\alpha\uds\alpha'}\in \partial \Sigma_{\uds\alpha'}$; otherwise $w_{\alpha \alpha'}\in {\rm Int}\Sigma_{\alpha'}$.
    
\item (Marking) $\uds{z}{}_j \in \partial \Sigma_{\uds\alpha{}_j}$, $z_i \in {\rm Int} \Sigma_{\alpha_i}$.
\end{enumerate}
For each $v_\alpha \in V_\Gamma$, the collection of special
points 
\beqn
W_\alpha:= \big\{ \uds{z}{}_j \ |\ \uds\alpha{}_j = \alpha \big\} \cup \big\{ z_i \ |\ \alpha_i = \alpha \big\} \cup \big\{ w_{\beta\alpha} \ |\ e_{\beta\alpha} \in E_\Gamma \big\} \subset \Sigma_\alpha
\eeqn
are required to be distinct, and the order of boundary markings and nodes $w_{\uds\alpha\uds\alpha'}\in \partial \Sigma_{\uds\alpha'}, \uds{z}{}_j \in \partial \Sigma_{\uds\alpha{}_j}$ corresponding to edges meeting any vertex respect the ribbon structure of $\uds\Gamma$. If $\Gamma$ is broken and has unbroken parts $\Gamma_1, \ldots$, then a treed disk modelled on $\Gamma$ is a collection of treed disks ${\mc C}_s$  modelled on every $\Gamma_s$. Note that a unbroken part of ${\mc C}$ could be an infinite edge.

The set of treed disks naturally forms a category.  If ${\mc C}$ and ${\mc C}'$ are treed disks modelled on $\Gamma$ and $\Gamma'$ respectively, an {\it isomorphism} from ${\mc C}'$ to ${\mc C}$ consists of an isomorphism $\rho: \Gamma' \to \Gamma$ which identifies each $v_\alpha \in V_\Gamma$ with $v_\alpha' \in V_{\Gamma'}$, translations on infinite edges, and biholomorphic maps $\varphi_\alpha: \Sigma_\alpha' \to \Sigma_{\alpha}$ such that the nodes and markings transform correspondingly. We require that when $v_\alpha \in V_\Gamma^0 \sqcup V_\Gamma^\infty$, $\varphi_\alpha$ is a M\"obius transformation fixing the infinity; hen $v_\alpha \in V_\Gamma^1$, then $\varphi_\alpha$ is a translation of $\Sigma_\alpha$ (which is either ${\bm C}$ or ${\bm H}$). It follow immediately from the definition the automorphism group of a treed disk ${\mc C}$ modelled on $\Gamma \in {\bf T}$ is trivial if and only if $\Gamma$ is stable (see Definition \ref{defn33}). Here ends this definition.
\end{defn}

We think of treed disks as topological spaces via the following {\it realization} construction.  Given a treed disk ${\mc C}$ for each $e \in E_{\uds \Gamma}$ let $I_e \subset {\mb R}$ be a closed interval of length $\ell (e)$; to each boundary tail $t \in T_{\uds \Gamma}$ not belong to an infinite edge, let $I_t$ be a semi-infinite interval being either $(-\infty, 0]$
or $[0, +\infty)$; to an infinite edge $\{ t_{\rm in}, t_{\rm out} \}$ we assign $I_{t_-} = I_{t_+} = (-\infty, +\infty)$. These intervals and the surfaces $\Sigma_\alpha$ are glue together in a natural way, and form a connected topological space called the {\it  realization} of ${\mc C}$. In the realization, all intervals with positive or infinite lengths are replaced by a finite closed interval and zero-length intervals are replaced by a point. So the realization is well-defined up to homeomorphism. (see Figure \ref{figure3} for illustration.)

We denote the moduli spaces of stable treed disks as follows. For $\Gamma \in {\bf T}^{\rm st}$, let ${\mc W}_{\Gamma}$ be the set of isomorphism classes of stable treed disks of domain type $ \Gamma$. Denote by the bar notation the union over subordinate domain s
\beqn 
\ov{\mc W}_{\Gamma}:= \bigsqcup_{\substack{\Pi \leq \Gamma \\ \Pi\ {\rm stable}}} {\mc W}_{\Pi}.  
\eeqn

\begin{figure}[ht]
    \centering
    \includegraphics[width=\textwidth]{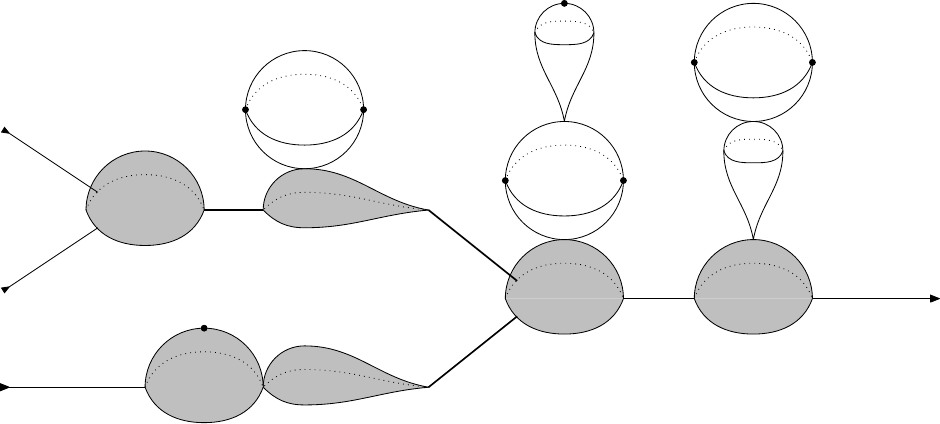}
    \caption{A typical treed disk of scale $1$. It has three inputs (located on the left of the picture), 11 vertices (two-dimensional components), and 8 markings (leaves). The teardrop-like components are of scale $1$. The gray parts together with edges connecting them form the base. This treed disk is stable.}
    \label{figure3}
\end{figure}

We introduce a topology on the moduli space of treed disks by the following notion of sequential convergence, which generalizes that for stable genus zero curves with $k + 1$ markings in McDuff--Salamon \cite{McDuff_Salamon_2004} and for trees  in Boardman--Vogt \cite{Boardman_Vogt}. We first recall the simple case where $(\Gamma, {\mf s})$ is the scaled base-free tree with a single vertex $v_\infty \in V_\Gamma$ whose scale is $1$ and interior leaves $l_1, \ldots, l_k$ ($k \geq 1$). Let ${\mf p}$ be any decoration. Let ${\mc C}_n$ be a sequence of treed disks modelled on $( \Gamma, {\mf s})$, which are equivalent to distinct points $z_{n, 1}, \ldots, z_{n, k} \in {\bm C}$ modulo translations. As $n$ goes to $\infty$, the points $z_{n,i}$ may come together or separate from each other. Consider $ \Pi \leq \Gamma$ and let ${\mc C}_\infty$ be a tree disk modelled on $ \Pi$, which is described as in Definition \ref{defn39}. We temporarily fix the following notations. Let $v_\alpha \in V_\Pi^0$. Then there exists a unique vertex $v_\alpha^1 \in V_\Pi^1$ connecting $v_\alpha$ to the root $v_\infty$ (see Definition \ref{defn31}), and a point $w_\alpha^1 \in {\bm C}$ corresponding to the node in the path connecting $v_\alpha$ and $v_\alpha^1$.

\begin{defn}\label{defn310}
A sequence ${\mc C}_n$ {\it converges} to ${\mc C}_\infty$ if the following conditions hold.
\begin{itemize}
\item For each $v_\alpha \in V_\Pi^0$, there exist a sequence of M\"obius transformations
  $\phi_{n, \alpha}: (\ov\Sigma_\alpha, \infty) \to (\ov {\bm C}, \infty)$ that converge to the constant map with value $w_\alpha^1$ uniformly with all derivatives away from nodes on $\ov\Sigma_\alpha$, such that for each leaf $l_i \in L_{v_\alpha}$, $\phi_{n, \alpha}^{-1} (z_{n, i})$ converges to $z_{\infty, i} \in \Sigma_\alpha$.
  
\item For each $v_\alpha \in V_\Pi^\infty$, there exist a sequence of M\"obius transformations $\phi_{n, \alpha}: (\ov{\Sigma}_\alpha, \infty) \to (\ov{{\bm C}}, \infty)$ that converges to the constant map with value $\infty$ away from nodes on $\ov\Sigma_\alpha$. 

\item For each $v_\alpha \in V_\Pi^1$, there exist a sequence of translations $\phi_{n, \alpha}: \Sigma_\alpha \to {\bm C}$ such that for each leaf $l_i \in L_{v_\alpha}$, $\phi_{n, \alpha}^{-1}(z_{n, i})$ converges to $z_{\infty, i} \in \Sigma_\alpha$.
\end{itemize} 
The sequences of M\"obius transformations satisfy the following condition. For each edge $e_{\alpha \beta} \in E_\Pi$ having a corresponding node $w_{\alpha \beta} \in \Sigma_\beta$, the sequence of maps
\beqn
(\phi_{n, \beta})^{-1} \circ \phi_{n, \alpha}: \Sigma_\alpha \to \Sigma_\beta
\eeqn
converges uniformly with all derivatives to the constant $w_{\alpha \beta}$ on compact subsets. Here ends this definition. 
\end{defn}

The sequential convergence of based treed disks is defined similarly. Consider a scaled based tree $(\Gamma, {\mf s})$ with a single vertex whose scale is $1$, boundary tails $t_1, \ldots, t_l$ and interior leaves $l_1, \ldots, l_k$. The stability condition is equivalent to $l + 2k \geq 1$. Let ${\mf m}$ be the trivial metric type. Let ${\mf p}$ be any decoration. For $\Gamma = (\Gamma, {\mf s}, {\mf m}, {\mf p})$, the topology of $\ov{\mc W}_{ \Gamma}$ is defined in a way similar to Definition \ref{defn310} (cf. \cite[Section 2]{Xu_glue} for detailed discussion of a special case). We omit the details.

For a general stable scaled based or base-free tree $\Gamma$, the notion of sequential convergence in $\ov{\mc W}_\Gamma$ can be obtained from the above two special cases combined with the notion of convergence of stable marked spheres or stable marked disks, and the notion of convergence of metric trees. Again we omit the details. The sequential convergence actually determines a compact Hausdorff topology, because of the existence of local distance functions as in McDuff--Salamon \cite{McDuff_Salamon_2004}. We leave it to the reader
  to check the following statement.  

\begin{lemma}\label{lemma311}
The moduli space $\ov{\mc{W}}_{\Gamma}$ is compact and Hausdorff with respect to the topology defined in Definition \ref{defn310}.
\end{lemma}

We give a formula for the dimension of the moduli spaces of stable treed disks. Define 
\beqn
d^\lambda(l,k) = \left\{ \begin{array}{cc} l + 2k - 2,\ &\ \lambda = 0\ {\rm or}\ \infty;\\
                                           l + 2k - 1,\ &\ \lambda = 1.\end{array}\right.
\eeqn
Then $d^\lambda(l,k)$  is the dimension of ${\mc W}_{\Gamma}$ where $\Gamma$ is a top
stratum stable domain type of scale $\lambda$ with $l$ inputs
and $k$ leaves.  For a general stable $\Pi$, there is a unique top
stratum stable domain type $\Gamma$ with $\Pi \leq \Gamma$. One has  
\begin{multline}\label{eqn32}
{\rm dim} {\mc W}_\Pi = {\rm dim} {\mc W}_\Gamma - {\rm codim} \Pi\\
= d^\lambda(l, k) - \# E_{\uds \Pi}^* - 2 \#  \Big( E_\Pi^0  \cup E_\Pi^\infty \setminus E_{\uds \Pi} \Big) - \# \partial^* V_{\uds \Pi}^\infty - 2 \# \partial V_{\Pi^{\rm sup}}^\infty - {\rm b} (\uds{\Pi}) .
\end{multline}
Here ${\rm b}(\uds{\Pi} )$ is a number characterizing how many breakings $\uds{\Gamma}$ has,  defined as
\beqn
{\rm b}( \uds{\Pi} ) = \left\{ \begin{array}{l} \# {\rm breakings\ in\ }\uds{\Pi}, \hfill {\rm if\ all\ unbroken\ parts\ are\ of\ scale\ }0\ {\rm or}\ \infty;\\
                                               \# {\rm unbroken\ parts\
                                    of\ scale \ }0 + \# {\rm unbroken\
                                    parts\ of\ scale\ }\infty, \
                                    \hspace{1cm} 
{\rm in\ other\ cases\ }.     \end{array} \right.
\eeqn
In particular, if $\uds\Pi$ is unbroken, then ${\rm b}(\uds\Pi) = 0$.

\subsection{Orientations}\label{subsection34}

To define the signed counts one needs to specify orientations on the moduli spaces of domains. Consider the moduli space ${\mc M}_{k+1, l}$ of marked disks with $k+1$ boundary markings and $l$ interior markings. Each point $p \in {\mc M}_{k+1, l}$ can be represented uniquely by a configuration of points
\beqn
0 = x_1 < x_2 < \cdots < x_{k-1} < x_k = 1,\ z_1, \ldots, z_l \in {\rm Int} {\bm H}
\eeqn
where we view the $0$-th boundary marking as $\infty \in {\bm D}\cong {\bm H} \cup \{\infty\}$. Via the above representation, one can identify
\beqn
T_p {\mc M}_{k+1, l} \cong {\mb R}^{k-2} \oplus {\mb C}^l.
\eeqn
Here the $i$-th ${\mb R}$-factor represents the direction of deformations of $x_{i+1}$. We orient ${\mc M}_{k+1, l}$ via the standard orientation of ${\mb R}^{k-2} \oplus {\mb C}^l$. 


On the other hand one consider marked {\it scaled disks}, namely configurations in the upper half plane modulo translation. Let ${\rm Aff}_{k, l}$ be the moduli space of configurations of $k$ boundary points and $l$ interior points of ${\bm H}$ modulo translation. A point $p \in {\rm Aff}_{k,l}$ can be uniquely represented by a configuration 
\beqn
0 = x_1 < \cdots < x_k < \infty,\ z_1, \ldots, z_l \in {\rm Int} {\bm H}.
\eeqn
Its tangent spaces are naturally identified with ${\mb R}^{k-1} \oplus {\mb C}^l$ hence are oriented. 

We extend the orientation specified above to treed configurations. In fact we only need to orient moduli spaces ${\mc M}_\Gamma$ of domain types belonging to the top strata. First consider domain types of scale $0$ and fix integers $k, l \ge 0$. Let $\Gamma$ be a stable unbroken domain type of scale $0$ with $k$ boundary inputs and $l$ interior leaves. Suppose the metric type of $\Gamma$ requires exactly one edge have length zero. The moduli space ${\mc M}_\Gamma$ is the codimension one boundary stratum of two moduli spaces ${\mc M}_{\Gamma'}$ and ${\mc M}_{\Gamma''}$ of top dimension $2l + k-2$. We may assume that $\Gamma'$ has one fewer vertex than $\Gamma''$. 
For the inductive step, suppose one has oriented ${\mc M}_{\Gamma'}$, then we orient ${\mc M}_{\Gamma''}$ in such a way that the induced boundary orientations on ${\mc M}_\Gamma$ from ${\mc M}_{\Gamma'}$ and ${\mc M}_{\Gamma''}$ are opposite. The union of moduli spaces of top dimensions by gluing common codimension one boundary strata becomes an oriented manifold. One can orient moduli spaces of treed disks of scale $1$ and $\infty$ in a similar way; we leave the details to the reader.

\section{Perturbations}\label{section4}

In this section we define the notion of perturbation data as certain functions defined over the universal curves.

\subsection{The universal curves}\label{subsection41}

Our perturbation scheme requires us to study the geometry and topology of the universal curve of stable treed disks. Given a stable domain type $\Gamma = (\Gamma, {\mf s}, {\mf m}, {\mf p})$, the universal curve $\ov{\mc U}_\Gamma$ is a compact Hausdorff topological space admitting a projection map $\pi_\Gamma: \ov{\mc U}_\Gamma \to \ov{\mc W}_\Gamma$ such that for each stable treed disk ${\mc C}$ representing a point $p \in \ov{\mc W}_{\Gamma}$, there is a homeomorphism $\pi_{\Gamma}^{-1} (p) \cong {\mc C}$. The space $\ov{\mc U}_\Gamma$ is independent of the decoration ${\mf p}$. Furthermore, we may write the universal curve as a union
\beqn
\ov{\mc U}_\Gamma = \ov{\mc U}{}_\Gamma^{\rm 1D} \cup \ov{\mc U}{}_\Gamma^{\rm 2D},
\eeqn
where $\ov{\mc U}{}_\Gamma^{\rm 1D}$ resp. $\ov{\mc U}{}_\Gamma^{\rm 2D}$ is the one-dimensional resp. two-dimensional part. Denote 
\beqn
{\mc U}_\Gamma = \pi_\Gamma^{-1}({\mc W}_\Gamma),\ {\mc U}_\Gamma^{\rm 1D} = {\mc U}_\Gamma \cap \ov{\mc U}{}_\Gamma^{\rm 1D},\ {\mc U}_\Gamma^{\rm 2D} = {\mc U}_\Gamma \cap \ov{\mc U}{}_\Gamma^{\rm 2D}.
\eeqn
The restrictions of $\pi_\Gamma$ to ${\mc U}{}_\Gamma^{\rm 1D}$ or ${\mc U}{}_\Gamma^{\rm 2D}$ are fibre bundles. 

There are certain inclusion maps among universal curves. If $\Pi \leq \Gamma$, then there is a natural map 
\beq\label{eqn41}
\ov{\mc U}{}_\Pi \hookrightarrow \ov{\mc U}{}_\Gamma. 
\eeq

\subsection{Locality of maps}\label{subsection42}

In the argument of Cieliebak--Mohnke, perturbations are chosen  that on each component depend only on the special points on that component; we call this property {\it locality}.   

To state this property, we first introduce the following notations. Let $\Gamma$ be a stable domain type. For each vertex $v \in V_\Gamma$ (resp. edge $e \in V_\Gamma$) let
\beqn
\ov{\mc U}_{\Gamma, v} \subset \ov{\mc U}_\Gamma\ \left( {\rm resp.}\ \ov{\mc U}{}_{\Gamma, e} \subset \ov{\mc U}{}_\Gamma \right)
\eeqn
be the closed set corresponding to points on the $v$-component (resp. points on the edge $e$). More generally, for a subgraph $\Pi$ (not necessarily connected), define
\beqn
\ov{\mc U}{}_{\Gamma, \Pi} = \bigcup_{v \in V_\Pi} \ov{\mc U}{}_{\Gamma, v}  \cup \bigcup_{e \in E_\Pi} \ov{\mc U}{}_{\Gamma, e} 	\subset \ov{\mc U}_{\Gamma}.
\eeqn
Let $\Gamma(\Pi)$ be the domain type obtained by removing from $\Gamma$ all vertices not in $V_\Pi$ and all edges not in $E_\Pi$.\footnote{There is a canonical way of assigning a scale, a decoration, and a metric type on $\Gamma(\Pi)$.} Then there is a natural forgetful map 
\beq\label{eqn42}
\ov{\mc U}_{\Gamma, \Pi} \to \ov{\mc U}_{\Gamma(\Pi)}.
\eeq

\begin{defn}\label{defn41}
Let $Z$ be a set. Given a stable domain type $\Gamma \in  {\bf T}^{\rm st}$. A map $f_\Gamma: \ov{\mc U}{}_\Gamma \to Z$ is called {\it  local} if the following conditions are satisfied. 

\begin{enumerate}

\item \label{locala} For each $v \in V_{\Gamma^{\rm sup}}$, let
  $\uds\Gamma(v)$ be the subgraph consisting of all vertices in the
  base $\uds\Gamma$ and the vertex $v$ and all edges connecting
  them. Then there exists a map
  $f_v: \ov{\mc U}_{\Gamma(\uds\Gamma(v))} \to Z$ such that the
  restriction of $f_\Gamma$ to $\ov{\mc U}_{\Gamma, \uds\Gamma(v)}$ is
  equal to the pullback of $f_v$ via the map
  $\ov{\mc U}_{\Gamma,\uds\Gamma(v)} \to \ov{\mc
    U}_{\Gamma(\uds\Gamma(v))}$.

\item \label{localb} For the base $\uds \Gamma$, there exists a map
  $f_{\uds \Gamma}: \ov{\mc U}{}_{\Gamma(\uds\Gamma)} \to Z$ such that
  the restriction of $f_\Gamma$ to $\ov{\mc U}{}_{\Gamma, \uds\Gamma}$
  is equal to the pullback of $f_{\uds \Gamma}$ via the forgetful map
  $\ov{\mc U}{}_{\Gamma, \uds\Gamma} \to \ov{\mc
    U}{}_{\Gamma(\uds\Gamma)}$.

\item For any $\Pi \leq \Gamma$, the restriction of $f_\Gamma$ to
  $\ov{\mc U}_\Pi$ via \eqref{eqn41} still satisfies \eqref{locala}
  and \eqref{localb}.

\end{enumerate}
\end{defn}

\subsection{Another forgetful map}\label{subsection43}

Before we discuss the notion of perturbation data, we describe a kind of forgetful map which is related to the method of regularizing ``crowded'' configurations and different from the forgetful map \eqref{eqn42}.

\begin{defn} \label{defn42}
  Consider a stable domain type $\Gamma$. For each subset $W \subset V_{\Gamma^{\sup}}$, let $L_W \subset L_\Gamma$ be the set
  of leaves attached to vertices in $W$. Given $W$, denote by
  $\Gamma_W$ the scaled tree obtained by the following two-step
  operation.
\begin{enumerate}
\item In the first step, for each connected component $W_a \subset W$
  with $L_{W_a} \neq \emptyset$, we forget all leaves in $L_{W_a}$ but
  the one with the largest index; then we stabilize and obtain a
  stable tree $\Gamma_{W, +}$. The set $V_\Gamma \setminus W$ clearly
  corresponds to a subset of $V_{\Gamma_{W, +}}$, whose complement is
  denoted by $\bar W_+$.
\item In the second step, for each
  $v \in V_{\Gamma_{W, +}^{\rm sup}}^1 \cap \bar W_+$ which is maximal
  in $\Gamma_{W, +}$ and has only one leaf, we replace $v$ by a new
  leaf and obtain a new tree $\Gamma_W$. The set
  $V_\Gamma \setminus W$ canonically corresponds to a subset of
  $V_{\Gamma_W}$ whose complement is denoted by $\bar W$. (See
  illustration in Figure \ref{figure4}.).
\end{enumerate}

\begin{figure}[ht]
    \centering
    \includegraphics[width=\textwidth]{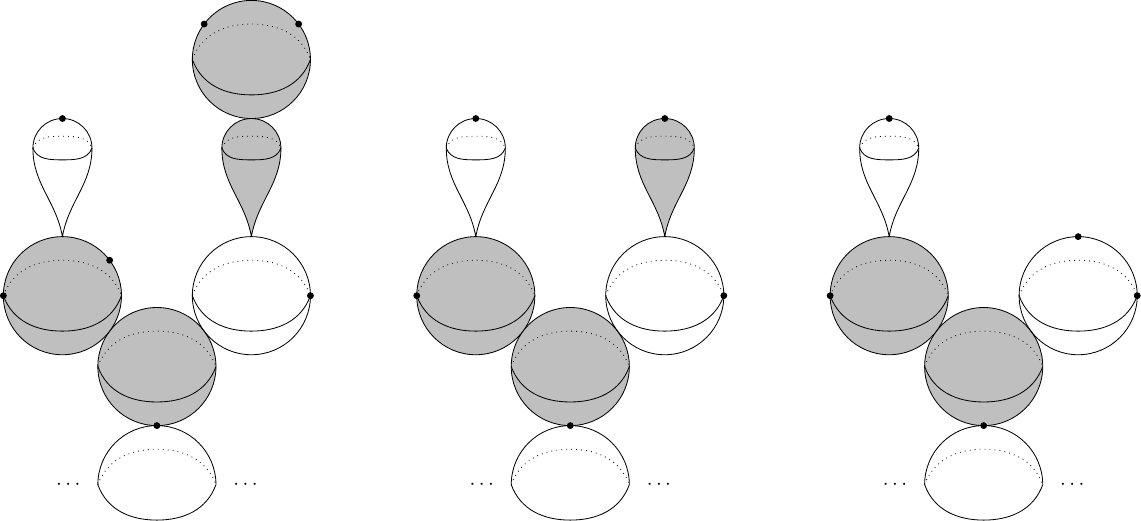}
    \caption{The forgetful map $\Gamma \mapsto \Gamma_W$. The gray components in the configuration on the left are labelled by elements of $W$. In the first step we forget all but one leaf in each connected component of $W$ and then stabilize, obtaining a stable tree (the configuration in the middle). In the second step we collapse certain vertices of scale $1$. The gray components in the configuration on the right are labelled by elements in $\bar W$.}
    \label{figure4}
\end{figure}

The graph $\Gamma_W$ inherits a metric type ${\mf m}_W$ and decoration
${\mf p}_W$ on $\Gamma_W$ as follows.

\begin{enumerate}

\item The metric type ${\mf m}_W$ is automatically induced from ${\mf m}$. 

\item The decoration ${\mf p}_W$ is defined as follows. There is a natural decomposition 
\beqn
L_{\Gamma_W} = L_{\Gamma_W}^{\rm old} \sqcup L_{\Gamma_W}^{\rm new}
\eeqn
where naturally old leaves in $L_{\Gamma_W}^{\rm old}$ corresponds to leaves in $L_\Gamma \setminus L_W$ and the values of the ole decoration ${\mf p}$ descend. Each new leaf $l \in L_{\Gamma_W}^{\rm new}$ corresponds to a connected component $W_a \subset W$ and define 
\beqn
{\mf p}_W( l ) = \prod_{l_j^a \in L_{W_a}} {\mf p}( l_j^a) \in ( \{0\} \cup {\mb N})^{\{ D_1,\ldots, D_h\}}.
\eeqn
\end{enumerate}

This construction results in a possibly empty subset $\bar W \subset \Gamma_W$. Each element of $\bar W$ corresponds to a vertex in $\Gamma$ which is not collapsed but whose valence or the number of leaves attached is changed. 
\end{defn}

\begin{rem} 
  A particular case of the above construction is the operation of
  removing certain sphere components. If $W = V_{\Gamma^{\rm sup}}^0$,
  then we denote the tree $\Gamma_W$ by $\check\Gamma$. There is then
  a contraction map \beqn \ov{\mc U}_\Gamma \to \ov{\mc
    U}{}_{\check\Gamma}.  \eeqn
\end{rem}

\subsection{Neighborhoods of nodes}\label{subsection44}

Our perturbation data are certain functions defined on
$\ov{\mc U}_\Gamma$ that vanish near nodes. We need to specify
neighborhoods of nodal points and breakings in a coherent way. In the
total space of the universal curve, let
$\ov{\mc U}{}_{\Gamma}^{\rm node}$
 denote the union of the nodes, all
edges in $E_{\underline \Gamma}^0$, and all leaves in $L_\Gamma$.  Let
$\ov{\mc U}{}_\Gamma^{\rm break}$ denote the closed subset consisting
of all broken points of edges and all infinities of infinite edges.
We would like to fix certain neighborhoods of these subsets.

\begin{defn}(Nodal neighborhoods) Assume $\Gamma \in {\bf T}^{\rm st}$.

\begin{enumerate}

\item A {\it  nodal neighborhood} in the universal curve $\ov{\mc
    U}_\Gamma$ is an open neighborhood 
$\ov{\mc U}{}^o_\Gamma$
of $\ov{\mc U}{}_\Gamma^{\rm node} \cup \ov{\mc U}{}_\Gamma^{\rm break}$ such that the characteristic function of this neighborhood is a local map (see Definition \ref{defn41}) from $\ov{\mc U}{}_\Gamma$ to $\{0, 1\}$.

\item Denote the intersection of the nodal neighborhood with $\ov{\mc U}{}_\Gamma^{\rm 2D}$ and $\ov{\mc U}{}_\Gamma^{\rm 1D}$ by 
\begin{align*}
  &\ \ov{\mc U}{}_{\Gamma}^{\rm thin} := \ov{\mc U}{}^o_\Gamma \cap \ov{\mc U}{}_\Gamma^{\rm 2D}
    \subset \ov{\mc U}{}_\Gamma^{\rm 2D},\ &\ \ov{\mc U}{}_\Gamma^{\rm
                                             long} := \ov{\mc U}^o_\Gamma \cap 
                                             \ov{\mc U}{}_\Gamma^{\rm 1D} 
                                             \subset \ov{\mc U}{}_\Gamma^{\rm 1D}
\end{align*}
and call them the thin part and the long part respectively. Denote 
\begin{align*}
&\ \ov{\mc U}{}_\Gamma^{\rm thick}:= \ov{\mc U}{}_\Gamma^{\rm 2D} \setminus \ov{\mc U}{}_\Gamma^{\rm thin},\ &\ \ov{\mc U}{}_\Gamma^{\rm short}:= \ov{\mc U}{}_\Gamma^{\rm 1D} \setminus \ov{\mc U}{}_\Gamma^{\rm long}
\end{align*}
and call them the thick part and the short part. For each $v \in V_\Gamma$, denote
\begin{align*}
&\ \ov{\mc U}{}_{\Gamma, v}^{\rm thin}:= \ov{\mc U}{}_\Gamma^{\rm thin} \cap \ov{\mc U}{}_{\Gamma, v},\ &\ \ov{\mc U}{}_{\Gamma, v}^{\rm thick}:= \ov{\mc U}{}_\Gamma^{\rm thick}\cap \ov{\mc U}{}_{\Gamma, v}.
\end{align*}

\item Given $A>0$ and a nodal neighborhood, we denote 
\beqn
|\ov{\mc U}{}_\Gamma^{\rm thick}| > A,
\eeqn
if for each $p \in {\mc W}_\Gamma$ represented by ${\mc C}_p$ and each
$v \in V_\Gamma^1\setminus V_{\uds\Gamma}$, there holds
\beqn {\rm
  Area}\Big( {\mc C}_p \cap \ov{\mc U}{}_{\Gamma, v}^{\rm thick} \Big)
> A.  \eeqn 
Here the area is taken with respect to an identification
${\mc C}_{p, v} \cong {\bm C}$. Such an identification is canonical
up to translation so the area is well-defined.
\end{enumerate}
\end{defn}

\begin{lemma}\label{lemma45}
Given $\delta > 0$, there exist a collection of nodal neighborhoods
\beqn
\Big\{ \ov{\mc U}{}_\Gamma^\delta\ |\ \Gamma \in {\bf T}^{\rm st}  \Big\}
\eeqn
that satisfy the following properties. 

\begin{enumerate}

\item For any $ \Pi \leq \Gamma$, we have 
\beqn
\ov{\mc U}{}_\Pi^\delta = \ov{\mc U}{}_\Gamma^\delta \cap \ov{\mc U}{}_{\Pi}.
\eeqn

\item For each nodal neighborhood $\ov{\mc U}{}_\Gamma^\delta$, for each vertex $v\in V_\Gamma^1$, there holds
\beq\label{eqn43}
| \ov{\mc U}{}_{\Gamma, v}^{\rm thick} | > \delta^{-2} \cdot {\rm deg}^{\rm max} \Gamma(v).
\eeq
Here ${\rm deg}^{\rm max}$ is defined by \eqref{eqn31}.

\item For each $e \in E_{\uds\Gamma}^+$ and each fibre ${\mc C}_p \subset {\mc U}_\Gamma$, ${\mc C}_{p, e}^{\rm short}:= {\mc C}_p \cap \ov{\mc U}{}_{\Gamma, e}^{\rm short} \neq \emptyset$.

\item For each $v \in V_{\uds\Gamma}$ and each fibre ${\mc C}_p \subset {\mc U}_\Gamma$, ${\mc C}_{p, v}^{\rm thick}:= {\mc C}_p \cap \ov{\mc U}{}_{\Gamma, v}^{\rm thick}$ has nonempty intersection with the boundary of the $v$-component. 

\item For each $v \in V_{\Gamma^{\rm sup}}^0$, $\ov{\mc U}{}_{\Gamma, v}^{\rm thin} = \ov{\mc U}{}_{\Gamma, v}$. 

\end{enumerate}
\end{lemma}

This lemma can be proved via an induction argument. We leave the
tedious proof to the reader. Having chosen these open sets, one can
define the notion of smoothness on functions defined on the universal
curve $\ov{\mc U}_\Gamma$ which vanish on
$\ov{\mc U}{}_\Gamma^{\rm thin} \cup \ov{\mc U}{}_\Gamma^{\rm long}$.
This is because the complement of the closure of
$\ov{\mc U}{}_\Gamma^{\rm thin} \cup \ov{\mc U}{}_\Gamma^{\rm long}$
is the union of two smooth manifolds (the 1D and 2D
components). Furthermore, by choosing certain Riemannian metrics on
the 1D and 2D components and a particular sequence $\epsilon_l$ of
positive numbers converging to zero, one can define Floer's
$C^\epsilon$-norm (see \cite{Floer_unregularized}) on such smooth
functions (possibly infinite) . More precisely, for a function $f$
vanishing on $\ov{\mc U}{}_\Gamma^\delta$, define its norm as \beqn \|
f \|_{C^\epsilon} = \| f \|_{C^\epsilon( \ov{\mc U}{}_\Gamma^{\rm
    1D})} + \| f \|_{C^\epsilon( \ov{\mc U}{}_\Gamma^{\rm 2D})}.
\eeqn We skip the precise definition of Floer's norm; all one needs to
know is that the space of $C^\epsilon$-functions contain certain
bump functions supported in arbitrarily small balls.

\subsection{Space of perturbations}\label{subsection45}

Domain-dependent perturbations are certain maps from the universal curves to an infinite-dimensional space. Now we specialize
this infinite-dimensional space ${\mc P}$. We have a space of smooth
$K$-invariant almost complex structures ${\mc J}_D$ (see
\eqref{eqn211}).  Choose a Morse function $F_L \in C^\infty(L)$ with a
unique maximum ${\bm x}_M \in L$.  The space of perturbations
$h_L \in C^\infty(L)$ of the Morse function $F_L \in C^\infty(L)$ is
denoted by ${\mc F}_L$. Define
\beqn
{\mc P} = {\mc F}_L \times {\mc J}_D.
\eeqn
We consider local functions  
\beqn
P_\Gamma = (F_\Gamma, J_\Gamma): \ov{\mc U}_\Gamma \to {\mc P}
\eeqn
whose values are in a small neighborhood of $(F_L, J_V)$. Such a map
$P_\Gamma = (F_\Gamma, J_\Gamma)$ is called a {\it  domain-dependent
  perturbation}, or simply a perturbation. There is a particular map, denoted by
$P_\Gamma^\circ$ which is equal to the constant $(F_L, J_V)$. If
$P_\Gamma$ agrees with $P_\Gamma^\circ$ over any subset of
$\ov{\mc U}_\Gamma$, we also say that $P_\Gamma$ {\it  vanishes} over
that subset.

Now we define our set of perturbations. 

\begin{defn}\label{defn46}
Given $\Gamma \in {\bf T}^{\rm st}$ and a nodal neighborhoods $\ov{\mc U}{}_\Gamma^\delta$. Let ${\mc P}_\Gamma$ be the space of smooth local maps $P_\Gamma = (F_\Gamma, J_\Gamma): \ov{\mc U}_\Gamma \to {\mc P}$ satisfying the following conditions.

\begin{enumerate}

\item $P_\Gamma = P_\Gamma^\circ$ on $\ov{\mc U}{}_\Gamma^\delta$.

\item $P_\Gamma - P_\Gamma^\circ$ has finite $C^\epsilon$-norm.

\item The restriction of $J_\Gamma$ to $\ov{\mc U}{}_\Gamma^\infty \times D$ is equal to $J_V$. 

\end{enumerate}
\end{defn}

\begin{rem} \label{rem47} 
The forgetful morphism in Subsection \ref{subsection43}
  extends to perturbations as follows.  Suppose
  $\Gamma \in {\bf T}^{\rm st}$ and $W \subset V_{\Gamma^{\rm sup}}$.
  Let $\Gamma_W$ be the domain type obtained by the operation
  defined in Subsection \ref{subsection43}. The nodal
  neighborhood $\ov{\mc U}{}_\Gamma^\delta$ induces a nodal
  neighborhood of $\ov{\mc U}_{\Gamma_W}$ defined as follows. Notice
  that there is a natural bijection
  $V_\Gamma \setminus W \cong V_{\Gamma_W} \setminus \bar W$ and an
  isomorphism $\uds\Gamma \cong \uds{\Gamma}_W$, for each
  $v \in V_{\Gamma_W} \setminus \bar W$.  Define
\beqn
\ov{\mc U}{}_{\Gamma_W, v}^{\rm thin} = \ov{\mc U}{}_{\Gamma, v}^{\rm thin}
\eeqn
and
\beqn
\ov{\mc U}{}_{\Gamma_W, \bar W}^\delta:= \prod_{v \in V_{\Gamma_W}\setminus \bar W} \ov{\mc U}{}_{\Gamma_W, v}^{\rm thin} \times \prod_{v \in \bar W} \ov{\mc U}{}_{\Gamma_W, v} \times \prod_{e \in E_{\Gamma_W}^+\cup T_{\Gamma_W}} \ov{\mc U}{}_{\Gamma, e}^{\rm long}. 
\eeqn
  If we denote by ${\mc P}_{\Gamma_W, \bar W}$ the space of 
  perturbations with respect to the above induced nodal neighborhood,
  then there is a natural map 
\beq\label{eqn44}
{\mc P}_\Gamma \to {\mc P}_{\Gamma_W, \bar W},\ P_\Gamma \mapsto P_{\Gamma_W}
\eeq 
defined as follows. For each $v \notin \bar W$ resp. an edge $e \in E_{{\uds \Gamma}_W}$, the restriction of $P_{\Gamma_W}$ on to the component corresponding to $v$ resp. the edge corresponding to $e$ is inherited from $P_\Gamma$; for each $v \in \bar W$, the restriction of $P_{\Gamma_W}$ to the component corresponding to $v$ is $J_V$. The locality condition implies that this map is well-defined. A right inverse of this map is given by choosing zero perturbation on the collapsed components.  Since the map is surjective and linear, the preimage of a comeager subset is also  comeager.   This fact will play a role in the proof of Proposition \ref{prop611}). 
\end{rem}

Perturbation data for different domain types need to satisfy compatibility condition. 

\begin{defn}\label{coherent}
A system of perturbation data 
\beqn
{\bm P} = \{ P_\Gamma \in {\mc P}_\Gamma \ |\ \Gamma \in {\bf T}^{\rm st}\}.
\eeqn
is {\it  coherent} if the following conditions are satisfied:
\begin{enumerate}

\item If $\Pi \leq \Gamma$, then with respect to the natural inclusion $\ov{\mc U}_\Pi \hookrightarrow \ov{\mc U}_\Gamma$ (see \eqref{eqn41}), $P_\Pi$ agrees with the restriction of $P_\Gamma$.

\item If $\Pi$ is an unbroken part of $\Gamma$, then the restriction
  of $P_\Gamma$ to $\ov{\mc U}{}_{\Gamma, \Pi}$ is equal to the
  pullback of $P_\Pi$ via the forgetful map
  $\ov{\mc U}{}_{\Gamma, \Pi} \to \ov{\mc U}{}_{\Gamma(\Pi)} \cong \ov{\mc U}_\Pi$ (see
  \eqref{eqn42}). In other words, $P_\Gamma$ is determined by $P_\Pi$
  for all its unbroken parts.
\end{enumerate}
\end{defn}

\section{Moduli Spaces}\label{section5}

In this section we construct moduli space of pseudoholomorphic curves and vortices appearing in definitions of the $A_\infty$
algebras and the $A_\infty$ morphism.

\subsection{Map types}\label{subsection51}

The combinatorial type of a ``treed vortex'' is defined now. Recall from Section \ref{subsection45} that we have chosen a Morse function $F_L: L \to {\mb R}$ which has a unique maximum ${\bm x}_M$ (since $L$ is connected).  For ${\bm x} \in {\rm Crit} F$ we assign the degree
\beqn 
|{\bm x}| = {\rm dim} L - {\rm Morse\ index\ of\ } {\bm
  x} \in {\mb Z} \cap [0,\dim L]  \eeqn 
On the other hand, consider the commutative diagram 
\beqn
\vcenter{ \xymatrix{ H_2(X ) \ar[r] \ar[d] & H_2^K(V ) \ar[d]\\
    H_2( X, L ) \ar[r] & H_2^K ( V, L_V)} } \eeqn 
with horizontal maps given by pullback.  We use $B$ to denote an
element of $H_2^K(V, L_V)$. We say that a holomorphic sphere in $X$
(resp. a holomorphic disk in $X$ resp. an affine vortex over ${\bm C}$
resp. an affine vortex over ${\bm H}$) {\it represents} $B$ if its
homology class in $H_2(X)$ (resp.  $H_2( X, L)$ resp. $H_2^K(V)$
resp. $H_2^K(V, L_V)$) is mapped to $B$.  The Chern number
of an absolute class 
\beqn 
c_1(B) \in {\mb Z}  
\eeqn
is one half of the Maslov index of a relative
class.

We label constraints at interior markings by the following kind of
data. Let
\beqn V^\us = \bigsqcup_{l=1}^m V_l^{\us}  \eeqn 
be a stratification of the unstable locus $V^\us$ by smooth complex submanifolds. 
Define
\beqn {\mc O}:= {\mc O}^{\rm stable} \sqcup {\mc O}^{\rm unstable}
\sqcup {\mc O}^{\rm tangential} \eeqn 
where \beqn {\mc O}^{\rm
  unstable}:= \{ V_1^{\rm us}, \ldots, V_m^{\rm us} \}, \eeqn 
and
\beqn {\mc O}^{\rm stable}:= \{ D_T^{\rm st}\ |\ T \subset \{D_1,
\ldots, D_h \} \} \cup \{ D_a^{\rm st} \cap S_i\ |\ D_a \in \{D_1,
\ldots, D_h\}, S_i \in \{S_1, \ldots, S_N\}\}, \eeqn 
where
\begin{itemize}
    \item for every subset $T \subset \{D_1, \ldots, D_h\}$,
\beqn
D_T = \bigcap_{D_a\in T} D_a.
\eeqn
\item the divisors
$S_1, \ldots, S_N$ are those in the condition (S3) in Definition \ref{defn12}), \end{itemize}
and
\beqn
{\mc O}^{\rm tangential}:= \{ D_a^m\ |\ D_a \in \{D_1, \ldots, D_h\}, m \geq 2 \}.
\eeqn
An element of ${\mc O}$ is called a {\it  constraint}; it is called a stable, unstable, or tangential if it lies in the corresponding subset of ${\mc O}$. Each constraint $O \in {\mc O}$ also defines a smooth $G$-invariant submanifold $V_O \subset V$ which descends to a smooth (possibly empty) submanifold $\bar V_O \subset X$. There is a particular constraint
\beqn
1 \in {\mc O}^{\rm st}
\eeqn
which corresponds to the open submanifold $V^{\rm st}$. For $O \in {\mc O}$, define its {\it degree} as
\beq \label{condeg}
\delta_O = \left\{\begin{array}{cc} {\rm codim} V_O,\ &\ O \notin {\mc O}^{\rm tangential};\\
                                    2m,\ & \ O = D_a^m \in {\mc O}^{\rm tangential}. \end{array}  \right.
\eeq

\begin{defn}\label{defn51}(Map types)

\begin{enumerate}

\item An unbroken {\it map type} is a tuple ${\bm \Gamma} = (\Gamma, {\bf B}, {\bf O}, {\bf X})$ where
\begin{itemize}
\item $\Gamma \in {\bf T}$ is an unbroken domain type. 

\item ${\bf B}: V_\Gamma \to H_2^K(V, L_V; {\mb Z})$ is a collection of curve classes $B_\alpha$.

\item ${\bf O}: L_\Gamma \to {\mc O}$ is a collection of constraints. 

\item ${\bf X}: T_\Gamma \to {\rm Crit} F_L$ is a sequence of elements in the set of critical points of $F$. It is usually denoted as ${\bf X} = ({\bm x}_1, \ldots, {\bm x}_k; {\bm x}_\infty)$ if $\Gamma$ has $k$ inputs. 

\end{itemize}
The notion of unbroken map types can be easily extended to the notion of {\it  broken} map types. We skip the details. Every map type ${\bm \Gamma}$ has an underlying domain type $\Gamma$ and the correspondence ${{\bm \Gamma}} \to \Gamma$ will be used frequently without explicit explanation. The map type ${\bm \Gamma}$ is then called a {\it   refinement} of $\Gamma$. 

\item The total energy $\omega ({\bm \Gamma})$ and the total Chern number $c_1 ({\bm \Gamma})$ of $\bm \Gamma$ is defined as 
\begin{align*}
&\ \omega({{\bm \Gamma}}):=\sum_{v_\alpha \in V_\Gamma} \omega (B_\alpha),\ &\ c_1({{\bm \Gamma}}): =  \sum_{v_\alpha \in V_\Gamma} c_1(B_\alpha).%
\end{align*}

\item Define 
\beqn
|{\bf X}| = |{\bm x}_\infty| - \sum_{t_j \in T_\Gamma} |{\bm x}_j|.
\eeqn

\item A map type ${\bm \Gamma}$ is {\it  stable} if 
\begin{itemize}
\item for every unstable $v_\alpha \in V_\Gamma$, $B_\alpha \neq 0$;

\item for every infinite edge in $\Gamma$, the labeling critical points at the two ends are different. 
\end{itemize}

\item For two map types ${{\bm \Gamma}}$, ${{\bm \Gamma}}'$, denote ${{\bm \Gamma}}' \leq {{\bm \Gamma}}$ if the following conditions are satisfied:
\begin{itemize}
\item $\Gamma' \leq \Gamma$, which implies a tree map $\rho: \Gamma' \to \Gamma$. 

\item For any boundary tail $t_{\uds i}' \in  T_{\Gamma'}$ and $t_{\uds i} = \rho_T (t_{\uds i}) \in T_\Gamma$, ${\bm x}_{\uds i}' = {\bm x}_{\uds i}$.

\item For any vertex $v_\alpha \in V_\Gamma$, we require

\beqn
B_\alpha = \sum_{v_\beta' \in \rho_V^{-1}(v_\alpha)} B_\beta'.
\eeqn

\item For $l_j' \in L_{\Gamma'}$ and $l_i = \rho_L ( l_j') \in L_\Gamma$ with corresponding constraints $O_j'$ and $O_i$, if $O_j' \in {\mc O}^{\rm unstable}$, then $V_{O_j'} \cap \ov{V_{O_j}} \neq \emptyset$; otherwise $O_j' = O_i$.
\end{itemize}
\end{enumerate}

\end{defn}

\subsection{Treed vortices}\label{subsection52}

A treed vortex is a combination of disks, spheres, and vortices, sometimes with Lagrangian boundary condition, pseudoholomorphic with
respect to a given collection of domain-dependent almost complex
structures, together with gradient segments of the given
domain-dependent Morse function. Given $\Gamma \in {\bf T}$ and a
perturbation
$P_{\Gamma^{\rm st}} = (J_{\Gamma^{\rm st}}, F_{\Gamma^{\rm st}}) \in
{\mc P}_{\Gamma^{\rm st}}$
where $\Gamma^{\rm st}$ is the stabilization of $\Gamma$, for each
treed disk ${\mc C}$ of domain type $\Gamma$, there are an induced family of
almost complex structures $J_{{\mc C}}: {\mc C} \to {\mc J}_D$ and an
induced family of domain-dependent smooth functions
$F_{{\mc C}}: {\mc C} \to {\mc F}$.
Then for each
$v_\alpha \in V_\Gamma$, $J_{\mc C}$ induces a map $J_\alpha: \Sigma_\alpha \to {\mc J}_D$; for each
$e \in T_\Gamma \cup E_{\uds\Gamma}^+$, ${\mc F}_C$ induces a map $F_e: I_e \times L \to {\mb R}$.

\begin{defn}\label{defn52}(Treed scaled vortex)
Given $\Gamma$ and $P_{\Gamma^{\rm st}}$ as above.

\begin{enumerate}

\item A {\it  treed scaled vortex} of domain type $\Gamma$ is a collection
\beqn
{\mc V}:= \big\{ {\mc C}, ({\bm v}_\alpha;v_\alpha \in V_\Gamma), (x_e; e \in E_{\uds\Gamma}^+ \cup T_\Gamma) \big\} 
\eeqn
where 
\begin{itemize}
\item ${\mc C}$ is a treed disk with domain type $\Gamma$;

\item For $v_\alpha \in V_\Gamma^0$, ${\bm v}_\alpha$ is a $K$-orbit of $J_\alpha$-holomorphic maps from $\Sigma_\alpha$ to $V$ with boundary mapped into $L_V$;

\item For $v_\alpha \in V_\Gamma^1$, ${\bm v}_\alpha$ is a gauge equivalence class of affine vortices over $\Sigma_\alpha$;

\item For $v_\alpha \in V_\Gamma^\infty$, ${\bm v}_\alpha = u_\alpha$ is a $I_\alpha$-holomorphic map from $\Sigma_\alpha$ to $X$ with boundary mapped into $L$. Here $I_\alpha$ is the domain-dependent almost complex structure on $X$ induced from $J_\alpha$;
\item For $e \in E_{\uds\Gamma}^+ \cup T_\Gamma$, $x_e: I_e\to L$ is a solution to the gradient flow equation 
\beqn
x_e'(t) + \nabla F_e(t) (x_e(t)) = 0;
\eeqn
\end{itemize}
such that the following conditions are satisfied:
\begin{itemize}

\item There are obvious matching conditions at nodes and points where one-dimensional components are attached to two-dimensional components. We omit the details.

\item When $t_-, t_+\in T_\Gamma$ form an infinite edge, $x_{t_-} = x_{t_+}$ as maps.

\end{itemize}

\item Let ${\bm \Gamma}$ be a map type refining $\Gamma$. We say that a treed scaled vortex ${\mc V}$ defined as above is of map type ${\bm \Gamma}$ if the following conditions are satisfied.
\begin{itemize}

\item For each $v_\alpha \in V_\Gamma$, ${\bm v}_\alpha$ represents the curve class $B_\alpha$;

\item For each $e \in T_\Gamma$, $x_e(t)$ converges to the prescribed critical point;

\item (Contact order condition) For each leaf $l_i \in L_\Gamma$ corresponding to the marking $z_i \in \Sigma_\alpha$ in ${\mc C}$, if $v_\alpha \in V_\Gamma^0 \cup V_\Gamma^1$, then the image of ${\bm v}_\alpha$ at $z_i$ is contained in $V_{O_i}$; if $v_\alpha \in V_\Gamma^\infty$, then the image of ${\bm v}_\alpha$ at $z_i$ is contained in $\bar V_{O_i}$. Moreover, if ${\bm v}_\alpha$ is not contained in $D_a$ and the constraint $O_i = D_a^{m_i}$ is tangential, then the local intersection between ${\bm v}_\alpha$ with $D_a$ (or $\bar D_a$) is at least $m_i$. 
\end{itemize}

\item An isomorphism between treed scaled vortices is a domain
  isomorphism intertwining the maps and vortices up to gauge
  symmetry. Given a map type ${\bm \Gamma}$, let
  ${\mc M}_{{\bm \Gamma}}( P_{\Gamma^{\rm st}})$ be the set of
  isomorphism classes of treed scaled vortices of map type ${\bm \Gamma}$.
  In the next section we equip this set with a topology and call it
  the moduli space of treed vortices.

\end{enumerate}
\end{defn}

\subsection{Convergence and compactness}\label{subsection53}
 
We describe a notion of Gromov convergence for sequences of vortices. We first consider the convergence of affine vortices over ${\bm C}$ defined by Ziltener \cite{Ziltener_book}. Let $(\Gamma, {\mf s})$ be the scaled base-free tree of scale $1$ and no leaves. Such map types model stable affine vortices with no markings. Let
\beqn
{\mc V}_\infty = \big\{ {\mc C}_\infty, ( {\bm v}_{\infty, \alpha}; v_\alpha \in V_\Gamma) \big\}
\eeqn
be a stable affine vortex of domain type $\Gamma$ as defined in Definition \ref{defn52}. Recall that each vertex $v_\alpha \in V_\Gamma$ gives a surface $\Sigma_\alpha$ biholomorphic to a copy of ${\bm C}$, and each edge $e_{\alpha \beta}$ corresponds to a point $w_{\alpha\beta} \in \Sigma_\beta$ (a node); the set of nodes attached to $\Sigma_\beta$ is denoted by $W_\beta$. For simplicity we also fix the almost complex structure to be $J_V$. 

\begin{defn} {\rm (Gromov convergence of vortices)\cite{Ziltener_book}}  
Let ${\bm v}_k = (u_k, \phi_k, \psi_k)$ be a sequence of affine vortices over ${\bm C}$. We say that ${\bm v}_k$ {\it converges} to ${\mc V}_\infty$ if there exist sequences of M\"obius transformations $\varphi_{k,\alpha}: \Sigma_\alpha \to {\bm C}$ (for all $v_\alpha \in V_\Gamma$) and sequences of gauge transformations $\zeta_{k,\alpha}: \Sigma_\alpha \to K$ (for all $v_\alpha \in V_\Gamma^0 \cup V_\Gamma^1$) satisfying the following.
\begin{enumerate}

\item For $v_\alpha \in V_\Pi^0$, suppose the $K$-orbit of holomorphic maps ${\bm v}_{\infty, \alpha}$ is represented by a holomorphic map $u_{\infty, \alpha}: \Sigma_\alpha \to V$. We require that $\zeta_{k,\alpha}^* {\bm v}_k \circ \varphi_{k,\alpha}$ converges in c.c.t. (see Subsection \ref{subsection22}) to $(u_{\infty, \alpha}, 0, 0)$.

\item For $v_\alpha \in V_\Gamma^1$, $\varphi_{k, \alpha}$ are translations of the complex plane ${\bm C}$ and $\zeta_{k,\alpha}^* {\bm v}_k \circ \varphi_{k,\alpha}$ converges in c.c.t. to ${\bm v}_{\infty, \alpha}$. 

\item For $v_\alpha \in V_\Pi^\infty$, we require that
  $\mu( u_k \circ \varphi_{k,\alpha})$ converges to zero uniformly on any
  compact subset $Q \subset \Sigma_\alpha \setminus W_\alpha$.
  Consider the sequence of maps $\pi \circ u_k \circ \varphi_{k,\alpha}$
  from $Q$ to $X$ where $\pi: V^{\rm st} \to X$ is the projection. We
  require further that $\pi \circ u_k \circ \varphi_{k,\alpha}$ converges
  uniformly to $u_{\infty, \alpha}$ over all $Q$.

\item For each edge $e_{\alpha\beta} \in E_\Gamma$, the sequence of M\"obius transformations $\varphi_{\beta, k}^{-1} \circ \varphi_{\alpha, k}$ converges to the constant $w_{\alpha\beta}\in \Sigma_\beta$.

\item $E({\bm v}_k)$ converges to $E({\bm v}_\infty)$. 
\end{enumerate}
\end{defn} 

One can generalize the above notion to include markings or with
boundary conditions and define convergence for 1) affine vortices with
markings; 2) affine vortices over ${\bm H}$ with both interior and
boundary markings. We omit the details.

We expand the definition to a notion of convergence for unbroken
stable treed scaled vortices.  Let $\Gamma\in {\bf T}^{\rm st}$ be an
unbroken stable domain type and $P_\Gamma \in {\mc P}_\Gamma$
be a perturbation datum on $\ov{\mc U}_\Gamma$. Let $\Pi \leq \Gamma$
be another (not necessarily stable) domain type. Then
$P_\Gamma$ induces a domain-dependent perturbation $P_{\mc C}$ for all
treed disks ${\mc C}$ of domain type $\Pi$. Let \beqn {\mc V}_k = \big\{ {\mc
  C}_k, ({\bm v}_{k,\alpha}; v_\alpha \in V_\Gamma), (x_{k,e}; e \in
E_{\uds\Gamma}^+ \cup T_\Gamma) \big\},\ k = 1, 2, \ldots \eeqn be a
sequence of stable treed scaled vortices of domain type $\Gamma$. Let \beqn
{\mc V}_\infty = \big\{ {\mc C}_\infty, ({\bm v}_{\infty,\alpha};
v_\alpha \in V_\Pi), (x_{\infty,e}; e\in E_{\uds\Pi}^+ \cup T_\Pi)
\big\} \eeqn be a stable treed scaled vortex of domain type $\Pi$ for the perturbation $P_{\mc C}$.

\begin{defn}\label{defn54}
We say that ${\mc V}_k$ converges to ${\mc V}_\infty$ if the following are satisfied.

\begin{enumerate}

\item Let ${\mc C}_\infty^{\rm st}$ be the stabilization of ${\mc C}_\infty$. Then we require that ${\mc C}_k$ converges to ${\mc C}_\infty^{\rm st}$ in the sense of Definition \ref{defn310}.

\item Let $\rho_V: V_\Pi \to V_\Gamma$ be the map induced from the relation $\Pi \leq \Gamma$. For each $v_\alpha \in  V_\Gamma$, consider the sequence of gauged maps ${\bm v}_{k,\alpha}$ with domain $\Sigma_\alpha$. The preimage of $v_\alpha$ under $\rho_V$ corresponds to a subtree $\Pi_\alpha$ of $\Pi$, which also corresponds to a stable affine vortex ${\mc V}_{\infty, \alpha}$ of domain type $\Pi_\alpha$. Then we require that the sequence ${\bm v}_{k, \alpha}$ converges to ${\mc V}_{\infty, \alpha}$, in the sense we discussed before this definition. 

\item Let $\rho_E: E_{\uds \Pi} \cup T_\Pi \to E_{\uds \Gamma} \to T_\Gamma$ be the map induced from the relation $\Pi \leq \Gamma$. For an edge of positive length or boundary tail $e \in E_{\uds \Gamma}^+ \sqcup T_\Gamma$, its preimage under $\rho_E$ corresponds to a (chain of) edges in $\Pi$, which in ${\mc V}_\infty$ corresponds to a (broken) perturbed gradient line (with possibly finite ends). If the lengths of $e$ in ${\mc C}_k$ do not converge to zero, then the sequence of perturbed gradient lines $x_{k,e}$ converges uniformly in all derivatives to the (broken) perturbed gradient line in ${\mc V}_\infty$. If the lengths of $e$ converge to zero, then there is no extra requirement for the sequence of perturbed gradient lines $x_{k,e}$. 
\end{enumerate}
\end{defn}

The above definition extends in a straightforward way to the case that the domain type $\Gamma$ of the sequence is broken. The uniqueness of the sequential limit up to isomorphism can be proved in a similar manner as the case of pseudoholomorphic maps. We omit the details.

\begin{prop}[Uniqueness of sequential limit up to isomorphism] The sequential limits defined as in Definition \ref{defn54} are unique up to isomorphism.
\end{prop}

The tangency order of the limiting objects to the stabilizing divisor may increase but not decrease. Hence if one fixes the tangency orders of all elements of a sequence, the limit has the same tangency order. Other kinds of constraints are also preserved. 

\begin{prop}[Tangency orders and constraints are preserved under convergence]
If in Definition \ref{defn54} ${\mc V}_k \in {\mc M}_{\bm\Gamma}(P_\Gamma)$ for a common map type ${\bm \Gamma}$ for all $k$, then there is a unique refinement ${\bm \Pi}$ of $\Pi$ such that ${\bm \Pi} \leq {\bm \Gamma}$ and ${\mc V}_\infty \in {\mc M}_{\bm\Pi}(P_{\Pi^{\rm st}})$.
\end{prop}

The sequential compactness theorem for treed vortices we will need is
the following.

\begin{thm}\label{thm57} {\it (Compactness of fixed domain type)} Let ${\mc V}_k$ be a sequence of stable treed scaled vortices with isomorphic stable underlying domain type $\Gamma \in {\bf T}^{\rm st}$, while the equations are defined by a common perturbation $P_{\Gamma} \in {\mc P}_\Gamma$. Suppose the energy $E ({\mc V}_k)$ are uniformly bounded. Then there is a subsequence of ${\mc V}_k$ converging to a stable treed scaled vortex ${\mc V}_\infty$.
\end{thm}

 We do not provide the detailed proof, which is a combination of results on compactness for broken Morse trajectories, Gromov compactness for holomorphic spheres and disks, and affine vortices due to Ziltener in \cite{Ziltener_book}; the compactness result of affine vortices over ${\bm H}$ is essentially proved by Wang--Xu in \cite{Wang_Xu} (in Wang--Xu \cite{Wang_Xu} the authors actually 
proved the compactness of vortices over the unit disk under adiabatic  limit, and the case of vortices over ${\bm H}$ can be reproduced from the same argument). 

The following notation will be used for moduli spaces of vortices.   Consider a stable domain type $\Gamma$
and a refinement ${{\bm \Gamma}}$. Consider the moduli space
${\mc M}_{{\bm \Gamma}}(P_{\Gamma})$. Define %
\beq\label{eqn52}
\ov{{\mc M}_{{\bm \Gamma} }(P_{\Gamma})}:= \bigsqcup_{{\bm \Pi} \leq
  {{\bm \Gamma}}} {\mc M}_{\bm \Pi} (P_{\Pi^{\rm st}}).  \eeq 
Here $P_{\Pi^{\rm st}}$ is induced from $P_\Gamma$ by restricting to lower
stratum of the universal curve. Notice that there might be stable
refinements ${{\bm \Pi}}$ appearing in the disjoint union on the right
hand side whose underlying $\Pi$ is unstable. 

To show the sequential compactness of $\ov{ {\mc M}_{{\bm \Gamma} }(P_{\Gamma})}$, first notice that there are only finitely many
${\bm \Pi}$ appearing in the disjoint union on the right hand side of
\eqref{eqn52}. Second, given a sequence of elements in
${\mc M}_{\bm \Pi}( P_{\Pi^{\rm st}})$ with $\Pi$ unstable, we can add
a fixed number of markings to stabilize all unstable components of
$\Pi$. Then Theorem \ref{thm57} implies that a subsequence of the
stabilized sequence converge to a limit. It is routine to show that
after removing the added marked points, the convergence still
holds. Then the notion of sequential convergence puts a compact
Hausdorff topology on the moduli space \eqref{eqn52} as for the case
of holomorphic curves in \cite{McDuff_Salamon_2004}.

\begin{thm}\label{thm58}
Let $\Gamma$ be a stable domain type and ${\bm \Gamma}$ be a refinement. Let $P_{\Gamma}\in {\mc P}_\Gamma$ be a perturbation data on $\ov{\mc U}_{\Gamma}$. Then the moduli space $\ov{ {\mc M}_{{\bm \Gamma} }(P_{\Gamma})}$ has a unique compact and Hausdorff topology for which sequential convergence is the same as the sequential convergence defined by Definition \ref{defn54}.
\end{thm}

\section{Transversality and orientation}\label{section6}

In this section we regularize various moduli spaces and equip them with orientations. We construct a coherent collection of perturbation data which make relevant moduli spaces regular and which imply good compactifications of moduli spaces of expected dimensions at most one. We also equip these moduli spaces with coherent orientations so that zero-dimensional moduli spaces have well-defined counts and one-dimensional moduli spaces can be glued together along fake boundaries.

\begin{defn}\label{controlled}
Let $\Gamma$ be a domain type. 
\begin{enumerate}
\item A {\it bouquet} of $\Gamma$ is a maximal subtree $\Upsilon$ with  
  root in the set $V_{\Gamma^{\rm sup}}^1$ of vertices corresponding  
  to affine vortices over ${\bm C}$.  
\item ${\bm \Gamma}$ is {\it controlled} if for each bouquet ${\bm  
  \Upsilon}$, we have the inequality 
\beq\label{eqn61}
\omega({\bm \Upsilon}) \leq {\rm deg}^{\rm max} \Upsilon.  
\eeq  
(See \eqref{eqn31} for the definition of the notation ${\rm deg}^{\rm max}\Upsilon$.)  
\end{enumerate}
\end{defn}

\begin{defn}\label{uncrowded}
Let ${{\bm \Gamma}} = (\Gamma, {\bf B}, {\bf O}, {\bf X}) $ be a map type (see Definition \ref{defn51}).
\begin{enumerate}
\item A {\it ghost vertex} of ${{\bm \Gamma}}$ is a vertex $v_\alpha$ with $B_\alpha = 0$. A {\it ghost tree} is a subtree whose vertices are all ghost vertices.
\item A map type ${{\bm \Gamma}}$ is called {\it uncrowded} if for each ghost tree $\Xi$, there is either no leaves attached to $\Xi$, or one leaf whose constraint is nontangential. Otherwise ${\bm \Gamma}$ is called {\it crowded}.
\end{enumerate}
\end{defn}

\begin{defn}\label{regularity} {\it (Regularity and strong regularity)} Let $\Gamma$ be a stable domain type and $P_\Gamma \in {\mc P}_\Gamma$ be a perturbation. 

\begin{enumerate}
\item Assume that $V_{\Gamma^{\rm sup}}^0 = \emptyset$. We say that configuration ${\mc V} \in {\mc M}_{\bm \Gamma}^*(P_\Gamma)$ is {\it regular} if for some $W_i \subset {\mc W}_\Gamma$ with ${\mc V} \in {\mc M}_{\bm \Gamma}^*(P_\Gamma, W_i)$, the linearization of the section at ${\mc V}$ (defined later in \eqref{eqn64}) is surjective onto the fibre 
of ${\mc E}_{\bm \Gamma}(W_i)$. This condition is independent of the 
choice of $W_i$. We say that $P_\Gamma$ is regular if for all 
controlled and uncrowded refinements ${\bm \Gamma}$ of $\Gamma$, every 
element of ${\mc M}_{\bm \Gamma}^*(P_\Gamma)$ is regular. 
\item To treat transversality for crowded map types we need the following concept. Let $W \subset V_{\Gamma^{\rm sup}}$ be a subset containing  
  $V_{\Gamma^{\rm sup}}^0$ and $\Gamma_W$ the stable domain type obtained by  applying the forgetful construction described in Subsection  \ref{subsection42}. The set $W$ is supposed to be the set of ghost spherical components. Let $P_{\Gamma_W}$ be the induced perturbation on $\ov{\mc U}_{\Gamma_W}$ (see \eqref{eqn44}). We say that $P_\Gamma$ is {\it  $W$-regular} (or  
  $P_{\Gamma_W}$ is $\bar W$-regular, with notation as in Definition  
  \ref{defn42} ) if for all uncrowded and controlled refinement  
  ${\bm \Gamma}_W$ of $\Gamma_W$ with $B_v = 0$ for all  
  $v \in \bar W$, every element in  
  ${\mc M}_{{\bm \Gamma}_W}^* (P_{\Gamma_W})$ is regular.  
\item $P_\Gamma$ is called {\it  strongly regular} if it is $W$-regular for all subsets $W \subset V_{\Gamma^{\rm sup}}$ containing $V_{\Gamma^{\rm sup}}^0$.
\end{enumerate}
\end{defn}

The main result of this section is the existence of coherent systems of strongly regular perturbations:

\begin{thm}\label{thm64}
There exists a coherent system of perturbations (see Definition \ref{coherent})
\beqn
{\bm P} = ( P_\Gamma)_{\Gamma \in {\bf T}^{\rm st}},\ P_\Gamma \in {\mc P}_\Gamma
\eeqn
such that each $P_\Gamma$ is strongly regular (see Definition \ref{regularity}).
\end{thm}

\begin{rem}
  For each two such collections, ${\bm P}_\Gamma$ and
  ${\bm P}_\Gamma'$, one considers homotopies between them, i.e., for
  each $\Gamma$, a family of perturbations $P_{\Gamma}(t)$
  parametrized by $t \in [0,1]$. One can define a similar notion of
  strong regularity for families. One can also interpolate between
  choices of nodal neighborhoods, etc. The definition of strong
  regularity for families and the proof of the fact that a generic
  homotopy is strongly regular are left to the reader. Using regular
  homotopies one can prove that the homotopy classes of the $A_\infty$
  algebras and $A_\infty$ morphisms we will construct are independent
  of various choices.
\end{rem}

\subsection{Local model with tangency conditions}

The proof of Theorem \ref{thm64} is a Sard--Smale argument applied to a universal moduli space involving tangential constraints at the markings. In \cite{Cieliebak_Mohnke}, one can use higher Sobolev space $W^{k,p}$ to model maps and consider the $l$-th order derivative of any maps of regularity $W^{k, p}$, thanks to the Sobolev embedding. However if we would like to use higher Sobolev spaces to model vortices, we need to extend the result of the second name author and Venugopalan \cite{Venugopalan_Xu} which is about the local model with $W^{1,p}$-Sobolev norms only. Instead, we still use the $W^{1,p}$ norm and use the twisting trick which appeared in \cite{Zinger_2011}. We thank A. Zinger for explaining this method to us.

We define a Banach space of sections that vanish to a certain order at a given point as follows.  Recall $p\in (2, 4)$. Let $\Sigma$ be a Riemann surface, and $E \to \Sigma$ be a complex line bundle equipped with a metric and a metric connection. Let $Z = o_1 z_1 + \cdots + o_m z_m$ be an effective divisor on $\Sigma$ with multiplicities $o_i \geq 1$ for all $i$. Let $k \geq 0$. Denote by $W_Z^{k,p}(\Sigma, E) \subset W^{k,p}_{\rm loc} (\Sigma \setminus Z, E)$ the space of sections $s$ of $E$ such that for every $z_i \in Z$ there
exists a neighborhood $U_i$ of $z_i$ and a coordinate $w: U_i \to {\mb C}$ such that
\begin{align*}
&\ w(z_i) = 0,\ &\  \frac{\nabla^l s}{ w^{k - l + o_i-1}} \in L^p (U_i, E),\ \forall l = 0, \ldots, k. 
\end{align*}
In other words, $s$ is of class $W^{k, p, o_i - \frac{2}{p}}$ with respect to the cylindrical metric near $Z$. It is straightforward to check that if $s$ is holomorphic near $Z$ with respect to certain local holomorphic structure of $E$ and $s(w) = O(|w|^{o_i})$ near all $z_i$, then $s$ lies in the space $W^{k, p}_Z(\Sigma, E)$. Moreover, the following lemma is standard. 
\begin{lemma}\label{lemma66}
$W_Z^{k,p}(\Sigma, E)$ is a Banach space. Moreover, if $\Sigma$ is a compact Riemann surface without boundary, $D: \Omega^0(E) \to \Omega^{0,1}(E)$ is a real linear Cauchy--Riemann operator, then for all $k \geq 0$, $D$ induces a Fredholm operator 
\beqn
D: W_Z^{k+1, p}(\Sigma, E) \to W_Z^{k, p}( \Sigma, \Lambda^{0,1}\otimes E),
\eeqn
whose index is 
\beqn
{\rm index} D = 2 - 2g + 2 {\rm deg} E - 2 {\rm deg} Z.
\eeqn
\end{lemma}

Now we use the above Sobolev space to model vortices or holomorphic
curves with tangency conditions. We first introduce the following
notations.  Let
$\lambda \in \{0, 1, \infty\}$ and $\Sigma$ be a Riemann surface. We
only consider the following situations in this paper.  \beqn (\lambda,
\Sigma) \in \{ (0, {\bm D}^2), (1, {\bm H}), (1, {\bm C}), (\infty,
{\bm D}^2), (\infty, {\bm S}^2) \}.  \eeqn Let $J$ be a $K$-invariant
almost complex structure on $V$, possibly parametrized by points on
$\Sigma$. Let $B$ be a curve class in $H_2^K(V, L_V; {\bf
  Z})$.
Consider Fredholm triples, i.e., triples
$ ({\mc B}, {\mc E}, {\mc F}) = ({\mc B}_\Sigma^\lambda(B), {\mc
  E}_\Sigma^\lambda(B), {\mc F}_\Sigma^\lambda(B) )$
of Banach manifolds, Banach space bundles, and Fredholm sections
specified as follows.

\begin{enumerate}

\item If $(\lambda, \Sigma) = (0, {\bm D}^2)$, consider the Banach manifold $\tilde {\mc B}$ of $W^{1,p}$-maps from ${\bm D}^2$ to $V$ with boundary mapped into $L_V$ representing the class $B$. There is a Banach space bundle $\tilde {\mc E}$ whose fibre over an element of this Banach manifold $u: {\bm D}^2  \to V$ is the space $L^p(\Sigma, u^* TV)$. For any $K$-invariant almost complex structure $J$, possibly parametrized by points on ${\bm D}^2$, the $J$-holomorphic curve equation defines a section $\tilde {\mc F}: \tilde {\mc B} \to \tilde {\mc E}$. Notice that all geometric data are $K$-invariant and there is a free $K$-action on $\tilde {\mc B}$. Then after quotienting by the $K$-action, one obtains a Fredholm triple $({\mc B}, {\mc E}, {\mc F})$. 

\item If $(\lambda, \Sigma) = (1, {\bm A})$ with ${\bm A} = {\bm C}$ or ${\bm H}$, then ${\mc B}$, ${\mc E}$, and ${\mc F}$ are the objects ${\mc B}_{\bm A}^{\rm vor}(B)$, ${\mc E}_{\bm A}^{\rm vor}(B)$, and ${\mc F}_J$ described in Theorem \ref{localmodel}.

\item If $(\lambda, \Sigma) = (\infty, {\bm D}^2)$ or $(\infty, {\bm S}^2)$, then ${\mc B}$ is the space of $W^{1,p}$-maps from $\Sigma$ to $X$ with $u(\partial \Sigma) \subset L$ representing the class $B$, and ${\mc E}$ is the bundle whose fibre over a map $u$ is $L^p(\Sigma, u^* TX)$. The section ${\mc F}$ is defined by 
Cauchy-Riemann operator with respect to the almost complex structure on $X$ induced from $J$. 

\end{enumerate}

We add tangency conditions on the level of Banach
manifolds. We will only do this in detail for
$(\lambda, \Sigma) = (1, {\bm C})$ as other cases are either similar
or simpler. Recall $h$ is the number of irreducible components of the
stabilizing divisor $D$ in \eqref{Dh}, and let $Z_1, \ldots, Z_h$
be disjoint effective divisors on ${\bm C}$ and denote
$Z = (Z_1, \ldots, Z_h)$.  Let
\beqn
{\mc B}_{{\bm C},Z}^1(B) \subset {\mc B}_{\bm C}^1(B) \eeqn 
be the subset of (gauge equivalence classes of) gauged maps
${\bm v} = (u, \phi, \psi)$ satisfying: 
\[ u(Z_1 \cup \cdots \cup Z_h) \subset V^{\rm st} \]
 and if
$D_a \subset V$ is the vanishing locus of
$s_a \in H^0( \tilde V{}^{k_a})$, then 
\beqn u^* s_a \in W_{Z_a}^{1,p}
( U_a, u^* \tilde V{}^{k_a} ) \eeqn 
where $U_a \subset {\bm C}$ is a bounded open set containing
$Z_a$. Define the Banach space bundle
${\mc E}^1_{{\bm C},Z}(B) \to {\mc B}^1_{{\bm C},Z}(B)$ whose fibre
over ${\bm v}$ is the space 
\beqn \Big\{ {\bm \varsigma} = (\varsigma,
\varsigma') \in {\mc E}^1(B)|_{\bm v} \ \left|\ d s_a \circ \varsigma
  \in W_{Z_a}^{0,p} ( U_a, u^* \tilde V{}^{k_a} ),\ 0 \leq a \leq h
\right. \Big\}.  \eeqn 
Given $J \in {\mc J}_D$, consider the section \beqn {\mc F}_J: {\mc
  B}_{\bm C}^1(B) \to {\mc E}_{\bm C}^1(B) \eeqn induced by the affine
vortex equation associated to $J$. We claim that for each
${\bm v} = (u, \phi, \psi) \in {\mc B}^1_{{\bm C},Z} (B)$,
${\mc F}_J ({\bm v}) \in {\mc E}^1_{{\bm C},Z}(B)$. Indeed, since  $s_a$ is holomorphic with respect to $J_V$, we see
\begin{multline*}
ds_a \circ ( \partial_s u + {\mc X}_\phi + J ( \partial_t u + {\mc X}_\psi) ) = d s_a \circ (\partial_s u + J \partial_t u) \\
= d s_a \circ ( \partial_s u + J_V \partial_t u)+ d s_a \circ ( J - J_V) \partial_t u = \ov\partial  (u^* s_a) + d s_a \circ ( J - J_V) \partial_t u.
\end{multline*}
By the condition $u^* s_a \in W^{1,p}_{Z_a} ( U, u^* \tilde V{}^{k_a} )$, the first term $\ov\partial( u^* s_a)$ is in the space $W_{Z_a}^{0, p}$. To estimate the second term $d s_a \circ ( J - J_V) \partial_t u$, let $z$ be a local holomorphic coordinate on the domain centered at the marking. Because near the marking $u$ approaches to $D_a$ in a certain rate $|z|^o$ and $J = J_V$ along $D_a$ (see \eqref{eqn211}), one can see that $d s_a \circ ( J - J_V) \partial_t u \in W_{Z_a}^{0, p}$. Then we have a smooth Fredhom section
\beqn 
{\mc F}_{{\bm C}, J}^1: {\mc B}_{{\bm C},Z}^1(B) \to {\mc E}_{{\bm C},Z}^1(B)
\eeqn
of Banach space bundles. Lemma \ref{lemma66} implies
that its index is equal to 
\beq \label{eqn62} {\rm index} {\mc F}_{{\bm C},J}^1 =
{\rm dim} X + 2 c_1(B) - 2 \sum_{a=1}^h {\rm deg} Z_a.  \eeq
Moreover, every point in the zero locus is an affine vortex which has
the prescribed tangency conditions at the interior markings.

%
%

\subsection{Definition of regularity}\label{subsection62}

\subsubsection{Fredholm problem setup for singular domains}

We set up the Fredholm problem for a general moduli space of
affine vortices.  We
impose the following assumption in effect for this section only:

\vspace{0.2cm}

\noindent {\sc Running Assumption.} $\Gamma$ is stable and the set of
vertices 
$V_{\Gamma^{\rm sup}}^0$ corresponding to holomorphic spheres in $V$
is empty. 

\vspace{0.2cm}

We define a Banach vector bundle and section so that the moduli
  space is locally homeomorphic to the zero set. Let ${\bm \Gamma}$
be a refinement of $\Gamma$ and ${\mc C} = {\mc C}_p$ be a
stable\footnote{We only need to consider the Fredholm theory for
  stable domains.} treed disk representing a point
$p \in {\mc W}_{\Gamma}$. For each vertex $v_\alpha \in V_\Gamma$
whose scale is $\lambda \in \{0, 1, \infty\}$, one has a pair
$(\lambda_\alpha, \Sigma_\alpha)$ belonging to the categories just
discussed. Further, all tangential constraints at markings on
$\Sigma_\alpha$ defines an $h$-tuple of effective divisors
$Z_\alpha = (Z_{1, \alpha}, \ldots, Z_{h, \alpha})$. Then we define
the Fredholm triple \beqn ({\mc B}_\alpha, {\mc E}_\alpha, {\mc
  F}_\alpha) \eeqn where
${\mc B}_\alpha \subset {\mc B}_{\Sigma_\alpha,
  Z_\alpha}^{\lambda_\alpha}(B_\alpha)$
is the Banach manifold of $K$-orbits of maps into $V$ resp. gauge
equivalence classes of gauged maps to $V$ resp. maps into $X$ which
satisfy the pointwise constraints at markings labelled by
non-tangential constraints. More precisely, for each
$l_i \in L_{v_\alpha}$, if $O_i\in {\mc O}$ is non-tangential, then
the valued of the (gauged) map is contained in $V_{O_i}$
resp. $V_{O_i} \qu G$. This requirement makes sense thanks to the
Sobolev embedding $W^{1,p} \hookrightarrow C^0$; this requirement is
also gauge invariant since all constraints are $K$-invariant. Lastly,
at each interior node or the infinity at $\Sigma$ there is a smooth
evaluation map from ${\mc B}_\alpha$ to $X$; at each boundary node
there is a smooth evaluation map from ${\mc B}_\alpha$ to $L$ (see
Theorem \ref{localmodel}). Let ${\mc E}_\alpha$ be the
restriction of
${\mc E}_{\Sigma_\alpha, Z_\alpha}^{\lambda_\alpha}(B_\alpha)$ to
${\mc B}_\alpha$ and let
${\mc F}_\alpha: {\mc B}_\alpha \to {\mc E}_\alpha$ the restriction of
$F_{\Sigma_\alpha}^{\lambda_\alpha}$ to ${\mc B}_\alpha$.

For each edge or tail there is also the Fredholm theory associated to gradient lines. For each $e \in E_{\underline\Gamma}^+ \sqcup T_\Gamma$, over the interval $I_e$ there is the Banach manifold ${\mc B}_e$ consisting of $W^{1,p}$-maps from $I_e$ to $L$; moreover, when $I_e$ has infinite end(s), we also require that elements in ${\mc B}_e$ satisfy the asymptotic condition specified by the data in ${\bm \Gamma}$. Over ${\mc B}_e$ there is the bundle whose fibre over a map $x_e$ is $L^p(I_e, x_e^* TL)$. The perturbed gradient line e provides a section ${\mc F}_e: {\mc B}_e \to {\mc E}_e$. For each finite end of $I_e$ there is a smooth evaluation map ${\mc B}_e \to L$. 

Combining the bundles over the one- and two-dimensional components
gives a section of a Banach bundle cutting out the moduli space
locally. Define
\beq \label{eqn63}
{\mc B}_{{{\bm \Gamma}}}({\mc C}) \subset \Big( \prod_{v_\alpha\in V_\Gamma} {\mc
  B}_\alpha \Big) \times \Big( \prod_{e \in E_{\underline{\Gamma}}^+ \sqcup T_\Gamma} {\mc B}_e \Big) 
\eeq 
to be the open subset of the product such that for each node of ${\mc
  C}$, the distance between the evaluations at the two sides of the
nodes is less than a fixed very small constant; for example, a number
smaller than the injectivity radii of $X$ and $L$. An element of ${\mc
  B}_{\bm \Gamma}({\mc C})$ is denoted by $(({\bm v}_\alpha),
(x_e))$. Notice that there is no matching condition required for ${\mc B}_{\bm \Gamma}({\mc C})$. To include the matching
condition, denote by 
\[ \Delta_X \subset X \times X, \quad \Delta_L
\subset L \times L\] 
the diagonals. Denote 
\beqn
X_{\Gamma} = (X \times X)^{E_\Gamma \setminus E_{\uds \Gamma}} \times (L \times L)^{E_{\underline \Gamma}^*} \times (L \times L)^{\partial E_{\underline \Gamma}^+}. 
\eeqn 
Then there is a smooth evaluation map 
\beqn 
{\rm ev}_\Gamma : {\mc B}_{\bm \Gamma} ({\mc C}) \to X_\Gamma
\eeqn 
defined by taking evaluations at all nodes and ends of edges. 
By the definition in \eqref{eqn63}, the image of this evaluation map is contained in a small neighborhood $U(\Delta_\Gamma)$ of the ``diagonal''
\beqn 
\Delta_{\Gamma} = (\Delta_X)^{ E_\Gamma \setminus E_{\uds \Gamma}} \times (\Delta_L)^{E_{\underline \Gamma}^*}  \times (\Delta_L)^{\partial E_{\underline \Gamma}^+}.
\eeqn 
We may assume that $U(\Delta_\Gamma)$ can be
embedded into the normal bundle $N_\Gamma \to \Delta_\Gamma$. Denote
the composition of ${\rm ev}_\Gamma$ with the projection
$U(\Delta_\Gamma) \to \Delta_\Gamma$ by $\ov{\ev}_\Gamma$. Then let
${\mc E}_{\bm\Gamma}({\mc C}) \to {\mc B}_{\bm\Gamma} ({\mc C})$ be
the Banach vector bundle whose fibre over $( ({\bm v}_\alpha), (x_e))$
is 
\beqn 
\Big( \bigoplus_{v_\alpha \in V_\Gamma} {\mc E}_\alpha|_{{\bm v}_\alpha} \Big) \oplus \Big( \bigoplus_{e \in E_{\underline \Gamma}^+} {\mc E}_e|_{x_e} \Big) \oplus \ov{\ev}_\Gamma^* N_{\Gamma}.  
\eeqn 
Lastly, we obtained a section 
\beqn 
{\mc F}_{\bm \Gamma} ({\mc C}): {\mc B}_{\bm \Gamma} ({\mc C}) \to {\mc E}_{\bm   \Gamma} ({\mc C}).  
\eeqn 
The regularity results for vortices (see \cite[Section 3]{Cieliebak_Gaio_Mundet_Salamon_2002}) and holomorphic curves imply that each point in the zero locus of
${\mc F}_{\bm \Gamma}({\mc C})$ is a gauge equivalence class of stable
treed scaled vortex over ${\mc C}$, hence defines an element of
${\mc M}_{\bm \Gamma}(P_\Gamma)$.
  Define
\beqn {\mc B}_{\bm \Gamma}^* ({\mc C})
\subset {\mc B}_{\bm \Gamma}({\mc C}) \eeqn 
to be the subset of objects
${\mc V} = ({\mc C}, ({\bm v}_\alpha), (x_e))$ such that whenever
$B_\alpha \neq 0$, ${\bm v}_\alpha$ is not contained in $D$ or
$\bar D$.

\subsubsection{Fredholm problem setup for varying domains}

We use local trivializations of the universal curve to extend the previous setup to varying domains. For each $p \in {\mc W}_\Gamma$ with fibre ${\mc C}_p$ in the universal curve, one can find an open neighborhood $W_p\subset {\mc W}_\Gamma$ and a smooth local trivialization 
\beqn
{\mc U}_\Gamma|_{W_p} \cong W_p \times {\mc C}_p.
\eeqn
Each fibre in ${\mc U}_\Gamma|_{W_p}$ can be
identified with ${\mc C}_p$ with fixed positions of markings, a
smoothly varying domain complex structure, and varying length
functions on all positive length edges. One can cover ${\mc U}_\Gamma$
by a countably many such charts $W_{p_i}$. Abbreviate $W_{p_i} = W_i$
and ${\mc C}_i = {\mc C}_{p_i}$. For each $i$, define 
\beqn
{\mc B}_{\bm \Gamma} (W_i):=W_i \times {\mc B}_{\bm \Gamma} ({\mc C}_i),
\eeqn 
and
\beqn 
{\mc E}_{\bm \Gamma} (W_i):= W_i \times {\mc E}_{\bm \Gamma}({\mc C}_i).
\eeqn
The equations (for the varying domain complex structures and lengths) and the matching condition at nodes defines a section
\beq\label{eqn64}
{\mc F}_{\bm \Gamma} (W_i): {\mc B}_{\bm \Gamma} (W_i) \to {\mc E}_{\bm \Gamma} (W_i).
\eeq
Define
\beqn
{\mc M}_{\bm\Gamma} (P_{\Gamma}, W_i) = {\mc F}_{\bm \Gamma} (W_i)^{-1}(0). 
\eeqn
Then there is a natural equivalence relation $\sim$ on the disjoint union of all ${\mc M}_{\bm\Gamma} (P_{\Gamma}, W_i)$. Consider the topological space 
\beq\label{eqn65} 
\bigsqcup_i {\mc M}_{\bm\Gamma} (P_{\Gamma}, W_i) / \sim.  
\eeq
The usual regularity results for holomorphic curves and the regularity of vortices (see \cite[Section 3]{Cieliebak_Gaio_Mundet_Salamon_2002}) imply that this space is naturally identified with the moduli space ${\mc M}_{\bm \Gamma}(P_\Gamma)$ defined in Section \ref{section5}. Moreover, it is a standard to prove that the notion of convergence defined in Section \ref{section5} is equivalent to the convergence induced from the topology of the Banach manifolds. Hence the identification is actually a homeomorphism. Lastly, define 
\beqn
{\mc M}_{{\bm \Gamma}}^*(P_\Gamma) \subset {\mc M}_{{\bm \Gamma}}(P_\Gamma)
\eeqn
to be the open subset of elements which do not have nonconstant components mapped into the stabilizing divisor $D = D_1 \cup \cdots \cup D_h$. 
For regular perturbations the moduli space ${\mc M}_{{\bm \Gamma}}^*(P_\Gamma)$ is a smooth manifold, whose dimension is given by a formula derived from the Riemann--Roch theorem for real Cauchy--Riemann operators in
\cite[Appendix]{McDuff_Salamon_2004}. We leave the verification to the reader.

\begin{prop}\label{prop67}
Suppose $\Gamma \in {\bf T}^{\rm st}$ and $V_{\Gamma^{\rm sup}}^0 = \emptyset$. Suppose $P_\Gamma$ is regular. Let ${\bm \Gamma}$ be a controlled and uncrowded map type ${\bm \Gamma}$ refining $\Gamma$. Also suppose for all ghost vertices $v_\alpha$ and any leaf $l_i$ attached to $v_\alpha$, the constraint $O_i$ is nontangential. Then one has
\beqn
{\rm index} {\bm \Gamma}:= {\rm dim} {\mc M}_{{\bm \Gamma}}^*(P_\Gamma) = {\rm dim} {\mc W}_\Gamma + 2 c_1( {\bm \Gamma}) + |{\bf X}| - \sum_{l_i \in L_\Gamma} \delta_{O_i}.
\eeqn
(See the definitions of relevant terms in Definition \ref{defn51}, \eqref{eqn52}, 
and \eqref{condeg}.)
\end{prop}

\subsection{The induction construction}\label{subsection63}

Before we start the inductive construction of coherent regular
  perturbation we introduce the
following notation. Define an equivalence relation $\Gamma\sim \Pi$
among stable domain types to be generated by the relation 
\beqn \Pi \leq
\Gamma\ {\rm and}\ \rho|_{\uds\Pi}: \uds \Pi \to \uds \Gamma\ {\rm is\
  an\ isomorphism} \eeqn 
where $\rho: \Pi \to \Gamma$ is the tree map induced by the relation
$\Pi \leq \Gamma$. Roughly speaking, $\Gamma \sim \Pi$ if $\Gamma,\Pi$ have the ``same'' base. Denote by $[\Gamma]$
the equivalence class of $\Gamma$. It is not hard to verify that
The partial order $\leq$ among stable domain types descends to a partial
order among equivalence classes, which is still denoted by $\leq$.

Our inductive construction is based on the partial order among
equivalence classes. Let $[\Gamma]$ be an equivalence class. As the
induction hypothesis, we assume that we have constructed $P_\Pi$ for
all $\Pi$ in all equivalence classes $[\Pi]$ with
$[\Pi] \lneqq [\Gamma]$. Let $\Gamma$ be any domain type in
$[\Gamma]$. Define a Banach manifold as follows. The pre-chosen
perturbation data $P_\Pi$ determine the value of $P_\Gamma$ over
lower strata $\ov{\mc U}{}_{\Gamma, \Pi}$ for all $\Pi \lneqq \Gamma$
and $[\Pi] \lneqq [\Gamma]$. This gives a Banach manifold 
\beqn {\mc
  P}_\Gamma^* \subset {\mc P}_\Gamma \eeqn 
consisting of perturbations whose values over all these subsets agree
with the pre-chosen ones. We would like to use the Sard--Smale theorem
to find a comeager subset of ${\mc P}_\Gamma^*$ consisting of
perturbations satisfying the regularity condition.

\begin{lemma}\label{lemma68}
For $\delta > 0$ sufficiently small, there exists a comeager subset ${\mc P}_{\Gamma}^{*, {\rm s.reg}} \subset {\mc P}_{\Gamma}^*$ whose
  elements are strongly regular. \end{lemma}

\begin{proof}
By definition, the strong regularity of a perturbation $P_\Gamma$ consists of finitely many $W$-regularity properties. We would like to prove the following claim.

\vspace{0.2cm}

\noindent {\it  Claim.} For every $W \subset V_{\Gamma^{\rm sup}}$
containing $V_{\Gamma^{\rm sup}}^0$, there is a comeager subset ${\mc
  P}_{\Gamma_W, \bar W}^{*, {\rm reg}}$ consisting of $\bar W$-regular
perturbations, with notation as in Definition \ref{defn42}.
\vspace{0.2cm}

Once this claim is proved, then via the forgetful map 
\beqn
{\mc P}_\Gamma^* \to {\mc P}_{\Gamma_W, \bar W}^*
\eeqn
(see Remark \ref{rem47} and \eqref{eqn44}) the comeager subsets provided by this claim can all be pulled back to comeager subsets of ${\mc P}_\Gamma^*$. Since there are only finitely many, their intersection is still comeager, denoted by ${\mc P}_\Gamma^{*, {\rm s.reg}}$. 

Now we need to prove the above claim. Since the arguments are basically the same, we only prove the case that $W = \emptyset$, and assume that $V_{\Gamma^{\rm sup}}^0 = \emptyset$. Consider a controlled and uncrowded refinement ${\bm \Gamma}$. We would like to consider a ``universal'' version of the section \eqref{eqn64} by adding deformations of perturbation data. We omit $W_i$ from the notation. Consider the universal section
\beq\label{eqn66} 
{\mc F}_{\bm \Gamma}: {\mc P}_\Gamma^* \times {\mc B}_{\bm\Gamma}^* \to {\mc E}_{\bm\Gamma}.
\eeq
We would like to prove that ${\mc F}_{\bm \Gamma}$ is transversal to the moduli space. In fact this follows from the claim that for each fibre ${\mc C} = {\mc C}_p$ of ${\mc U}_\Gamma \to {\mc W}_\Gamma$, the section
\beq\label{eqn67}
{\mc F}_{\bm \Gamma}({\mc C}): {\mc P}_\Gamma^*  \times {\mc B}_{\bm \Gamma}^*({\mc C}) \to {\mc E}_{\bm \Gamma}({\mc C})
\eeq
is transverse to the zero section. 

Now we consider the more precise description of the linearization of ${\mc F}_{\bm \Gamma}({\mc C})$. Consider a zero point $(P_\Gamma, {\mc V})$. There is a decomposition 
\beqn
T_{P_\Gamma} {\mc P}_\Gamma^*   = \bigoplus_{ v_\alpha \in V_\Gamma} T_\alpha^{\mc P} \oplus \bigoplus_{e \in E_{\uds\Gamma}^+} T_e^{\mc P}
\eeqn
where each summand is the subspace of infinitesimal perturbations that vanishes on all but one component. There is also a decomposition
\beqn
T_{\mc V} {\mc B}_{\bm \Gamma}^*({\mc C}) = \bigoplus_{v_\alpha \in V_\Gamma} T_\alpha^{\mc B} \oplus \bigoplus_{e \in E_{\uds \Gamma}^+} T_e^{\mc B}.
\eeqn
Let $\mathring{T}_\alpha^{\mc B} \subset T_\alpha^{\mc B}$ be the
subspace of deformations that vanish at nodes (but not markings) on
the $v_\alpha$-components and let
$\mathring{T}_e^{\mc B} \subset T_e^{\mc B}$ be the space of
deformations that vanish at the boundaries of the interval
$I_e$. There are also summands
${\mc E}_\alpha \subset {\mc E}_{\bm \Gamma}({\mc C})|_{\mc V}$ and
${\mc E}_e \subset {\mc E}_{\bm \Gamma}({\mc C})|_{\mc V}$. Then the
linearization of ${\mc F}_{\bm \Gamma}({\mc C})$ at ${\mc V}$ induces
linear maps
\beq\label{eqn68}
{\mc D}_\alpha: T_\alpha^{\mc P} \oplus \mathring{T}_\alpha^{\mc B} \to {\mc E}_\alpha
\eeq
\beq\label{eqn69}
{\mc D}_e: T_e^{\mc P}\oplus \mathring{T}_e^{\mc B} \to {\mc E}_e.
\eeq
For the same reason as in Cieliebak--Mohnke's approach in \cite{Cieliebak_Mohnke}, since the refinement is uncrowded, one only needs to show that for all components $v_\alpha$ with $B_\alpha \neq 0$ (resp. all edges $e \in E_{\uds\Gamma}^+\cup T_\Gamma$), the linear map \eqref{eqn68} (resp. the map \eqref{eqn69}) is surjective. These different cases are treated as follows.

\begin{enumerate}

\item First we consider transversality for gradient flow lines.  For each edge $e \in E_{\uds \Gamma}^+$, denote the variables of ${\mc D}_e$
  by $f_e \in T_e^{\mc P}$ (deformation of
  $F_e: I_e \times L \to {\mb R}$) and
  $\xi_e \in \mathring{T}_e^{\mc B}$ (deformation of
  $x_e: I_e \to L$). Then 
\beqn {\mc D}_e (f_e, \xi_e) = D_e (\xi_e) +
  \nabla f_e(x_e) \in {\mc E}_e = L^p (I_e, x_e^* T L ).  \eeqn 
By
  item (c) of Lemma \ref{lemma45}, $f_e$ can be nontrivial over a
  nonempty open subset of $I_e$. Then ${\mc D}_e$ is surjective since any element of the cokernel satisfies an ordinary differential equation but vanishes on an open subset.

\item Next, consider transversality for holomorphic spheres or disks
  in the symplectic quotient $X$.  Suppose
  $v_\alpha \in V_\Gamma^\infty$ so
  ${\bm v}_\alpha = u_\alpha: \Sigma_\alpha \to X$ is a holomorphic
  sphere or holomorphic disk. We have assumed that $u_\alpha$ is
  nonconstant. Notice that the thick part
  $\Sigma_\alpha^{\rm thick} \subset \Sigma_\alpha$, which is the
  region where one can deform the almost complex structure, is
  nonempty. Then by basic holomorphic curve theory in
    \cite{McDuff_Salamon_2004}, the map ${\mc D}_\alpha$ is
  surjective.

\item Next consider transversality for vortices over the half-space.
  Suppose $v_\alpha \in V_{\uds\Gamma}^1$ so ${\bm v}_\alpha$ is a
  gauge equivalence class of affine vortices over
  $\Sigma_\alpha \cong {\bm H}$. As explained at the end of
  Subsection \ref{subsection23}, surjectivity of the augmented
  linearization \eqref{eqn27} is equivalent to surjectivity of
  the operator $D_\alpha$. Choose a smooth representative
  $(u_\alpha, \phi_\alpha, \psi_\alpha)$. Then we would like to show
  that the operator \beqn {\mc D}_\alpha^+: T_\alpha {\mc P}\oplus
  \mathring{T}_{\alpha, +}^{\mc B} \to \tilde L^p( {\bm H}, u_\alpha^*
  TV \oplus {\mf k} \oplus {\mf k}) \eeqn defined by \beqn {\mc
    D}_\alpha^+( h_\alpha, \xi_\alpha) = \Big( D_\alpha'(\xi_\alpha) +
  h_\alpha( \partial_t u_\alpha + {\mc X}_{\psi_\alpha}), D_\alpha'' (
  \xi_\alpha), D_\alpha'''(\xi_\alpha) \Big).  \eeqn is
  surjective. Here $\mathring{T}_{\alpha, +}^{\mc B}$ is the subspace
  of triples
\[ \xi_\alpha = (\xi_\alpha', \xi_\alpha'', \xi_\alpha''')\in \tilde
  L_m^{1,p}({\bm H}, u_\alpha^* TV \oplus {\mf k}\oplus {\mf
    k})_{L_V} \] 
  where $\xi'$ vanishes at nodes, and
  $D_\alpha', D_\alpha'', D_\alpha'''$ are the three components of
  the augmented linearization \eqref{eqn27}. By the Fredholm property
  of the vortex equation, $D_\alpha^+$ has closed range. Hence to
  prove that ${\mc D}_\alpha^+$ is surjective, it suffices to show
  that its range is dense. Suppose this is not the case, so that there is an element
 \[ \varsigma_\alpha = (\varsigma_\alpha', \varsigma_\alpha'',
  \varsigma_\alpha''') \in L^{q, -\tau_p}( \Sigma_\alpha, u_\alpha^*
  TV \oplus {\mf k} \oplus {\mf k}) \] 
  which is $L^2$-orthogonal to the range of ${\mc D}_\alpha^+$. Here
  $q$ is the conjugate of $p$. Notice that by item (d) of Lemma
  \ref{lemma45}, $\Sigma_\alpha^{\rm thick} \subset \Sigma_\alpha$
  contains a portion of the boundary of $\Sigma_\alpha$, while
  $u_\alpha(\partial \Sigma_\alpha) \subset L_V \subset \mu^{-1}(0)$.
  Hence there exists a nonempty open subset
  $O \subset \Sigma_\alpha^{\rm thick}$ with $u_\alpha(O) \subset U$
  where $U$ is the neighborhood of $\mu^{-1}(0)$ specified in
  \eqref{eqn210}. Over $O$ one can deform $J_\alpha$ in a
  domain-dependent and $K$-invariant way. On the other hand, since
  ${\bm v}_\alpha$ is nonconstant, the subset of $O$ where
  $\partial_t u_\alpha + {\mc X}_{\psi_\alpha} \neq 0$ is open and
  dense (see \cite[Lemma
  2.5]{Cieliebak_Gaio_Mundet_Salamon_2002}). Then using the
  deformation $h_\alpha$, one sees that
  $\varsigma_\alpha'|_O \equiv 0$. Furthermore, choose
  $\xi_\alpha = ( \xi_\alpha', 0, 0)$, then \beqn 0 = \langle {\mc
    D}_\alpha^+(0, \xi_\alpha), \varsigma_\alpha \rangle = \langle
  d\mu(u_\alpha) \cdot \xi_\alpha', \varsigma_\alpha'' \rangle +
  \langle d\mu(u_\alpha) \cdot J_\alpha \xi_\alpha',
  \varsigma_\alpha''' \rangle.  \eeqn Since $u_\alpha(O)$ is contained
  in $U$ where the $K$-action is free, by taking $\xi_\alpha'$
  properly one can prove that
  $\varsigma_\alpha''|_O = \varsigma_\alpha'''|_O \equiv 0$. Then
  $\varsigma_\alpha|_O = 0$.  Unique continuation of solutions to first order
  elliptic equations implies that  $\varsigma_\alpha \equiv 0$ over
  $\Sigma_\alpha$, which is a contradiction. Hence ${\mc D}_\alpha^+$
  has dense range and hence surjective.

\item Lastly consider transversality for vortices over ${\bm C}$,
  using the controlled property of map types and the assumptions on
  the size of the nodal neighborhoods.  Suppose
  $v_\alpha \in V_{\Gamma^{\rm sup}}^1$, namely ${\bm v}_\alpha$ is a
  nonconstant affine vortex over $\Sigma_\alpha \cong {\bm C}$. By
  the argument in the above case, one only needs to show that
\beq\label{eqn610} u_\alpha^{-1}( U ) \cap \Sigma_\alpha^{\rm thick}
  \neq \emptyset.  \eeq 
In fact, by the controlledness property, 
\beqn E({\bm v}_\alpha) = \omega(B_\alpha) \leq {\rm
    deg}^{\max} (\Gamma(v_\alpha)).
    \eeqn 
On the other hand, by the definition of the energy, one has 
\beqn E({\bm v}_\alpha) \geq \int_{\Sigma_\alpha} |\mu(u_\alpha)|^2 \geq | \Sigma_\alpha^{\rm thick}| \inf_{\Sigma_\alpha^{\rm thick}}
  |\mu(u_\alpha)|^2 \geq  \delta^{-2} \inf_{\Sigma_\alpha^{\rm thick}}
  |\mu(u_\alpha)|^2 {\rm deg}^{\max} (\Gamma(v_\alpha)).  
  \eeqn
  Therefore, when $\delta$ is sufficiently small, there holds \beqn
  \inf_{\Sigma_\alpha^{\rm thick}} |\mu(u_\alpha)| \leq \delta <
  \delta_U.  \eeqn So \eqref{eqn610} is true and hence
  ${\mc D}_\alpha$ is surjective.
\end{enumerate}

Therefore the operator \eqref{eqn67} is surjective. So is
\eqref{eqn66}. Then the zero locus ${\mc F}_{\bm \Gamma}^{-1}(0)$ is a
smooth Banach submanifold of
${\mc P}_\Gamma^* \times {\mc B}_{\bm \Gamma}^*$. Since the projection
onto ${\mc P}_\Gamma^* $ is Fredholm, by the Sard--Smale theorem,
the set of regular values is comeager. The proof is then finished by
using the construction provided at the beginning.
\end{proof}

\begin{proof}[Proof of Theorem \ref{thm64}]
To begin the induction one needs to show that for the basic domain types that do not have degenerations, transversality can be achieved. These domain types are Y-shapes and $\Phi$-shapes (see Definition \ref{defn32}), and the strong regularity for these shapes can be achieved in the same way as in Lemma \ref{lemma68}. 

Now we carry out the inductive step. Given an equivalence class $[\Gamma]$, we assume that we have constructed coherent perturbations $P_\Pi$ for all stable domain type $\Pi$ with $[\Pi] \lneqq [\Gamma]$, which produces a Banach manifold $P_\Gamma^* \subset P_\Gamma$ for all $\Gamma$ in the class $[\Gamma]$. Now we construct a comeager subset ${\mc P}_\Gamma^{**}$ inductively for all $\Gamma$ in this equivalece class. First, if $\Gamma$ is a smallest element in $[\Gamma]$, then define 
\beqn
{\mc P}_\Gamma^{**}:= {\mc P}_\Gamma^{*, {\rm s.reg}}
\eeqn
where $P_\Gamma^{*, {\rm s.reg}} \subset P_\Gamma^*$ is constructed by Lemma \ref{lemma68}. Suppose we have constructed a comeager subset ${\mc P}_\Pi^{**} \subset {\mc P}_\Pi^*$ for all $\Pi$ within the same equivalence class and $\Pi \lneqq \Gamma$. Then consider the restriction maps
\beqn
{\mc P}_\Gamma^* \to {\mc P}_\Pi^* \ \ \forall \Pi \lneqq \Gamma,\ [\Pi ] = [\Gamma].
\eeqn
These maps all admit right inverses, hence the preimages of ${\mc
  P}_\Pi^{**}$ together with the comeager set $P_\Gamma^{*, {\rm s.reg}}$ given by Lemma \ref{lemma68} have a comeager intersection, which is defined to be
${\mc P}_\Gamma^{**}$. Lastly, for every maximal elements $\Gamma$ in
the equivalence class $[\Gamma]$, we choose one element $P_\Gamma \in
{\mc P}_\Gamma^{**}$. They restrict to strongly regular perturbations
$P_\Pi$ for all $\Pi$ in this equivalence class. 

For Morse trees, see similar construction in \cite{abouzaid_plumbing} and \cite{Mescher_2018}.
\end{proof}

\begin{rem}\label{rem69} We arrange the induction
  procedure so that we first construct perturbations for trees of
  scale zero and scale infinity, and then extend these perturbations
  to trees of scale $1$. The constructions for scale zero trees and
  scale infinity trees are independent. However, one can make their
  restrictions to domain types without interior leaves identical. Indeed, let
  ${\bf T}_0 \subset {\bf T}^{\rm st}$ be the set of stable domain types
  which have no interior leaves, and
  ${\bf T}_0^0, {\bf T}_0^\infty \subset {\bf T}_0$ the subset of
  those domain types of scale $0$ and $\infty$ respectively. Then there is a
  one-to-one correspondence ${\bf T}_0^0 \to {\bf T}_0^\infty$,
  denoted by $\Gamma^0 \mapsto \Gamma^\infty$. There are
  identifications
  $\ov{\mc W}_{\Gamma^0} \cong \ov{\mc W}_{\Gamma^\infty}$ between
  moduli spaces and
  $\ov{\mc U}_{\Gamma^0} \cong \ov{\mc U}_{\Gamma^\infty}$ between
  universal curves. Then during our construction, we may require that
  with respect to these identifications, the two sets of perturbations
  are identical on those domain types without interior leaves.

Furthermore, let ${\bf T}_0^1\subset {\bf T}_0$ be the subset of {\it  unbroken} stable domain types of mixed scale. For each $\Gamma^1 \in {\bf T}_0^1$ and $\lambda \in \{0, \infty\}$, there is an associated $\Gamma^\lambda \in {\bf T}_0^\lambda$. This is defined by including the domain symmetry of scale $1$ pieces (which is ${\mb R}$) to the group of real M\"obius transformations fixing the infinity. This induces a map $\ov{\mc W}_{\Gamma^1} \to \ov{\mc W}_{\Gamma^\lambda}$, which is a generically a fibration of relative dimension $1$. There is also the associated map $\ov{\mc U}_{\Gamma^1} \to \ov{\mc U}_{\Gamma^\lambda}$. Therefore, in our construction, we may require that for $\Gamma^1 \in {\bf T}_0^1$, the perturbation data $P_\Gamma$ is the pullback of $P_{\Gamma^\lambda}$. A consequence of this requirement is that the $A_\infty$ morphism we will define is a higher order deformation of the identity. 
\end{rem}

\subsection{Refined Compactness}

The transversality of various moduli spaces implies a stronger compactness result for moduli spaces of expected dimension at most one. To state the precise form our the refined compactness theorem, we introduce the notions of {\it essential} map types. In index zero these map types constitute configurations to be counted to define the Fukaya algebras and the $A_\infty$ morphisms. 

\begin{defn}\label{essential}
A map type ${{\bm \Gamma}}$ is called {\it  essential} if the following are satisfied.
\begin{enumerate}

\item $\Gamma$ is stable, unbroken, has no vertices corresponding to holomorphic spheres, has no edge of length zero, and the set of bordered vertices $\partial^* V_{\uds \Gamma}^\infty $ is empty. This is equivalent to the condition that $\Gamma$ is in ``top stratum.'' In terms of dimension, this condition is equivalent to ${\rm dim} {\mc W}_\Gamma = d^\lambda(k, l)$ (see \eqref{eqn32}).

\item All leaves are attached to vertices in $V_\Gamma^0$ or $V_\Gamma^1$.

\item ${{\bm \Gamma}}$ is uncrowded.

\item For each $l_i \in L_\Gamma$, $p_i = D_{a_i}$ for some $D_{a_i} \in \{D_1, \ldots, D_h\}$ and $O_i = D_{a_i}^{\rm st}$.

\item For each $v_\alpha \in V_\Gamma$ with $B_\alpha \neq 0$, the
  number of leaves which are attached to $v_\alpha$ and are decorated
  with $D_a$ is $\omega(B_\alpha) {\rm deg} D_a $. This condition
  implies that ${\bm \Gamma}$ is controlled in the sense of Definition 
\ref{controlled}.
\end{enumerate}
\end{defn}

The last condition of the above definition also implies that for every element in ${\mc M}_{\bm\Gamma}^*(P_\Gamma)$, every intersection with the stabilizing divisor is at the marking, contained in the stable locus, and is transverse. 

The following establishes sequential compactness for moduli spaces defined using these perturbations.  
From now on we fix a coherent system of strongly regular perturbation data $P_\Gamma$ for all stable domain type $\Gamma$ granted by Theorem \ref{thm64} and only consider the corresponding moduli spaces.

\begin{prop}\label{prop611}
Let ${\bm \Gamma}$ be an essential map type. 
\begin{enumerate}

\item When ${\rm index} {\bm \Gamma} = 0$, one has
\beqn
\ov{ {\mc M}_{{\bm\Gamma}}^*(P_\Gamma)} = {\mc M}_{\bm\Gamma}^*(P_\Gamma).
\eeqn

\item When ${\rm index} {\bm \Gamma} = 1$, one has
\beqn
\ov{ {\mc M}_{{\bm\Gamma}}^*(P_\Gamma)} \setminus {\mc M}_{\bm\Gamma}^*(P_\Gamma) = \bigsqcup {\mc M}_{\bm\Pi}^*(P_\Pi)
\eeqn
where the disjoint union is taken over all map types ${\bm \Pi}$ obtained from ${\bm \Gamma}$ by applying exactly one of the following elementary transformations (see explanations of the following cases in Definition \ref{defn37}).

\begin{enumerate}

\item[{\rm (F1)}] Shrinking the length of a non-special edge to zero.

\item[{\rm (F2)}] Disk bubbling.

\item[{\rm (F3)}] Shrinking the length of a collection of special edges to zero. 

\item[{\rm (F4)}] Boundary affine vortex bubbling.

\item[{\rm (T1)}] Breaking one edge.

\item[{\rm (T2)}] Breaking a collection of edges at infinity.
\end{enumerate} 
\end{enumerate}
\end{prop}

\subsection{Proof of Proposition \ref{prop611}}\label{subsection65}

We first fix a few notations. Suppose ${\mc V}$ represents a point in the closure $\ov{ {\mc M}_{{\bm \Gamma}}^*(P_\Gamma)}$ which is the limit of a sequence ${\mc V}_k$ representing a sequence of points in the uncompactified part ${\mc M}_{{\bm \Gamma}}^*(P_\Gamma)$. Then by the definition of convergence, there is a surjective map
\beqn 
\rho_V: V_\Pi \to V_\Gamma 
\eeqn 
For each $v_\alpha \in V_\Pi$, let the image of $v_\alpha$ be $v_{\check\alpha} \in V_\Gamma$.

We first describe the intersection points of the limiting
configuration with the stabilizing divisor.

\begin{lemma}\label{lemma612}
  Suppose $v_\alpha \in V_\Pi$ and the corresponding vortex or
  holomorphic map is ${\bm v}_\alpha$. Suppose ${\bm v}_\alpha$ has
  positive energy and $z_\alpha \in \Sigma_\alpha$ is an isolated
  intersection of ${\bm v}_\alpha$ with $D$.  Then $z_\alpha$ is
  either a node or an interior marking.
\end{lemma}

\begin{proof}
Suppose $z_\alpha \in {\bm v}_\alpha^{-1}(D)$ and $z_\alpha$ is not a node. Take a small disk $U_\alpha \subset \Sigma_\alpha$ containing $z_\alpha$ that is disjoint from other points mapped into $D$ as well as special points on $\Sigma_\alpha$. Because the map ${\bm v}_\alpha|_{U_\alpha}$ is pseudoholomorphic, the local intersection number of ${\bm v}_\alpha|_{U_\alpha}$ is positive (see Lemma \ref{lemma211}). By the fact that ${\mc V}_k$ converges to ${\mc V}$, for $k$ sufficiently large, there is a small disk $U_{\check\alpha,k} \subset \Sigma_{\check\alpha, k} \subset {\mc C}_k$ such that ${\bm v}_{\check\alpha, k}$ converges (after applying the reparametrization $U_{\check\alpha, k}\cong U_\alpha$ and a sequence of gauge transformations) to ${\bm v}_\alpha|_{U_\alpha}$. In particular the image of the boundary of $U_{\check\alpha, k}$ is disjoint from $D$ and the intersection number between ${\bm v}_{\check\alpha, k}|_{U_{\check\alpha, k}}$ with $D$ is equal to the local intersection number between ${\bm v}_\alpha$ with $D$ at $z_\alpha$. In particular, there is at least one intersection point between ${\bm v}_{\check\alpha, k}$ and $D$ within $U_{\check\alpha, k}$. Since the map type ${\bm \Gamma}$ is essential, this intersection point must be a marking. Then by the convergence of the domain ${\mc C}_k \to {\mc C}$, $z_\alpha$ must be a marking as well. 
\end{proof}

Next we show that in the limiting configuration each bouquet (Definition \ref{controlled}) has enough markings. This result implies that the limiting map type is controlled, a prerequisite for achieving transversality.

\begin{lemma}\label{lemma613}
For each bouquet $\Upsilon$ in $\Pi$, there exists $D_a \in \{D_1, \ldots, D_h\}$ such that 
\beq\label{eqn611}
{\rm deg}_{D_a} {\mf p}_\Upsilon = {\rm deg} D_a \cdot \omega({\bm \Upsilon}).
\eeq
\end{lemma}

\begin{proof}
Let the root component of $\Upsilon$ be $v_\alpha \in V_\Pi^1$ which supports an affine vortex ${\bm v}_\alpha$. Recall that the evaluation of ${\bm v}_\alpha$ at infinity is a point in $X$ while the intersection of all $D_a \qu G$ is empty. Therefore there exists an element $D_a \in \{D_1, \ldots, D_h\}$ such that the evaluation is not in $D_a \qu G$. Choose such a $D_a$.

On the other hand, $v_{\check\alpha}$ must be of scale $1$ and one has an isomorphism $\Sigma_{\check\alpha}$ with either ${\bm C}$ or ${\bm H}$. The convergence of ${\bm v}_k$ implies that there exists a sequence of disks $O_{k, \check\alpha} \subset \Sigma_{\check\alpha}$ identified with $B_{r_k}(z_k) \subset \Sigma_{\check\alpha}$ with $r_k \to \infty$ such that one can extend ${\bm v}_k|_{O_{k, \check\alpha}}$ to a smooth gauged map ${\bm v}_k':= (P_k, A_k, u_k)$ from ${\bm S}^2$ to $V$ satisfying the following conditions.

\begin{enumerate}

\item The restriction of ${\bm v}_k'$ to the complement of $B_{r_k}(z_k)$ converges as $k \to \infty$ to the constant which is the evaluation of ${\bm v}_\alpha$ at infinity. Hence for $k$
  large, the restriction of ${\bm v}_k'$ to the complement of
  $B_{r_k}(z_k)$ is uniformly away from $D_a$.

\item For large $k$, the equivariant homology class of ${\bm v}_k'$ coincides with the total homology class of ${\bm \Upsilon}$, denoted by $B_\Upsilon \in H_2^K(V)$.

\end{enumerate}

Therefore, by the definition of convergence of treed disks (see Definition \ref{defn310}), ${\rm deg}_{D_a} \Upsilon$ is equal to the number of markings in ${\mc V}_k$ which are decorated by $D_a$ and which are contained in $O_{k, \check\alpha}$. By (a) above, this number is also equal to the intersection number between ${\bm v}_k'$ and $D_a$, which is equal to the pairing between $B_\Upsilon$ and ${\rm deg} D_a \cdot [\omega_V + \mu]$. Therefore \eqref{eqn611} is true.
\end{proof}

\begin{proof}[Proof of Proposition \ref{prop611}]
 We would like to modify the limiting triple $(\Pi, {{\bm \Pi}}, {\mc V})$ to a triple for which transversality holds. Consider 
\beqn
\tilde W:= V_{\Pi^{\rm sup}}^0 \cup \Big\{ v_\alpha \in V_{\Pi^{\rm sup}}^1\ |\ {\bm v}_\alpha \subset D \Big\} \cup \Big\{ v_\alpha\in V_{\Pi^{\rm sup}} \ |\ B_\alpha = 0 \Big\}.
\eeqn
We decompose $\tilde W$ into connected components
\beqn
\tilde W = \tilde W_1 \sqcup \cdots \sqcup \tilde W_l.
\eeqn
Then by the monotonicity of $V$ and the condition (S1) in Definition \ref{defn12}), one has
\beqn
c_1(\tilde W_j):= \sum_{v_\alpha \in W_j} c_1(B_\alpha) \geq 0.
\eeqn
For each $\tilde W_i$, define $\tilde W_j'$ as follows: if $\tilde W_j$ is a ghost tree, then $\tilde W_j' = \tilde W_j$; otherwise, namely when 
\beqn
E(\tilde W_j):= \sum_{v_\alpha \in \tilde W_j} E(B_\alpha) > 0,
\eeqn
let $\tilde W_j'$ be the minimal subtree of $\Gamma^{\rm sup}$ containing $\tilde W_j$ that
is also non-separating, i.e., the complement of $\tilde W_j'$  in
$\Gamma$ is connected. Then define 
\beqn
\tilde W' = \bigcup_{1\leq j \leq l} \tilde W_j'.
\eeqn
Furthermore, there might be isolated vertices $v_\alpha \in \tilde
W'\cap V_{\Gamma^{\rm sup}}^1$ which have only one leaf. Removing such
vertices in $\tilde W'$ and define the set of remaining vertices $W
\subset V_{\Gamma^{\rm sup}}$. Notice that $V_{\Gamma^{\rm sup}}^0
\subset W$. Then by applying the forgetful map $\Pi \mapsto \Pi_W$
defined in Subsection \ref{subsection43}, one obtains a domain type $\Pi_W$ with a decomposition 
\beqn
L_{\Pi_W} = L^{\rm old} \sqcup L^{\rm new}.
\eeqn
Moreover, there is a one-to-one correspondence between new leaves and
connected components which have at least one leaf. Let 
\beqn
W = W_1 \sqcup \cdots \sqcup W_k \sqcup W_{k+1} \sqcup \cdots \sqcup W_{k+s}
\eeqn
be the decomposition into connected components, such that
$L_{W_1}, \ldots, L_{W_k}$ are nonempty and
$L_{W_{k+1}} = \cdots = L_{W_{k+s}} = \emptyset$. Let
$l_1, \ldots, l_k$ be the corresponding new leaves of $\Pi_W$
corresponding to the components $W_1, \ldots, W_k$. By the forgetful
property of the perturbations in Remark \ref{rem47}, there is an
induced perturbation $P_{\Pi_W} \in {\mc P}_{\Pi_W}$ such that, by
forgetting the corresponding components, one obtains a treed disk
${\mc C}_W$ supporting a treed scaled vortex ${\mc V}_W$ of domain type
$\Pi_W$ with respect to the perturbation data $P_{\Pi_W}$. Let
$z_i \in {\mc C}_W$ be the new marking corresponding to the new leaf
$l_i$. Now we would like to upgrade $\Pi_W$ to a map type ${\bm \Pi}_W$ so that
${\mc V}_W \in {\mc M}_{{\bm \Pi}_W}^*(P_{\Pi_W})$. Obviously, one
only needs to assign constraints at new markings of ${\mc C}_W$, since
these are the only data of a map type which do not descend
from ${\bm \Pi}$ canonically. To define the constraints, we decompose
the set of new leaves
\beqn L^{\rm new} = L^a \sqcup L^b \sqcup L^c \sqcup L^d \eeqn 
where classes $a, b, c, d$ are defined by the properties of the
corresponding connected components of $W$ as follows.
\begin{enumerate}
\item $l_i \in L^a$ if $E(W_i) = 0$.
\item $l_i \in L^b$ if $E(W_i)>0$ and $c_1(W_i) = 0$.
\item $l_i \in L^c$ if $c_1(W_i) = 1$.
\item $l_i \in L^d$ if $c_1(W_i) \geq 2$.
\end{enumerate}
We first claim that $L^b = L^c = L^d = \emptyset$. Define a constraint
$O_i \in {\mc O} $ for every new leaf $l_i \in L^{\rm new}$ as
follows.

\begin{enumerate}

\item For each $l_i \in L^a$ corresponding to a ghost tree with at
  least one leaf, the marking $z_i$ is mapped into $D$ (or $\bar D$).
  Then define $O_i$ to be the stratum of $D$ or $\bar D$ where the
  value of $z_i$ belongs to. So the degree of the constraint defined
  in \eqref{condeg} is $\delta_{O_i} \geq 2$.
\item For each $l_i \in L^b$, by the semi-Fano conditions
  (S1)-(S3), every nonconstant component in $W_i$ must be an
  $J_V$-affine vortex contained in $D \cap S$. The value of
  ${\mc V}_W$ at $z_i$ belongs to $\bar D \cap \bar S$. Define $O_i$
  to be the corresponding (stable) stratum of $D \cap S$. So
  $\delta_{O_i} \geq 4$.

\item For each $l_i \in L^c$, by the semi-Fano condition, we know that
  every nonconstant component in $W_i$ has to be an $J_V$-affine
  vortex contained in $D$. Then define $O_i$ to be the constraint
  given by corresponding stable stratum of $D$ where the evaluation at $z_i$ belongs to. So $\delta_{O_i} \geq 2$.

\item For each $l_i \in L^d$, define $O_i  = 1$ which means no constraint. So $\delta_{O_i} = 0$.

\end{enumerate}
The above assignments define a map type ${\bm \Pi}_W$. We claim that 
\beqn
{\mc V}_W \in {\mc M}_{{\bm \Pi}_W}^*(P_{\Pi_W}).
\eeqn
Indeed, the definitions of the constraints at the new markings imply that ${\mc V}_W \in {\mc M}_{{\bm \Pi}_W}(P_{\Pi_W})$. Moreover, all components supporting nonconstant affine vortices which are contained in $D$ are removed by the forgetful map $\Pi \mapsto \Pi_W$. Further, by Theorem \ref{thm25} and item (c) of Definition \ref{defn46}, in ${\mc V}_W$ there are no nonconstant holomorphic spheres contained in $\bar D$. Hence ${\mc V}_W \in {\mc M}_{{\bm \Pi}_W}^* (P_{\Pi_W})$. The construction of $\Pi_W$ shows that ${\bm \Pi}_W$ is uncrowded. Lemma \ref{lemma613} shows ${\bm \Pi}_W$ is controlled. Then the above moduli space is regular and hence (by the dimension formula \eqref{eqn32} and Proposition \ref{prop67})
\beqn
0 \leq {\rm index} {\bm \Pi}_W = d^\lambda(0, k) - {\rm codim} \Pi_W + |{\bf X}| + 2c_1({\bm \Pi}_W) + \sum_{l_i \in L_{\Pi_W}} ( 2- \delta_{O_i}).
\eeqn
By taking the difference ${\rm index} {\bm \Gamma} - {\rm index} {\bm \Pi}_W$, one has 
\begin{multline*}
1 \geq {\rm index} {\bm \Gamma} \geq {\rm codim} \Pi_W + 2 \left( c_1 ({\bm\Gamma}) - c_1({\bm \Pi}_W) \right) - \sum_{l_i\in L_{\Pi_W}} (2 - \delta_{O_i} ) \\
 = {\rm codim} \Pi_W +  \sum_{l_i \in L^b \cup L^c \cup L^d } \left( 2 c_1(W_i ) - ( 2 -\delta_{O_i}) \right) - \sum_{l_i\in L_{\Pi_W}^{\rm old} \cup L^a } ( 2 - \delta_{O_i})\\
 \geq  \sum_{l_i \in L^b \cup L^c \cup L^d} \left( 2 c_1(W_i ) - ( 2 -\delta_{O_i}) \right)
\end{multline*}
By the definition of $O_i$ for new leaves, we know that 
\beqn
2 c_1(W_i) + \delta_{O_i} \geq 4,\ \forall l_i \in L^b \cup L^c \cup L^d.
\eeqn
Therefore $L^b = L^c = L^d = \emptyset$. Moreover, because ${\rm
  codim} \Pi_W \leq 1$, there is no separating connected components of
$W$, because in the construction of $\Pi_W$, every separating
component of $W$ will create at least one vertex $v_\alpha \in
V_{\Pi_W^{\rm sup}}^\infty$ which increases the codimension by at
least two. 

By the previous paragraph, $W_i$ are ghost trees that are
non-separating and in the remainder of the proof we rule them out as
well.  To do so we redefine ${\bm \Pi}_W$ to rule out tangencies and
other high codimension phenomena. For each
$l_i \in L^{\rm new} = L^a$, we know that the value of $z_i$ is in $D$
or $\bar D$. If the value at $z_i$ is in a lower stratum of $D$ or
$\bar D$, then define $O_i$ to be the corresponding stratum of $D$
(whose codimension is at least four). Otherwise, the value at $z_i$ is
in a top stratum, say $D_a$. The construction of $W$ implies that
$z_i$ is an isolated intersection with $D_a$. Then define
$O_i = D_a^{m_i}$ where $m_i$ is the local intersection number. Notice
that $m_i \geq 2$ since the collapsed ghost tree has at least two markings. So one has \beqn \delta_{O_i} \geq 4\
\forall l_i \in L^{\rm new}.  \eeqn Then we have
\begin{multline*}
0 \leq {\rm index} {\bm \Pi}_W = d^\lambda(0, k) - {\rm codim} \Pi_W + c_1({\bm \Pi}_W) + \sum_{l_i \in L_{\Pi_W}} ( 2- \delta_{O_i})\\
= {\rm index} {\bm \Gamma} - {\rm codim} \Pi_W + \sum_{l_i \in L^{\rm old}} ( 2- \delta_{O_i}) + \sum_{l_i\in L^{\rm new}}( 2- \delta_{O_i}). 
\end{multline*}
Therefore, $L^{\rm new} = \emptyset$ which implies $W = \emptyset$. So
$\Pi_W = \Pi$. Furthermore, if ${\rm index} {\bm \Gamma} = 0$, then
${\rm codim}\Pi_W = {\rm codim} \Pi = 0$ which means $\Pi$ is in top
stratum. Since $\Pi \leq \Gamma$ we must have $\Pi =\Gamma$ and ${\bm
  \Pi} = {\bm \Gamma}$. If ${\rm index} {\bm \Gamma} = 1$, then besides
the possibility ${\bm \Pi} = {\bm \Gamma}$, $\Pi$ can also be in a
codimension one stratum of $\ov{\mc W}_\Gamma$, which belongs to the
list of degenerations in the statement of Proposition
\ref{prop611}. Therefore the proof of Proposition \ref{prop611} is
complete. \end{proof} 

\subsection{Orienting moduli spaces}\label{subsection66}

To count elements of zero-dimensional moduli spaces with signs, we need to specify the orientations of determinant line bundles of these moduli spaces. Recall that for a Fredholm operator $T: A \to B$ between separable Banach spaces, the determinant line is 
\beqn
\det T:= (\det {\rm coker} T)^*\otimes \det {\rm ker} T.
\eeqn
An orientation of $T$ is an ${\mb R}^*$-orbit in $\det T$. However, we often regard an orientation of $T$ as a nonzero element of $\det T$. With $T$ oriented, we call an element of $\det T$ {\it positive} if it is a positive multiple of the preferred orientation.

To orient moduli spaces we need to specify orientations of the unstable/stable manifolds of critical points. For each critical point ${\bm x}$ of $F_L: L \to {\mb R}$, fix (arbitrarily) an orientation of the unstable manifold
\beqn
W_{\bm x}^- \subset L.
\eeqn
Let $\sigma_{\bm x}^- \in \det {T_{\bm x} W_{\bm x}^-}$ be a positive orientation class. An orientation on the stable manifold is then induced in the following way: an orientation class $\sigma_{\bm x}^+\in \det T_{\bm x} W_{\bm x}^+$ is positive if and only if the orientation class $\sigma_{\bm x}^+ \wedge \sigma_{\bm x}^-$ is positive with respect to the natural isomorphism 
\beqn
\wedge: \det T_{\bm x} W_{\bm x}^+ \otimes \det T_{\bm x} W_{\bm x}^- \cong  \det T_{\bm x} W_{\bm x}^+ \oplus T_{\bm x} W_{\bm x}^- \cong \det T_{\bm x} L.
\eeqn

Now we orient the moduli spaces of treed scaled vortices. Let ${\bm \Gamma}$ be a map type with a regular moduli space ${\mc M}_{\bm \Gamma}^*$. Because borderless domains do not affect the orientation issue, we assume that ${\bm \Gamma}$ has no spherical components and hence the underlying tree $\Gamma$ is a ribbon tree. For any element ${\mc V} \in {\mc M}_{\bm \Gamma}^*$ whose underlying treed disk is ${\mc C}$, the linearized operator is a Fredholm operator 
\beqn
\tilde D: T_{\mc C} {\mc M}_\Gamma \oplus {\mc B} \to {\mc E}
\eeqn
whose restriction to the second summand is abbreviated temporarily by
\beqn
D: {\mc B} \to {\mc E}.
\eeqn
This is the linearized operator on the fixed treed disk. Since the tangent space $T_{\mc C} {\mc M}_\Gamma$ is finite dimensional, $\tilde D$ can be homotopic via Fredholm operators to the operator $0 \oplus D: T_{\mc C} {\mc M}_\Gamma \oplus {\mc B} \to {\mc E}$. Let $\sigma_\Gamma \in \det T_{\mc C} {\mc M}_\Gamma$ be the positive orientation class specified in Subsection \ref{subsection34}. We will define the positive orientation class of $\tilde D'$ (which determines one for $\tilde D$) as an element 
\beqn
\sigma_\Gamma \wedge \sigma_D \in \det T_{\mc C} {\mc M}_\Gamma \otimes \det D \cong \det \tilde D'\cong \det \tilde D.
\eeqn
Indeed, for each vertex $v_\alpha \in V_{\Gamma}$, let $D_\alpha: {\mc B}_\alpha \to {\mc E}_\alpha$ be the linearization of the nonlinear Cauchy--Riemann equation or the affine vortex equation depending on the position of the vertex $v_\alpha$. The orientation and the spin structure on $L$ equip a natural orientation class $\sigma_\alpha \in \det D_\alpha$ (see Subsection \ref{subsection27}). On the other hand, for each boundary edge $e \in E_\Gamma$ whose length is $\ell(e)$, the perturbation datum $P_\Gamma$ induces a perturbed negative gradient flow of times for a time interval $[0, \ell(e)]$. The time-$\ell(e)$ map is a diffeomorphism $\varphi_e: L \to L$ whose graph
\beqn
\Delta_e \subset L \times L
\eeqn
is isotopic to the diagonal. Let the normal bundle to $\Delta_e$ be $N_e$. Suppose $e$ connects vertices $v_{\alpha}$ and $v_{\beta}$ where $v_\beta$ is closer to the output. Then the matching condition is 
\beqn
(u_\alpha(z_\alpha), u_\beta(z_\beta)) \in \Delta_e. 
\eeqn
Then the total linearization at ${\mc V}$ (without deforming ${\mc C}$) is equivalent to 
\beqn
D: \bigoplus_{v_\alpha \in V_\Gamma}  \to \bigoplus_{v_\alpha \in V_\Gamma} {\mc E}_\alpha \oplus \bigoplus_{e \in E_\Gamma} N_e \oplus \bigoplus_{i=1}^k W_{{\bm x}_i}^+ \oplus W_{{\bm x}_\infty}^-.
\eeqn
Since the summands $N_e$, $W_{{\bm x}_i}^+$, and $W_{{\bm x}_\infty}^-$ are all finite-dimensional, the operator $D$ is homotopic through Fredholm operators to the direct sum of all $D_\alpha$ and the zero operators to these finite-dimensional vector spaces. Moreover, $\det D_\alpha$, $\det N_e$, $\det W_{{\bm x}_i}^+$, and $\det W_{{\bm x}_\infty}^-$ all have preferred positive orientation classes. Hence one can orient $D$ using these preferred orientation classes on components by specifically ordering elements in $V_\Gamma \cup E_\Gamma \cup T_\Gamma$.

We define the order as follows. First suppose $\Gamma$ has only one vertex $v$ with output $t_{\rm out}$ and inputs $t_1, \ldots, t_k\in T_\Gamma$. Then we order $V_\Gamma \cup E_\Gamma \cup T_\Gamma$ as 
\beqn
t_{\rm out}, v, t_1, \ldots, t_k.
\eeqn
More generally, suppose we have defined the order of elements for all ribbon trees with at most $m$ vertices. Then for $\Gamma$ with $m+1$ vertices, let $\Gamma_1, \ldots, \Gamma_s$ be the connected components of the complement of the output $t_{\rm out}$ and the root $v_\infty$. Then we order elements of $V_\Gamma \cup E_\Gamma \cup T_\Gamma$ as 
\beqn
t_{\rm out}, v_\infty, \underbrace{a_1, \ldots, a_{r_1}}_{V_{\Gamma_1} \cup E_{\Gamma_1} \cup T_{\Gamma_1}}, \cdots, \underbrace{ a_1, \ldots, a_{r_s}}_{V_{\Gamma_s}\cup E_{\Gamma_s} \cup T_{\Gamma_s}}.
\eeqn
See Figure \ref{ordering} for an example. 

\begin{figure}[ht]
    \centering
    \includegraphics{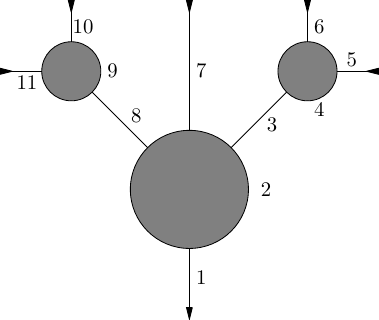}
    \caption{Ordering vertices, edges, and tails of a ribbon tree.}
    \label{ordering}
\end{figure}

With the order of tree elements specified above, we define the positive orientation class of the linearized operator as follows. Suppose $V_\Gamma \cup E_\Gamma \cup T_\Gamma$ has $n_\Gamma$ elements and $\sigma_i$ is the positive orientation class of the $i$-th element. Then define 
\beqn
\sigma_D:= (-1)^{|{\bm x}_\infty|(n-|{\bm x}_\infty|)} \bigwedge_{i=1}^{n_\Gamma} \sigma_i\in \det D.
\eeqn
Here $|{\bm x}_\infty| = {\rm dim} W_{{\bm x}_\infty}^+$ and the sign $(-1)^{|{\bm x}_\infty|(n-|{\bm x}_\infty|)}$ is to correct the orientation class $\sigma_{{\bm x}_\infty}^- \in \det T_{{\bm x}_\infty} W_{{\bm x}_\infty}^-$ as the orientation class one needs is the orientation of the normal bundle to the stable manifold $W_{{\bm x}_\infty}^+$.  With the orientation of the moduli space of domains, one obtains an orientation of the moduli space ${\mc M}_{\bm \Gamma}^*$. When ${\bm \Gamma}$ is an essential map type with index zero, ${\mc M}_{\bm \Gamma}^*$ is discrete and each element is equipped with a sign. 

\subsection{Fake and true boundaries of one-dimensional moduli spaces}\label{subsection67}

In this section we study the boundary of the moduli spaces. Consider any essential map type ${{\bm \Gamma}}$ with ${\rm index} {\bm \Gamma} \in \{0, 1\}$. Proposition \ref{prop611} gives a description of the compactification of the moduli space ${\mc M}_{\bm \Gamma}^*$ (from now on we omit the perturbation $P_\Gamma$ we have chosen from the notations). 

\begin{lemma}\label{lemma614}
Suppose ${{\bm \Gamma}}$ is an essential map type with ${\rm index} {{\bm \Gamma}} = 1$. Then $\ov{ {\mc M}_{{\bm \Gamma}}^*}$ is a compact one-dimensional manifold with boundary. Moreover, the following are true. 
\begin{enumerate}
\item If ${{\bm \Gamma}}$ is of scale $0$, then
\beqn
\partial \ov{ {\mc M}_{{\bm \Gamma}}^* } = \partial_{\rm T1} \ov{   {\mc M}_{{\bm \Gamma}}^* } \sqcup \partial_{\rm F1} \ov{ {\mc M}_{{\bm \Gamma}}^* } \sqcup \partial_{\rm F2} \ov{ {\mc
    M}_{{\bm \Gamma}}^*}.  \eeqn

\item If ${{\bm \Gamma}}$ is of scale $\infty$, then 
\beqn
\partial \ov{ {\mc M}_{{\bm \Gamma}}^* } = \partial_{\rm T2} \ov{ {\mc M}_{{\bm \Gamma}}^* } \sqcup \partial_{\rm F1} \ov{ {\mc M}_{{\bm \Gamma}}^* } \sqcup \partial_{\rm F2} \ov{ {\mc M}_{{\bm \Gamma}}^* }. 
\eeqn

\item If ${{\bm \Gamma}}$ is of scale 1, then 
\beqn
\partial \ov{ {\mc M}_{{\bm \Gamma}}^* } = \partial_{\rm T1} \ov{ {\mc M}_{{\bm \Gamma}}^* } \sqcup \partial_{\rm T2} \ov{ {\mc M}_{{\bm \Gamma}}^* } \sqcup  \partial_{\rm F1} \ov{ {\mc M}_{{\bm \Gamma}}^* } \sqcup \partial_{\rm F2} \ov{ {\mc M}_{{\bm \Gamma}}^* } \sqcup \partial_{\rm F3} \ov{ {\mc M}_{{\bm \Gamma}}^* } \sqcup \partial_{\rm F4} \ov{ {\mc M}_{{\bm \Gamma}}^* }. 
\eeqn
\end{enumerate}
Here $\partial_{\rm T1} \ov{{\mc M}_{{\bm \Gamma}}^*}$ etc. are the boundary components corresponding to the map types listed in Proposition \ref{prop611}. 
\end{lemma}

\begin{proof}
  Transversality implies that the interior of
  $\ov{ {\mc M}_{{\bm \Gamma}}^* }$ is a one-dimensional
  manifold. Proposition \ref{prop611} implies that the boundary has
  the above description. It remains to put boundary charts on
  $\ov{ {\mc M}_{{\bm \Gamma}}^* }$. The existence of these charts is
  the corollary of various gluing results. Here gluing of holomorphic disks (see for example \cite{FOOO_Book}) and gluing of gradient flow lines are standard (see for example \cite{Schwarz_book}), which give boundary charts near type (F2), type (T1), and type (T2) boundary components. For type (F1) and type (F3) boundary components, using the transversality, one can also put boundary charts parametrized by the lengths of the edges that are shrunk to zero. Lastly, for type (F4) boundary components, one can put boundary charts using the gluing result of \cite{Xu_glue}.
\end{proof}

The difference between true and fake boundary components is explained as follows.  For two different essential map type
${{\bm \Gamma}}_1$ and ${{\bm \Gamma}}_2$ of index $1$, the
compactified moduli spaces $\ov{ {\mc M}_{{{\bm \Gamma}}_1}^*}$ and
$\ov{ {\mc M}_{{{\bm \Gamma}}_2}^*}$ may share common boundary
components. Indeed,
$\partial_{\rm T1} \ov{ {\mc M}_{{{\bm \Gamma}}}^*}$ and $\partial_{\rm T2} \ov{ {\mc M}_{{\bm \Gamma}}^* }$ are components
that cannot be glued in two different ways and are called {\it true boundaries}; other boundary components in the
list of Proposition \ref{prop611} can be glued in two different ways
and are called {\it fake boundaries} (as indicated by their
labellings). More precisely, by using two different kinds of gluings,
the fake boundary points are the ends of one-dimensional components of
the moduli space in two different ways. The various one-dimensional
strata glue together to a topological one-manifold whose boundary is
the union of true boundary points:

\begin{lemma}\label{lemma615}
For each essential map type ${{\bm \Gamma}}$ with ${\rm index} {{\bm \Gamma}} = 1$ and $i \in \{1, \ldots, 4\}$, then there exists exactly one essential map type ${{\bm \Gamma}}_i$
which has the same number of leaves and the same number of inputs labelled by the same sequence of critical points, and which satisfies
\beqn
\partial_{\rm Fi} \ov{ {\mc M}_{{\bm \Gamma}}^* } = \left\{ \begin{array}{cc} \partial_{\rm Fi+1} \ov{ {\mc M}_{ {{\bm \Gamma}}_l}^*},\ &\ {\rm if\ } i = 1, 3;\\
\partial_{\rm Fi-1} \ov{ {\mc M}_{{{\bm \Gamma}}_l}^*},\ &\ {\rm if\ } i = 2,4.
\end{array} \right.
\eeqn
Moreover, if we take the union $\ov{ {\mc M}_{{\bm \Gamma}}^* } \cup \ov{ {\mc M}_{{{\bm \Gamma}}_i}^* }$ by identifying their common boundaries, then the union is an oriented one-dimensional manifold (with or without boundary) whose orientation agrees with the orientations on $\ov{ {\mc M}_{{\bm \Gamma}}^*}$ and $\ov{ {\mc M}_{{{\bm \Gamma}}_i}^* }$. 
\end{lemma}

\begin{proof} 
Up to orientation the statement is a special case of Lemma \ref{lemma614}. Since the way we orient the linearized operators for both holomorphic disks and affine vortices over ${\bm H}$ is coherent with respect to gluing (see Remark \ref{coherence}), over the common fake boundary of two different moduli spaces, the induced boundary orientations are exactly opposite. Therefore the union $\ov{{\mc M}_{\bm \Gamma}^*} \cup \ov{{\mc M}_{{\bm \Gamma}_i}^*}$ is still oriented.
\end{proof} 

We introduce the following notation for moduli spaces.  or each $B \in H_2^K (V, L_V; {\mb Z})$, $i \in \{0, 1\}$, and $\lambda \in \{0, 1, \infty\}$, let
\beqn
\ov{ {\mc M} ( B, {\bf X})_i^\lambda},
\eeqn
be the union of all ${\mc M}_{{\bm \Gamma}}^*$ for all essential map types ${{\bm \Gamma}}$ whose inputs and outputs labelled by ${\bf X} = ({\bm x}_1, \ldots, {\bm x}_l; {\bm x}_\infty)$, whose index is $i$, and whose total homology class is $B$. Notice that when $i = 0$, this is a disjoint union of discrete points; when $i = 1$, we identify boundary components using Lemma \ref{lemma615}. Since the class $B$ fixes the total energy and hence the number of interior markings, $\ov{ {\mc M}( B, {\bf X} )_i^\lambda}$ consists of finitely many different essential map types. By the compactness result for fixed domain type (Theorem \ref{thm57}), $\ov{{\mc M}(B, {\bf X})_i^\lambda}$ is compact $i$-dimensional manifold with boundary. When $i = 1$, we can write 
\beq\label{eqn612}
\partial \ov{ {\mc M} (  B, {\bf X} )_1^\lambda} = \partial_{{\rm T1}} \ov{ {\mc M} ( B, {\bf X} )_1^\lambda}  \sqcup \partial_{{\rm T2}} \ov{ {\mc M} ( B,  {\bf X})_1^\lambda}.
\eeq
The two components correspond to ``breaking upstairs'' and ``breaking downstairs.''

\section{Fukaya Algebras}\label{section7}

In this section we wrap up all the previous analytical constructions
and discuss their algebraic implications. In particular we complete
the definitions of the $A_\infty$ algebras $\fuk^0 (L)$,
$\fuk^\infty (L)$ and the $A_\infty$ morphism between them. We also
prove certain properties of these structures.

\subsection{\texorpdfstring{$A_\infty$}{Lg} algebras}\label{subsection71}

 Recall that ${\bm \Lambda}$ is the Novikov field of formal Laurent series in ${\bm q}$
\beqn
{\bm \Lambda} = \Set{ \sum_{i \geq i_0} a_i {\bm q}^i\ | \  a_i \in {\mb C} },
\eeqn
and ${\bm \Lambda}_0$ resp. ${\bm \Lambda}_+$ are the subring of series with only nonnegative resp. positive powers. Let ${\bm \Lambda}^* \subset {\bm \Lambda}_0$ be the group of invertible  elements. 

We recall the basic notions about $A_\infty$ algebras over the Novikov field. Let $\uds{A}$ be a finite-dimensional ${\mb Z}$-graded ${\mb C}$-vector space, and denote $A = \uds A \otimes {\bm \Lambda}$. The grading of a homogeneous element of $a \in A$ is denoted by $|a|\in {\mb Z}$. For $k \in {\mb Z}$, $A[k]$ is the shifted graded module whose degree $l$ component is $A[k]^l = A^{k+l}$.

\begin{defn}\label{defn71}
\begin{enumerate}

\item A (curved) $A_\infty$ algebra structure over $A$ consists of a sequence of graded, ${\bm \Lambda}$-linear maps
\beqn
{\bm m}_l: \overbrace{ A \underset{\bm \Lambda}{\otimes} \cdots \underset{\bm \Lambda}{\otimes} A}^l \to A[2- l],\ l \geq 0
\eeqn
satisfying the $A_\infty$ relation: for any $l \geq 1$ and a $l$-tuple $({\bm a}_1, \ldots, {\bm a}_{l})$ of homogeneous elements of $A$, denoting $n_j  = |{\bm a}_1| + \cdots + |{\bm a}_j| + j$, then
\beq \label{ainftyrel}
0 = \sum_{r = 0}^l \sum_{j = 0}^{l - r} (-1)^{n_j } \m_{l-r+1} \big( {\bm a}_1, \ldots, {\bm a}_j, \m_r \big( {\bm a}_{j + 1}, \ldots, {\bm a}_{j + r} \big), {\bm a}_{j+ r+1}, \ldots, {\bm a}_l \big).
\eeq

\item A {\it  strict unit} of an $A_\infty$ algebra is an element ${\bm e} \in A$ of degree zero satisfying
\beqn
\m_2( {\bm e},{\bm a}) = (-1)^{|{\bm a}|} \m_2 ({\bm a}, {\bm e})  = {\bm a},\ \forall {\bm a} \in A;
\eeqn
\beqn
\m_l ( {\bm a}_1, \ldots, {\bm a}_{j - 1}, {\bm e}, {\bm a}_{j + 1}, \ldots, {\bm a}_l ) = 0,\ \forall l \geq 2,\ {\bm a}_1, \ldots, {\bm a}_{j-1}, {\bm a}_{j+1}, \ldots, {\bm a}_l \in A.
\eeqn
An $A_\infty$ algebra with a strict unit is called a {\it  strict unital} $A_\infty$ algebra.
\end{enumerate}
\end{defn}

\subsubsection{Maurer--Cartan equation and potential function}

Denote $A_+ = \uds A \otimes {\bm \Lambda}_+$. The {\it  weak Maurer--Cartan} equation for an $A_\infty$ algebra ${\mc A}$ reads 
\beqn 
{\bm m}_0^{\bm b}(1):=\sum_{l \geq 0} \m_{l} ( {\bm b}, \ldots, {\bm b} ) \equiv 0\ {\rm mod}\ {\bm \Lambda} {\bm e},\ \ {\rm where}\ {\bm b} \in A_+ \cap
A^{\rm odd}.  
\eeqn 
A solution ${\bm b}$ is called a {\it  weakly bounding cochain}. Let $MC ({\mc A})$ be the set of all weakly bounding cochains. Define the {\it  potential function} ${\bf W}_{\mc A}: MC ({\mc A}) \to {\bm \Lambda}$ by 
\beqn 
{\bf W}_{\mc A} ( {\bm b} ) {\bm e} = \m_0^{\bm b}(1).  
\eeqn

\subsubsection{$A_\infty$ morphisms}

\begin{defn}\label{defn72}
Let ${\mc A}_i = (A, {\bm m}_{i, 0}, {\bm m}_{i, 1}, \ldots)$, $i = 1, 2$ be two $A_\infty$ algebras over $A$. 
\begin{enumerate}
\item 
A {\it  morphism} from ${\mc A}_1$ to ${\mc A}_2$ is a collection of ${\bm \Lambda}$-linear maps 
\beqn
{\bm \kappa} = \Big( \kappa_l: \overbrace{ A \underset{\bm \Lambda}{\otimes} \cdots \underset{\bm \Lambda}{\otimes} A }^{l} \to A[1- l],\ l = 0, 1, \ldots \Big)
\eeqn
satisfying the $A_\infty$ relation
\begin{multline}\label{eqn72}
0 = \sum_{i + j \leq l} (-1)^{n_j} \kappa_{l - i + 1} \big( {\bm a}_1, \ldots, {\bm a}_j, {\bm m}_{1, i}\big( {\bm a}_{j+1}, \ldots, {\bm a}_{j+i} \big), {\bm a}_{j+i+1}, \ldots, {\bm a}_{l} \big) \\
+ \sum_{r \geq 1} \sum_{ i_1, \ldots, i_r \geq 0}^{ 
i_1 + \cdots + i_r = l} {\bm m}_{2, r} \big( \kappa_{i_1} \big( {\bm a}_1, \ldots, {\bm a}_{i_1} \big), \ldots, \kappa_{i_r} \big( {\bm a}_{l - i_r +1}, \ldots, {\bm a}_l \big) \big).
\end{multline}

\item If ${\mc A}_1$ and ${\mc A}_2$ are strictly unital with strict units ${\bm e}_1, {\bm e}_2$, then a morphism ${\bm \kappa}: {\mc A}_1 \to {\mc A}_2$ is called {\it  unital} if 
\beqn
\kappa_1({\bm e}_1) = {\bm e}_2,\ \kappa_l \big( {\bm a}_1, \ldots, {\bm a}_i, {\bm e}_1, {\bm a}_{i+2}, \ldots, {\bm a}_l \big) = 0,\ \forall l \neq 1,\ 0 \leq i \leq l - 1.
\eeqn

\end{enumerate}
\end{defn}

From now on we make the following assumption in the abstract discussion of $A_\infty$ algebras. We assume that ${\mc A}_1, {\mc A}_2$ be two $A_\infty$ algebras over the same finite-dimensional ${\bm \Lambda}$-module $A = \uds A \otimes {\bm \Lambda}$ with a common strict unit ${\bm e} \in \uds A \subset A$ and ${\bm \kappa}: {\mc A}_1 \to {\mc A}_2$ be a unital $A_\infty$ morphism. An $A_\infty$ morphism ${\bm \kappa}: {\mc A}_1 \to {\mc A}_2$ is called a {\it  higher order deformation of the identity}, if 
\begin{align*}
&\ \kappa_1 ({\bm a}) - {\bm a} \in A_+,\ \forall {\bm a} \in \uds A;\ &\ \kappa_l \big( \overbrace{\uds A \underset{\mb C}{\otimes} \cdots \underset{\mb C}{\otimes} \uds A }^{l} \big) \subset A_+,\ \forall l  \neq 1.
\end{align*}

\begin{lemma}\label{lemma73}
Suppose ${\bm \kappa}: {\mc A}_1 \to {\mc A}_2$ is a higher order deformation of the identity. Then there is a well-defined map 
\beq\label{eqn73}
\uds \kappa: A_+ \to A_+,\ \uds \kappa( {\bm a}) = \sum_{l \geq 0} \kappa_{l}( \underbrace{{\bm a}, \ldots, {\bm a}}_{l} ).
\eeq
Moreover, $\kappa_1: A \to A$ is a linear isomorphism and $\uds \kappa: A_+ \to A_+$ is a bijection.
\end{lemma}

\begin{proof}
The convergence of $\uds \kappa$ follows from the definition. $\kappa_1$ is invertible because it differs from the identity by higher order terms. $\uds \kappa$ is bijective because we may always solve $\uds \kappa ({\bm a}) = {\bm b}$ order by order. 
\end{proof}

Now we consider how $A_\infty$ morphisms interact with potential functions. 

\begin{prop}\label{prop74}
Let ${\mc A}_1, {\mc A}_2$ be $A_\infty$ algebras over the same ${\bm \Lambda}$-module $A$ with a common strict unit ${\bm e}$. Let ${\bm \kappa}: {\mc A}_1 \to {\mc A}_2$ be a unital $A_\infty$ morphism that is a higher order deformation of the identity. Then the map $\uds \kappa: A_+ \to A_+$ defined by \eqref{eqn73} maps $MC ({\mc A}_1)$ into $MC({\mc A}_2)$. Moreover one has
\beqn
{\bf W}_{{\mc A}_1} ({\bm b}_1) =  {\bf W}_{{\mc A}_2} (\uds \kappa ({\bm b}_1)),\ \forall {\bm b}_1 \in MC({\mc A}_1).
\eeqn
\end{prop}

\begin{proof}
The map $\uds \kappa$ sends odd degree elements to odd degree elements. Moreover, by the $A_\infty$ axiom of the morphism ${\bm \kappa}$ and its unitality, one has
\beqn
\doublespacing
\begin{split}
 &\ \sum_{l \geq 0} {\bm m}_{2, l} \big( \uds \kappa ( {\bm b}_1), \ldots, \uds \kappa ({\bm b}_1) \big) \\
= &\ \sum_{l \geq 0} \sum_{i_1, \ldots, i_l \geq 0} {\bm m}_{2, l} \big( \kappa_{i_1}( {\bm b}_1, \ldots, {\bm b}_1 ), \ldots, \kappa_{i_l}( {\bm b}_1, \ldots, {\bm b}_1) \big)\\
= &\ \sum_{r \geq 0} \sum_{l \geq 0} \sum_{i_1, \ldots, i_l \geq 0}^{ i_1 + \cdots + i_l = r} {\bm m}_{2, l} \big( \kappa_{i_1}( {\bm b}_1, \ldots, {\bm b}_1 ), \ldots, \kappa_{i_l}(  {\bm b}_1 , \ldots, {\bm b}_1   ) \big) \\
= &\ \sum_{r \geq 0} \sum_{s = 0}^{r} \sum_{j = 0}^{r - s} (-1)^{n_j} \kappa_{ r - s + 1} \big( \underbrace{ {\bm b}_1, \ldots, {\bm b}_1}_{j}, {\bm m}_{1, s} ( {\bm b}_1, \ldots, {\bm b}_1), \underbrace{ {\bm b}_1, \ldots, {\bm b}_1}_{r - s - j} \big)\\
= &\ \sum_{j \geq 0} (-1)^{n_j} \sum_{l \geq j}	\kappa_{l + 1	} \big( \underbrace{{\bm b}_1, \ldots, {\bm b}_1 }_{j}, \sum_{s \geq 0} {\bm m}_{1, s}( {\bm b}_1, \ldots, {\bm b}_1), \underbrace{{\bm b}_1, \ldots, {\bm b}_1}_{l - j} \big) \\
= &\ \sum_{j \geq 0} (-1)^{n_j} \kappa_{l + 1} \big( \underbrace{ {\bm b}_1, \ldots, {\bm b}_1}_{j}, {\bf W}_{{\mc A}_1} ({\bm b}_1 ) {\bm e}, \underbrace{ {\bm b}_1, \ldots, {\bm b}_1 }_{l - j} 	\big)\\
= &\ \kappa_1 \big( {\bf W}_{{\mc A}_1} ({\bm b}_1) {\bm e} \big)\\
= &\ {\bf W}_{{\mc A}_1} ({\bm b}_1) {\bm e}.
\end{split}
\eeqn
This finishes the proof. 
\end{proof}

\subsubsection{Deformations and cohomology}

We continue with the assumption that ${\mc A}$ is an $A_\infty$ algebra over $A$ with a strict unit ${\bm e}$. Suppose ${\bm b} \in MC({\mc A})$. Define 
\beqn
{\bm m}_l^{\bm b} ({\bm a}_1, \ldots, {\bm a}_l ) = \sum_{i_0, \ldots, i_l} {\bm m}_{l + i_0 + \cdots + i_l} ( \underbrace{{\bm b}, \ldots, {\bm b}}_{i_0}, {\bm a}_1, \underbrace{ {\bm b}, \ldots, {\bm b}}_{i_1}, \cdots, {\bm a}_l, \underbrace{{\bm b}, \ldots, {\bm b}}_{i_l} ).
\eeqn
Similarly, suppose ${\bm \kappa}:{\mc A}_1 \to {\mc A}_2$ is a unital
$A_\infty$ morphism that is a higher order deformation of the
identity, and ${\bm b}_1 \in MC({\mc A}_1)$. Define 
\beqn \kappa_l^{\bm b} ({\bm a}_1, \ldots, {\bm a}_l) = \sum_{i_0,
  \ldots, i_l} \kappa_{l + i_0 + \cdots + i_l} ( \underbrace{{\bm b},
  \ldots, {\bm b}}_{i_0}, {\bm a}_1, \underbrace{ {\bm b}, \ldots,
  {\bm b}}_{i_1}, \cdots, {\bm a}_l, \underbrace{{\bm b}, \ldots, {\bm
    b}}_{i_l} ).  \eeqn

\begin{lemma}\label{lemma75}
Let ${\mc A}_1$ and ${\mc A}_2$ be $A_\infty$ algebras over $A$. Let ${\bm \kappa}: {\mc A}_1 \to {\mc A}_2$ be an $A_\infty$ morphism that is a higher order deformation of the identity. Let ${\bm b}_1 \in MC({\mc A}_1)$ be a weakly bounding cochain and denote ${\bm b}_2= \uds \kappa ({\bm b}_1) \in MC({\mc A}_2)$. 
\begin{enumerate}

\item For $i = 1, 2$, the maps ${\bm m}_{i, 0}^{{\bm b}_i},  {\bm m}_{i, 1}^{{\bm b}_i}, \ldots$ define a unital $A_\infty$ algebra ${\mc A}_i^{{\bm b}_i}$ over $A$. 

\item The maps $\kappa_{0}^{{\bm b}_1}, \kappa_1^{{\bm b}_1}, \ldots$ define a unital $A_\infty$ morphism ${\bm \kappa}^{{\bm b}_1}: {\mc A}_1^{{\bm b}_1} \to {\mc A}_2^{{\bm b}_2}$. 

\item For $i = 1, 2$, one has ${\bm m}_{i, 1}^{{\bm b}_i} \circ {\bm m}_{i, 1}^{{\bm b}_i} = 0$; moreover, $\kappa_1^{{\bm b}_1}$ is an isomorphism of cochain complexes $\kappa_1^{{\bm b}_1}: ( A, {\bm m}_{1, 1}^{{\bm b}_1} ) \cong ( A, {\bm m}_{2, 1}^{{\bm b}_2} )$ and hence induces an isomorphism in cohomology
\beqn
H({\bm \kappa}, {\bm b}_1): H({\mc A}_1, {\bm b}_1) \cong H({\mc A}_2, {\bm b}_2). 
\eeqn
\end{enumerate}
\end{lemma}

\begin{proof}
It is routine to check the first two items. For the third item, by the $A_\infty$ axiom and the fact that ${\bm b}_i \in MC({\mc A}_i)$, one sees that
\beqn
\m_{i,1}^{{\bm b}_i} \big( \m_{i,1}^{{\bm b}_i} ({\bm a}) \big) = \m_{i, 2} \big( \m_{i,0}^{{\bm b}_i} (1), {\bm a} \big) - (-1)^{|{\bm a}|} \m_{i,2}\big( {\bm a}, \m_{i,0}^{{\bm b}_i} (1) \big) = 0.
\eeqn
Hence $(A, \m_{i,1}^{{\bm b}_i} )$ is a cochain complex. Further, by the $A_\infty$ relations, we have (since the degree of ${\bm b}_1$ is odd, the signs in the \eqref{eqn72} are all positive)
\beqn
\begin{split}
&\ \kappa_1^{{\bm b}_1} \big( \m_{1, 1}^{{\bm b}_1} ({\bm a}) \big) \\
= &\ \sum_{i_0, i_1\geq 0} \kappa_{i_0 + i_1 + 1} \big( \underbrace{{\bm b}_1, \ldots, {\bm b}_1}_{i_0}, \sum_{j_0, j_1 \geq 0} \m_{1, j_0 + j_1 + 1} \big( \underbrace{{\bm b}_1, \ldots, {\bm b}_1}_{j_0}, {\bm a}, \underbrace{ {\bm b}_1, \ldots, {\bm b}_1}_{j_1} \big), \underbrace{{\bm b}_1, \ldots, {\bm b}_1}_{i_1} \big)  \\
= &\ - \sum_{r_0, r_1, r_2 \geq 0} \sum_{j_0\geq 0} \kappa_{ r_0 + r_1 + r_2 + 1} \big( \underbrace{{\bm b}_1, \ldots, {\bm b}_1}_{r_0},  \m_{1, j_0} \big( {\bm b}_1, \ldots, {\bm b}_1 \big), \underbrace{{\bm b}_1, \ldots, {\bm b}_1}_{ r_1}, {\bm a}, \underbrace{{\bm b}_1, \ldots, {\bm b}_1}_{r_2} \big) \big) \\
&\ - \sum_{s_0, s_1, s_2 \geq 0} \sum_{j_1\geq 0} \kappa_{ s_0 +s_1 + s_2 + 1}  \big( \underbrace{{\bm b}_1, \ldots, {\bm b}_1}_{s_0}, {\bm a}, \underbrace{{\bm b}_1, \ldots, {\bm b}_1}_{s_1}, \m_{1, j_1} \big(  {\bm b}_1, \ldots, {\bm b}_1 \big), \underbrace{{\bm b}_1, \ldots, {\bm b}_1}_{s_2} \big) \\
 &\ + \sum_{r, s \geq 0} \sum_{i_1, \ldots, i_r \geq 0} \sum_{j_1, \ldots, j_s \geq 0}  \m_{2, r + s + 1} \big( \kappa_{i_1}(  {\bm b}_1, \ldots, {\bm b}_1 ), \ldots, \kappa_{i_{r}} (  {\bm b}_1, \ldots, {\bm b}_1  ) \big.\\
&\ \big. , \sum_{l_0, l_1 \geq 0} \kappa_{l_0 + l_1 + 1} \big( \underbrace{{\bm b}_1, \ldots, {\bm b}_1}_{l_0}, {\bm a}, \underbrace{{\bm b}_1, \ldots, {\bm b}_1}_{l_1} \big), \kappa_{j_0} (  {\bm b}_1, \ldots, {\bm b}_1) , \ldots, \kappa_{j_s} (  {\bm b}_1, \ldots, {\bm b}_1  ) \big) \\
= &\ - \sum_{r_0, r_1, r_2 \geq 0} \kappa_{r_0 + r_1 + r_2 + 1} \big( \underbrace{{\bm b}_1, \ldots, {\bm b}_1}_{r_0},  {\bf W}_{{\mc A}_1} ({\bm b}_1) {\bm e}_1, \underbrace{{\bm b}_1, \ldots, {\bm b}_1}_{r_1}, {\bm a}, \underbrace{{\bm b}_1, \ldots, {\bm b}_1}_{r_2} \big) \big) \\
&\ - \sum_{s_0, s_1, s_2 \geq 0} \kappa_{ s_0 + s_1 + s_2 + 1}  \big( \underbrace{{\bm b}_1, \ldots, {\bm b}_1}_{s_0}, {\bm a}, \underbrace{{\bm b}_1, \ldots, {\bm b}_1}_{s_1}, {\bf W}_{{\mc A}_1} ({\bm b}_1) {\bm e}_1, \underbrace{{\bm b}_1, \ldots, {\bm b}_1}_{s_2} \big) \\
&\ +    \sum_{r, s \geq 0} \m_{2, r + s + 1} \big( \underbrace{{\bm b}_2, \ldots, {\bm b}_2}_{r}, \kappa_1^{{\bm b}_1} ( {\bm a}), \underbrace{{\bm b}_2, \ldots, {\bm b}_2}_{s} \big)\\
= &\ \m_{2, 1}^{{\bm b}_2} \big( \kappa_1^{{\bm b}_1}({\bm a}) \big).
\end{split}
\eeqn
Here to obtain the last equality we used the unitality of ${\bm \kappa}$. Hence $\kappa_1^{{\bm b}_1}$ is a chain map. It is an isomorphism because it is an invertible linear map (see Lemma \ref{lemma73}).
\end{proof}

\subsection{The Fukaya algebra on zero scale}\label{subsection72}

Define compositions and morphisms on the Floer cochain group as follows. The space of Floer cochains is  the ${\mb C}$-vector space
\beqn
CF^*(L) = \bigoplus_{{\bm x}\in {\rm Crit} F_L} {\mb C} \{{\bm x}\}
\eeqn
and
is graded by $| {\bm x} | = {\rm dim} L - {\rm index} {\bm x}$.

Orientations lead to an assignment of sign to each rigid vortex. Recall from Subsection \ref{subsection27} moduli spaces of treed scaled vortices are oriented. Then for each essential map type ${{\bm \Gamma}}$ of index zero, each element ${\mc V} \in {\mc M}_{{{\bm \Gamma}}}^* (P_\Gamma)$ is equipped with a sign ${\bf Sign}({\mc V}) \in \{\pm 1\}$. 

The local systems affect the counts in the following way. We assume we have chosen a ${\bm \Lambda}$-local system on $L$, that is, a flat ${\bm \Lambda}^*$-bundle over $L$ with structure group being the group of units ${\bm \Lambda}^*$ of $\Lambda$. We use $b \in H^1(L; {\bm \Lambda}_0)$ to represents a local system hence for any curve class $B$, one has the associated holonomy \beqn e^{\langle
  b, \partial B\rangle} \in {\bm \Lambda}.  \eeqn

Combining the weights above, we have a well-defined count of holomorphic disks as follows.  Fix a collection of generators ${\bf X} = ({\bm x}_1, \ldots, {\bm x}_l; {\bm x}_\infty)$, and consider the zero-dimensional moduli space ${\mc M} (B, {\bf X})_0^0$. Define
\beqn
\langle {\bm x}_1, \ldots, {\bm x}_l | {\bm x}_\infty \rangle_B^0 = \widetilde {\#} {\mc M} ( B, {\bf X} )_0^0:= (-1)^{\heartsuit( {\bf X} ) }  \sum_{{\mc V} \in {\mc M} ( B, {\bf X} )_0^0 }  {\bf Sign} ({\mc V}) \cdot e^{\langle b,  \partial B\rangle} \in {\mb C}. 
\eeqn
where 
\beqn
\heartsuit( {\bf X} ) := \sum_{i = 1}^l i|{\bm x}_i| \in {\mb Z};
\eeqn
Here for each equivariant class $B \in H_2^K(V, L_V; {\mb Z})$ , the boundary $\partial B$ denotes the image in $H_1^K(L_V; {\mb Z}) \cong H_1(L; {\mb Z})$. For each $B$, denote
\beqn
k_B = ( k_{1, B}, \ldots, k_{h, B}) = ( {\rm deg} D_1 \cdot \omega(B), \ldots, {\rm deg} D_h \cdot \omega(B))
\eeqn
and 
\beqn
k_B! = k_{1, B}! \cdots k_{h, B}!.
\eeqn
Define 
\beq\label{eqn74}
\langle {\bm x}_1, \ldots, {\bm x}_l | {\bm x}_\infty\rangle^0 = \sum_B \frac{{\bm q}^{\omega(B)}}{ k_B!} \langle {\bm x}_1, \ldots, {\bm x}_l |{\bm x}_\infty \rangle_B^0 \in {\bm \Lambda},
\eeq

The composition maps are defined by adding corrections to the Morse boundary operator. For $l = 1$, define 
\beqn
\m_1^0 ({\bm x}) = (-1)^{|{\bm x}|} \delta_{\rm Morse}({\bm x}) +  \displaystyle \sum_{{\bm x}_\infty \in \crit} \langle{\bm x}| {\bm x}_\infty \rangle^0 \cdot {\bm x}_\infty.
\eeqn
For $l \neq 1$, define
\beqn
\m_l^0 \big( {\bm x}_1, \ldots, {\bm x}_l \big) = \sum_{ {\bm x}_\infty \in \crit} \langle  {\bm x}_1, \ldots, {\bm x}_l | {\bm x}_\infty \rangle^0 \cdot {\bm x}_\infty.
\eeqn
The $l = 1$ case has additional terms coming from Morse
trajectories because by definition an infinite edge is not a stable domain type.

\begin{thm}\label{thm76} 
$\fuk^0 (L):= ( CF^*(L) \otimes {\bm \Lambda}, \m_0^0, \m_1^0, \ldots)$ is an $A_\infty$ algebra. 
\end{thm}

\begin{proof}
  It suffices to verify the $A_\infty$ relation \eqref{ainftyrel} for   all ${\bm a}_i$ being generators ${\bm x}_i$. Then it is equivalent   to verify that all curve class $B$ and critical points   ${\bm x}_\infty$, one has 
\beqn 
\sum_{r = 0}^l \sum_{j = 0}^{l- r}   (-1)^{ n_j} \langle {\bm x}_1, \ldots, {\bm x}_{j -1}, \m_r^0 ( {\bm     x}_{j+1}, \ldots, {\bm x}_{j + r} ), {\bm x}_{j + r + 1}, \cdots,   {\bm x}_l; {\bm x}_\infty \rangle_B^0 = 0.  
\eeqn 
We first verify these relations up to signs. First, the $\lambda = 0$
  case of \eqref{eqn612} implies \beqn
\partial \ov{ {\mc M}(B, {\bf X})_1^0} = \partial_{\rm T1} \ov{ {\mc M}(B, {\bf X})_1^0}.
\eeqn
The configurations in the boundary are those that split along an edge of infinite length. Let $\txt{BU}$ be the set of ``broken upstairs'' pairs $( {\bf X}_1, {\bf X}_2 )$ where for some $j, r$
and ${\bm z} \in \crit$,
\begin{align*}
&\ {\bf X}_1 = ( {\bm x}_{j+1}, \ldots, {\bm x}_{j +  r}; {\bm z}),\ &\ {\bf X}_2 = ( {\bm x}_1, \ldots, {\bm x}_j, {\bm z}, {\bm x}_{j+ r + 1}, \ldots, {\bm x}_l; {\bm x}_\infty). 
\end{align*}
(in the notation $\txt{BU}$ we omit the dependence on ${\bf X}$). Then we have
\beqn
\partial_{\rm T1} \ov{ {\mc M} (B, {\bf X})_1^0} \cong \bigsqcup_{({\bf X}_1, {\bf X}_2 )  \in \txt{BU}} \bigsqcup_{B_1 + B_2 = B} \frac{ k_B!}{ k_{B_1} ! k_{B_2}!} \Big( {\mc M} (B_1, {\bf X}_1)_0^0 \times  {\mc M}(B_2, {\bf X}_2 )_0^0 \Big). 
\eeqn
Then we have (modulo signs)
\begin{multline*}
\sum_{ r =0}^l \sum_{j = 1}^{l -  r} \langle {\bm x}_1, \ldots, {\bm x}_j, {\bm m}_r^0 ( {\bm x}_{j+1}, \ldots, {\bm x}_{j+ r} ), {\bm x}_{j+ r + 1}, \ldots, {\bm x}_l | {\bm x}_\infty \rangle_B^0  \\
= \sum_{( {\bf X}_1, {\bf X}_2 ) \in \txt{BU}} \sum_{B_1 + B_2 = B} \langle {\bm x}_{j+1}, \ldots, {\bm x}_{j+  r} | {\bm z} \rangle_{B_1}^0 \cdot \langle {\bm x}_1, \ldots, {\bm x}_j, {\bm z}, {\bm x}_{j+ r +1}, \ldots, {\bm x}_l |  {\bm x}_\infty \rangle_{B_2}^0 \\
= \sum_{ ( {\bf X}_1,{\bf X}_2) \in \txt{BU}} \sum_{ B_1 + B_2 = B}  \left(  \frac{ \tilde {\#}  {\mc M} ( B_1, {\bf }_1)_0^0}{ k_{B_1} !} \cdot \frac{ \tilde {\#}  {\mc M} ( B_2, {\bf X}_2 )_0^0}{ k_{B_2}!} \right) = \frac{1}{ k_B !} \Big( \tilde {\#} \big( \partial \ov{ {\mc M}  (B, {\bf X})_1^0} \big) \Big) = 0.
\end{multline*}
Indeed, the last identity follows from the fact that the restriction of our perturbation data to a boundary stratum which has one breaking is equal to the product of the perturbation data on the two unbroken parts. The verification of signs can be done following the same argument as in the proof of \cite[Theorem 3.6]{Woodward_toric} by carefully comparing the boundary orientation class and the product orientation class on codimension one boundary strata.
\end{proof}

\subsection{The Fukaya algebra on the infinite scale}\label{subsection73}

The $A_\infty$-relation for the infinite scale Fukaya algebra follows in a similar way. Consider essential map types of scale
$\infty$ with inputs labelled by ${\bf X} = ({\bm x}_1, \ldots, {\bm x}_j; {\bm x}_\infty)$. For each
$B \in H_2^K(V, L_V; {\mb Z})$, consider the zero-dimensional moduli
space ${\mc M}(B, {\bf X})_0^\infty$. Using the orientation on the
moduli space induced from the spin structure, define the counts
\beqn
\langle {\bm x}_1, \ldots, {\bm x}_j | {\bm x}_\infty \rangle_B^\infty:= (-1)^{\heartsuit({\bf X})} \sum_{{\mc V} \in {\mc M}(B, {\bf X})_0^\infty} {\bf Sign}({\mc V}) \cdot e^{\langle b, \partial B\rangle} \in {\mb C}
\eeqn
and the correlators
\beq\label{eqn75}
\langle {\bm x}_1, \ldots, {\bm x}_j |{\bm x}_\infty \rangle^\infty:= \sum_{B} \frac{{\bm q}^{\omega(B)}}{k_B!}  \langle {\bm x}_1, \ldots, {\bm x}_j | {\bm x}_\infty \rangle_B^\infty  \in {\bm \Lambda}.
\eeq
Then we define the compositions ${\bm m}_j^\infty$. For $j = 1$, define
\beqn
{\bm m}_1^\infty ({\bm x}) = (-1)^{|{\bm x}|} \delta_{\rm Morse}({\bm x}) +  \displaystyle \sum_{{\bm x}_\infty} \langle {\bm x} | {\bm x}_\infty \rangle^\infty \cdot {\bm x}_\infty.
\eeqn
For $j \neq 1$, define
\beqn
{\bm m}_j^\infty \big( {\bm x}_1, \ldots, {\bm x}_j \big) = \sum_{{\bm x}_\infty} \langle {\bm x}_1, \ldots, {\bm x}_j | {\bm x}_\infty \rangle^\infty.
\eeqn

\begin{thm}\label{thm77}
$\fuk^\infty (L):= ( CF^* (L) \otimes {\bm \Lambda}, {\bm m}_0^\infty, {\bm m}_1^\infty, \ldots )$ is an $A_\infty$ algebra over ${\bm \Lambda}$. 
\end{thm}

\begin{proof}
The proof is similar to Theorem \ref{thm76} and left to the reader (the verification of signs is exactly the same).
\end{proof}

\begin{rem}\label{rem78}
  We remark why $\fuk^\infty(L)$ can be viewed as what
  Fukaya-Oh-Ohta-Ono \cite{FOOO_Book} call the {\it bulk-deformed Fukaya algebra} $\fuk(L; {\mf c})$ for a certain cycle
  ${\mf c} \in C^*( X; {\bm \Lambda}_+)$. Indeed, by considering
  domain-dependent almost complex structures over universal curves for
  base-free trees, one can prove transversality for vortices over ${\bm C}$ for generic
    perturbations. Then for each curve class
  $B \in H_2^K(V; {\mb Z})$ with $\omega(B) \geq 0$, there is only one
  essential base-free map type ${\bm \Gamma}_B$. The transversality
  result implies that the moduli space ${\mc M}_{{\bm \Gamma}_B}^*$ is
  a smooth oriented manifold of dimension ${\rm dim} X + 2 c_1(B) - 2$
  and the evaluation map 
\beqn \ev: {\mc M}_{{\bm \Gamma}_B}^* \to X
  \eeqn 
  is an oriented pseudocycle. One may define   a Novikov-weighted cycle 
\beqn 
{\mf c} = \sum_{B \in H_2^K ( V; {\mb Z})} \frac{ {\bm q}^{\omega(B)}}{ k_B!}    ( \ev( {\mc M}_{{\bm \Gamma}_B}^* ) )   \eeqn 
whose homology class is independent of all choices. 
 Then formally one may view $\fuk{}^\infty(L)$ as the Fukaya algebra
  deformed by ${\mf c}$.
\end{rem}

\subsection{The open quantum Kirwan map}\label{subsection74}

Now we define a morphism between the two Fukaya algebras constructed
above.  By Definition \ref{defn72} an $A_\infty$ morphism is given by a sequence of maps 
\beqn
\kappa_l: (CF^*(L)\otimes {\bm \Lambda})^{\otimes l} \to CF^* (L) \otimes {\bm \Lambda},\ l = 0, 1, \ldots.
\eeqn
The coefficients of these maps for 
${\bf X} = ( {\bm x}_1, \ldots, {\bm x}_l; {\bm x}_\infty)$ are given as follows. For each
$B \in H_2^K(V, L_V; {\mb Z})$, consider the zero-dimensional
moduli space ${\mc M}(B; {\bf X})_0^1$. Define 
\beqn
\langle {\bm x}_1, \ldots, {\bm x}_l | {\bm x}_\infty \rangle_k^1:= (-1)^{\heartsuit({\bf X})} \sum_{{\mc V}
  \in {\mc M}(B, {\bf X})_0^1} {\bf Sign}({\mc V}) \cdot e^{\langle
  b, \partial B\rangle} \in {\mb C} \eeqn 
and 
\beqn \kappa_l \big( {\bm x}_1, \ldots, {\bm x}_l \big) =
\sum_{{\bm x}_\infty} \sum_{B} \frac{{\bm
    q}^{\omega(B)} }{k_B!} \langle {\bm x}_1, \ldots, {\bm x}_l | {\bm
  x}_\infty \rangle_B^1 \cdot {\bm x}_\infty.  \eeqn

\begin{thm}\label{thm79}
The collection of multilinear maps $\kappa_l$ is an $A_\infty$ morphism from $\fuk^0 (L)$ to $ \fuk^\infty (L)$. Moreover, when the perturbation satisfies the properties described in Remark \ref{rem69}, the morphism is a higher order deformation of the identity. 
\end{thm}

\begin{proof}
Recall that the $A_\infty$ axiom for ${\bm \kappa}$ holds if for all positive integers $l$ and homogeneous elements ${\bm a}_1, \ldots, {\bm a}_l$ of $CF(L)$,
\begin{multline}\label{eqn76}
\sum_{0 \leq r \leq l} \sum_{0 \leq j \leq l-r} (-1)^{ n_j} \kappa_{l -  r + 1} \big( {\bm a}_1, \ldots, {\bm a}_j, {\bm m}_r^0 \big( {\bm a}_{j+1}, \ldots, {\bm a}_{j + r} \big), {\bm a}_{j + r + 1}, \ldots, {\bm a}_l \big) \\
= \sum_{s\geq 1} \sum_{r_1 + \cdots + r_s = l} {\bm m}_s^\infty \big( \kappa_{ r_1} \big( {\bm a}_1, \ldots, {\bm a}_{ r_1} \big), \ldots, \kappa_{ r_s } \big({\bm a}_{l - r_s -1}, \ldots,  {\bm a}_l \big) \big).
\end{multline}
To prove \eqref{eqn76} up to signs, it suffices consider the case that all
${\bm a}_i = {\bm x}_i$ are arbitrary elements of ${\bf crit}$.
Choose ${\bm x}_\infty \in \crit$ and denote
${\bf X} = ({\bm x}_1, \ldots, {\bm x}_{\uds k}; {\bm x}_\infty)$. For
each curve class $B$, consider the specialization of \eqref{eqn612} to
the case $\lambda = 1$. To better describe these two boundary
components, introduce the following notations.
\begin{enumerate}

\item Let $\txt{BU}$ (stands for ``broken upstairs'') be the set
  of pairs $( {\bf X}_1, {\bf X}_2)$, where ${\bf X}_1$ and
  ${\bf X}_2$ are sequences of elements of $\crit$ of the forms
 \begin{align*}
&\ {\bf X}_1 = ({\bm x}_{j+1}, \ldots, {\bm x}_{j+  r}; {\bm z}),\ &\ {\bf X}_2 = ({\bm x}_1, \ldots, {\bm x}_j, {\bm z}, {\bm x}_{j + r + 1}, \ldots, {\bm x}_l; {\bm x}_\infty)
\end{align*}
for all ${\bm z} \in \crit$, $r \in \{0, 1, \ldots,l \}$, and
$j \in \{ 0, 1, \ldots, l - r \}$. Then modulo orientations, we have
\beqn
\partial_{\rm T1}  \ov{ {\mc M}(B, {\bf X})_1^1}  \cong \bigsqcup_{ ({\bf X}_1, {\bf X}_2 ) \in \txt{BU}} \bigsqcup_{B_1 + B_2 = B}  \frac{k_B!}{ k_{B_1} !  k_{B_2} ! }   \Big( {\mc M}( B_1, {\bf X}_1 )_0^0 \times {\mc M} ( B_2, {\bf X}_2 )_0^1 \Big).
\eeqn

\item Let $\txt{BD}$ (stands for ``broken downstairs'') be the set of $(s + 1)$-tuples $( {\bf X}_1, \ldots, {\bf X}_s; {\bf X}_\infty )$ where
\beqn
{\bf X}_i = ( {\bm x}_{j_i}, \ldots, {\bm x}_{j_i + r_i}; {\bm z}_i),\ i = 1, \ldots, l,\ j_{i+1} = j_i + r_i + 1;\ {\bf X}_\infty = ({\bm z}, \ldots, {\bm z}_s; {\bm x}_\infty)
\eeqn
for all $s \in {\mb N}$, all partitions $l =  r_1 + \cdots + r_s$ into $s$ nonnegative integers, and all choices of $s$-tuple of critical points $({\bm z}_1, \ldots, {\bm z}_s )$. Then similar to the above case, modulo orientation one has 
\begin{multline*}
\partial_{\rm T2}  \ov{ {\mc M}(B, {\bf X})_1^1}  \cong \\
 \bigsqcup_{({\bf X}_1, \ldots, {\bf X}_s; {\bf X}_\infty )\in \txt{BD}}  \bigsqcup_{ B_1 + \cdots + B_s + B_\infty = B}  \frac{k_B!}{ k_{B_1}!\cdot \cdots \cdot k_{B_s} ! \cdot k_{B_\infty}!} \left(  {\mc M}( B_\infty; {\bf X}_\infty)_0^\infty \times \prod_{i=1}^s {\mc M}(B_i; {\bf X}_i)_0^1 \right).
 \end{multline*}
\end{enumerate}

Now we prove the $A_\infty$ relation (up to sign). For all $k \geq 0$, one has up to sign
\beqn\doublespacing
\begin{split}
&\ \sum_{ r =0}^l \sum_{j = 1}^{l - r} \langle {\bm x}_1, \ldots, {\bm x}_j, {\bm m}_r^0 ( {\bm x}_{j+1}, \ldots, {\bm x}_{j+ r} ), {\bm x}_{j+ r + 1}, \ldots, {\bm x}_l ); {\bm x}_\infty \rangle_B^1 \\
= &\ \sum_{( {\bf X}_1, {\bf X}_2) \in \txt{BU}} \sum_{B_1 + B_2 = B} \langle {\bm x}_{j+1}, \ldots, {\bm x}_{j+ r}; {\bm z} \rangle_{B_1}^0 \cdot \langle {\bm x}_1, \ldots, {\bm x}_j, {\bm z}, {\bm x}_{j+ r +1}, \ldots, {\bm x}_l ;  {\bm x}_\infty \rangle_{B_2}^1 \\
= &\ \sum_{ ( {\bf X}_1, {\bf X}_2) \in \txt{BU}} \sum_{ B_1 + B_2 = B}  \left(  \frac{ \tilde {\#} {\mc M}(B_1, {\bf X}_1)_0^0 }{k_{B_1} !} \cdot \frac{ \tilde {\#} {\mc M}( B_2, {\bf X}_2 )_0^1 }{ k_{B_2} !} \right) \\
= &\ \frac{1}{k!} \Big( \tilde {\#} \big( \partial_{\rm T1} \ov{ {\mc M} (B, {\bf X})_1^1} \big) \Big)\\
= &\ \frac{1}{k!} \Big( \tilde {\#} \big( \partial_{\rm T2} \ov{ {\mc M} (B, {\bf X})_1^1}  \big) \Big) \\
= &\ \sum_{( {\bf X}_1, \ldots, {\bf X}_s; {\bf X}_\infty ) \in \txt{BD} } \sum_{B_1 + \cdots + B_s + B_\infty = B } \left(  \frac{ \tilde {\#} {\mc M}( B_\infty, {\bf X}_\infty )_0^\infty}{k_{B_\infty}!} \times \prod_{i =1}^s \frac{ \tilde {\#}  {\mc M}( B_i; {\bf X}_i)_0^1}{ k_{B_i} !} \right) \\
= &\ \sum_{(  {\bf X}_1, \ldots, {\bf X}_s; {\bf X}_\infty ) \in \txt{BD}} \sum_{B_1 + \cdots + B_s + B_\infty = B} \langle {\bm z}_1, \ldots, {\bm z}_s;  {\bm x}_\infty \rangle_{B_\infty}^\infty \cdot  \prod_{i=1}^s \langle {\bm x}_{j_i}, \ldots, {\bm x}_{j_i + r_i};  {\bm z}_i \rangle_{B_i}^1 \\
= &\ \sum_{ r_1 + \cdots + r_s = l} \langle \kappa_{ r_1}({\bm x}_1, \ldots, {\bm x}_{ r_1}), \cdots, \kappa_{ r_s} ( {\bm x}_{ r_1 + \cdots + r_{s-1} + 1}, \ldots, {\bm x}_l ); {\bm x}_\infty   \rangle_B^\infty.
\end{split} \eeqn

The verification of signs in the axioms of the $A_\infty$ morphism can be done in the same way as in the proof of \cite[Theorem 3.14]{Woodward_toric}. Indeed, one needs to compare the boundary orientations and the product orientations of one-dimensional moduli spaces. As affine vortices over ${\bm H}$ is combinatorially similar to quilted disks, the signs appearing are the same as in the proof of \cite[Theorem 3.14]{Woodward_toric}. We omit the details. This finishes the proof of the $A_\infty$ relation for the morphism.
\end{proof}

\subsection{The positivity conditions}\label{subsection75}

We consider special properties of the $A_\infty$ algebras and $A_\infty$ morphisms under the following positivity assumptions.
\vspace{0.2cm}

\begin{defn}\label{poscond} Define the following conditions:
\begin{itemize}
\item[] (P1) All nonconstant $J_V$-holomorphic
  disks in $V$ have positive Maslov indices.
\item[] (P2) All nonconstant $I_X$-holomorphic disks in $X$ have positive Maslov indices.
\item[] (P3) All nonconstant $J_V$-affine vortices
  over ${\bm H}$ have positive Maslov indices.
\end{itemize}
\end{defn}

\begin{prop}\label{prop711}
  Assume that $(V, J_V, L_V)$ satisfies (P1). Then there exists a function
  $W^0: H^1(L; {\bm \Lambda}_0) \to {\bm \Lambda}$ such that for the
  $A_\infty$ algebra $\fuk^0(L)$ defined for each local system $\exp(b)$, $b \in H^1(L; {\bm \Lambda}_0)$ there holds 
  \beq\label{eqn77} 
  {\bm m}_0^0(1) = W^0(b)  {\bm x}_M.  
  \eeq
\end{prop}

\begin{proof}
We first prove the following claim using Gromov compactness for pseudoholomorphic disks:

\vspace{0.2cm}

\noindent {\it  Claim.} For any $E>0$, there exists $\theta^0 (E)>0$ satisfying the following condition. Let $J: {\bm D}^2 \to {\mc J}_D$ be a smooth map such that 
\beqn
\| J - J_V\|_{C^2 ({\bm D}^2 \times V)} \leq \theta^0 (E),
\eeqn
and $u: {\bm D}^2 \to V$ is a $J$-holomorphic disk with $E(u) \leq E$. Then the Maslov index of $u$ is positive. 		
\vspace{0.2cm}

\noindent {\it  Proof of the claim.} Suppose the claim is not true.  Then there exists a sequence of families of almost complex structures $J_i$ parametrized by $z \in {\bm D}^2$ converging in the $C^2$-topology to $J_V$ and a sequence of $J_i$-holomorphic disks $u_i: {\bm D}^2 \to V$ with $E(u_i) \leq E$ with nonpositive Maslov indices. By Gromov compactness, 
a subsequence of $u_i$ converges to a stable $J_V$-holomorphic disk. The limit has positive energy hence positive Maslov index. This is a contradiction since the homology class is preserved in Gromov convergence. \hfill {\it  End of the proof of the claim.}

\vspace{0.2cm}

When inductively construct the coherent system of strongly regular perturbation data, one can ask the following condition be satisfied. For each stable domain type $\Gamma$ and each subtree $\Pi$ of $\Gamma$, recall that ${\rm deg}^{\rm max}(\Pi)$ defined in \eqref{eqn31} is roughly the expected energy of components in $\Pi$ in a stable treed scaled vortex of domain type $\Gamma$. If all edges of $\Pi$ have zero length, we require that $\| J_\Gamma - J_V \|_{C^\epsilon(  \ov{\mc U}_{\Gamma, \Pi} \times V)}$ being sufficiently small so that for each treed disk ${\mc C}$ of type $\Pi$, over each disk component which can be identified with the standard disk ${\bm D} \subset {\bm C}$, one has
\beqn
\| J_\Gamma - J_V \|_{C^2({\bm D} \times V)} < \theta^0 ( {\rm deg}^{\rm max}\Pi ).
\eeqn
This condition can be preserved step by step during the induction. Then by the above claim, only quasidisks with positive Maslov indices contribute to the composition maps. In particular, one has that
\beqn
{\bm m}_0^0(1) =  \left( \sum \tilde{\#} {\mc M}_{{\bm \Gamma}_B}^*(P_{\Gamma_B}) \right) \cdot {\bm x}_M
\eeqn
where the sum is taken over all $B \in H_2(V, L_V; {\mb Z})$ having positive area and Maslov index two; ${\bm \Gamma}_B$ is the essential map type which has only one vertex $v_\alpha$, no input, and one output, with $B_\alpha = B$. Hence \eqref{eqn77} follows. \end{proof}

\begin{prop}\label{prop712}
Suppose $(V, L_V, J_V)$ satisfies (P2). Then there exists a function $W^\infty: H^1(L; {\bm \Lambda}_0) \to {\bm \Lambda}$ such that for the $A_\infty$ algebra $\fuk^\infty(L)$ defined for each local system $b \in H^1( L; {\bm \Lambda}_0)$ there holds
\beq\label{eqn78}
{\bm m}_0^\infty(1) = W^\infty(b) {\bm x}_M.
\eeq
\end{prop}

\begin{proof}
Similar to the proof of Proposition \ref{prop711}, we first prove the following claim.

\vspace{0.2cm}

\noindent {\it  Claim.} For any $E>0$, there exists $\theta^\infty(E)>0$ satisfying the following conditions (notice that $S \cap L_V = \emptyset$ and for the standard almost complex structure, all nonconstant objects with zero Chern number are contained in $S$).
\begin{enumerate}

\item Let $J: {\bm D}^2 \to {\mc J}_D$ be a smooth map with $\| J - J_V \|_{C^2({\bm D}^2 \times V)} \leq \theta^\infty(E)$ and $u: {\bm D}^2 \to X$ be an $I$-holomorphic disk with $E(u) \leq E$ where $I$ is the family of almost complex structures on $X$ induced from $J$.  Then the Maslov index of $u$ is positive.

\item Let $J: {\bm S}^2 \to {\mc J}_D$ be a smooth map with $\| J - J_V \|_{C^2({\bm S}^2 \times V)} \leq \theta^\infty(E)$ and $u: {\bm S}^2 \to X$ be an $I$-holomorphic sphere with $E(u) \leq E$ where $I$ is the family of almost complex structures on $X$ induced from $J$, then the Chern number of $u$ is nonnegative. Moreover, when the Chern number is zero, the image of $u$ is disjoint from $L$.

\item Let $J: {\bm C} \to {\mc J}_D$ be a smooth map with $\| J - J_V \|_{C^2({\bm C}\times V)} \leq \theta^\infty(E)$ and ${\bm v}$ be a $J$-affine vortex over ${\bm C}$ with $E({\bm v}) \leq E$, then the Chern number of ${\bm v}$ is nonnegative. Moreover, if the Chern number is zero, then the image of ${\bm v}$ is disjoint from $L_V$.

\end{enumerate}

\vspace{0.2cm}

When constructing the coherent system of perturbation data ${\bm P} = \{ P_\Gamma \}$, we can require that for all stable domain type $\Gamma$ and all subtrees $\Pi \subset \Gamma$ with only zero length edges, the norm $\| J_\Gamma - J_V \|_{C^\epsilon ( \ov{\mc U}_{\Gamma, \Pi} \times V)}$ is sufficiently small so that for each treed disk ${\mc C}$ of type $\Gamma$, for each disk component which can be identified with the standard disk ${\bm D}^2$, there holds
\beqn
\| J_\Gamma - J_V \|_{C^2( {\bm D}^2 \times V)} < \theta^\infty( {\rm deg}^{\rm max} \Pi).
\eeqn
Then for dimensional reason, only Maslov two or Maslov zero stable treed disks of scale $\infty$ contribute to ${\bm m}_0^\infty(1)$. Moreover, since all Maslov index zero affine vortices are contained in a subset that is disjoint from the Lagrangian, objects with total Maslov index zero cannot contribute to ${\bm m}_0^\infty(1)$. Then \eqref{eqn78} follows. 
\end{proof} 

\begin{prop}\label{prop713}
  Suppose $(V, L_V)$ satisfies all three positivity conditions in Definition \ref{poscond}. Then $W^0(b) = W^\infty(b)$.
\end{prop}

\begin{proof}
By the $A_\infty$ relation for the $A_\infty$ morphism ${\bm \kappa}$, there holds 
\beqn
\kappa_1 ( {\bm m}_0^0(1)) = \sum_{l=0}^\infty {\bm m}_l^\infty ( \kappa_0(1), \ldots, \kappa_0(1)).
\eeqn
By Proposition \ref{prop711} and \ref{prop712}, the result follows from 
\beqn
\kappa_0(1) = 0
\eeqn
and
\beqn
\kappa_1({\bm x}_M) = {\bm x}_M.
\eeqn
Similar to the proofs of Proposition \ref{prop711} and \ref{prop712}, when the perturbation data is sufficiently close to the standard one, by dimension counting and the positivity conditions, there is no nonempty moduli spaces contributing to $\kappa_0(1)$. On the other hand, the only objects that contribute to $\kappa_1({\bm x}_M)$ are those with Maslov index zero. If such an object has a positive energy, then the energy is concentrated on spherical components. However, since nonconstant holomorphic spheres in $X$ or affine vortices with zero Chern number never intersect the Lagrangian, only the object with zero energy contributes to $\kappa_1({\bm x}_M)$, implying $\kappa_1({\bm x}_M) = {\bm x}_M$. Hence $W^0(b) = W^\infty(b)$.
\end{proof}

\subsection{Strict unitality}\label{subsection77}

Strict unitality is an important property for $A_\infty$ algebras. It
plays a similar role as the {\it Fundamental Class} axiom of
Gromov--Witten invariants. However, due to the complexity of the chain
level theory in Lagrangian Floer theory the naive construction usually
does not grant strict units. A usual method is to construct a weaker
alternative, called a {\it homotopy unit} (see \cite[Section 3]{FOOO_Book}).  We state our extension as
follows. The detailed construction and proof are given in Section
\ref{sectiona1}.

\begin{thm}\label{thm714}
Let $\fuk^0 (L)$ and $\fuk^\infty (L)$ be the $A_\infty$ algebras and ${\bm \kappa}: \fuk^0 (L) \to \fuk^\infty (L)$ be the $A_\infty$ morphism, all of which have been constructed by choosing a coherent system of perturbation data ${\bm P}$. Let 
\beqn
\wt{CF}(L; {\bm \Lambda}) = CF( L) \oplus {\bm \Lambda} {\bm e} \oplus {\bm \Lambda} {\bm p}
\eeqn
be the graded ${\bm \Lambda}$-vector space where the degree of ${\bm e}$ is $0$ and the degree of ${\bm p}$ is $-1$. Then there exist $A_\infty$ algebra structures $\wt{\fuk}{}^0 (L)$ and $\wt{\fuk}{}^\infty (L)$ over $\wt{CF} (L; {\bm K})$ satisfying the following properties. 
\begin{enumerate}

\item The composition maps $\tilde {\bm m}_l^0$ of $\wt{\fuk}{}^0 (L)$ resp. the composition maps $\tilde {\bm m}_l^\infty$ of $\wt{\fuk}{}^\infty(L)$ extend ${\bm m}_l^0$ resp. ${\bm m}_l^\infty$, namely
\beqn
\tilde {\bm m}_l ({\bm a}_1, \ldots, {\bm a}_l ) = {\bm m}_l ({\bm a}_1, \ldots, {\bm a}_l ),\ \forall l \geq 0,\ {\bm a}_1, \ldots, {\bm a}_l \in CF (L) \subset \wt{CF} (L),
\eeqn
where $(\tilde {\bm m}_l, {\bm m}_l)$ is either $(\tilde {\bm m}_l^0, {\bm m}_l^0 )$ or $(\tilde {\bm m}_l^\infty, {\bm m}_l^\infty )$. 

\item ${\bm e}$ is a strict unit for both $\wt{\fuk}{}^0 (L)$ and $\wt{\fuk}{}^\infty (L)$. 

\item There is a unital $A_\infty$ morphism $\tilde {\bm \kappa}: \wt{\fuk}{}^0 (L) \to \wt{\fuk}{}^\infty ( L)$ extending ${\bm \kappa}$, i.e.,
\beqn
\tilde  \kappa_l  ({\bm a}_1, \ldots, {\bm a}_l ) = \kappa_l ({\bm a}_1, \ldots, {\bm a}_l ),\ \forall l \geq 0,\ {\bm a}_1, \ldots, {\bm a}_l \in CF (L) \subset \wt{CF}(L).
\eeqn
Moreover, $\tilde {\bm \kappa}$ is a higher order deformation of the identity. 

\item Under condition (P1) there holds
\beqn
\tilde {\bm m}_l^0( {\bm p}, \ldots, {\bm p}) = \left\{ \begin{array}{cc}  {\bm e} - {\bm x}_M,\ &\ l = 1,\\
                                                                                            0,\ &\ l \geq 2. \end{array} \right.
\eeqn
\end{enumerate}
\end{thm}

The detailed proof is based on the construction of a coherent system of perturbation data on the universal curves of moduli spaces of {\it  weighted} treed disks (see Section \ref{sectiona1}). With the strict units one can define the Maurer--Cartan spaces $MC(\wt{\fuk}{}^0(L))$ and $MC(\wt{\fuk}{}^\infty)$ and the potential functions 
\begin{align*}
&\ {\bf W}^0: MC(\wt{\fuk}{}^0(L)) \to {\bm \Lambda},\ &\ {\bf W}^\infty: MC( \wt{\fuk}{}^\infty(L) \to {\bm \Lambda}.
\end{align*}
A straightforward corollary to Theorem \ref{thm714} and Proposition \ref{prop74} is that the open quantum Kirwan map induces a map between the Maurer--Cartan spaces.

\begin{cor}\label{cor715}
The map 
\beqn
\uds{\tilde \kappa}: \wt{CF}(L; {\bm \Lambda}_+) \to \wt{CF}(L; {\bm \Lambda}_+),\ \ \uds{\tilde \kappa} ({\bm b}) = \sum_{l \geq 0} \tilde \kappa_l ({\bm b}, \cdots, {\bm b})
\eeqn
maps $MC( \wt{\fuk}{}^0 (L))$ into $MC( \wt{\fuk}{}^\infty (L))$ and for each ${\bm b} \in MC( \wt{\fuk}{}^0 (L))$, one has
\beqn
{\bf W}^0 ({\bm b}) = {\bf W}^\infty ( \uds{\tilde \kappa} ({\bm b}))	\in {\bm \Lambda}.
\eeqn
\end{cor}

On the other hand, given a weakly bounding cochain ${\bm b}^0 \in MC( \wt{\fuk}{}^0 (L))$ or ${\bm b}^\infty \in MC(\wt{\fuk}{}^\infty (L))$, one can define Floer cohomologies respectively. Using the terminology of \cite{Woodward_toric}, we call the first Floer cohomology the {\it  quasimap Floer cohomology}, denoted by 
\beqn
QHF^*((L, b, {\bm b}^0); {\bm \Lambda}).
\eeqn
We denote the cohomology out of $\wt{\fuk}{}^\infty(L)$ and a weakly bounding cochain ${\bm b}^\infty$ by 
\beqn
HF^* ((L, b, {\bm b}^\infty); {\bm \Lambda}).
\eeqn
When the weakly bounding cochain or the local systems are understood from the context we also omit them from the notations. Since $\tilde {\bm \kappa}$ is a higher order deformation of the identity, the degree one component of $\tilde {\bm \kappa}$ is an invertible linear map, and hence an isomorphism of cochain complexes. Then by Lemma \ref{lemma75} we obtain the isomorphism of the Floer cohomology groups.

\begin{cor}\label{cor716}
For any weakly bounding cochain ${\bm b}^0$ of $\wt{\fuk}{}^0(L)$ and the corresponding weakly bounding cochain ${\bm b}^\infty = \uds{\tilde {\kappa}}({\bm b}^0)$, there is an isomorphism
\beqn
QHF^* ( (L, {\bm b}^0 ); {\bm \Lambda}) \cong HF^* ( (L, {\bm b}^\infty); {\bm \Lambda}).
\eeqn
\end{cor}

Lastly, under certain positivity assumptions, the previous results have stronger implications. 

\begin{prop}\label{prop717}
Suppose $(V, J_V, L_V)$ satisfies (P1). Then for the $A_\infty$ algebra $\wt{\fuk}{}^0(L)$ defined for each local system $b \in H^1(L; {\bm \Lambda}_0)$, there holds
\begin{align}\label{eqn79}
&\ W^0 (b) {\bm p} \in MC( \wt{\fuk}{}^0 (L)),\ &\ {\bf W}^0 ( W^0 (b) {\bm p} ) = W^0 (b).
\end{align}
\end{prop}

\begin{proof}
By item (d) of Theorem \ref{thm714}, one has 
\beqn
\sum_{l \geq 0} \tilde {\bm m}_l^0( W^0(b) {\bm p}) = \tilde {\bm m}_0^0(1) + \tilde {\bm m}_1^0( W^0(b) {\bm p}) = W^0(b) {\bm x}_M + W^0(b) ( {\bm e} - {\bm x}_M) = W^0(b) {\bm x}_M.
\eeqn
Hence \eqref{eqn79} follows. 
\end{proof}

Under (P1), the weakly bounding
cochain $W^0(b) {\bm p}$ is called the {\it canonical weakly bounding
  cochain}. This notion provides a natural section \beqn H^1(L; {\bm
  \Lambda}_0) \to \bigsqcup_{b} MC(\wt{\fuk}{}^0(L, b)).  \eeqn
Composing with the potential function one obtains a
${\bm \Lambda}$-valued function on $H^1(L; {\bm \Lambda}_0)$. By abuse
of notations, we still denote this function by \beqn {\bf W}^0: H^1(L;
{\bm \Lambda}_0) \to {\bm \Lambda}, \eeqn called the {\it restricted
  potential function}. It coincides with $W^0$. Similarly, under condition (P2) there is also a family of canonical weakly
bounding cochains $W^\infty(b) {\bm p}$ of $\wt{\fuk}{}^\infty(L)$
inducing a restricted potential function \beqn {\bf W}^\infty: H^1(L;
{\bm \Lambda}_0) \to {\bm\Lambda}.  \eeqn With only (P1), the canonical weakly bounding
cochain of $\wt{\fuk}{}^0(L)$ is always mapped to a certain weakly
bounding cochain of $\wt{\fuk}{}^\infty(L)$. When all three positivity
conditions hold, the image is actually the canonical weakly
bounding cochain of $\wt{\fuk}{}^\infty(L)$, and in addition
${\bf W}^0 = {\bf W}^\infty$.

\section{Toric Manifolds}\label{section8}

In this section we consider the example of toric manifolds. Let $X$ be an $n$-dimensional compact toric manifold. Its moment polytope $P \subset {\mb R}^n$ gives a canonical presentation of $X$ as GIT quotient. We consider the Lagrangian Floer theory of the preimage of an interior point of $P$ under the natural projection $X \to P$. Preceding our results, there have been many mathematical works about Lagrangian Floer theory of this kind of Lagrangians, see for example \cite{Cho_Oh}, \cite{Cho_Poddar}, \cite{FOOO_toric_1, FOOO_toric_2, FOOO_mirror}, \cite{Woodward_toric}, \cite{Chan_Lau_2014}, \cite{Gonzalez_Iritani}, \cite{CLLT}. Our result gives a conceptual explanation of the work of Chan--Lau--Leung--Tseng \cite{CLLT}.

\subsection{Compact toric manifolds as GIT quotients}\label{subsection81}

Let us first recall the basics about compact toric manifolds. We introduce toric manifolds using Batyrev's approach \cite{Batyrev_1993} but change a few notations. Let $M$ be the lattice ${\mb Z}^n$ and $M^\vee = {\rm Hom}_{\mb Z}(M, {\mb Z})$ its dual lattice. Denote $M_{\mb R} = M \otimes_{\mb Z} {\mb R}$, $M_{\mb Z}^\vee = M^\vee \otimes_{\mb Z}{\mb R}$. Let $k \geq 1$. A convex subset $\sigma \subset M_{\mb R}$ is a {\it regular $k$-dimensional cone} if there are $k$ linearly independent vectors $v_1, \ldots, v_k \in M$ such that 
\beqn
\sigma = \big\{ a_1 v_1 + \cdots + a_k v_k\ |\ a_1, \ldots, a_k \geq 0 \big\}
\eeqn
and $v_1, \ldots, v_k$ can be extended to a ${\mb Z}$-basis of $M$. A regular cone $\sigma'$ is a {\it  face} of another regular cone $\sigma$, denoted by $\sigma' \prec \sigma$, if the set of generators of $\sigma'$ is a subset of that of $\sigma$. A {\it  complete $n$-dimensional fan} is a finite collection $\Sigma = \{ \sigma_1, \ldots, \sigma_s \}$ of regular cones in $M_{\mb R}$ satisfying the following conditions:
\begin{enumerate}
\item If $\sigma \in \Sigma$ and $\sigma' \prec \sigma$, then $\sigma' \in \Sigma$.

\item If $\sigma, \sigma' \in \Sigma$, then $\sigma \cap \sigma' \prec \sigma$, $\sigma \cap \sigma' \prec \sigma'$. 

\item $M_{\mb R} = \sigma_1 \cup \cdots \cup \sigma_s$. 
\end{enumerate}

Given a complete $n$-dimensional fan $\Sigma$, let $G(\Sigma) = \{
v_1, \ldots, v_N \} \subset M$ be the set of generators of
1-dimensional fans in $\Sigma$. $\gamma  = \{ v_{i_1}, \ldots,
v_{i_p}\} \subset G(\Sigma)$ is called a {\it  primitive collection}
if $\{ v_{i_1}, \ldots, v_{i_p}\}$ does not generate a $p$-dimensional
cone in $\Sigma$ but any proper subset of $\gamma$ generate a cone in
$\Sigma$. 

Associated to the combinatorial data above we associate a quotient of affine space as follows.  Consider the exact sequence 
\beqn
\xymatrix{0 \ar[r] & {\bf K}_\Sigma \ar[r] & {\mb Z}^N \ar[r]^{e_i \mapsto v_i} & {\mb Z}^n \ar[r] & 0}.
\eeqn
Let $\tilde K = (S^1)^N$ be the $N$-dimensional torus and $\tilde G = ({\mb C}^*)^N$ be its complexification. Then ${\bf K}_\Sigma \cong {\mb Z}^{N - n}$ generates a subgroup $G: = G_\Sigma \subset \tilde G$, which acts on ${\mb C}^N$ via weights ${\bm w}_1, \ldots, {\bm w}_N \in ({\mb Z}^{N-n})^\vee$. The unstable locus of this action is 
\beq\label{eqn81}
V^\us = \bigcup_{\gamma} V^\gamma = \bigcup_\gamma \big\{ (x_1, \ldots, x_N)\in {\mb C}^N\ |\ x_i = 0\ \forall\ v_i \in \gamma \big\}.
\eeq
Let $V = {\mb C}^N$. Then the $G$-action on $V^\ss = V \setminus
V^\us$ is free and the quotient $X:= V^\ss/ G$ is a compact toric
manifold, with a residual $({\mb C}^*)^n$-action. On the other hand, a
moment map for the $\tilde K$-action is 
\beqn
\tilde \mu_0 (x_1, \ldots, x_N) = \Big( \tilde \mu_{0,1}, \ldots, \tilde \mu_{0, N} \Big) = \Big( -\frac{{\bf i}}{2} |x_1|^2 , \ldots, - \frac{{\bf i}}{2} |x_N|^2 \Big).
\eeqn
Here we identify ${\bf i} {\mb R} \cong {\rm Lie} S^1$ with its dual by the metric $|{\bf i}| = 1$. Then via the map ${\fk} \hookrightarrow \tilde {\mf k}$, $\tilde \mu_0$ restricts to a moment map of the $G$-action, which is 
\beqn
\mu_0  (x_1, \ldots, x_N) = - \frac{{\bf i}}{2} \sum_{i=1}^N |x_i|^2 {\bm w}_i \in {\fk}^*.	
\eeqn

Lagrangian branes are defined as generic orbits of the standard Hamiltonian torus action.  Choose positive real numbers
$c_1, \ldots, c_N$. Then the submanifold \beq\label{eqn82} L_V =
\Big\{ (x_1, \ldots, x_N) \in V \ \Big| \ | x_i |^2 = \frac{c_i}{\pi}
\Big. \Big\} \cong (S^1)^N, \eeq which is the level set of
$\tilde \mu_0$ at \beqn \tilde {\mb C} := \left( - \frac{{\bf i}}{2
    \pi } c_1, \ldots, - \frac{{\bf i}}{2\pi} c_N \right), \eeqn is a
$K$-invariant Lagrangian submanifold of $V$. Via the inclusion
${\mf k} \subset \tilde {\mf k}$, $\tilde {\mb C}$ reduces to
${\mb C} \in {\mf k}^*$. Denote
\begin{align*}
&\ \tilde \mu = \tilde \mu_0 - \tilde {\mb C},\ &\ \mu = \mu_0 - {\mb C}.
\end{align*}
We can construct a linearization of the $G$-action, such that the induced moment map coincides with $\mu$. Assume $c_i \in  {\mb Q}$ for all $i$. We define a linearization with respect to the $\tilde G$-action as follows. Let $V_i \cong {\mb C}$ be the $i$-th factor of $V \cong {\mb C}^N$ with coordinate $x_i$. Let $\tilde V_{i,0} \to V_i$ be the trivial line bundle with a constant Hermitian metric, equipped with the ${\mb C}^*$-action of weight $1$. Then the moment map on $\tilde V_{0,i}$ is $\tilde \mu_{0, i}$. On the other hand, since $c_i$ is rational, one can choose $k>0$ such that $k c_i \in {\mb Z}$ for all $i$. Let $\tilde V_i \to V_i$ be the $k$-th tensor power of $\tilde V_{0,i}$ and twist the ${\mb C}^*$-action by additional weight $k c_i$. Then the moment map on $\tilde V_i$ is 
\beqn
\tilde \mu_{0,i} - \frac{ {\bf i}}{2\pi}  k c_i. 
\eeqn
Define 
\beqn
\tilde V = \tilde V_1 \boxtimes \cdots \boxtimes \tilde V_N \to V.
\eeqn
It is a linearization of the $\tilde G$-action whose equivariant first Chern form
\beqn
k \big( \omega_V + \tilde \mu_0 - \tilde {\mb C} \big).
\eeqn
Restricting from $\tilde G$ to $G$, we obtain a linearization of the $G$-action on $V$.  The unstable locus of the $G$-action with respect to this linearization is still $V^{\rm us}$ given by \eqref{eqn81}. Moreover, 
\beqn
X \cong \mu^{-1}(0) / K.
\eeqn 
The Lagrangian $L_V$ then descends to a Lagrangian torus $L \subset X$. 
 
\begin{lemma}
The Lagrangian submanifold $L \subset X$ is strongly rational in the sense of Definition \ref{defn27}.
\end{lemma}

\begin{proof}
Because $k c_i \in {\mb Z}$, the connection $\nabla^{\tilde V_i}$ on $\tilde V_i$ restricted to the $i$-th factor $L_i$ of $L_V$ has a trivial holonomy. Therefore, $(\tilde V, \nabla^{\tilde V})$ descends to a line bundle with connection whose restriction to $L$ is trivial. Hence Definition \ref{defn27} (a) is satisfied. Moreover, an equivariant holomorphic section $s_i$ of $\tilde V_i$ which can be written as a monomial in $x_i$ vanishes at the origin of $X_i$ and hence is nowhere vanishing over $L_i$. The trivialization of $\tilde V_i|_{L_i}$ induced by $s_i|_{L_i}$ agrees with the above trivialization of $\tilde V_i|_{L_i}$ up to homotopy. Then $s_1 \boxtimes \cdots \boxtimes s_N$ is a $\tilde G$-equivariant, hence $G$-equivariant holomorphic section of $\tilde V$ that is nowhere vanishing over $L_V$. This section descends to a holomorphic section of $\tilde X \to X$ that is nowhere vanishing on $L$ and belongs to the same homotopy class of trivializations. Hence Definition \ref{defn27} item (b) is satisfied. 
\end{proof}

\subsection{Proof of Theorem \ref{thm18}}\label{subsection82}

The reader may verify that the target space $(V, G, \mu)$ satisfies  Definition \ref{defn11}. We also rescale the symplectic form so that all disk classes have integral areas. Moreover, we assume that (S1)---(S4) of Definition \ref{defn12} are satified if we choose  
\beqn
S = S_1 \cup \cdots \cup S_N
\eeqn
to be the union of coordinate hyperplanes. As explained in Example \ref{example13}, these conditions can be described combinatorially.

\begin{rem}
This semi-positivity condition does not only depends on the toric manifold $X$ but also depends on how to present $X$ as a GIT quotient. 
\end{rem}

Under the above hypothesis, by Theorem \ref{thm76}, Theorem
\ref{thm77}, and Theorem \ref{thm79}, $\fuk{}^0(L)$, $\fuk^\infty(L)$,
and the morphism ${\bm \kappa}: \fuk^0(L) \to \fuk^\infty(L)$ are
defined. Theorem \ref{thm714} enhance them to strictly unital
$A_\infty$ algebras $\wt{\fuk}{}^0 (L)$, $\wt{\fuk}{}^\infty (L)$, and
a unital morphism
$\tilde {\bm \kappa}: \wt\fuk{}^0(L) \to \wt{\fuk}{}^\infty(L)$.

Next we would like to show that both $A_\infty$ algebras are weakly
unobstructed by applying Proposition \ref{prop717}. Recall the
following well-known result.

\begin{prop}[Blaschke's Formula]\label{blaschke}
Every $J_V$-holomorphic quasidisk $u: {\bm D}^2 \to V$ is of the form 
\beqn
{\bm D}^2 \ni z \mapsto \big[ u_j(z) \big]_{j=1}^N := \Big[     \frac{ c_j e^{{\bf i} \theta_j}}{\pi}   \prod_{k=1}^{d_j} \Big( \frac{ z - \alpha_{j, k}}{ 1 - \ov{\alpha}_{j, k} z} \Big) \Big]_{j=1}^N.
\eeqn
Here $\alpha_{j, k}$ is in the interior of the unit disk. Furthermore, the Maslov index of such a holomorphic disk is equal to $2 ( d_1 + \ldots + d_N)$.
\end{prop}

One can see that there are $N$ basic disk classes $B_i \in H_2(V, L_V)$ of Maslov index two such that in each class there is essentially one holomorphic disk. It also follows that all $J_V$-holomorphic disks have nonnegative Maslov indices. Thus we have the following corollary to Proposition \ref{prop717}. 

\begin{thm}
For any local system $\wt{\fuk}{}^0 (L)$ and $\wt{\fuk}{}^\infty (L)$ are weakly unobstructed.
\end{thm}

\subsection{Calculation of the potential function}

We need to calculate the potential function of the equivariant Fukaya algebra. The potential function as a function defined over $H^1(L; {\bm \Lambda}_0)$ is expected to coincides with the Givental--Hori--Vafa potential. However, the calculation made in \cite{Woodward_toric} cannot be carried over directly since we are using a different perturbation scheme.

\begin{lemma}\label{lemma85}
For every $E>0$, there exists $\epsilon(E) >0$ such that for any smooth family of $K$-invariant almost complex structures $J$ parametrized by points on ${\bm D}^2$ with $\| J - J_V \|_{C^2({\bm D} \times V)}  \leq \epsilon(E)$, for any $J$-holomorphic disk $u: {\bm D}^2 \to V$ with boundary mapped into $L_V$ and $E(u) \leq E$, the class of $u$ is equal to $\sum d_i B_i$ with $d_i \geq 0$.
\end{lemma}

\begin{proof}
The proof is similar to the proof of the claim made in Proposition \ref{prop711}. Suppose this is not true, then there exist a sequence of domain-dependent almost complex structures $J_i$ parametrized by points on the disk such that $J_i$ converges to $J_V$ in $C^2$-topology and a sequence of $J_i$-holomorphic disks $u_i: {\bm D}^2 \to V$ such that $E(u_i) \leq E$ such that the homology classes of $u_i$ are not of the desired type. By Gromov compactness, 
a subsequence (still indexed by $i$) converges to a stable $J_V$-holomorphic disk. By Proposition \ref{blaschke}, the homology class of the limiting stable disk has to be $d_1 B_1 + \cdots + d_N B_N$ with all $d_i$ nonnegative. This is a contradiction.
\end{proof}

We also need certain properties satisfied by a generic stabilizing divisor.

\begin{lemma}\label{lemma86}
There exists $k_0>0$ such that for each natural number $h$ bigger than the real dimension of $X$, for each tuple 
\beqn
(k_1, \ldots, k_h),\ k_a \geq k_0
\eeqn
there exists a comeager subset of 
\beqn
\prod_{a=1}^h \Gammait_{k_a}(L)
\eeqn
consisting of elements $(s_1, \ldots, s_h)$ satisfying the following conditions. 
\begin{enumerate}

\item The divisor $D = D_1 \cup \cdots \cup D_h$ satisfies the conditions for a stabilizing divisor in Lemma \ref{lemma210}. In particular, they intersect transversely in the stable locus.

\item For each $s_a$ and each disk class $B_i$, the moduli space of
  $J_V$-holomorphic disks of class $B_i$ with one interior marking
  mapped nontransversely to $D_a^{\rm st}$ has the expected dimension
  $N - 3$. 

\item For each pair $(s_a, s_b)$ in the $h$-tuple, the moduli spaces of $J_V$-holomorphic disks of class $B_i$ with one interior marking mapped into $D_a^{\rm st}$ and $D_b^{\rm st}$ both transversely has the expected dimension $N-3$.
\end{enumerate}
\end{lemma}

The proof of Lemma \ref{lemma86} is provided in the next subsection. Now we fix a divisor $D = D_1 \cup \cdots D_h$ satisfying the conditions of Lemma \ref{lemma86}. Further we would like to use Lemma \ref{lemma85} such that if the perturbed almost complex structure is sufficiently closed to $J_V$, then at least for computing the potential function, only disks in the basic disk classes contribute. 

\begin{prop}\label{prop87}
There exists a coherent system of strongly regular perturbation data ${\bm P}$ for all stable domain types $\Gamma$ of scale $0$ satisfying the following condition. 

\begin{enumerate}

\item For any treed disk ${\mc C}$ of type $\Gamma$ and a disk component which can be identified with the standard disk ${\bm D} \subset {\bm C}$, suppose this disk component is contained in a maximal subtree $\Pi$ whose lengths all have length zero, then 
\beqn
\| J_\Gamma - J_V \|_{C^2( {\bm C} \times V)} \leq \epsilon ( {\rm deg}^{\rm max}\Pi ).
\eeqn

\item For each basic disk class $B_i$, let ${\bm \Gamma}_i$ be the essential map type of scale $0$ which has no input, one output labelled by ${\bm x}_M$, and which has the homology class $B_i$, the moduli space ${\mc M}_{{\bm \Gamma}_i}^*(P_{\Gamma_i})$ is regular and consists of $k_{B_i} !$ points, where
\beqn
k_{B_i}! = \prod_{a=1}^h k_{B_i, a}!,\ k_{B_i, a} = {\rm deg} D_a \cdot \omega(B_i).
\eeqn
\end{enumerate}
\end{prop}

\begin{proof}
  First, for all the essential map types (finitely many) mentioned
  in item (b) of this proposition, let $P_{\Gamma_i}^*$ be the
  perturbation data which is equal to standard almost complex
  structure $J_V$ on the disk and which is equal to the function $F_L$
  on the output. Then Lemma \ref{lemma86} implies that the moduli
  space ${\mc M}_{{\bm \Gamma}_i}^* (P_{\Gamma_i}^*)$ is regular and
  consists of $k_{B_i}!$ points. Then by the transversality condition,
  there exists a neighborhood
  ${\mc P}_{\Gamma_i}^* \subset {\mc P}_{\Gamma_i}$ of
  $P_{\Gamma_i}^*$ such that for each perturbation $P_{\Gamma_i}$ in
  this neighborhood, there are exactly the same number of points which
  are still regular. Second, during the inductive construction of the
  coherent system of perturbation data, the condition (a) can be
  satisfied. Notice that for each $\Gamma_i$ there are only finitely
  many stable domain types $\Pi$ with $\Pi \leq \Gamma_i$. Hence in
  constructing the coherent system of strongly regular perturbation
  data, one can require both conditions to be satisfied.
\end{proof}

Now we can compute the potential function for the equivariant Fukaya
algebra. Each element $b \in H^1(L; {\bm \Lambda}_0)$ giving rise to a local system $\exp b$. We use $(b_1, \ldots, b_N) \in {\bm \Lambda}_0^N$ as local coordinates on the space of local systems.

\begin{cor}
For the Fukaya algebra $\fuk^0(L)$ defined for any local system $b \in H^1(L; {\bm \Lambda}_0)$, there holds
\beqn
{\bm m}_0^0(1) = W^0 (b) {\bm x}_M
\eeqn
where $W^0$ is the Givental--Hori--Vafa potential, i.e., 
\beq\label{eqn83}
W^0 (b) =  \sum_{i=1}^N \exp(b_i) {\bm q}^{\omega(B_i)}.
\eeq
\end{cor}

\begin{proof}
By Proposition \ref{prop711} and its proof, there is a function $W^0(b)$ with ${\bm m}_0^0(1) = W^0(b) {\bm x}_M$ where $W^0(b)$ is the weighted counts of Maslov two disks passing through the point ${\bm x}_M$. By the conditions on the perturbation data stated in Proposition \ref{prop87}, the only nonempty moduli spaces which contribute to ${\bm m}_0^0(1)$ are those which represent the basic disk classes $B_i$. Moreover, for each $B_i$, item (b) of Proposition \ref{prop87}, the count coincides with the count as if using the standard almost complex structure. Moreover the holonomy of the local system along the boundary of $B_i$ is exactly $\exp(b_i)$. One can also check the orientations as did in \cite{Woodward_toric} and each disk contributes positively to the counting. Hence by the definition of the composition maps,  \eqref{eqn83} follows.
\end{proof}

By Proposition \ref{prop717}, $W^0(b)$ coincides with the potential
function of $\wt{\fuk}{}^0(L)$ evaluated at the canonical weakly
bounding cochain $W^0(b) {\bm p}$. Hence we proved item (b) of Theorem
\ref{thm18}. To prove item (c) of Theorem \ref{thm18}, we would like
to use Proposition \ref{prop712} and \ref{prop713}. The condition (P2) follows from Blaschke's formula
(Proposition \ref{blaschke}). Then by Proposition \ref{prop712}, the
restricted potential function
${\bf W}^\infty: H^1(L;{\bm \Lambda}_0) \to {\bm \Lambda}$ is
defined. To get the identity ${\bf W}^0 = {\bf W}^\infty$, one needs
to verify the condition (P3).

\begin{lemma}
Nonconstant $J_V$-affine vortices over ${\bm H}$ have positive Maslov indices. 
\end{lemma}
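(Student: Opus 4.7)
The plan is to reduce to a coordinate-wise analysis and then invoke positivity of intersection with the coordinate hyperplanes. Write $u = (u_1, \ldots, u_N) : {\bm H} \to {\mb C}^N$ with each $u_i$ satisfying $|u_i|^2 = c_i/\pi$ on $\partial {\bm H}$. Using the removable-singularity result Proposition \ref{prop22} together with the exponential decay it carries (as refined in \cite{Wang_Xu} and \cite{Ziltener_book}), I would first fix a gauge in which $u$ extends continuously to the compactification ${\bm D}^2 = {\bm H} \cup \{\infty\}$ sending $\infty$ to a point of $L$. Then each $u_i|_{\partial {\bm D}^2}$ is a loop in ${\mb C}^\times$ with a well-defined winding number $d_i \in {\mb Z}$ around $0$, and these winding numbers are gauge-invariant because $G = ({\mb C}^*)^{N-n}$ acts on the $i$-th factor through a nowhere-vanishing character.

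Next I would show $d_i \ge 0$ by a positivity-of-intersection argument. The coordinate hyperplane $S_i = \{x_i = 0\}$ is $G$-invariant and $J_X$-complex, and $u^{-1}(S_i) = u_i^{-1}(0)$ is gauge-invariant. Since $G$ is abelian, on any simply connected neighborhood of a point of $u_i^{-1}(0)$ I can gauge-transform the connection to zero, so that the vortex equation restricted to the neighborhood reduces to $\bar\partial u = 0$ and each component $u_i$ becomes locally $J_X$-holomorphic. Because $c_i > 0$ the boundary values of $u_i$ avoid $0$, so $u_i$ is not identically zero and its zeros are isolated with positive local multiplicity. The total count equals $d_i$ by standard degree theory on the compactified disk, so $d_i \ge 0$.

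Finally I would use the energy identity to get strict positivity. Under the identification $H_2({\mb C}^N, (S^1)^N; {\mb Z}) \simeq {\mb Z}^N$ by winding numbers, the Maslov index is $\on{Mas}({\bm v}) = 2 \sum_{i=1}^N d_i$, while the pairing $\langle[\omega_X^K], [{\bm v}]\rangle$ evaluated on the basic disk classes yields $E({\bm v}) = \sum_{i=1}^N c_i d_i$ with each $c_i > 0$ (the moment-map contribution vanishing since $L \subset \mu^{-1}(0)$). By \eqref{eqn24} and the assumption that ${\bm v}$ is nonconstant, $E({\bm v}) > 0$, forcing some $d_i$ strictly positive, which combined with $d_j \ge 0$ for all $j$ gives $\on{Mas}({\bm v}) = 2 \sum d_i > 0$.

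The main subtlety will be establishing the strong asymptotic gauge so that $u$ extends to a continuous map of compactifications and the class in $H_2({\mb C}^N, (S^1)^N)$ is genuinely described by the tuple $(d_1, \ldots, d_N)$. Granted the decay results cited in Proposition \ref{prop22}, the rest reduces to the classical disk-in-linear-target computation, with the vortex equations entering only through the gauge-invariance of each $u_i^{-1}(0)$ and the local gauge-fixing, both of which are immediate because the gauge group is abelian.
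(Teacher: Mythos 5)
Your proposal is correct and rests on the same key observation as the paper's proof: because the torus action on $X = {\mb C}^N$ is linear (diagonal, through characters) and $L = L_1 \times \cdots \times L_N$ is a product, the affine vortex splits coordinate-wise and the Maslov index is $2\sum_i d_i$ in terms of the winding numbers. The paper then finishes in one line: each component $(u_i,\phi_i,\psi_i)$ is an affine vortex with target the monotone pair $(X_i, L_i)$ (a circle in ${\mb C}$), so nonnegativity of its energy together with the energy identity \eqref{eqn24} forces $d_i \ge 0$, and positivity of the total energy forces some $d_i > 0$. You reach the same two facts by a slightly different route --- positivity of intersection with the coordinate hyperplanes for $d_i \ge 0$, with the energy identity reserved for the strict inequality --- which has the modest advantage of using only the Cauchy--Riemann half of the vortex equations, so you never need to check that the curvature/moment-map equation itself decouples factor by factor.

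One sub-step as written is not right: you cannot gauge-transform a connection with nonzero curvature to zero by a unitary gauge transformation on a simply connected neighborhood, and the curvature here is $-\mu(u)\,\nu$, which is generally nonzero. The standard fix, which the paper itself records as Lemma \ref{lemma37}, is either to pass to a \emph{complex} gauge transformation killing the $(0,1)$-part of $A$ (so that $u_i$ becomes a genuine holomorphic function near each of its zeros), or to regard $u$ as a $\tilde J_{\phi,\psi}$-holomorphic map into ${\bm D}^2 \times X$ and intersect with ${\bm D}^2 \times S_i$. Either way the zeros of $u_i$ are isolated with positive local multiplicity and your count $d_i \ge 0$ goes through; the rest of your argument (gauge invariance of the winding numbers, extension to the compactified half-plane via Proposition \ref{prop22}, and the computation $E = \sum_i c_i d_i > 0$) is fine.
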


\begin{proof}
Let ${\bm v} = (u, \phi, \psi)$ be such an affine vortex. By the removal of singularity theorem for affine vortices over ${\bm H}$ (Proposition \ref{prop22}), after an appropriate gauge transformation, we may assume that $u$ converges at infinity and hence extends to a continuous map $u: {\bm D}^2 \to V$. We write $u$ in coordinates as $(u_1, \ldots, u_N)$, where $u_i: {\bm D}^2 \to V_i \cong {\mb C}$ is a continuous map. Notice that $\phi, \psi$ takes value in ${\mf k}$ which is canonically embedded into the Lie algebra of $T^N$. Hence we can also write $\phi$ and $\psi$ in coordinates as $(\phi_1, \ldots, \phi_N)$ and $(\psi_1, \ldots, \psi_N)$. Since the action is induced from the standard torus action on ${\mb C}^N$, we see for each $i$, $(u_i, \phi_i, \psi_i)$ satisfies the equation
\beq
\partial_s u_i + {\mc X}_{\phi_i} (u_i) + {\bf i}( \partial_t u_i + {\mc X}_{\psi_i}(u_i)) = 0
\eeq
(but not necessarily the vortex equation). We can identify $u_i$ with a map $\tilde u_i: {\bm D}^2 \to {\bm D}^2 \times V_i$ which is $J_{\phi_i, \psi_i}$-holomorphic, where $J_{\phi_i, \psi_i}$ is the almost complex structure on ${\bm D}^2 \times V_i$ induced from the standard complex structure on $V_i$ and the Hamiltonian vector fields generated by $\phi_i$ and $\psi_i$. Then since these Hamiltonian vector fields vanish at the origin, the divisor ${\bm D}^2 \times \{0\}\subset {\bm D}^2 \times V_i$ is $J_{\phi_i,\psi_i}$-almost complex. By the proof of \cite[Proposition 7.1]{Cieliebak_Mohnke}, the intersection number between $\tilde u_i$ and ${\bm D}^2 \times \{0\}$ is nonnegative. Therefore, the winding number of the boundary restriction of $u_i$ is nonnegative. On the other hand, the Maslov index of ${\bm v}$ is equal to twice of the sum of the winding numbers of the boundary restrictions of $u_i$. Therefore the Maslov index is nonnegative. Moreover, when the Maslov index is zero, which implies all $u_i$ do not intersect with the origin of $V_i$, the class of ${\bm v}$ is necessarily zero, meaning that ${\bm v}$ is a constant vortex. 
\end{proof}

Therefore by Proposition \ref{prop713}, item (c) of Theorem \ref{thm18} is proved. 

\subsection{The stabilizing divisor}\label{subsection84}

Now we prove Lemma \ref{lemma86}. Recall that $\tilde V \to V$ is the
line bundle linearizing the $G$-action, which descends to a
holomorphic line bundle $\tilde X \to X$. For any $k \geq 1$, denote
$\Gammait_k = H^0_G(V, \tilde V^{\otimes k})$ the space of
$G$-equivariant holomorphic sections of $\tilde V{}^{\otimes k}$,
which are certain subspace of polynomials on ${\mb C}^N$. For
sufficiently large $k$, $\Gammait_k$ is isomorphic to $\bar
\Gammait_k: = H^0(X, \tilde X^{\otimes k})$. Since the bundle $\tilde X$ is positive, there exists $k_X \geq 1$ such that for all $k \geq
k_X$, for all $x \in X$, one has the surjectivity of two linear maps 
\beq\label{eqn84} 
\bar \Gammait_k \mapsto \tilde X_x^{\otimes k},\ \bar s \mapsto \bar s( x); 
\eeq
\beq\label{eqn85} 
\bar \Gammait_{k, x} \mapsto T_x^* X \otimes \tilde X_x^{\otimes k},\ \bar s \mapsto d \bar s(x).
\eeq
Here $\bar \Gammait_{k, x} \subset \bar \Gammait_k$ is the subset of sections that vanish at $x$. We assume $k \geq k_X$. Given $f \in \Gammait_k$, denote by $D_f = f^{-1}(0)$ the equivariant stabilizing divisor and $D_f^\ss:= D_f \cap V^\ss$ the semi-stable part. Let $\Gammait_k (L) \subset \Gammait_k$ be the subset of $f$ satisfying $D_f \cap L_V = \emptyset$.

The following lemma is used to show that the strata of maps meeting both the stabilizing divisor and the boundary divisor at the same time are expected dimensions. For each $i = 1, \ldots, N$, denote
\beqn
Z_i = \left\{ (x_1, \ldots, x_N) \in {\mb C}^N \ |\ \| x_j \|^2 = \frac{c_j}{\pi}\ \forall j \neq i \right\}
\eeqn
where $c_i$ are the constants from \eqref{eqn82}. 

\begin{lemma}\label{lemma810}
For each $i$ and $p \in Z_i$, the tangent vector $\frac{\partial}{\partial x_i}(p)$ is not tangent to the $G$-orbit through $p$.
\end{lemma}

\begin{proof}
Let $p = (x_1, \ldots, x_N) \in Z_i$. If $x_i = 0$, then since the $G$-action is linear, the infinitesimal $G$-actions at $p$ are all orthogonal to $\frac{\partial}{\partial x_i}$, hence $\frac{\partial}{\partial x_i}$ is not an infinitesimal $G$-action. Now assume $x_i \neq 0$. If the statement is not true, then because $x_j \neq 0$ for all $j \neq i$, there is a one-parameter subgroup of $G$ which fixes all coordinates but $x_i$. Then the hyperplane defined by $x_i = 0$ is fixed by this one-parameter subgroup, and is necessarily unstable. This contradicts the fact that the unstable locus has complex codimension at least two.
\end{proof}

Now we consider tangencies between holomorphic disks and a divisor. For each basic disk class $B_i$, let $\widetilde {\mc M}_1(B_i)$ be the set of pairs $(u, z_0)$ where $u$ is a holomorphic disk and $z_0$ is an interior point of its domain. Consider
\beq\label{eqn87}
\widetilde {\mc M}_1^t (f;B_i) = \Big\{ (u, z_0) \in \widetilde {\mc M}_1(B_i) \ |\ u(z_0) \in D_f^\ss,\ \frac{\partial f}{\partial x_i}(u(z_0)) = 0 \Big\}.
\eeq
By Proposition \ref{blaschke}, the above set contains all holomorphic disks tangent to $D_f$ at the point $z_0$. 

\begin{lemma}
There is a comeager subset $\Gammait_k^{\rm reg}(L) \subset \Gammait_k(L)$ such that for all $f \in \Gammait_k^{\rm reg}(L)$ and all basic disk class $B_i$, $\widetilde {\mc M}_1^t (f; B_i)$ is a smooth manifold of dimension $N$.
\end{lemma}

\begin{proof}
Consider the smooth map 
\beqn
\begin{split}
{\mc F}: \Gammait_k(L) \times \widetilde {\mc M}_1(B) \to &\ {\mb C} \times {\mb C},\\
                               (f, u, z_0) \mapsto &\ \left( f(u(z_0)),\ \frac{\partial f}{\partial x_i}( u(z_0)) \right).
\end{split}
\eeqn
Let the infinitesimal deformations of $(f, u, z_0)$ be denoted by $(s, \xi, w)$. Suppose $(f, u, z_0) \in {\mc F}^{-1}(0)$ and $p_0: = u(z_0) \in D_f^\ss$. The partial derivative of the above map at $(f, u, z_0)$ in the $f$ direction is 
\beq\label{eqn88} 
s \mapsto \left( s(p_0),\ \frac{\partial s}{\partial   x_i}(p_0) \right) 
\eeq 
By Lemma \ref{lemma810}, $\frac{\partial}{\partial x_i}(p_0) \notin G_X|_{p_0}$. Then by the surjectivity of \eqref{eqn84} and \eqref{eqn85}, the linear map \eqref{eqn88} is surjective if $p_0 \in D_f^\ss$. So ${\mc F}^{-1}(0)$ is a smooth manifold in the locus where $p_0 \in D_f^\ss$. Then consider the projection ${\mc F}^{-1}(0) \to \Gammait_k(L)$. By Sard's theorem,  the set of regular values in $\Gammait_k(L)$ is a comeager subset $\Gammait_k^{{\rm reg}}(B_i)$. Hence all $f \in \Gammait_k^{\rm reg}(B_i)$, the condition for $\widetilde {\mc M}_1^t (f; B_i)$ holds.
\end{proof}

Now define 
\beqn
\widetilde {\mc M}_1^*(f; B_i) = \widetilde {\mc M}_1(f; B_i) \setminus \widetilde {\mc M}_1^t (f; B_i).
\eeqn
This is the moduli space of disks in class $B$ with one marked point such that the disk intersects transversely with $D_f^{\rm st}$ at the marked point. 

\begin{proof}[Proof of Lemma \ref{lemma86}]
The first two conditions on $(f_1, \ldots, f_h)$ are clearly satisfied by a generic $h$-tuple $(f_1, \ldots, f_h)$. To fulfill the third condition, for each pair $(f, f')\in \Gammait_k(L) \times \Gammait_{k'}(L)$ and each basic disk class $B_i$, consider the set 
\beqn
\widetilde {\mc M}_1^c (f, f'; B_i) = \Big\{ (u, z_0) \in \widetilde {\mc M}_1^* (f; B_i) \cap \widetilde {\mc M}_1^* (f'; B) \ |\ u(z_0) \in D_f \cap D_{f'} \Big\}.
\eeqn
For each element $(u, z_0)$, since $u(z_0)$ is in the stable locus and $u$, $D_f$, $D_{f'}$ are transverse,  Sard's theorem  implies  that for a generic pair $(f, f')$, the set $\tilde {\mc M}_1^c(f, f'; B_i)$ has the expected dimension $N$. Then for a generic $h$-tuple $(f_1, \ldots, f_h)$ the third condition of Lemma \ref{lemma86} is satisfied.
\end{proof}

\section{Quotients of products of spheres}\label{section9}

In this section we consider non-abelian quotients of products of spheres as an example of the theory developed so far. These examples were considered in Kirwan's book \cite{Kirwan_book}, and are quotients of products of spheres by a diagonal action of the  group of Euclidean rotations $SO(3)$, also called {\it polygon     spaces} in the literature.  We expect that the technique works for   many other examples; for example $SO(3) = PSU(2)$ to $PSU(k)$ which acts on ${\bf CP}^{k-1}$.  By considering dimensions, the diagonal   $PSU(k)$-action on the product of $k+1$ copies of ${\bf CP}^{k-1}$   has zero dimensional GIT quotient. Therefore, one can cook up a   Lagrangian in the GIT quotient of the diagonal $PSU(k)$-actions on   the product of $2l + k+1$ copies of ${\bf CP}^{k-1}$, where the   Lagrangian is diffeomorphic to the product of $l$ copies of   ${\bf CP}^{k-1}$. (cf. \cite[Section 5]{Jack_Smith_thesis}.)

To construct the polygon spaces, let $S^2$ be the unit sphere in
${\mb R}^3$ equipped with the Fubini--Study form. Here we distinguish
from the notation ${\bm S}^2$ which is used for domains. Viewing each
point of $S^2$ as a unit vector in ${\mb R}^3$, then $ K = SO(3)$ acts
diagonally on the product
\beqn V: = V_{2l +3}: = (S^2)^{2l +3}.
\eeqn 
Since $V$ is K\"ahler and the $SO(3)$-action preserves the complex
structure, the action extends to an action by $G = PSL(2)$. A moment
map of the $SO(3)$-action is 
\beqn \mu(x_1, \ldots, x_{2l}, x_{2l+1},
x_{2l +2}, x_{2l + 3}) = \sum_{i=1}^{2l+3} x_i \in {\mb R}^3 \cong
\mf{so}(3).  \eeqn 
Then the GIT quotient $X$ can be viewed as the moduli space of
equilateral $(2l +3)$-gons in ${\mb R}^3$ modulo rigid body motion.

A natural family of Lagrangians is given by specifying the lengths of diagonals. Let
$L_V \subset V_{2l+3}$ be the Lagrangian
\beqn
L_V:= (\bar \Delta_{S^2})^l \times \Delta_3
\eeqn
where each $\bar \Delta_{S^2}$ is the anti-diagonal of the product of the $2j-1$ and $2j$-th factors of $V$, and $\Delta_3 \subset S^2 \times S^2 \times S^2$ is the set of vectors $(x_{2l+1}, x_{2l +2}, x_{2l +3})$ such that $x_{2l +1} + x_{2l +2} + x_{2l +3} = 0$.  The subset $L_V$ is a compact $SO(3)$-invariant Lagrangian contained in $\mu^{-1}(0)$. Then $L_V$ descends to $L = L_V / SO(3) \subset X$.  The $SO(3)$-action on the $\Delta_3$ factor is free and transitive. Hence $L$ is diffeomorphic to the product of $l$ two-spheres in the polygon space $X$.

\begin{lemma}\label{lemma91}
  There is a linearization $\tilde V \to V$ of the $PSL(2)$-action on
  $V$ such that $L$ is strongly rational with respect to the induced
  line bundle $\tilde X \to X$.
\end{lemma}

\begin{proof}  First we treat the diagonals in the two-fold products.
  Let $V_i \cong S^2 \cong \bf{CP}^1$ be the $i$-th factor in
  $V$. Let $\tilde V_i \to V_i$ be the line bundle ${\mc O}(1)$
  equipped with the standard Fubini--Study Hermitian metric. Then the
  $PSL(2)$-action can be lifted to ${\mc O}(1)$ while the
  Fubini--Study metric is $SO(3)$-invariant. Then the moment map on
  $\tilde V_i$ coincides with the inclusion
  $\tilde \mu_i: V_i \to {\mb R}^3$. Then the bundle \beqn \tilde
  V_{2i-1} \boxtimes \tilde V_{2i} \to V_{2i-1} \times V_{2i} \eeqn
  provides a linearization of the diagonal $PSL(2)$-action on
  $V_{2i-1} \times V_{2i}$. Moreover, the anti-diagonal
  $\bar \Delta_{S^2}$ is the level set of the moment map
  $\tilde \mu_{2i-1} + \tilde \mu_{2i}$. Let $\tau: S^2 \to S^2$ be
  the anti-podal map $x \mapsto -x$ which is an anti-holomorphic
  involution. Then there is a canonical $SO(3)$-equivariant
  isomorphism
\beqn \tau^* {\mc O}(1) \cong \ov{{\mc O}(1)}.  \eeqn
  Then at every point
  $(x, -x) \in \bar \Delta_{S^2} \subset V_{2i-1} \times V_{2i}$,
  there is a canonical identification
 \beqn \tilde V_{2i-1}|_x \otimes
  \tilde V_{2i}|_{-x} \cong {\mc O}(1)|_x \otimes \ov{{\mc O}(1)}|_x
  \cong {\mb C} \eeqn 
which gives an $SO(3)$-equivariant
  trivialization of
  $\tilde V_{2i-1} \boxtimes \tilde V_{2i}|_{\bar
    \Delta_{S^2}}$.
  Since $\bar\Delta_{S^2}$ is the zero locus of the moment map, this
  trivialization is also parallel with respect to the connection
  $\nabla^{\tilde V_{2i-1} \boxtimes \tilde V_{2i}}$.

Next we consider the Lagrangians given by closed triangles. 
The factor  $\Delta_3 \subset V_{2l+1} \times V_{2l+2} \times V_{2l+3}$ is an orbit for the diagonal $SO(3)$-orbit. Hence the restriction of the bundle $\tilde V_{2l+1} \boxtimes \tilde V_{2l+2} \boxtimes \tilde V_{2l+3}$ to $\Delta_3$ has a trivialization given by the $SO(3)$-action. Since $\Delta_3$ is also the zero locus of the moment map, this trivialization is also parallel with respect to the Chern connection. 

We combining factors as follows. Define
\beqn
\tilde V = \tilde V_1 \boxtimes \cdots \boxtimes \tilde V_{2l+3} \to V_{2l+3}
\eeqn
with the product Hermitian metric and the product connection. This is
a linearization of the diagonal $PSL(2)$-action on $V_{2l+3}$. The
properties obtained above imply that the pair
$(\tilde V, \nabla^{\tilde V})$ descends to a bundle with connection
$(\tilde X, \nabla^{\tilde X})$ on $X$ whose restriction to $L$ is a
trivial bundle with the trivial connection. Hence Definition
\ref{defn27} item (a) is satisfied. By the construction of
\cite{Auroux_Gayet_Mohsen} (the holomorphic version), for sufficiently
large $k$, there exists holomorphic sections of $\tilde X{}^k$ which
are nowhere vanishing on $L$. Since $L$ is homeomorphic to a product
of spheres, there is only one homotopy class of trivializations of
$\tilde X|_L$. So Definition \ref{defn27} item (b) is also
satisfied. Therefore $L$ is strongly rational.
\end{proof}

To carry out our construction we also need the monotonicity condition.

\begin{lemma}\label{lemma92}
The pair $(V, L_V)$ is monotone with minimal Maslov index two.
\end{lemma}

\begin{proof}
It is standard knowledge that $(S^2 \times S^2, \bar\Delta_{S^2})$ is monotone and has minimal Maslov index four. Consider $H_2(S^2 \times S^2 \times S^2, \Delta_3)$. Since $\Delta_3 \cong SO(3) \cong {\bf RP}^3$, the homology exact sequence
\beqn
H_2(\Delta_3) \to H_2( S^2 \times S^2 \times S^2) \to H_2( S^2 \times S^2 \times S^2, \Delta_3) \to H_1(\Delta_3) \to H_1(S^2 \times S^2 \times S^2)
\eeqn
in integer coefficients is isomorphic to 
\beqn
0 \to {\mb Z} \oplus {\mb Z} \oplus {\mb Z} \to H_2(S^2\times S^2 \times S^2, \Delta_3) \to {\mb Z}_2 \to 0.
\eeqn
So for any $B \in H_2(S^2 \times S^2 \times S^2, \Delta_3)$, $2B$ can
be lifted to a spherical class in $S^2 \times S^2 \times S^2$.
Therefore, $(S^2 \times S^2 \times S^2, \Delta_3)$ is also monotone,
having minimal Maslov index two, and its monotonicity constant is the
same as that of $S^2$, and hence the same as that of
$(S^2\times S^2, \bar \Delta_{S^2})$.
\end{proof}

Moreover, we know that the unstable locus $V^{\rm us}$ consists of
points $(x_1, \ldots, x_{2l +3})$ in which at least $l +2$ coordinates
are equal. Since $l \geq 1$, the real codimension of $V^{\rm us}$ is
at least four. So all conditions of Definition \ref{defn11} are
satisfied. Lastly, conditions (S1)---(S4) of Definition
\ref{defn12} are verified by the following lemma (one can take
$S = \emptyset$).

\begin{lemma}\label{lemma93}
  If $B \in H_2^{SO(3)}(V_{2l+3}; {\mb Z})$ is a spherical class with
  $\langle [\omega_V + \mu], B \rangle > 0$, then the equivariant
  Chern number of $B$ is at least 1.
\end{lemma}

\begin{proof}
Since $\pi_1(SO(3)) \cong {\mb Z}_2$, there are two isomorphism classes of $SO(3)$-bundles over ${\bm S}^2$, the trivial and the nontrivial ones. Suppose $B$ is represented by a pair $(P, u)$ where $P \to {\bm S}^2$ is an $SO(3)$-bundle and $u$ is a section of $P (V_{2l+3})$, then $2B$ is contained in the image of 
\beqn
H_2( V_{2l+3} ; {\mb Z}) \to H_2^{SO(3)}(V_{2l+3}; {\mb Z}). 
\eeqn
Then for some $A \in H_2( V_{2l+3}; {\mb Z})$ one has
\begin{align*}
&\ 2 \langle  [\omega_V + \mu], B \rangle = \langle [\omega_V], A \rangle,\ &\ 2 \langle c_1^{SO(3)}(TV_{2l+3}), B \rangle = \langle c_1( TV_{2l+3}), A \rangle.
\end{align*}
Then the claim follows from the monotonicity of $V_{2l+3}$ itself.
\end{proof}

Therefore, all prerequisites of our abstract construction, i.e., (T1)---(T3) of Definition \ref{defn11}, (S1)---(S4) of Definition \ref{defn12}, and (L1)---(L2) of Definition \ref{defn27}, are hence verified.
One can construct the $A_\infty$ algebras $\wt{\fuk}{}^0(L)$, $\wt{\fuk}{}^\infty(L)$, and the morphism 
\beqn
\tilde {\bm \kappa}: \wt{\fuk}{}^0(L) \to \wt{\fuk}{}^\infty(L).
\eeqn
Notice that because $h^1(L) = 0$, there is only one local system hence
we do not have the dependence on $b$. Moreover, by Proposition
\ref{prop711}, Proposition \ref{prop717}, and the monotonicity
condition of $L_V$ (Lemma \ref{lemma92}), one obtains the following result.

\begin{thm}\label{thm93}
Both $\wt{\fuk}{}^0 (L)$ and $\wt{\fuk}{}^\infty (L)$ are weakly unobstructed.
\end{thm}

It is possible to use the standard complex structure $J_V$ on $V_{2l +3}$ to define the equivariant Fukaya algebra $\wt{\fuk}{}^0 (L)$. Nevertheless, the current situation is enough to determine the Floer cohomology, if we perturb $J_V$. 

\begin{thm}\label{thm94}
For any weakly bounding cochain the Floer cohomology of $L$ is isomorphic to the homology of the product of $l$ spheres. 
\end{thm}

\begin{proof}
Since $L$ is diffeomorphic to the product of two spheres, one can choose a Morse function on $L$ such that the number of critical points is equal to the sum of Betti numbers of $L$. Moreover, since the Maslov indices are all even, the Floer differential must be odd. Hence the Floer differential is actually zero and the Floer cohomology is isomorphic to the cochain complex, which is the same as the homology of the product of $l$ spheres.
\end{proof}

\appendix

\section{Strict Units}\label{sectiona1}

In this section we describe the combinatorics necessary to equip our Fukaya algebras with strict units. The construction follows from the idea of Charest--Woodward \cite{Charest_Woodward_2017} using weighted trees. Using this construction we prove Theorem \ref{thm714}.

\subsection{Weighted treed disks}

We use an extra set of notations to define weighted trees. Consider a commutative monoid with three elements $\circ, \bullet, \weight$. The multiplication is defined in such a way that $\circ$ is the unit, $\bullet$ is zero, and $\weight$ is idempotent.

\begin{defn}\label{defna1}
Let $(\Gamma, {\mf s})$ be an unbroken scaled tree. A {\it  weighting type} is a map ${\mf w}: T_\Gamma \cup L_\Gamma \to \{ \circ, \bullet, \weight\}$ such that ${\mf w} \equiv \bullet$ on $L_\Gamma$ and 
\beq\label{eqna1}
{\mf w}(t_{\rm out}) = \prod_{t \in T_\Gamma^{\rm in} \cup L_\Gamma} {\mf w}(t) \in \{ \circ, \bullet, \weight\}.
\eeq
The weighting type induces a partition $T_\Gamma = T_\Gamma^\circ \sqcup T_\Gamma^\bullet \sqcup T_\Gamma^\weight$. This requirement implies that there are nine weighted Y-shapes and three weighted $\Phi$-shapes, as shown in Figure \ref{figure6} and Figure \ref{figure7}. 

\begin{figure}[ht]
    \centering
    \includegraphics[width=\textwidth]{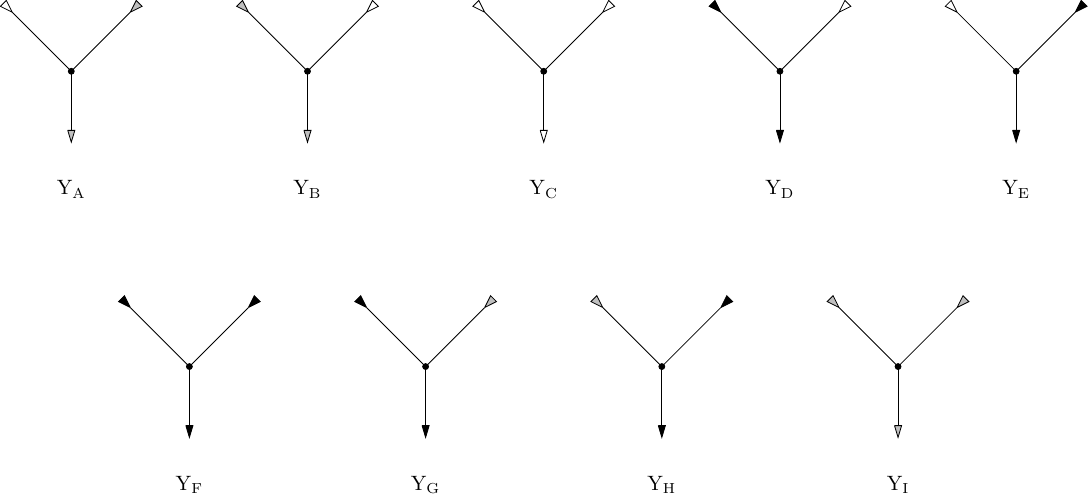}
    \caption{The allowed weighted Y-shapes. The grayscales indicate the weighting types on the boundary tails.}
    \label{figure6}
\end{figure}

\begin{figure}[ht]
    \centering
    \includegraphics{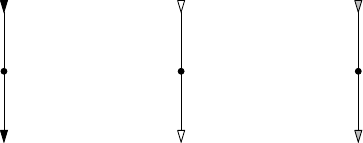}
    \caption{The allowed weighting types on a $\Phi$-shape. }
    \label{figure7}
\end{figure}

\end{defn}

Equation \eqref{eqna1} implies that the values of ${\mf w}$ on boundary inputs determines its value on the output. Hence the notion of weighting types naturally generalizes to the case of possibly broken trees.

\begin{defn}\label{defna2} A {\it weighted domain type} is a tuple 
\beqn
\hat\Gamma = (\Gamma, {\mf s}, {\mf p}, {\mf m},  {\mf w}),
\eeqn
where $\Gamma = (\Gamma, {\mf s}, {\mf p}, {\mf m})$ is an unweighted domain type and ${\mf w}$ is a weighting type on $(\Gamma, {\mf s})$. $\hat\Gamma$ is {\it  stable} if $(\Gamma, {\mf s})$ is stable. Let $\hat{\bf T}$ be the set of isomorphism classes of weighted domain types and let $\hat{\bf T}^{\rm st} \subset \hat{\bf T}$ be the subset of stable ones. 
\end{defn}

\begin{defn}\label{defna3} (Weighted trees) 
\begin{enumerate}
\item A {\it  weighting} on an {\it  unbroken} weighted domain type $\hat\Gamma$ is a function ${\bm \omega}: T_\Gamma \to [0, 1]$ that satisfies 
\beqn
{\bm \omega} |_{T_\Gamma^\circ} = 1,\ {\bm \omega} |_{ T_\Gamma^\bullet} = 0,\ {\bm \omega} |_{ T_\Gamma^\weight} \in (0, 1),\ {\bm \omega} (t_{\rm out}) = \prod_{t \in T_\Gamma^{\rm in}} {\bm \omega} (t). 
\eeqn
A weighting on a broken weighted domain type $\hat\Gamma$ is a collection of weightings on all its unbroken parts which coincide at breakings.

\item Let $\hat\Gamma$ and $\hat\Gamma'$ be two weighted domain types and ${\bm \omega}$, ${\bm \omega}'$ be weightings on them. $(\hat\Gamma, {\bm \omega} )$ is said to be {\it  equivalent} to $(\hat\Gamma', {\bm \omega}')$ if 
\begin{enumerate}
\item either $\hat\Gamma$ and $\hat\Gamma'$ have weighted output and there exists $a>0$ such that
\beqn
{\bm \omega} (t)^a = {\bm\omega}' ( \rho_T(t)),\ \forall t \in T_\Gamma,\ \ {\rm or}
\eeqn

\item the outputs of $\hat\Gamma$ and $\hat\Gamma'$ are not weighted and ${\bm \omega} = {\bm\omega}' \circ \rho_T$.
\end{enumerate}

\item Given $\hat\Gamma \in \hat{\bf T}$. A weighted treed disk of domain type $\hat\Gamma$ is a treed disk of domain type $\Gamma$ together with a weighting ${\bm\omega}$ on $\hat\Gamma$. Two weighted treed disks of domain type $\hat\Gamma$ are isomorphic if the treed disks are isomorphic and the weightings are equivalent. 
\end{enumerate}
\end{defn}

Define a partial order among weighted domain types as follows. 

\begin{defn}\label{defna4}
Let $\hat\Gamma'$, $\hat\Gamma$ be weighted domain types. We denote $\hat\Gamma' \leq \hat\Gamma$ if $\Gamma' \leq\Gamma$, and for each boundary tail $t' \in T_{\hat \Gamma'}$ with corresponding boundary tail $t \in T_{\hat \Gamma}$, one has
\begin{align*}
&\ t \in T_{\hat \Gamma}^\circ \Longrightarrow t' \in T_{\hat \Gamma'}^\circ;\ &\  t \in T_{\hat \Gamma}^\bullet \Longrightarrow t' \in T_{\hat \Gamma'}^\bullet.
\end{align*}
\end{defn}
Using Lemma \ref{lemma38} it is not hard to see that $\leq$ is still a partial order. 

\subsubsection{Moduli spaces}

For a {\it stable} weighted domain type $\hat\Gamma$, let ${\mc W}_{\hat\Gamma}$ be the set of all isomorphism classes of stable weighted treed disks modelled on $\hat\Gamma$. Define
\beqn
\ov{\mc W}_{\hat\Gamma}:= \bigsqcup_{ \substack{ \hat\Pi \leq \hat\Gamma \\ \hat\Pi\ {\rm stable}}} {\mc W}_{\hat\Pi}. 
\eeqn
The topology on $\ov{\mc W}_{\hat\Gamma}$ is defined via the following notion of sequential convergence. 

\begin{defn}\label{defn5}
Let ${\mc C}_\nu$ be a sequence of stable weighted treed disks of a stable weighted domain type $\hat\Gamma$ and ${\mc C}_\infty$ is another stable weighted treed disk of a stable weighted domain type $\hat\Gamma_\infty \leq \hat\Gamma$. We say that ${\mc C}_\nu$ converges to ${\mc C}_\infty$ if the convergence hold for the underlying unweighted treed disks, and in addition, the following conditions are satisfied.
\begin{enumerate}
\item If the outputs $t_{\rm out}$, $t_{\rm out}'$ of $\hat\Gamma$, $\hat\Gamma_\infty$ are not weighted, then the weights on boundary inputs converge. More precisely, let $\rho_T: T_{\Gamma_\infty} \to T_\Gamma$ be the bijection between tails (whose existence is implied by the convergence of unweighted treed disks), then for each $t_i \in T_\Gamma$, $w_\nu(t_i)  \to w_\infty(\rho_T^{-1}(t_i))$.
\item If the outputs $t_{\rm out}$, $t_{\rm out}'$ of $\hat\Gamma$, $\hat\Gamma_\infty$ are weighted, then there exist real numbers $a_\nu$, $a_\infty$ such that
\beqn
\big[ {\bm\omega}_\nu(t_{\rm out}) \big]^{a_\nu} = \big[ {\bm\omega}_\infty(t_{\rm out}') \big]^{a_\infty} = \frac{1}{2}.
\eeqn
We require that for all boundary inputs $t_{\uds i}$,
\beqn
\lim_{\nu \to \infty} \big[ {\bm\omega}_\nu(t_{\uds i}) \big]^{a_\nu} = \big[ {\bm\omega}_\infty(t_{\uds i}') \big]^{a_\infty} \in (0, 1).
\eeqn
\end{enumerate}
\end{defn}

To better understand the topology of $\ov{\mc W}_{\hat\Gamma}$, we look at a few special cases. 
\begin{enumerate}
\item When $\hat\Gamma$ is a Y-shape (see Figure \ref{figure6}), by Definition \ref{defna3}, for the first six configurations of Figure \ref{figure6}, ${\mc W}_{\hat\Gamma}$ is an isolated point. ${\mc W}_{{\rm Y}_{\rm G}}, {\mc W}_{{\rm Y}_{\rm H}}$ and ${\mc W}_{{\rm Y}_{\rm I}}$ are all homeomorphic to an open interval parametrized by the weightings on the weighted inputs. One can compactify them by adding boundary configurations as described by Figure \ref{figure8} and Figure \ref{figure9}.

\begin{figure}[ht]
    \centering
    \includegraphics{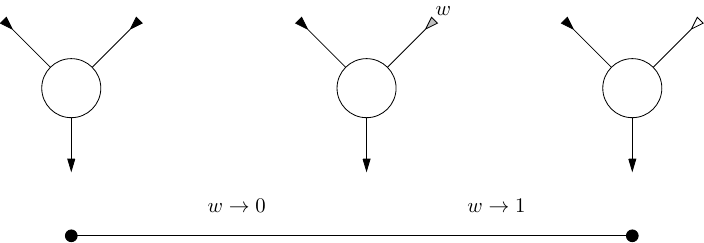}
    \caption{Moduli space ${\mc W}_{{\rm Y}_{\rm G}}$ and its compactification. The case of ${\mc W}_{{\rm Y}_{\rm H}}$ is similar.}
    \label{figure8}
\end{figure}

\begin{figure}[ht]
    \centering
    \includegraphics{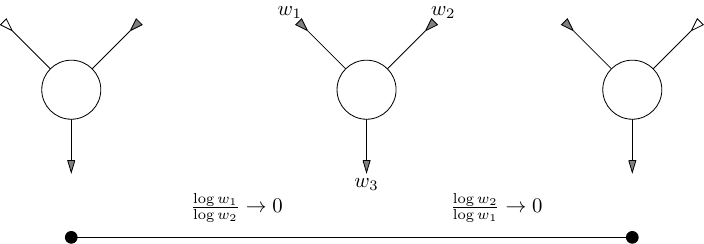}
    \caption{Moduli space ${\mc W}_{{\rm Y}_{\rm I}}$ and its compactification.}
    \label{figure9}
\end{figure}

\item When $\hat\Gamma$ is a $\Phi$-shape, by Definition \ref{defna1} and \ref{defna3}, ${\mc W}_{\hat\Gamma}$ is a single point.
\end{enumerate}

One can prove that moduli spaces of stable weighted treed disks are compact and Hausdorff. We left the proof to the reader. Lastly we give the dimension formula for a stable weighted domain type $\hat\Gamma$. Let $\Gamma$ be the underlying unweighted domain type. \eqref{eqn32} gives the dimension of ${\mc W}_{\Gamma}$, and we have
\beqn
{\rm dim} {\mc W}_{\hat\Gamma} = {\rm dim} {\mc W}_\Gamma + \# (T_{\hat \Gamma}^\weight \cap T_{\hat \Gamma}^{\rm in}) - \# ( T_{\hat \Gamma}^\weight \cap T_{\hat \Gamma}^{\rm out} ).
\eeqn

\subsubsection{Forgetting boundary tails}

We describe a forgetful map. Let $t$ be a forgettable incoming boundary tail of $\hat\Gamma$ and $\hat\Gamma_t$ be the domain type obtained by forgetting $t$ and stabilization. For any ${\mc C}$ representing a point in $\ov{\mc W}_{\hat\Gamma}$, this operation gives another stable weighted treed disk ${\mc C}_t$, and induces a continuous map $\pi_t: \ov{\mc W}_{\hat\Gamma} \to \ov{\mc W}_{\hat\Gamma_t}$. It also induces a contraction map ${\mc C} \to {\mc C}_t$, which gives a commutative diagram 
\beq\label{eqna2}
\vcenter{ \xymatrix{ \ov{\mc U}_{\hat\Gamma} \ar[r] \ar[d]_-{\tilde \pi_t} & \ov{\mc W}_{\hat\Gamma} \ar[d]^-{\pi_t} \\
    \ov{\mc U}_{\hat\Gamma_t} \ar[r] & \ov{\mc W}_{\hat\Gamma_t} } }.  
          \eeq

\begin{rem}\label{rema6}
Given a treed disk ${\mc C}$ of domain type $\hat\Gamma$ and an incoming forgettable boundary tail $t$ of $\hat\Gamma$. Suppose the unbroken part containing $t$ is not an infinite edge, a Y-shape, or a $\Phi$-shape. There are several possibilities of the change of shapes regarding the forgetful operation $\pi_t$.
\begin{enumerate}

\item When $v_t$, the vertex to which $t$ is attached, is still stable after forgetting $t$, only the interval $I_{t}$ is contracted by the stabilization.

\item If $v_t$ becomes unstable after forgetting $t$, then the two-dimensional component $\Sigma_{t}$ corresponding to $v_t$ is also contracted by the forgetful map. See Figure \ref{figure10}.

\begin{figure}[ht]
    \centering
    \includegraphics[width=\textwidth]{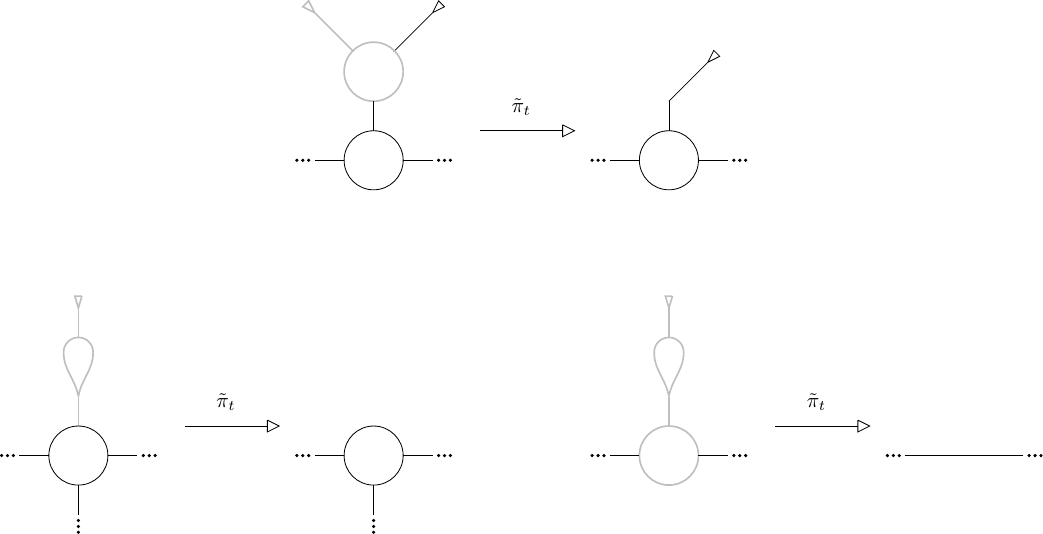}
    \caption{When forgetting a boundary tail, we regard the gray part of the treed disk is contracted.}
    \label{figure10}
\end{figure}

\item An extremal situation of the above case is described by the Figure \ref{figure11}, where we contract a whole unbroken part of a broken treed disk. 

\begin{figure}[ht]
    \centering
    \includegraphics{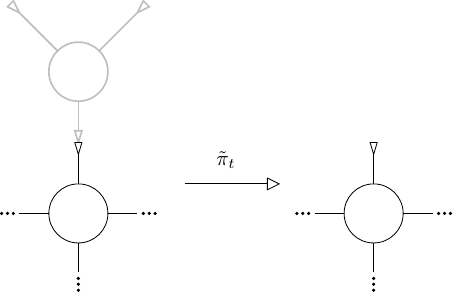}
    \caption{If we forgetting the upper-right boundary tail, then the gray part of the treed disk is contracted.}
    \label{figure11}
\end{figure}

\end{enumerate}
\end{rem}
 
\subsubsection{Coherent perturbations}

For a weighted domain type $\hat\Gamma$, let $\Gamma$ be its underlying unweighted domain type. Then there is a natural forgetful map $\ov{\mc U}_{\hat\Gamma}  \to \ov{\mc U}_\Gamma$ which covers another forgetful map $\ov{\mc W}_{\hat\Gamma} \to \ov{\mc W}_\Gamma$. Indeed, $\ov{\mc W}_{\hat\Gamma}$ is homeomorphic to the product of $\ov{\mc W}_\Gamma$ and a cube, while $\ov{\mc U}_{\hat\Gamma}$ is the pullback of $\ov{\mc U}_\Gamma \to \ov{\mc W}_\Gamma$. Then the collections of nodal neighborhoods we have used (see Lemma \ref{lemma45}) are pulled back to nodal neighborhoods
\beqn
\ov{\mc U}{}_{\hat\Gamma}^\delta \subset \ov{\mc U}{}_{\hat\Gamma}.
\eeqn
Define the space of perturbations ${\mc P}_{\hat\Gamma}$ in the same way as Definition \ref{defn46}. Moreover, we define the notion of coherent system of perturbation data as follows. 

\begin{defn} (cf. Definition \ref{coherent}) \label{defna7}
Let 
\beqn
\hat{\bm P} = \{ P_{\hat\Gamma} \in {\mc P}_{\hat\Gamma}\ |\ \hat\Gamma \in \hat{\bf T}^{\rm st} \}
\eeqn
be a system of perturbation data. We say this system is {\it  coherent} if it satisfies similar conditions to those in Definition \ref{coherent}. In addition, we require the following condition.

\begin{itemize}

\item Let $t \in T_{\hat\Gamma}^{\rm in}$ be the first forgettable input and assume that $\hat\Gamma{}_t$ is nonempty. Let $\ov{\mc U}_{\hat\Gamma, t} \subset \ov{\mc U}_{\hat\Gamma}$ be the set of points which are not contracted by the map $\tilde \pi_t$. Then 
\beqn
P_{\hat\Gamma}|_{\ov{\mc U}_{\hat\Gamma, t}} = P_{\hat \Gamma{}_t} \circ \tilde \pi_t.
\eeqn
\end{itemize}
\end{defn}

\begin{lemma}\label{lemmaa8}
Let $\hat\Gamma$ one of the first five ${\rm Y}$-shapes in Figure \ref{figure6}. Suppose $P_{\hat\Gamma} = (F_{\hat\Gamma}, J_{\hat\Gamma})$ belongs to a coherent system of perturbation data.
\begin{enumerate}

\item If $\hat\Gamma\in \{ {\rm Y}_{\rm A}, {\rm Y}_{\rm B}, {\rm Y}_{\rm D}, {\rm Y}_{\rm E}\}$, then $F_{\hat\Gamma} = F_L$ on the edges that are not forgettable. 

\item If $\hat\Gamma = {\rm Y}_{\rm C}$, then $F_{\hat\Gamma} = F_L$ on the outgoing and the second incoming edges. 
\end{enumerate}
\end{lemma}

\begin{proof}
We prove for the case $\hat\Gamma = {\rm Y}_{\rm D}$. The other cases are the same. Consider an unbroken domain type $\hat\Xi$ as shown in Figure \ref{figure12}. Consider a sequence of weighted treed disks ${\mc C}_k$ converge to a broken configuration by degenerating the edge $e$. In ${\mc C}_k$ we identify the interval $I_e$ with $[-\ell_k(e), 0]$ where $\ell_k(e)$ is the length of the edge $e$ which tends to infinity as $k \to +\infty$. Call the forgettable tail by $t$ and the other tail which is not collapsed by the forgetful map by $t'$. Identify the tail $I_{t'}$ with $(-\infty, -\ell_k(e)] \cong (-\infty, 0]$. 

Suppose $P_{\hat\Gamma}$ and $P_{\hat \Xi}$ belong to the same coherent system of perturbation data. Let $f_{k, t'}$ and $f_{k, e}$ be the restriction of $F_{\hat \Xi} - F_L$ to the tail $I_{t'}$ and the edge $I_e$. By Definition \ref{defna7}, after forgetting the tail $t$, the dotted part in Figure is collapsed and all $P_{\hat \Xi}$ induce the same perturbation data $P_{\hat \Xi_t}$. In particular, if denote the tail in $\hat \Xi_t$ corresponding to the surviving tail $t'$ and the edge $e$ by $t' \cup e$, and denote the restriction of $F_{\hat \Xi_t} - F_L$ to $I_{t' \cup e}$ by $f_{t' \cup e}$. Then for all $k$
\beqn
f_{k, t'} \cup f_{k, e}: (-\infty, -\ell_k(e)] \cup [-\ell_k (e), 0] \times L \to {\mb R}
\eeqn
is equal to $f_{t' \cup e}$. Since $f_{t' \cup e}$ vanishes near $-\infty$, we see for $k$ large, $f_{k, t'} \equiv 0$ and $f_{k, e}$ is supported in a fixed subinterval $[-a, 0] \subset [-\ell_k(e), 0]$. On the other hand, as $k \to \infty$, $P_{\hat \Xi}$ converges to the perturbation data on the limiting broken treed disk determined by the perturbation data $P_{\hat \Gamma}$ and the perturbation data on the other unbroken part. Then we see on the tails in $\hat \Gamma$ corresponding to the solid part in Figure \ref{figure12} (the tail $t'$ and the left piece of $e$), the difference $F_{\hat \Gamma} - F_L$ has to vanish. 
\end{proof}

\begin{figure}[ht]
    \centering
    \includegraphics{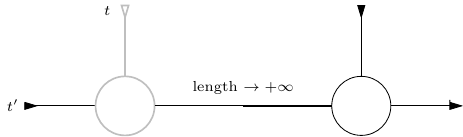}
    \caption{The gray part is collapsed by the forgetful map. The thick part in the edge $e$ is the support of the perturbation of the Morse function on the Lagrangian $L$.}
    \label{figure12}
\end{figure}

\subsection{Moduli spaces and transversality}

In this subsection we construct a strongly regular coherent system of perturbation data $\hat{\bm P}$ for all stable weighted domain types that extends the system ${\bm P}$ we have used for the unweighted case.

\subsubsection{Weighted map types}

Introduce two more generators of the Floer cochain complex
\begin{align*}
&\ {\bm x}_M^\circ,\ &\ {\bm x}_M^{\weight}.
\end{align*}
Here ${\bm x}_M^\circ$ is called the {\it  forgettable} ${\bm x}_M$ (which will be the strict units), and ${\bm x}_M^\weight$ is called the {\it  weighted} ${\bm x}_M$. To distinguish from these two elements, we re-denote the original ${\bm x}_M \in {\rm Crit} F_L$ by ${\bm x}_M^\bullet$ and call it the {\it  unforgettable} ${\bm x}_M$. Other elements of ${\rm Crit} F_L$ are also regarded as unforgettable. For the two new elements, define their degrees to be $| {\bm x}_M^\circ | = 0$ and $| {\bm x}_M^\weight | = -1$.

Now we extend the notion of map types, given by Definition \ref{defn51}, to {\it  weighted} map types. A weighted map type is a tuple $\hat{\bm \Gamma} = ( \hat\Gamma, {\bf B}, {\bf O}, {\bf X})$ where $\hat\Gamma \in \hat{\bf T}$ is a weighted domain type, ${\bf B}$ and ${\bf O}$ are defined in the same way as in Definition \ref{defn51}, and ${\bf X} = ({\bm x}_1, \ldots, {\bm x}_l; {\bm x}_\infty )$ is a sequence of elements in ${\rm Crit} F_L \cup \{ {\bm x}_M^\circ, {\bm x}_M^{\weight}\}$.

As in the unweighted case, the underlying domain type of a weighted map type $\hat{\bm\Gamma}$ is always denoted by the regular non-boldface symbol $\hat\Gamma$.  For any coherent system of perturbation data $\hat {\bm P}$, we can define the moduli spaces ${\mc M}_{\hat{\bm \Gamma}}(P_{\hat \Gamma})$ in the same way as Section \ref{section5}, and the open subset ${\mc M}_{\hat{\bm\Gamma}}^*(P_{\hat \Gamma}) \subset {\mc M}_{\hat{\bm \Gamma} }(P_{\hat\Gamma})$ consisting of configurations which do not have a nontrivial component mapped entirely into the stabilizing divisor $D$ (or $D \qu G$). In order to achieve transversality, we also look at special weighted map types which are uncrowded, controlled, and which have no components corresponding to holomorphic spheres in $V$. The notions of (strong) regularity are defined in exactly the same way as Definition \ref{regularity}.

\subsubsection{Perturbed gradient flows}

In order to achieve transversality for weighted map types while maintaining the coherence condition, especially the one about
forgetting forgettable boundary tails (see Definition \ref{defna7}),
we need to carefully choose certain perturbations to the gradient flow
equation.  Let $f^\circ \in C_c^\infty( (-\infty, 0] \times L )$ be a
time-dependent perturbation of the function $F_L: L \to {\mb R}$ and
denote $F^\circ = F_L + f^\circ$. Consider the perturbed gradient flow
equation \beq\label{eqna3} \dot{x}(s) + \nabla F^\circ (x(s)) = 0,\
-\infty < s \leq 0,\ \lim_{s \to -\infty} x(s) = {\bm x}_M.  \eeq
Denote the moduli space of solutions be ${\mc M}_\circ ({\bm x}_M)$.  The dimension of 
${\mc M}_\circ ({\bm x}_M)$
is ${\rm dim} L $, since ${\bm x}_M$ is the maximum.

\begin{lemma}\label{lemmaa9}
There exists $f^\circ \in C_c^\infty((-\infty, 0]\times L)$ satisfying the following condition. 
\begin{enumerate}
\item $f^\circ$ vanishes in a neighborhood of ${\bm x}_M$;

\item \label{lemmaa9b} For any ${\bm x} \in {\rm Crit} F_L$, there is a unique solution \eqref{eqna3} with $x(0) = {\bm x}$. 

\item The evaluation at time zero ${\mc M}_\circ ( {\bm x}_M) \to L$ is a submersion. 
\end{enumerate}
\end{lemma}

The proof is left to the reader. Choosing a function $F^\circ = F_L + f^\circ$ where $f^\circ$ satisfies the conditions of Lemma \ref{lemmaa9}, the perturbation data for certain simple weighted domain types are also determined by the coherence condition. The following two lemmata give these perturbation data explicitly and prove that they are regular. Their proofs are left to the reader.

\begin{lemma}\label{lemmaa10}
Choose $f^\circ$ satisfying the conditions of Lemma \ref{lemmaa9}. Let $\hat\Gamma$ be one of the first five ${\rm Y}$-shapes listed in Figure \ref{figure6}. Then the perturbation $P_{\hat\Gamma}$ that is equal to $f^\circ$ on the (first, if any) forgettable incoming edge and is trivial on other edges and trivial on the two-dimensional component is (strongly) regular. 

Moreover, if $\hat {\bm \Gamma}$ is an uncrowded and controlled refinement of $\hat\Gamma$, then $\hat {\bm \Gamma}$ is classified by the labellings of the boundary tails. Then we have

\begin{enumerate}
\item For $\hat\Gamma = {\rm Y}_{\rm A}, {\rm Y}_{\rm B}, {\rm Y}_{\rm C}$, there is only one such refinement $\hat {\bm \Gamma}$, where the boundary tails are labelled by ${\bm x}_M^\circ$ or ${\bm x}_M^{\weight}$ according to the weighting domain types of the boundary tails. In all cases ${\rm index} \hat{\bm \Gamma} = 0$ and ${\mc M}_{\hat {\bm \Gamma}}$ contains a single element represented by the trivial solutions. 

\item For $\hat\Gamma = {\rm Y}_{\rm D}, {\rm Y}_{\rm E}$, a refinement $\hat {\bm \Gamma}$ is essentially a pair of labelling $({\bm x}_{\rm in}^\bullet, {\bm x}_{\rm out}^\bullet)$ on the two unforgettable boundary tails. If ${\rm index} \hat{\bm \Gamma} = 0$, then $| {\bm x}_{\rm in}^\bullet| = | {\bm x}_{\rm out}^\bullet |$. In this case, the moduli space ${\mc M}_{\hat{\bm \Gamma}}$ is empty if ${\bm x}_{\rm in}^\bullet \neq {\bm x}_{\rm out}^\bullet$ and contains a single element if ${\bm x}_{\rm in}^\bullet = {\bm x}_{\rm out}^\bullet$. In the latter case, the only element is represented by the solution which is constant on the unforgettable edges and constant on the disk component, and is equal to the unique solution to \eqref{eqna3} with $x(0) = {\bm x}_{\rm in}^\bullet = {\bm x}_{\rm out}^\bullet$.   (Uniqueness follows from Lemma \ref{lemmaa9}). 
\end{enumerate}
\end{lemma}

\subsubsection{Canonical extension of perturbation data to weighted domain types}

Suppose we are given a coherent system of perturbation data for all unweighted domain types. We would like to extend this family to include weighted domain types
so that forgetful maps exist in the case of infinite weightings.  
Given an unbroken $\hat\Gamma \in \hat{\bf T}^{\rm st}$ and suppose $t \in T_{\hat\Gamma}$ is its first forgettable input. Consider the case that $\hat \Gamma_t \neq \emptyset$. Suppose we have chosen a perturbation $P_{\hat\Gamma_t}$. Then upon choosing $F^\circ$, there is a uniquely determined $P_{\hat\Gamma} = (F_{\hat\Gamma}, J_{\hat\Gamma})$, called the $F^\circ$-extension of $P_{\hat\Gamma_t}$, which, together with $P_{\hat\Gamma_t}$, respects the forgetful operation forgetting the boundary tail $t$. 

The construction of the perturbations on weighted types is defined in more detail as follows. For each treed disk ${\mc C}$ with underlying weighted domain type $\hat\Gamma$, let ${\mc C}_t$ be the treed disk obtained by forgetting the boundary tail $t$ and stabilizing. Then certain one-dimensional or two-dimensional components of ${\mc C}$ are contracted. The preserved components corresponds to components of ${\mc C}_t$. Then $P_{\hat\Gamma_t}$ determines the value of $P_{\hat\Gamma}|_{{\mc C}}$ on all the preserved components. To determine the value of $P_{\hat\Gamma}|_{{\mc C}}$ on the contracted components, we consider the following three cases.
\begin{enumerate}
\item If the forgetful operation only contracts a tail $I_t  \cong (-\infty, 0]$, then we define $F_{\hat\Gamma}|_{I_t}$ to be $F^\circ$. 

\item If the forgetful operation contracts a boundary tail $I_t$ and a vertex $v \in V_{\hat\Gamma}^0 \cup V_{\hat\Gamma}^\infty$ (see Figure \ref{figure10} and Figure \ref{figure11}), then we define $F_{\hat\Gamma}|_{I_t}$ in the same way as above and define $J_{\hat\Gamma}|_{\Sigma_{v}}$ to be $J_V$.

\item Suppose the forgetful operation contracts a vertex $v \in V_{\hat\Gamma}^1$. Then $v$ has no superstructure or leaf. Let $e$ be the edge starting from $v$ towards the root. Then $e$ has finite length. Then $I_t \cup I_e$ can be identified with $(-\infty, 0] \cup [0, -\ell (e)] \cong (-\infty, 0]$. Then define $F_{\hat\Gamma}|_{I_e \cup I_e}$ to be $F^\circ$ via this identification. Moreover, the forgetful map may or may not contract another vertex $v'\in V_{\hat\Gamma}^\infty$ i.e., the other end of $e$. In either case, define the restriction of $J_{\hat\Gamma}$ on the contracted two-dimensional component(s) to be $J_V$ (see Figure \ref{figure10}). 
\end{enumerate}

The following lemma shows that $P_{\hat\Gamma}$ is (strongly) regular as long as $F^\circ$ satisfies the conditions in Lemma \ref{lemmaa9} and $P_{\hat\Gamma_t}$ is (strongly) regular. 

\begin{lemma} \label{lemmaa11}
Suppose $F^\circ$ satisfies the conditions of Lemma \ref{lemmaa9} and $P_{\hat\Gamma_t}$ is a strongly regular perturbation, then the $F^\circ$-extension $P_{\hat\Gamma}$ of $P_{\hat\Gamma_t}$ is also strongly regular. 
\end{lemma}

\begin{proof}
The strong regularity of $P_{\hat\Gamma}$ follows from Item (c) of Lemma \ref{lemmaa9}.  
\end{proof}

Now we construct an extension of the previously chosen ${\bm P}$ for unweighted domain types to all weighted domain types. Let $\hat{\bf T}^\bullet \subset \hat{\bf T}^{\rm st}$ be the subset of weighted domain types whose inputs are all unforgettable. Then $\hat{\bf T}^\bullet \cong {\bf T}^{\rm st}$ and ${\bm P}$ induces a coherent system $\hat{\bm P}{}^\bullet$ for all domain types in $\hat{\bf T}^\bullet$. Now let $\hat{\bf T}^{\bullet \circ} \subset \hat{\bf T}^{\rm st}$ consisting of all stable weighted domain types whose boundary inputs are unforgettable or forgettable, but not weighted. By forgetting forgettable inputs successively, any unbroken $\hat\Gamma\in \hat{\bf T}^{\bullet \circ}$ can be reduced to some $\hat\Gamma' \in \hat{\bf T}^\bullet$ or one of ${\rm Y}_{\rm A}, {\rm Y}_{\rm B}, {\rm Y}_{\rm C}, {\rm Y}_{\rm D}, {\rm Y}_{\rm E}$. Then upon choosing $F^\circ$, we can extend ${\bm P}$ to a system $\hat{\bm P}{}^{\bullet \circ}$ to all $\hat\Gamma \in \hat{\bf T}^{\bullet \circ}$. It is routine to check that this new system is coherent in the sense of Definition \ref{defna7}.

Lastly, we can use a variant of Lemma \ref{lemma68} to inductively construct strongly regular perturbations for domain types which have weighted inputs. The base cases of the induction include choosing regular perturbations (independently) for the  domain types ${\rm Y}_{\rm G}, {\rm Y}_{\rm H}, {\rm Y}_{\rm I}$ (see Figure \ref{figure6}) and the domain type $\Phi_{\rm C}$ (see Figure \ref{figure7}). The induction procedure provides us a strongly regular coherent system of perturbation data $\hat {\bm P}$, which grants each stable weighted domain type $\hat\Gamma \in \hat{\bf T}^{\rm st}$ a strongly regular perturbation $P_{\hat\Gamma} = ( F_{\hat \Gamma}, J_{\hat\Gamma} )$. 

Given the strongly regular coherent system of perturbation data $\hat{\bm P}$, for any uncrowded, controlled weighted map type $\hat{\bm \Gamma}$ which has no holomorphic spheres no spheres upstairs, the subset ${\mc M}_{\hat{\bm \Gamma}}^*( P_{\hat\Gamma}) \subset {\mc M}_{\hat{\bm \Gamma}} (P_{\hat\Gamma})$ of stable weighted treed scaled vortices in which no nontrivial component is mapped entirely into $D$ is a smooth manifold of dimension
\beqn
{\rm dim} {\mc M}_{\hat{\bm \Gamma}}^*(P_{\hat\Gamma}) = {\rm index} \hat{\bm \Gamma}:= {\rm dim} {\mc W}_{\hat\Gamma} + |{\bf X}| + 2c_1( \hat{\bm \Gamma}) - \sum_{i=1}^k \delta_{O_i}.
\eeqn

We would like to extend the refined compactness result (Proposition \ref{prop611}) to include moduli spaces for weighted map types. The notions of essential map types (see Definition \ref{essential}) can be extended to include weighted map types without changing a word. Then we have the following extension about refined compactness (Proposition \ref{prop611}). Its proof is completely the same as before.

\begin{prop}\label{propa12}
Let $\hat{\bm \Gamma}$ be an essential weighted map type. 
\begin{enumerate}

\item When ${\rm index} \hat{\bm \Gamma} = 0$, one has
\beqn
\ov{ {\mc M}_{\hat{\bm \Gamma}}^* (P_{\hat\Gamma})} = {\mc M}_{\hat{\bm \Gamma}}^*(P_{\hat\Gamma}).
\eeqn

\item When ${\rm index} \hat{\bm \Gamma} = 1$, one has 
\beqn
\ov{ {\mc M}_{\hat{\bm\Gamma}}^*(P_{\hat\Gamma})} \setminus {\mc M}_{\hat{\bm\Gamma}}^*(P_{\hat\Gamma}) = \bigsqcup {\mc M}_{\hat{\bm \Pi}}^*(P_{\hat\Pi}),
\eeqn
where the disjoint union is taken over with all weighted map types $\hat{\bm \Pi}$ obtained from $\hat{\bm \Gamma}$ by applying exactly one operation listed in Proposition \ref{prop611}, plus the following one more possibility: 
\begin{itemize}
\item One weighted input of $\hat{\bm \Gamma}$ becomes forgettable or unforgettable while the weighting type of the output of $\hat{\bm \Pi}$ may or may not change according to the weighting rule \eqref{eqna1}.
\end{itemize}
\end{enumerate}
\end{prop}

\subsection{Strict unitality}

Now we start to equip the $A_\infty$ algebras with strict units and prove Theorem \ref{thm714}. Let $\widetilde{CF}{}^* (L; {\bm \Lambda})$ be the free ${\bm \Lambda}$-module
\beqn
\widetilde{CF} (L; {\bm \Lambda}) = CF ( L; {\bm \Lambda} ) \oplus {\bm \Lambda} {\bm e} \oplus {\bm \Lambda} {\bm p} = CF ( L; {\bm \Lambda} ) \oplus {\bm \Lambda} {\bm x}_M^\circ \oplus {\bm \Lambda} {\bm x}_M^\weight.
\eeqn
Then we extend the $A_\infty$ compositions ${\bm m}_l^0$, ${\bm m}_l^\infty$ and the $A_\infty$ morphism defined in Section \ref{section7} to $\widetilde{CF} (L; {\bm \Lambda})$. Indeed, by counting elements in zero-dimensional moduli spaces of scale $0$, $\infty$, and mixed scale, one can define in the same way these extensions, except for $\tilde {\bm m}_1^0 ( {\bm x}_M^\weight)$ and $\tilde {\bm m}_1^\infty ({\bm x}_M^\weight)$. We define
\beq\label{eqna4}
\tilde {\bm m}_1^0 ({\bm x}_M^\weight) = {\bm x}_M^\circ - {\bm x}_M^\bullet + \sum_{{\bm x}_\infty }  \langle {\bm x}_M^\weight | {\bm x}_\infty \rangle^0 \cdot {\bm x}_\infty
\eeq
where $\langle {\bm x}_M^\weight | {\bm x}_\infty \rangle^0$ is defined in the same way as \eqref{eqn74}. Similarly,
\beqn
\tilde {\bm m}_1^\infty ({\bm x}_M^\weight) = {\bm x}_M^\circ - {\bm x}_M^\bullet + \sum_{{\bm x}_\infty}  \langle {\bm x}_M^\weight | {\bm x}_\infty \rangle^\infty \cdot {\bm x}_\infty
\eeqn
where $\langle {\bm x}_M^\weight | {\bm x}_\infty \rangle^\infty$ is defined in the same way as \eqref{eqn75}.

The following proposition proves Theorem \ref{thm714} Item (a)--Item (c). 

\begin{prop}\label{propa13} $\wt{\fuk}{}^0 (L) = (\wt{CF} (L;{\bm \Lambda} ), \{ \tilde {\bm m}_l^0 \}_{l\geq 0} )$ and $\wt{\fuk}{}^\infty (L) = (\wt{CF} ( L; {\bm \Lambda} ), \{ \tilde {\bm m}_l^\infty \}_{l\geq 0} )$ are strictly unital $A_\infty$ algebras with ${\bm x}_M^\circ$ being their strict units. $\tilde {\bm \kappa} = ( \tilde \kappa_0, \tilde \kappa_1, \ldots)$ is a unital $A_\infty$ morphism which is a higher order deformation of the identity.
\end{prop}

\begin{proof}
The fact that $\{ \tilde {\bm m}_l^0 \}_{l \geq 0}$ and $\{ \tilde {\bm m}_l^\infty \}_{l \geq 0}$ define $A_\infty$ algebra structures over $\wt{CF} ( L; {\bm \Lambda})$ and the fact that $\{ \tilde \kappa_l \}_{l \geq 0}$ define an $A_\infty$ morphism follow from the same argument as before, in which the refined compactness theorem (Proposition \ref{propa12}) plays a crucial role. In the following we explain the unitality about ${\bm e} = {\bm x}_M^\circ$. 

We first prove the unitality of ${\bm x}_M^\circ$ for $\wt{\fuk}{}^0 (L)$. The first condition is $\tilde {\bm m}_1^0 ( {\bm x}_M^\circ) = 0$. Notice that by definition, there are two parts contributing to $\tilde {\bm m}_1^0({\bm x}_M^\circ)$, the Morse differential and the contribution from the counts of moduli spaces for essential map types. Since ${\bm x}_M^\circ$ is the unique maximum which has to represent a homology class, its Morse differential vanishes. On the other hand, consider essential map types $\hat{\bm \Gamma}$ of index zero with only one boundary input $t$ which is labelled by ${\bm x}_M^\circ$. For the underlying weighted domain type $\hat\Gamma$, recall that $\hat\Gamma_t$ is the weighted  domain type obtained by forgetting the boundary tail $t$ and stabilizing. If $\hat\Gamma_t$ is nonempty, we want to prove that the moduli space ${\mc M}_{\hat{\bm \Gamma}}^*(P_{\hat\Gamma})$ is empty. Indeed, if ${\mc V} \in {\mc M}_{\hat{\bm \Gamma}}^*(P_{\hat\Gamma})$, then since the perturbation data respects forgetting a forgettable tail (see Item (e) of Definition \ref{defna7}), by forgetting the tail $t$, we obtain an element ${\mc V}_t \in {\mc M}_{\hat{\bm \Gamma}_t}^* (P_{\hat\Gamma_t})$. However, the expected dimension of the latter moduli space is one less than that of ${\mc M}_{\hat{\bm \Gamma}}^*(P_{\hat\Gamma})$, which, by Item (e) of Definition \ref{defna7} and the transversality, is impossible. Hence ${\mc M}_{\hat{\bm \Gamma}}^*(P_{\hat\Gamma}) = \emptyset$. Therefore $\tilde m_0^1({\bm x}_M^\circ) = 0$. Next, we show that 
\beqn
\tilde {\bm m}_2^0 ({\bm x}_M^\circ, {\bm a}) = {\bm a} =  (-1)^{|{\bm a}|} \tilde {\bm m}_2^0 ( {\bm a}, {\bm x}_M^\circ),\ \forall {\bm a} \in \wt{\rm Crit} F_L.
\eeqn
If ${\bm a} = {\bm x}_M^\circ$, then the second identity is the same as the first one. If ${\bm a} \neq {\bm x}_M^\circ$, then the proof of the second identity is also similar to the proof of the first one. Hence we only show $\tilde {\bm m}_2^0 ({\bm x}_M^\circ, {\bm a}) = {\bm a}$. Indeed, suppose a configuration with map type $\hat{\bm \Gamma}$ and weighted domain type $\hat\Gamma$ contributes to $\tilde {\bm m}_2^0 ({\bm x}_M^\circ, {\bm a})$. Let $t$ be the first input of $\hat\Gamma$. Then for the same reason as above, $\hat\Gamma_t = \emptyset$, which implies that $\hat\Gamma$ is a {\rm Y}-shape. The condition that the index of $\hat{\bm \Gamma}$ is zero implies that the other two boundary tails are labelled by critical points of the same Morse index, say ${\bm x}'$ and ${\bm x}''$. By our construction of the perturbation
data on Y-shapes (see Lemma \ref{lemmaa9}), ${\mc M}_{\hat{\bm \Gamma}}^*(P_{\hat\Gamma})  = \emptyset$ if ${\bm x}' \neq {\bm x}''$, and contains a single element if ${\bm x}' = {\bm x}''$. The sign of this single element is positive. Hence it shows that $\tilde {\bm m}_2^{\it  eq} ({\bm x}_M^\circ, {\bm a}) = {\bm a}$ for any generator, and hence all element of $\wt{CF}(L; {\bm \Lambda})$. Similarly one has $\tilde {\bm m}_l^{\it eq} (\cdots, {\bm x}_M^\circ, \cdots) = 0$ for $l \geq 3$ because for any unbroken weighted domain type with more than three inputs, forgetting one input results in a nonempty weighted domain type. 

The unitality of ${\bm x}_M^\circ$ for $\wt{\fuk}{}^\infty (L)$ is
exactly the same. We now prove the unitality of the morphism, which
means
\begin{align}\label{eqna5}
&\ \tilde \kappa_1 \big( {\bm x}_M^\circ \big) = {\bm x}_M^\circ;\ &\ \tilde \kappa_l \big( \ \cdots, {\bm x}_M^\circ, \cdots \big) = 0,\ \forall l \geq 2.
\end{align}
To prove the first identity, consider any map type $\hat{\bm \Gamma}$ such that the moduli space ${\mc M}_{\hat{\bm \Gamma}}$ may contribute to $\tilde \kappa_1({\bm x}_M^\circ)$. Suppose $t$ is the boundary input labelled by ${\bm x}_M^\circ$. If $\omega( \hat{\bm \Gamma} ) > 0$, then $L_{\hat\Gamma} \neq \emptyset$. Therefore, $\hat\Gamma_t$ is nonempty, which results in a nonempty moduli space ${\mc M}_{\hat{\bm \Gamma}{}_t}$ with expected dimension $-1$. Hence $\omega(\hat{\bm \Gamma} ) = 0$. Then since ${\rm index} \hat{\bm \Gamma} = 0$, the output of $\tilde\Gamma$ must be labelled by ${\bm x}_M^\circ$ or ${\bm x}_M^\bullet$. However the latter was excluded by our requirement on weighting types (see Figure \ref{figure7}). Therefore $\tilde \kappa_1({\bm x}_M^\circ) = {\bm x}_M^\circ$. On the other hand, for all admissible weighted map type $\hat{\bm \Gamma}$ of mixed scale with $l \geq 2$ boundary inputs one of whose boundary tails is labelled by ${\bm x}_M^\circ$, forgetting the first forgettable input results in a nonempty map type $\hat{\bm \Gamma}_t$ with ${\rm index} \hat{\bm \Gamma}_t = -1$. Hence ${\mc M}_{\hat{\bm \Gamma}_t} = \emptyset$ by transversality and the coherence of the perturbation data. So the second identity of \eqref{eqna5} is proved. 

Lastly, one can still choose perturbation data satisfying properties in Remark \ref{rem69}, such that the $A_\infty$ algebras $\wt{\fuk}{}^0 (L)$ and $\wt{\fuk}{}^\infty  (L)$ are identical on the classical level and $\tilde {\bm \kappa}$ is a higher order deformation of the identity. This finishes the proof.
\end{proof}

It remains to prove item (d) of Theorem \ref{thm714}. Indeed, suppose (P1) holds. To compute
$\tilde {\bm m}_l^0 ({\bm p}, \ldots, {\bm p})$ we consider map types with all inputs being weighted. Using a perturbation data
satisfying the same condition as used in the proof of Proposition
\ref{prop711}, i.e. being sufficiently close to the standard
perturbation. Then for dimensional reason,
$\tilde {\bm m}_l^0( {\bm p}, \ldots, {\bm p})$ is zero unless $l = 1$
and objects contributing to $\tilde {\bm m}_0^1({\bm p})$ have zero
Maslov index. Then by the definition \eqref{eqna4}, \eqref{eqn77} is
proved. This finishes the proof of Theorem \ref{thm714}.

\bibliography{mathref}	

\bibliographystyle{amsalpha}

\end{document}